\newtheorem{prop}[equation]{Proposition}
\newtheorem{thm}[equation]{Theorem}
\newtheorem{lem}[equation]{Lemma}
\theoremstyle{definition}
\newtheorem{defn}[equation]{Definition}
\newtheorem{remark}[equation]{Remark}
\newtheorem{example}[equation]{Example}
\def\Mor{\mathrm{Mor}}
\def\Aut{\mathrm{Aut}}
\def\Obj{\mathrm{Obj}}
\def\typ{{\operatorname{typ}}}
\def\vp{\varphi}
\def\al{\alpha}
\def\be{\beta}
\def\ga{\gamma}
\def\La{\Lambda}
\def\de{\delta}
\def\si{\sigma}
\def\Ga{\Gamma}
\def\ze{\zeta}
\def\B{\ensuremath{\mathcal{B}}}
\def\C{\ensuremath{\mathcal{C}}}
\def\D{\ensuremath{\mathcal{D}}}
\def\E{\ensuremath{\mathcal{E}}}
\def\F{\ensuremath{\mathcal{F}}}
\def\H{\ensuremath{\mathcal{H}}}
\def\G{\ensuremath{\mathcal{G}}}
\def\L{\ensuremath{\mathcal{L}}}
\def\M{\ensuremath{\mathcal{M}}}
\def\O{\ensuremath{\mathcal{O}}}
\def\P{\ensuremath{\mathcal{P}}}
\def\Q{\ensuremath{\mathcal{Q}}}
\def\R{\ensuremath{\mathcal{R}}}
\def\Z{\ensuremath{\mathcal{Z}}}
\def\CC{\ensuremath{\mathbb{C}}}
\def\QQ{\ensuremath{\mathbb{Q}}}
\def\ZZ{\ensuremath{\mathbb{Z}}}
\def\Out{\mathrm{Out}}
\def\Aut{\mathrm{Aut}}
\def\Hom{\mathrm{Hom}}
\def\im{\mathrm{im}}
\def\Ker{\mathrm{Ker}}
\def\Coker{\mathrm{Coker}}
\def\incl{\text{incl}}
\def\id{\textrm{id}}
\def\Id{\textrm{Id}}
\def\op{\ensuremath{\mathrm{op}}}
\def\sylp{\operatorname{Syl}_p}
\def\geom{{\operatorname{geom}}}
\def\Ab{\mathbf{Ab}}
\def\Gps{\mathbf{Gps}}
\def\Inn{\operatorname{Inn}}
\def\Sym{\text{Sym}}
\def\Rep{\operatorname{Rep}}
\def\GL{\operatorname{GL}}
\def\SU{\operatorname{SU}}
\def\PSU{\operatorname{PSU}}
\def\pruffer{\ZZ/p^\infty}
\def\defeq{\overset{\operatorname{def}}{=}}
\def\Ad{\operatorname{Ad}}
\def\Lred{\L_{/0}}
\def\SFL{(S,\F,\L)}
\def\0{_{/0}}
\def\fus{{\operatorname{fus}}}
\def\res{\operatorname{Res}}
\def\Tr{{\operatorname{Tr}}}
\DeclareMathAlphabet\EuR{U}{eur}{m}{n}
\SetMathAlphabet\EuR{bold}{U}{eur}{b}{n}
\def\SpAd{{\operatorname{SpAd}}}
\def\rk{\operatorname{rk}}
\def\diag{{\operatorname{diag}}}
\newcommand{\widebar}[1]{\overline{#1}}
\newcommand{\modl}[1]{\ensuremath{#1}\text{-}\mathbf{mod}}
\def\Ext{\operatorname{Ext}}
\def\Image{\operatorname{Im}}
\def\nsg{\trianglelefteq}
\def\OutAd{\operatorname{OutAd}}
\def\proj{\operatorname{proj}}
\def\zpfree{$\ZZ/p^\infty$-free}
\def\gps{\mathbf{Gps}}
\newcommand{\wh}[1]{\widehat{#1}}
\newcommand{\xto}[1]{\xrightarrow{#1}}
\newcommand{\pcomp}[1]{{#1}^\wedge_p}
\newcommand{\higherlim}[2]{\displaystyle\setbox1=\hbox{\rm lim}
	\setbox2=\hbox to \wd1{\leftarrowfill} \ht2=0pt \dp2=-1pt
	\setbox3=\hbox{$\scriptstyle{#1}$}
	\def\test{#1}\ifx\test\empty
	\mathop{\mathop{\vtop{\baselineskip=5pt\box1\box2}}}\nolimits^{#2}
	\else
	\ifdim\wd1<\wd3
	\mathop{\hphantom{^{#2}}\vtop{\baselineskip=5pt\box1\box2}^{#2}}_{#1}
	\else
	\mathop{\mathop{\vtop{\baselineskip=5pt\box1\box2}}_{#1}}%
	\nolimits^{#2}
	\fi\fi}
\newcommand{\lrb}[1]{\llbracket #1\rrbracket}
\newcounter{saveequation}
\date{\today}
\title{Groups of unstable Adams operations\\ on $p$-local compact groups}
\author{Ran Levi}
\address{Institute of Mathematics, University of Aberdeen,
Fraser Noble Building 138, Aberdeen AB24 3UE, U.K.}
\email{r.levi@abdn.ac.uk}
\author{Assaf Libman}
\address{Institute of Mathematics, University of Aberdeen,
Fraser Noble Building 136, Aberdeen AB24 3UE, U.K.}
\email{a.libman@abdn.ac.uk}
\keywords{$p$-local compact groups, unstable Adams operations}
\begin{document}
\setcounter{section}{0}
\pagestyle{plain}

\numberwithin{equation}{section}
\renewcommand{\theequation}{\thesection.\arabic{equation}}
\renewcommand{\thethmain}{\Alph{thmain}}

\renewcommand{\theenumi}{(\arabic{enumi})}
\renewcommand{\labelenumi}{(\arabic{enumi})}

\begin{abstract}
A $p$-local compact group is an algebraic object modelled on the homotopy theory associated with $p$-completed classifying spaces of compact Lie groups and p-compact groups. In particular $p$-local compact groups give a unified framework in which one may study $p$-completed classifying spaces from an algebraic and homotopy theoretic point of view. Like connected compact Lie groups and p-compact groups, $p$-local compact groups admit unstable Adams operations - self equivalences that are characterised by their cohomological effect. Unstable Adams operations on $p$-local compact groups were constructed in a previous paper by F. Junod and the authors. In the current paper we study groups of unstable operations from a geometric and algebraic point of view. We give a precise description of the relationship between algebraic and geometric operations, and show that under some conditions unstable Adams operations are determined by their degree. 
We also examine a  particularly well behaved subgroup of unstable Adams operations. 
\end{abstract}

\maketitle



\section{Introduction}

Let $G$ be a  compact connected Lie group.
An unstable Adams operation of degree $k \geq 1$ on $BG$ is a self-map $f \colon BG \to BG$ which induces multiplication by $k^i$ on $H^{2i}(BG;\QQ)$ for every $i>0$.
In \cite{JMO1} unstable Adams operations for compact connected simple Lie groups $G$ were classified.
The analysis is centred around studying suitable self-maps of the $p$-completion $\pcomp{BG}$.
Later on, after $p$-compact groups were introduced by Dwyer and Wilkerson, their classification by Andersen-Grodal \cite{AG09} and Andersen-Grodal-M\o ller-Viruel \cite{AGMV} relied on studying a suitable notion of unstable Adams operation of $p$-compact groups.

Unstable Adams operations on $p$-local compact groups were defined in \cite{JLL}, where two related definitions are given. The first relies only on the algebraic structure of a $p$-local compact group $\G$. We shall refer to such operations as ``algebraic". The set of all such operations forms a group under composition, that we denote here by $\Ad(\G)$.  The second definition is more closely related to the way unstable Adams operations are defined for compact Lie and $p$-compact groups, and we shall refer to them as ``geometric". These form a subgroup of the group of all self homotopy equivalences of $B\G$, that we will denote by $\Ad^\geom(\G)$. 
The aim of this paper is  to study these groups. Detailed definitions will be given in Sections \ref{sec_geom_Ad} and \ref{Alg_Ad}.

To state our results some concepts are needed. More details will be given in later sections, including a brief review of $p$-local compact groups. Let $G$ be a discrete group.
We say that $G$ is \emph{\zpfree} if it contains no subgroup isomorphic to $\ZZ/p^\infty$. We say that $G$ has a \emph{Sylow $p$-subgroup} if it contains a subgroup $P$ isomorphic to a discrete $p$-toral group, such that any discrete $p$-toral subgroup of $G$ is conjugate to a subgroup of $P$. 
A \emph{normal Adams automorphism of degree $\zeta\in\ZZ_p$} of a discrete $p$-toral group $S$ is an automorphism which restricts to the $\zeta$-power map on the maximal torus $T\nsg S$, and induces the identity on $S/T$. 
The groups of these automorphisms of $S$ is denoted $\Ad(S)$.
If $S$ is not a split extension of its maximal torus, then the degree of such an automorphism must be a $p$-adic unit by \cite[Lemma 2.2(i)]{JLL}. 

If $\F$ is a fusion system over $S$, then the set of normal Adams automorphisms of $S$ that are $\F$-fusion preserving form the group $\Ad_{\fus}(S)$.
It contains $\Ad(S) \cap \Aut_\F(S)$ as a normal subgroup with quotient denoted $\OutAd_{\fus}(S)$.

A $p$-local compact group $\G = \SFL$ (or its underlying fusion system) is said to be \emph{weakly connected} if the maximal torus $T\le S$ is self-centralising in $S$ (and hence $\F$-centric).

%
%
\begin{prop}\label{prop_adgeom_outadfus}
Let $\G=\SFL$ be a $p$-local compact group.
Then $\Ad^{\geom}(\G)$ is \zpfree~ and contains a finite normal Sylow $p$-subgroup.
Furthermore, there is a short exact sequence
\[
1\to \higherlim{\mathcal{O}^c(\F)}{1}\mathcal{Z} \to \Ad^\geom(\G)\to \OutAd_{\fus}(S)\to 1.
\]
In particular, if $p$ is odd then $\Ad^{\geom}(\G) \cong \OutAd_{\fus}(S)$.
Finally, $\OutAd_{\fus}(S)$ is solvable of class $\leq 2$, and so $\Ad^{\geom}(\G)$ is solvable of class at most 3.
\end{prop}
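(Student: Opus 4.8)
The plan is to construct an explicit homomorphism $\rho\colon\Ad^\geom(\G)\to\OutAd_\fus(S)$ by restriction along the structure map $\Theta\colon BS\to B\G$, to identify its kernel with $\higherlim{\mathcal{O}^c(\F)}{1}\mathcal{Z}$ using the obstruction theory for self-equivalences of $B\G=|\L|\pcom$, to obtain surjectivity of $\rho$ from the constructions of \cite{JLL}, and then to read off the structural claims from the resulting short exact sequence. To define $\rho$: for $f\in\Ad^\geom(\G)$ the composite $f\circ\Theta$ is, up to homotopy, of the form $\Theta\circ B\varphi$ for an automorphism $\varphi$ of $S$, well defined modulo the $\F$-conjugacy ambiguity in lifting a map $BS\to B\G$ through $\Theta$; since $f$ is a geometric unstable Adams operation, $\varphi$ may be chosen to restrict to a power map on the maximal torus $T$ and to induce the identity on $S/T$, i.e.\ to be a normal Adams automorphism, and since $f\circ\Theta$ extends over $B\G$, this $\varphi$ is $\F$-fusion preserving. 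Two such choices differ by an element of $\Ad(S)\cap\Aut_\F(S)$, so $f\mapsto[\varphi]$ is a well-defined homomorphism into $\OutAd_\fus(S)$.

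Surjectivity of $\rho$ I would deduce from \cite{JLL}: every normal, $\F$-fusion preserving Adams automorphism of $S$ is realised by a geometric unstable Adams operation of $\G$. For the kernel, note that $\rho(f)=1$ means $f\circ\Theta\simeq\Theta\circ B\varphi$ with $\varphi\in\Ad(S)\cap\Aut_\F(S)\subseteq\Aut_\F(S)$; as $\F$-automorphisms of $S$ become trivial after composition with $\Theta$, this forces $f\circ\Theta\simeq\Theta$. Now I use the obstruction-theoretic description of self-equivalences of the classifying space of a $p$-local compact group: the group of self-homotopy-equivalences of $B\G$ inducing the trivial outer automorphism of $\L$ is naturally $\higherlim{\mathcal{O}^c(\F)}{1}\mathcal{Z}$, and, by rigidity, an isotypical self-equivalence of $\L$ restricting to an inner automorphism of $S$ induces the trivial outer automorphism. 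Hence an $f$ with $f\circ\Theta\simeq\Theta$ lies in $\higherlim{\mathcal{O}^c(\F)}{1}\mathcal{Z}$; conversely every element of $\higherlim{\mathcal{O}^c(\F)}{1}\mathcal{Z}$ satisfies $f\circ\Theta\simeq\Theta$, hence acts as the identity on $H^*(B\G;\QQ_p)$ (which injects into $H^*(BS;\QQ_p)$ via $\Theta^*$), and is therefore a degree-$1$ geometric unstable Adams operation in $\ker\rho$. This gives the short exact sequence; and since $\higherlim{\mathcal{O}^c(\F)}{1}\mathcal{Z}$ is a finite $p$-group, vanishing when $p$ is odd — both standard facts about higher limits of the centre functor over $\mathcal{O}^c(\F)$ — we obtain $\Ad^\geom(\G)\cong\OutAd_\fus(S)$ for $p$ odd.

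It remains to analyse $\OutAd_\fus(S)$ and to assemble the structural statements. The degree is a homomorphism $\Ad_\fus(S)\to\punits$ whose kernel consists of the fusion-preserving normal Adams automorphisms that are the identity on $T$ and on $S/T$; each such has the form $s\mapsto s\,d(s)$ for a crossed homomorphism $d\colon S/T\to T$, so this kernel is abelian, a subgroup of $\Der(S/T,T)$. Thus $\Ad_\fus(S)$, and hence its quotient $\OutAd_\fus(S)$, is an extension of an abelian group by an abelian group and so is solvable of class $\le 2$; consequently $\Ad^\geom(\G)$, an extension of $\OutAd_\fus(S)$ by the abelian group $\higherlim{\mathcal{O}^c(\F)}{1}\mathcal{Z}$, is solvable of class $\le 3$. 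Moreover the maximal divisible subgroup of $\Der(S/T,T)$ is the subgroup of inner automorphisms $c_t$, $t\in T$ (the quotient embeds into the finite group $H^1(S/T;T)$), and these lie in $\Ad(S)\cap\Aut_\F(S)$; hence the kernel of the degree map induced on $\OutAd_\fus(S)$ is a finite $p$-group while its image is a subquotient of $\punits$ and is therefore \zpfree~ with finite (for $p$ odd, trivial) Sylow $p$-subgroup. So $\OutAd_\fus(S)$ is \zpfree~ with a finite normal Sylow $p$-subgroup, and the exact sequence together with the finiteness of $\higherlim{\mathcal{O}^c(\F)}{1}\mathcal{Z}$ transfers both properties to $\Ad^\geom(\G)$: a copy of $\ZZ/p^\infty$ in $\Ad^\geom(\G)$ would, modulo the finite kernel, embed into $\OutAd_\fus(S)$, and the preimage of a Sylow $p$-subgroup of $\OutAd_\fus(S)$ is a finite normal $p$-subgroup into which every discrete $p$-toral subgroup conjugates.

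The main obstacle is the kernel computation: it rests on the full obstruction-theoretic description of self-equivalences of $B\G$ for a $p$-local compact group, and on the rigidity statement that an isotypical self-equivalence of $\L$ restricting to an inner automorphism of $S$ is itself inner, which together pin $\ker\rho$ to $\higherlim{\mathcal{O}^c(\F)}{1}\mathcal{Z}$ exactly rather than to a larger subgroup; the finiteness and $p$-odd vanishing of $\higherlim{\mathcal{O}^c(\F)}{1}\mathcal{Z}$ are the other external inputs to be quoted from the theory of higher limits over $\mathcal{O}^c(\F)$.
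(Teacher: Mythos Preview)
Your overall architecture matches the paper's: define the restriction map to $\OutAd_\fus(S)$, identify its kernel with $\higherlim{\mathcal{O}^c(\F)}{1}\mathcal{Z}$ via the obstruction theory of \cite{BLO3}, and then read off the structural consequences. The analysis of $\OutAd_\fus(S)$ via the degree map and $\Der(S/T,T)$ is also essentially what the paper does in its Proposition~\ref{prop_outadfuss}.

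There is, however, a genuine gap in your surjectivity argument. You write that surjectivity of $\rho$ follows from \cite{JLL} because ``every normal, $\F$-fusion preserving Adams automorphism of $S$ is realised by a geometric unstable Adams operation.'' But \cite{JLL} does not prove this: it only constructs operations of degree $\zeta$ for $\zeta\in\Gamma_m(p)$ for some (possibly large) $m$ depending on $\G$, and its Proposition~3.5 shows that $\Ad(\G)\to\Ad^\geom(\G)$ is surjective, which is the wrong direction for your purposes. An arbitrary class in $\OutAd_\fus(S)$ need not have a representative of degree lying in $\Gamma_m(p)$, so \cite{JLL} alone does not give surjectivity of $\rho$. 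The paper instead observes that $\Ad^\geom(\G)=\mu^{-1}(\OutAd_\fus(S))$ inside the exact sequence
\[
0 \to \higherlim{\mathcal{O}^c(\F)}{1}\mathcal{Z} \to \Out(B\G) \xto{\mu} \Out_\fus(S) \to \higherlim{\mathcal{O}^c(\F)}{2}\mathcal{Z}
\]
from \cite{BLO3}, and then invokes the vanishing of $\higherlim{\mathcal{O}^c(\F)}{2}\mathcal{Z}$ from \cite{LL} to get that $\mu$, and hence $\rho$, is surjective. You never mention $\higherlim{}{2}\mathcal{Z}$, and without it the surjectivity is unproven. This same identification $\Ad^\geom(\G)=\mu^{-1}(\OutAd_\fus(S))$ also gives the kernel computation for free, making your separate ``rigidity'' argument unnecessary (though not wrong).
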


Next we consider the group of algebraic operations and its relationship with the group of geometric operations. 
Geometric realisation gives rise to a homomorphism from the group of algebraic Adams operations to the group of geometric Adams operations.
\[
\ga \colon \Ad(\G) \xto{ \ (\Psi,\psi) \mapsto \pcomp{|\Psi|} \ } \Ad^{\geom}(\G).
\]
Similar to the situation with groups, given $t \in T$ there is an automorphism of $\L$ obtained by ``conjugating'' by $\widehat{t} \in \Aut_\L(S)$.
These automorphisms are called \emph{inner automorphisms} and they form the group $\Inn_T(\G) \leq \Ad(\G)$ that  is contained in the kernel of $\gamma$.
Let $D(\F)$ be the subgroup of $\ZZ_p^\times$ consisting of the degrees of all $\vp \in \Aut_\F(S) \cap \Ad(S)$.
In \cite[Proposition 3.5]{JLL} we showed that $\gamma$ is an epimorphism. Our next theorem gives more refined information.

%
%
\begin{thm}\label{thm_geom_vs_alg_uao}
Let $\G=\SFL$ be a weakly connected $p$-local compact group.
Let $T$ denote the maximal torus of $S$.
Then
\begin{enumerate}[\rm (i)]
\item
$\Ad(\G)$ has a normal maximal discrete $p$-torus denoted $\Ad(\G)_0$.
It contains a Sylow $p$-subgroup which is normal if $p=2$.

\item
$\Ad(\G)_0$ is contained in the kernel of $\gamma$ (see \eqref{def_gamma}) and there is a short exact sequence
\[
1 \to D(\F) \to \Ad(\G)/\Ad(\G)_0 \xto{\ \bar{\gamma} \ } \Ad^{\geom}(\G) \to 1
\]

\item
$\Ad(\G)_0 =\Inn_T(\G) \cong T/Z(\F)$ where $Z(\F)$ is the centre of $\F$.
\end{enumerate}
\end{thm}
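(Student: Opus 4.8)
The plan is to build on the exact sequence of Proposition \ref{prop_adgeom_outadfus} together with a careful analysis of the kernel of $\gamma$. The starting point for (iii) is the description of inner automorphisms: for $t\in T$, conjugation by $\widehat t\in\Aut_\L(S)$ determines an element of $\Ad(\G)$, and I would first check that the assignment $t\mapsto$ (that automorphism) is a homomorphism $T\to\Inn_T(\G)$, with kernel exactly the elements $t$ that act trivially on all of $\L$. Because $\G$ is weakly connected, $T$ is $\F$-centric and self-centralising in $S$, which should force the kernel to be precisely $Z(\F)$ (the subgroup of $T$ fixed by every element of $\Aut_\F(S)$, equivalently the centre of the fusion system), giving the isomorphism $\Inn_T(\G)\cong T/Z(\F)$. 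Since $T/Z(\F)$ is a quotient of a discrete $p$-torus by a closed subgroup, it is again a discrete $p$-torus (possibly of smaller rank, or trivial), which establishes that $\Inn_T(\G)$ is a discrete $p$-torus.

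For (i), I would show $\Ad(\G)_0:=\Inn_T(\G)$ is normal in $\Ad(\G)$: conjugating an inner automorphism by an arbitrary algebraic Adams operation $(\Psi,\psi)$ of degree $\zeta$ should send conjugation-by-$\widehat t$ to conjugation-by-$\widehat{\psi(t)}$, and since $\psi$ restricts to the $\zeta$-power map on $T$ this stays inside $\Inn_T(\G)$. To see it is the \emph{maximal} discrete $p$-torus and that the quotient $\Ad(\G)/\Ad(\G)_0$ contains a Sylow $p$-subgroup (normal when $p=2$), I would combine two inputs: the analogous statement for $\Ad^{\geom}(\G)$ from Proposition \ref{prop_adgeom_outadfus} (which is \zpfree\ with finite normal Sylow $p$-subgroup), and the fact from part (ii) that $\Ad(\G)/\Ad(\G)_0$ sits in an extension of $\Ad^{\geom}(\G)$ by the finite group $D(\F)\le\punits$. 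Finiteness of $D(\F)$ follows because it is a subgroup of $\punits$ and the relevant Adams automorphisms of $S$ realising $\F$-automorphisms form a group whose degree-image is constrained (using $\OutAd_\fus(S)$ being finite via Proposition \ref{prop_adgeom_outadfus}); then an extension of a \zpfree\ group with finite normal Sylow by a finite group is again of this type, and a discrete $p$-torus inside $\Ad(\G)$ must map trivially to this quotient, hence lie in $\Ad(\G)_0$.

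For (ii), the key claim is $\ker\gamma=\Ad(\G)_0=\Inn_T(\G)$ refined at the level of the quotient: I already know $\Inn_T(\G)\subseteq\ker\gamma$ from the discussion preceding the theorem, so $\gamma$ factors through $\bar\gamma\colon\Ad(\G)/\Ad(\G)_0\to\Ad^{\geom}(\G)$, which is onto by \cite[Proposition 3.5]{JLL}. It remains to identify $\ker\bar\gamma$ with $D(\F)$. An algebraic operation $(\Psi,\psi)$ lies in $\ker\gamma$ iff $\pcomp{|\Psi|}$ is homotopic to the identity; using the obstruction-theoretic/rigidity machinery for $p$-local compact groups (self-maps of $B\G$ correspond to the relevant quotient of $\Aut(\L)$), such $\Psi$ must be "inner up to" an automorphism of $S$ that is $\F$-fused to the identity, and the residual invariant is exactly the degree $\zeta$ modulo the degrees realised inside $\Aut_\F(S)\cap\Ad(S)$, i.e. modulo $D(\F)$. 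So the degree map $\Ad(\G)\to\punits$ induces an isomorphism $\ker\bar\gamma\xrightarrow{\ \cong\ }D(\F)$, completing the sequence.

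The main obstacle I anticipate is the precise identification of $\ker\gamma$ in (ii) — i.e. showing that an algebraic operation whose geometric realisation is null-homotopic differs from an inner automorphism only by an element of $\Aut_\F(S)\cap\Ad(S)$, with the degree as the complete obstruction. This requires the comparison between $\Ad(\G)$ and $\mathrm{Out}(\L)$ (or $\Aut_{\mathrm{typ}}(\L)$) and the fact that $B\G$ has the homotopy type of $|\L|\pcomp{}$ with self-equivalences controlled by $\mathrm{Out}(\L)$; pinning down that the only ambiguity is the torus-power degree, and that weak connectedness is exactly what makes $Z(\F)\le T$ so that the bookkeeping closes up, is where the real work lies. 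The torus statements in (i) and (iii) should then be comparatively formal consequences once this core identification is in hand.
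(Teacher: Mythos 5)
Your overall route is the same as the paper's: realise part (iii) by the map $t\mapsto\tau(\widehat t)$ with kernel $Z(\F)$, prove the exact sequence in (ii) with kernel $D(\F)$, and then deduce (i) because $\Ad(\G)/\Inn_T(\G)$ is an extension of the \zpfree\ group $\Ad^{\geom}(\G)$ (Proposition \ref{prop_adgeom_outadfus}) by the finite group $D(\F)$, hence \zpfree, so every discrete $p$-torus of $\Ad(\G)$ lands in $\Inn_T(\G)$; the Sylow statements then come from pulling back the normal Sylow of $\Ad^{\geom}(\G)$. The problem is that the decisive step of (ii) -- which you yourself flag as ``where the real work lies'' -- is asserted, not proved, so as it stands the proposal has a genuine gap exactly at the heart of the theorem.

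Concretely, two ingredients are missing. First, you need that every $(\Psi,\psi)\in\ker\gamma$ equals $\tau(\rho)$ for some $\rho\in\Aut_\L(S)$ with $\pi(\rho)\in\Aut_\F(S)\cap\Ad(S)$. This is not automatic from ``rigidity machinery'': in the paper (Lemma \ref{lem_autlads_to_adg_exact_seq}) one invokes $\Out_{\typ}(\L)\cong\Out(B\G)$ from \cite[Theorem 7.1]{BLO3} to produce a natural isomorphism $\rho\colon\Id_\L\to\Psi$, and then an explicit argument (using $\Psi(\iota_P^S)=\iota_{\psi(P)}^S$ and monomorphy of $\iota_P^S$ to get $\rho_P=\rho_S|_P^{\psi(P)}$, and axiom (C) to get $\pi(\rho_S)=\psi$) to conclude $\Psi=\tau(\rho_S)$ with $\rho_S\in\Aut_\L^{\Ad}(S)$; the same lemma computes $\ker(\tau)=Z(\F)$ using that morphisms of $\L$ are epimorphisms -- note that your description of $Z(\F)$ as the $\Aut_\F(S)$-fixed points of $T$ is not its definition and is not what this computation yields. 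Second, injectivity of the degree on $\ker\gamma/\Inn_T(\G)$ (equivalently the sequence $1\to T\to\Aut_\L^{\Ad}(S)\xto{\deg}D(\F)\to 1$ of Lemma \ref{lem_t_to_autlads_ato_df}) requires that a degree-one element of $\Aut_\F(S)\cap\Ad(S)$ is conjugation by an element of $T$ (Lemma \ref{lem_deg1_conj_T}). This is where weak connectedness genuinely enters: $T$ is $\F$-centric, so an $\F$-automorphism of $S$ restricting to the identity on $T$ is $c_t$ with $t\in Z(T)=T$ by \cite[Proposition 2.8]{BLO3} -- not merely the bookkeeping fact $Z(\F)\le T$ that you cite. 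Without these two steps the isomorphism $\ker\bar\gamma\cong D(\F)$ is unsupported, and with it the exactness in (ii) and the maximality and Sylow claims in (i) that you derive from that exactness.
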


We now turn our attention to the question to what extent is an unstable operation determined by its degree.  
If $G$ is a compact connected Lie group, then by \cite[Theorem 1]{JMO1} for any $k \geq 1$ there is up to homotopy at most one unstable Adams operation of degree $k$.
The situation for  $p$-local compact groups is not quite so simple. 
It is not hard to see that a fusion preserving Adams automorphism induced by a geometric operation is  only determined modulo $D(\F)$, and so its  degree is, at best, only determined modulo $D(\F)$.
Also, all elements of $\Ad(\G)_0 = \Inn_T(\G)$ are (distinct) algebraic unstable Adams operations of degree $1$.
Thus, the best we can hope for is to address the question of the injectivity of the homomorphisms
\[
\Ad^\geom(B\G) \xto{\ \deg \ } \ZZ_p^\times/D(\F) \qquad
\text{ and } \qquad
\Ad(\G)/\Ad(\G)_0 \xto{ \ \deg \ } \ZZ_p^\times.
\]
The next proposition gives criteria for injectivity of these maps.

\begin{prop}\label{prop_degree_determines_operation}
Suppose that $\G=(S,\F,\L)$ is weakly connected and let $W=\Aut_\F(T)$ be its Weyl group.
If $H^1(W,T)=0$ then the degree map $\deg\colon\OutAd_\fus(S) \xto{} \ZZ_p^\times/D(\F)$ is injective, and there are exact sequences
\begin{eqnarray*}
(1) && 1 \to \higherlim{\O(\F^c)}{1} \Z \to \Ad^{\geom}(\G) \xto{\quad \deg \quad} \ZZ_p^\times/D(\F) \\
(2) && 1 \to \higherlim{\O(\F^c)}{1} \Z \to \Ad(\G)/\Ad(\G)_0 \xto{\quad \deg \quad} \ZZ_p^\times 
\end{eqnarray*}
If in addition $p\neq 2$, then the degree maps in (1) and (2) are injective. 
\end{prop}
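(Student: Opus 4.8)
The plan is to deduce Proposition~\ref{prop_degree_determines_operation} from the structural results already available, principally Proposition~\ref{prop_adgeom_outadfus} and Theorem~\ref{thm_geom_vs_alg_uao}, by feeding in the hypothesis $H^1(W,T)=0$ to control the kernel of the degree map on $\OutAd_\fus(S)$. The first step is to analyse the degree homomorphism $\deg\colon\OutAd_\fus(S)\to\ZZ_p^\times/D(\F)$ directly. An element of $\OutAd_\fus(S)$ is represented by a normal Adams automorphism $\vp$ of $S$ that is $\F$-fusion preserving; its degree $\zeta\in\ZZ_p^\times$ is the exponent of the $\zeta$-power map it induces on $T$. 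If $\deg\vp\in D(\F)$, then by definition of $D(\F)$ there is $\psi\in\Aut_\F(S)\cap\Ad(S)$ of the same degree, and replacing $\vp$ by $\vp\psi^{-1}$ we may assume $\zeta=1$, i.e.\ $\vp$ restricts to the identity on $T$ and on $S/T$. So the key point is: \emph{a degree-$1$ normal Adams automorphism of $S$ that is $\F$-fusion preserving represents the trivial class in $\OutAd_\fus(S)$, provided $H^1(W,T)=0$.} Such a $\vp$ is of the form $s\mapsto \mu(s)\cdot s$ for a map $\mu\colon S\to T$ that is constant on cosets of $T$, hence factors through $S/T$; since $\vp$ is an automorphism and is the identity on $T$, the restriction of $\vp$ to $T$ being trivial forces $\mu|_T$ to be trivial, and one checks $\mu$ is a crossed homomorphism (a derivation) $S/T\to T$ — actually what we need is that the obstruction to $\vp$ being conjugation by an element of $T$, hence in $\Aut_\F(S)\cap\Ad(S)$, lives in a cohomology group computed from $W=\Aut_\F(T)$ acting on $T$. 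Weak connectedness ($T$ self-centralising in $S$, so $\F$-centric) is what makes $\Aut_\F(S)$ detect the $W$-action on $T$; the vanishing $H^1(W,T)=0$ then kills the class of $\mu$, so $\vp\in\Aut_\F(S)\cap\Ad(S)$, i.e.\ $\vp$ is trivial in $\OutAd_\fus(S)$. This gives injectivity of $\deg$ on $\OutAd_\fus(S)$.

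The second step is to assemble the exact sequences (1) and (2). For (1): Proposition~\ref{prop_adgeom_outadfus} gives the short exact sequence
\[
1\to \higherlim{\mathcal{O}^c(\F)}{1}\mathcal{Z}\to \Ad^\geom(\G)\to \OutAd_\fus(S)\to 1,
\]
and $\deg\colon\Ad^\geom(\G)\to\ZZ_p^\times/D(\F)$ factors through the quotient $\OutAd_\fus(S)$ (the degree of a geometric operation depends only on its class in $\OutAd_\fus(S)$ — this is exactly the $D(\F)$-ambiguity flagged in the paper). Composing with the injection $\OutAd_\fus(S)\hookrightarrow\ZZ_p^\times/D(\F)$ from the first step identifies the kernel of $\deg$ on $\Ad^\geom(\G)$ with $\higherlim{\mathcal{O}^c(\F)}{1}\mathcal{Z}$, which is exactly sequence (1) (with $\O(\F^c)$ and $\Z$ matching $\mathcal{O}^c(\F)$ and $\mathcal{Z}$ up to notation). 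For (2): Theorem~\ref{thm_geom_vs_alg_uao}(ii) gives
\[
1\to D(\F)\to \Ad(\G)/\Ad(\G)_0\xto{\ \bar\gamma\ }\Ad^\geom(\G)\to 1,
\]
and the composite $\deg=(\deg\circ\bar\gamma\text{-type map})$ on $\Ad(\G)/\Ad(\G)_0$ lands in $\ZZ_p^\times$ (not the quotient) and is compatible with $\bar\gamma$ in the sense that it sends $D(\F)$ isomorphically onto $D(\F)\le\ZZ_p^\times$ while the induced map on $\Ad^\geom(\G)$ is the degree map modulo $D(\F)$ from (1). A short diagram chase (the snake lemma applied to the map of extensions from the $D(\F)$-extension over $\Ad^\geom(\G)$ to the $D(\F)$-extension $D(\F)\hookrightarrow\ZZ_p^\times\twoheadrightarrow\ZZ_p^\times/D(\F)$) then identifies $\ker(\deg\colon\Ad(\G)/\Ad(\G)_0\to\ZZ_p^\times)$ with $\ker(\deg\colon\Ad^\geom(\G)\to\ZZ_p^\times/D(\F))=\higherlim{\O(\F^c)}{1}\Z$, giving sequence (2).

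The third step is the $p\neq 2$ refinement. By Proposition~\ref{prop_adgeom_outadfus}, when $p$ is odd the term $\higherlim{\mathcal{O}^c(\F)}{1}\mathcal{Z}$ vanishes (indeed $\Ad^\geom(\G)\cong\OutAd_\fus(S)$ in that case), so sequences (1) and (2) degenerate to injectivity statements: $\Ad^\geom(\G)\hookrightarrow\ZZ_p^\times/D(\F)$ and $\Ad(\G)/\Ad(\G)_0\hookrightarrow\ZZ_p^\times$. This is purely a matter of citing the odd-primary vanishing already recorded.

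I expect the main obstacle to be the first step — proving injectivity of $\deg$ on $\OutAd_\fus(S)$ — because it is the only genuinely new input and it is where the hypothesis $H^1(W,T)=0$ must be brought to bear. The delicate point is to correctly identify the cohomological obstruction: one must show that a degree-$1$ fusion-preserving normal Adams automorphism $\vp$ differs from an element of $\Aut_\F(S)\cap\Ad(S)$ by something measured by $H^1$ of the Weyl group $W=\Aut_\F(T)$ with coefficients in $T$ (using that $T$ is $\F$-centric, so that $\Aut_\F(S)$ surjects appropriately onto $\Aut_\F(T)=W$ with kernel controlled by $T$-conjugation, via the weakly connected hypothesis). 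Getting the coefficient module and the degree of cohomology exactly right, and verifying that the relevant cocycle condition is satisfied by the fusion-invariance of $\vp$, is the crux; the rest is bookkeeping with the two extensions from the earlier results.
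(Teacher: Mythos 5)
Your steps 2 and 3 coincide with the paper's proof: sequence (1) comes from the short exact sequence of Proposition \ref{prop_adgeom_outadfus} together with injectivity of $\deg$ on $\OutAd_\fus(S)$, sequence (2) from the snake lemma applied to the map of $D(\F)$-extensions onto $1\to D(\F)\to\ZZ_p^\times\to\ZZ_p^\times/D(\F)\to 1$, and the injectivity for $p\neq 2$ from the vanishing of $\higherlim{\O(\F^c)}{1}\Z$. The gap is in your step 1, the injectivity of $\deg\colon\OutAd_\fus(S)\to\ZZ_p^\times/D(\F)$, which you flag as the crux but do not actually establish. After reducing to a fusion-preserving $\theta\in\Ad^1(S)$ and writing $\theta(s)=\mu(s)\cdot s$, the crossed homomorphism $\mu$ has its class in $H^1(S/T,T)$, not in $H^1(W,T)$; since (by weak connectedness) $S/T\cong\Aut_S(T)$ embeds in $W$ as a $p$-subgroup, the vanishing of $H^1(W,T)$ gives no control over $H^1(S/T,T)$ (restriction goes the wrong way), and indeed $\Ad^1(S)/\Aut_T(S)\cong H^1(S/T,T)$ is typically nonzero. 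So the hypothesis cannot "kill the class of $\mu$" by any computation carried out at the level of $S$ alone, and your sketch does not explain how fusion invariance turns $\mu$ into a cocycle on $W$.

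The paper's mechanism is genuinely different and is the missing ingredient: one first lifts $\theta$ to an algebraic operation $(\Theta,\theta)\in\Ad(\G)$ (Lemma \ref{lem_surj_adg_to_adfuss}, which rests on the surjectivity of $\gamma$ from \cite{JLL}); weak connectedness makes $T$ an object of $\L$, so $\Theta$ induces an automorphism $\al$ of the group $\Aut_\L(T)$, sitting in the extension $1\to T\to\Aut_\L(T)\to W\to 1$, which is the identity on $T$ (degree $1$) and induces the identity on $W$ (Lemma \ref{L:deg 1 implied idW}); then \cite[Lemma 2.2]{JLL} shows that when $H^1(W,T)=0$ such an $\al$ is conjugation by some $t\in T$, and evaluating on $\widehat{s}$ for $s\in S=N_S(T)$ gives $\theta=c_t$, hence $\theta\in\Aut_\F(S)\cap\Ad(S)$ and the class is trivial in $\OutAd_\fus(S)$. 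Some such device for transporting the obstruction from $S/T$ to $W$ via the linking system is exactly what your argument lacks; without it, step 1 does not go through.
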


Proposition \ref{prop_degree_determines_operation} motivates the search for conditions under which $H^1(W,T) =0$. 

\begin{prop}\label{prop_H1W_vanish_intro}
Let $\F$ be a weakly connected saturated fusion system and let $W=\Aut_\F(T) \leq \GL_r(\ZZ_p)$ be its Weyl group.
Assume that either one of the following conditions holds:
\begin{enumerate}[\rm (i)]
\item
$p$ is odd and $D(\F) \neq 1$, or

\item $p=2$, $D(\F)\neq 1$ and $H^1(W/D(\F), T^{D(\F)})=0$, or

\item
$p$ is odd and the Weyl group $W$ is a pseudo-reflection group.
\end{enumerate}
Then $H^1(W,T)=0$.
\end{prop}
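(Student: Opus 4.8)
The plan is to work throughout with the finite Weyl group $W\le\GL_r(\ZZ_p)$, the rank-$r$ lattice $L=\ZZ_p^r$, and $T=L\otimes\QQ_p/\ZZ_p$. Since $W$ is finite and $L\otimes\QQ_p$ is a rational vector space, $H^i(W;L\otimes\QQ_p)=0$ for $i>0$, so the coefficient sequence $0\to L\to L\otimes\QQ_p\to T\to0$ gives $H^1(W;T)\cong H^2(W;L)$, and I will pass freely between the two forms. Two observations on $D(\F)$ are needed. First, any $\vp\in\Aut_\F(S)\cap\Ad(S)$ of degree $\zeta$ restricts on $T$ to the scalar map $\zeta\cdot\id\in\Aut_\F(T)=W$, which is central in $W$; thus $\zeta\mapsto\zeta\cdot\id$ embeds $D(\F)$ as a central subgroup of $W$ consisting of scalars. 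Second, since $W$ is finite so is $D(\F)$, and a finite subgroup of $\ZZ_p^\times$ lies in its torsion part: $\mu_{p-1}$ when $p$ is odd and $\{\pm1\}$ when $p=2$. Case (i) is then immediate: choose $\zeta\in D(\F)\setminus\{1\}\subseteq\mu_{p-1}$, so $\zeta-1\in\ZZ_p^\times$ (reduction mod $p$ is injective on $\mu_{p-1}$). The central element $\zeta\cdot\id\in W$ induces the identity on $H^1(W;T)$ (conjugation by a central element acts trivially on group cohomology), while it also acts on $H^1(W;T)$ as multiplication by $\zeta$ (it acts on the coefficient module $T$ as multiplication by the scalar $\zeta\in\ZZ_p$). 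Hence multiplication by $\zeta-1$ annihilates $H^1(W;T)$; since $\zeta-1\in\ZZ_p^\times$ and $H^1(W;T)$ is a $\ZZ_p$-module, $H^1(W;T)=0$.

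For case (ii), $p=2$ together with $D(\F)\neq1$ and the finiteness above force $D:=D(\F)=\{\pm\id\}\le W$, so that $T^{D}=T[2]$ and the given hypothesis reads $H^1(W/D;T^{D})=0$. I would feed the central extension $1\to D\to W\to W/D\to1$ into the Lyndon--Hochschild--Serre spectral sequence $E_2^{a,b}=H^a(W/D;H^b(D;T))\Rightarrow H^{a+b}(W;T)$. The term $E_2^{1,0}=H^1(W/D;T^{D})$ vanishes by hypothesis. For $E_2^{0,1}$, note that the generator of $D\cong C_2$ acts on $T$ by negation and $T$ is $2$-divisible, so $H^1(D;T)=\widehat{H}^1(C_2;T)=T/(2T)=0$, whence $E_2^{0,1}=0$. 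Since $H^1(W;T)$ is an extension of a subquotient of $E_2^{0,1}$ by a subquotient of $E_2^{1,0}$, it vanishes.

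Case (iii) carries the real weight. Here $p$ is odd and $W$ is generated by pseudo-reflections, and by the reduction above it suffices to prove $H^2(W;L)=0$ --- equivalently, that the ``abstract maximal torus normaliser'' extension $1\to T\to N\to W\to1$, which this $H^2$ classifies, splits. For $p$ odd this is a known feature of $\ZZ_p$-pseudo-reflection groups: by the Clark--Ewing classification together with Notbohm's realisation results such a $W$ is the Weyl group of a connected $p$-compact group, and at odd primes the maximal torus normaliser of a connected $p$-compact group splits. If one prefers to avoid that input, the alternative is to restrict to a Sylow $p$-subgroup of $W$ and analyse $L$ as a module over it; this is precisely where I expect the difficulty to concentrate, since that Sylow subgroup need not be cyclic and need not itself be generated by pseudo-reflections. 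Overall I expect (i) and (ii) to be essentially formal once the set-up is in place, with (iii) --- securing the odd-primary splitting for $\ZZ_p$-reflection groups --- being the main obstacle.
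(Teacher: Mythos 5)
Your parts (i) and (ii) are correct. For (i) you use the classical ``central scalar'' trick: the element $\zeta\cdot\id\in Z(W)$ coming from a nontrivial $\zeta\in D(\F)\subseteq\mu_{p-1}$ acts trivially on $H^1(W,T)$ (conjugation by a central element) while acting on the coefficients as multiplication by $\zeta$, and $\zeta-1\in\ZZ_p^\times$ then kills the whole group. This is a genuinely different, and somewhat more direct, route than the paper's, which instead feeds the central extension $1\to D(\F)\to W\to W/D(\F)\to 1$ into the Lyndon--Hochschild--Serre spectral sequence and invokes the computation $H^j(D(\F),T)=0$ for all $j\geq 0$ (Lemma \ref{lem_HUZp=0_on_natural_module}); both arguments are sound, and yours avoids the spectral sequence in this case. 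Your (ii) is essentially the paper's argument: same spectral sequence, with $E_2^{1,0}=H^1(W/D(\F),T^{D(\F)})=0$ by hypothesis and $E_2^{0,1}=0$ because $H^1(C_2,T)=T/2T=0$.

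The gap is in (iii), exactly where you anticipated it. Two problems with your proposed reduction. First, $H^2(W,L)$ does not classify extensions $1\to T\to N\to W\to 1$: extensions of $W$ by the discrete torus $T$ (with the given action) are classified by $H^2(W,T)\cong H^3(W,L)$ (Lemma \ref{lem_HWL_dim_shift} with $i=3$), whereas the group you actually need, $H^2(W,L)\cong H^1(W,T)$, governs $T$-conjugacy classes of splittings rather than their existence; so the splitting statement for the maximal torus normaliser is aimed at the wrong cohomology group. Second, even in the correct degree, splitting of the one extension realised by a $p$-compact group only shows that a single class vanishes, not that the whole cohomology group does, and the detour through Clark--Ewing/Notbohm realisation is far heavier than what is required. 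The paper closes case (iii) by quoting the purely algebraic theorem of Andersen \cite[Theorem 3.3]{An99}: for $p$ odd and a faithful action of a finite pseudo-reflection group $W\leq\GL_r(\ZZ_p)$ on the lattice $L$, one has $H^2(W,L)=0$; combined with the dimension shift $H^2(W,L)\cong H^1(W,T)$ this finishes the proof. That vanishing theorem (proved by Andersen case-by-case via the classification of $\QQ_p$-reflection groups) is precisely the missing ingredient in your write-up, and it should be cited in that cohomological form rather than deduced from the normaliser splitting statement. Your fallback of restricting to a Sylow $p$-subgroup of $W$ does not repair this on its own: as you note, that subgroup need not be generated by pseudo-reflections, and there is no general vanishing statement available for it.
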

Condition (iii) in the proposition is in fact an easy reinterpretation of work of Andersen, \cite[Theorem 3.3]{An99}.

Next, we study the possible degrees of unstable Adams operation on a given $p$-local compact group $\G$.
We will define a particular type of operations, which we call \emph{special  relative to a collection  of subgroups $\R$} (see Definition \ref{def_special_uao}) for which we will obtain a complete answer. 
These form a subgroup of $\Ad(\G)$ that we denote by $\SpAd(\G;\R)$. 
We will show in Appendix \ref{app_jll_special} that all unstable Adams operations constructed in \cite{JLL} are special.


To study special Adams operations relative to a collection $\R$, we consider the full subcategory $\L^\R$ of the linking system $\L$ whose objects are the elements of $\R$, as an extension, in the appropriate sense, of a category $\Lred^\R$ by a naturally defined functor $\Phi \colon \Lred^\R \to \modl{\ZZ_p}$. Utilising the theory of extension of categories (a variation of which was done by Hoff in \cite{Hoff}),  we show that the category $\L^\R$ corresponds, up to isomorphism of extensions, to a well defined  element $[\L^\R]\in H^2(\Lred^\R;\Phi)$. This also allows us to study self equivalences of $\L^R$ that are compatible with  its structure as an extension of $\Lred^\R$ via their effect on the extension class $[\L^R]$.  Our analysis corresponds nicely to the way in which  a group extension $G$ of a quotient group $K$ by an abelian group $A$ corresponds, up to isomorphism of extensions, to a unique class in $H^2(K;A)$, and maps of extensions are controlled by their effect on the corresponding extension classes. 

For a $p$-local compact group $\G = \SFL$, let $\H^\bullet(\F^c)$ denote the distinguished family of subgroups defined in  \cite[Sec. 3]{BLO3}. 
For every $k \geq 1$ let $\Gamma_k(p)\le \ZZ_p^\times$ denote the subgroup of all elements $\zeta$ such that $\zeta\equiv 1\mod p^k$, and set $\Gamma_0(p) = \ZZ_p^\times$.
Our next theorem that gives existence and uniqueness criteria for special unstable Adams operations, as well as information about the group of special operations and its relationship with the group of all (algebraic) operations.

\begin{thm}\label{thm_spad_in_ad}
Let $\G=\SFL$ be a weakly connected $p$-local compact group.
Suppose that a collection $\R \subseteq \F^c$ has finitely many $\F$-conjugacy classes.
Then the following statements hold.
\begin{enumerate}[\rm (i)]
\item If $(\Psi,\psi) \in \SpAd(\G;\R)$ is of degree $\zeta$ then $\zeta \cdot [\L^\R]=[\L^\R]$ in $H^2(\Lred^\R,\Phi)$. If in addition $\R \supseteq \H^\bullet(\F^c)$, see \cite[Sec. 3]{BLO3}, then there is an exact sequence 
\[
H^1(\Lred^\R,\Phi) \to \SpAd(\G;\R)/\Inn_T(\G) \xto{\deg} \Gamma_m(p) \to 1
\]
where $p^m$ is the order of $[\L^\R]$ in $H^2(\Lred^\R,\Phi)$. \label{thm_spad_in_ad_ii}
In particular, $\SpAd(\G;\R)$ has a normal Sylow $p$-subgroup with $\Inn_T(\G)$ as its maximal discrete $p$-torus.

\item $\SpAd(\G;\R)$ is a normal subgroup of $\Ad(\G)$ of finite index.
The quotient group is solvable of class at most $3$. \label{thm_spad_in_ad_i}
\end{enumerate}
\end{thm}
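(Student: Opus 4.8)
The plan is to obtain part (i) first and then deduce part (ii) from it together with the earlier structural results. For part (i), the strategy is to realize the full linking subcategory $\L^\R$ as an extension of $\Lred^\R$ by the functor $\Phi$, so that it is classified up to isomorphism of extensions by a class $[\L^\R] \in H^2(\Lred^\R,\Phi)$ (this is the "theory of extension of categories" advertised in the introduction, in the spirit of Hoff \cite{Hoff}). An element $(\Psi,\psi) \in \SpAd(\G;\R)$ of degree $\zeta$ is, by the very definition of "special relative to $\R$" (Definition \ref{def_special_uao}), a self-equivalence of $\L^\R$ compatible with this extension structure whose induced map on the base $\Lred^\R$ is the identity and whose induced map on the fibre functor $\Phi$ is multiplication by $\zeta$. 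The naturality of the classifying map for extensions then forces $\zeta\cdot[\L^\R] = [\L^\R]$; this is the direct categorical analogue of the fact that an automorphism of a group extension $1 \to A \to G \to K \to 1$ inducing the identity on $K$ and a given automorphism $\alpha$ of $A$ exists iff $\alpha_*[G] = [G]$ in $H^2(K;A)$.

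For the exact sequence in (i), I would set up the long exact sequence (or the relevant five-term / obstruction exact sequence) coming from this extension-theoretic picture: the group of self-equivalences of $\L^\R$ over the identity of $\Lred^\R$, modulo those homotopic to the identity, maps onto the stabiliser in $\punits$ of $[\L^\R]$ under the scaling action, with kernel a quotient involving $H^1(\Lred^\R,\Phi)$. The scaling action of $\punits$ on $H^2(\Lred^\R,\Phi)$ is through the factor $\ZZ_p$ acting on $\Phi$, so the stabiliser of $[\L^\R]$ is exactly $\{\zeta : \zeta\cdot[\L^\R]=[\L^\R]\} = \unitsmp$ where $p^m$ is the order of $[\L^\R]$; here one uses that $[\L^\R]$ is a torsion class (it lives in a group on which $1+p^m\ZZ_p$ acts trivially). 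The hypothesis $\R \supseteq \H^\bullet(\F^c)$ is what guarantees that $\SpAd(\G;\R)$ computed on $\L^\R$ agrees with genuine Adams operations on $\G$ (so that the geometric realisation $|\L^\R| \simeq |\L|$ after $p$-completion, by \cite[Sec. 3]{BLO3}), and that $\Inn_T(\G)$ is exactly the kernel of the degree map restricted to the "trivial on the base" part — this identifies the $H^1$-term's image with $\SpAd(\G;\R)/\Inn_T(\G)$ in the kernel of $\deg$. Surjectivity onto $\unitsmp$ amounts to existence: for each $\zeta \equiv 1 \bmod p^m$ one must produce an actual special operation of degree $\zeta$, which is the construction side of the extension theory (lifting the scaling automorphism of $\Phi$ to $\L^\R$, possible precisely because $\zeta$ stabilises $[\L^\R]$). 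The "normal Sylow $p$-subgroup with $\Inn_T(\G)$ as maximal discrete $p$-torus" claim then follows: $\SpAd(\G;\R)$ is an extension of $\unitsmp$ (a procyclic pro-$p$ group, hence $\ZZ_p$ up to finite, so $p$-toral-like) by an abelian $p$-group quotient of $H^1(\Lred^\R,\Phi)$, with $\Inn_T(\G) \cong T/Z(\F)$ the evident discrete $p$-torus inside.

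For part (ii), I would argue that $\SpAd(\G;\R) \nsg \Ad(\G)$ by showing any $(\Psi,\psi)\in\Ad(\G)$ conjugates a special operation to a special one — the collection $\R$ and the extension class $[\L^\R]$ are preserved (up to the unavoidable scaling by a degree, which does not affect the defining property of being special) under the action of $\Ad(\G)$, since an arbitrary algebraic operation permutes the $\F$-conjugacy classes in $\R$ and acts compatibly on $H^2(\Lred^\R,\Phi)$. Finite index: combine Theorem \ref{thm_geom_vs_alg_uao}(ii), which presents $\Ad(\G)/\Ad(\G)_0$ as an extension of $\Ad^\geom(\G)$ by the finite group $D(\F)$ with $\Ad^\geom(\G)$ finite-by-finite (Proposition \ref{prop_adgeom_outadfus}: it is $\zpfree$ with a finite normal Sylow $p$-subgroup and $\OutAd_\fus(S)$ finite), so $\Ad(\G)/\Ad(\G)_0$ is finite; meanwhile $\Ad(\G)_0 = \Inn_T(\G) \subseteq \SpAd(\G;\R)$ by (i) and part (iii)-type identifications, and $\SpAd(\G;\R)/\Inn_T(\G)$ surjects onto the finite-index subgroup $\unitsmp \leq \punits$ with kernel the finite group (image of $H^1(\Lred^\R,\Phi)$ — finite because $\R$ has finitely many $\F$-conjugacy classes, so $\Lred^\R$ is essentially a finite category and the coefficient functor has finite values). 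Hence $\SpAd(\G;\R)$ has finite index in $\Ad(\G)$, and the quotient $\Ad(\G)/\SpAd(\G;\R)$ is a subquotient of the solvable-of-class-$\leq 3$ group $\Ad(\G)/\Ad(\G)_0$ (combine Proposition \ref{prop_adgeom_outadfus} and Theorem \ref{thm_geom_vs_alg_uao}), hence solvable of class at most $3$.

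The main obstacle I anticipate is part (i): setting up the extension-of-categories formalism rigorously enough that "special operation of degree $\zeta$" translates cleanly into "$\zeta$ stabilises $[\L^\R]$", and in particular proving the \emph{surjectivity} (existence) half of the exact sequence — one must actually build, for each $\zeta\equiv 1\bmod p^m$, a self-equivalence of $\L^\R$ lifting the $\zeta$-scaling on $\Phi$, which is exactly where the obstruction class $[\L^\R]$ and its order $p^m$ enter, and where one needs the $\H^\bullet(\F^c)$ hypothesis to pass between $\L^\R$ and $\L$. Verifying that the action of $\punits$ on $H^2(\Lred^\R,\Phi)$ is just "scaling through $\ZZ_p$" and that $[\L^\R]$ is torsion with $\unitsmp$ as its stabiliser are the technical crux; everything in part (ii) is then a bookkeeping assembly of results already in hand.
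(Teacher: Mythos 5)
Your part (i) is essentially the paper's own argument: realise $\L^\R$ as an extension of $\Lred^\R$ by $\Phi$ with class $[\L^\R]\in H^2(\Lred^\R,\Phi)$, show a special operation of degree $\zeta$ gives an extension automorphism scaling $\Phi$ by $\zeta$ (whence $\zeta\cdot[\L^\R]=[\L^\R]$), get surjectivity of $\deg$ onto $\Ga_m(p)$ by lifting the $\zeta$-scaling when $\zeta$ stabilises the class, use $H^1(\Lred^\R,\Phi)$ to control automorphisms over the identity, and use $\R\supseteq\H^\bullet(\F^c)$ to extend functors on $\L^\R$ to operations on all of $\L$. One imprecision: a special operation does \emph{not} induce the identity on $\Lred^\R$ (it sends $P$ to $\tau_P P\tau_P^{-1}$), so to conclude that the class is fixed you must in addition show that the induced functor $\overline{\Psi}$ acts trivially on $H^2(\Lred^\R,\Phi)$ (the paper does this by an explicit cochain computation), or first correct $\Psi$ by the natural isomorphism given by the morphisms $\widehat{\tau_P}$ so that it covers $\Id_{\Lred^\R}$; this is repairable and does not change the route.

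The genuine gap is in part (ii), in the finite-index claim. Your argument rests on the assertion that $\OutAd_{\fus}(S)$ is finite, hence that $\Ad^{\geom}(\G)$ and $\Ad(\G)/\Ad(\G)_0$ are finite. This is false in every nontrivial weakly connected case: by part (i) itself (or by \cite{JLL}) there are operations of every degree in $\Ga_m(p)$, so the image of $\OutAd_{\fus}(S)$ under $\deg$ in $\ZZ_p^\times/D(\F)$ is uncountable; Proposition \ref{prop_adgeom_outadfus} only gives \zpfree ness, a finite normal Sylow $p$-subgroup and solvability, not finiteness. Consequently the inclusion $\Inn_T(\G)=\Ad(\G)_0\le\SpAd(\G;\R)$ gives no bound on the index, and your solvability bound also fails (even granting finiteness, a quotient of $\Ad(\G)/\Ad(\G)_0$ would only inherit class $\le 4$ from the sequences you cite, not $\le 3$). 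What is actually needed, and what your sketch omits, is control of the degree-one part: after a snake-lemma reduction using the surjection $\deg\colon\SpAd(\G;\R)\to\Ga_m(p)$ (finite index in $\ZZ_p^\times$) and the finite abelian group $H^1(S/T,T)$, one must compare $\SpAd^{\id_S}(\G;\R)$ with $\Ad^{\id_S}(\G)$. The paper does this by (a) embedding $\Ad^{\id_S}(\G)$ into a finite product of centres $Z(Q)$, indexed by morphisms between representatives of the $T$-classes in $\H^\bullet(\F^c)$, so it is an abelian discrete $p$-toral group, and (b) showing that the $r!$-th power of any such operation (with $r$ the maximal size of a morphism set in $\Lred$ between such representatives) fixes every object and every class in $\Lred^\R$ and is therefore special. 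Without an argument of this kind neither the finiteness of $\Ad(\G)/\SpAd(\G;\R)$ nor the class-$3$ bound follows; note also that the paper never claims (and you should not rely on) finiteness of $H^1(\Lred^\R,\Phi)$ — the finiteness and the solvability class come from assembling three finite abelian layers, not from the $H^1$ term.
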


Let $\deg(\Ad(\G))\le \ZZ_p^\times$ denote the image of the degree map. Let $p^n$ be the order of the extension class of $S$ in $H^2(S/T,T)$. By \cite[Proposition 2.8(i)]{JLL}, $\deg(\Ad(\G)) \leq \Ga_n(p)$. On the other hand,  Theorem \ref{thm_spad_in_ad}  shows that  the restriction of the degree map to $\SpAd(\G;\R))$ is onto $\Ga_m(p)$, where $p^m$ is the order of the extension class of $\L^\R$ in $H^2(\Lred^\R,\Phi)$ and $\R=\H^\bullet(\F^c)$. Thus $\Ga_m(p) \leq \deg(\Ad(\G))$. Putting these observation together we have 
\[
\Ga_m(p) \leq \deg(\Ad(\G)) \leq \Ga_n(p).
\]
This gives a more precise result than the one obtained in \cite[Theorem A]{JLL}.

With Theorem \ref{thm_spad_in_ad} in mind, one could hope na\"ivly that every (geometric) unstable Adams operation is homotopic to one induced by a special operation. Our next result shows that this is not the case. 
In fact, 
the image of $\SpAd(\G;\R)$ in $\Ad^{\geom}(\G)$ is in general a proper subgroup.
Examples are given by connected compact Lie groups as is shown by the next result.

\begin{prop}\label{prop_in_psu2p_not_all_special_intro}
Let $p \geq 3$ be a prime and set $G=\PSU(2p)$.
Let $S \leq G$ be a maximal discrete $p$-toral group and let $\G=(S,\F_S(G),\L_S^c(G))$ be the associated $p$-local compact group.
Let $\R$ denote the collection of all centric radical subgroups of $S$.
Then $\SpAd(\G;\R) \lneq \Ad(\G)$.
In fact, the following composition is not surjective
\[
\SpAd(\G;\R) \xto{ \incl } \Ad(\G) \xto{(\Psi,\psi) \mapsto \pcomp{|\Psi|}} \Ad^\geom(\G).
\]
\end{prop}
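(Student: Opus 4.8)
The plan is to separate $\SpAd(\G;\R)$ from $\Ad(\G)$ by the set of degrees that occur. First record the shape of $\G$ for $G=\PSU(2p)$: the Weyl group is $W=\Aut_\F(T)=\Sigma_{2p}$, acting on the maximal discrete torus $T$ by coordinate permutations; since $2p<p^{2}$ for $p\ge 3$, a Sylow $p$-subgroup of $\Sigma_{2p}$ is $\ZZ/p\times\ZZ/p$, generated by two disjoint $p$-cycles, and the images in $\PSU(2p)$ of the corresponding permutation matrices form a complement, so $S=T\rtimes(\ZZ/p\times\ZZ/p)$. In particular $\G$ is weakly connected and $\R$, the collection of centric radical subgroups, has finitely many $\F$-conjugacy classes, so Theorem~\ref{thm_spad_in_ad} applies; one also checks that $\Aut_\F(T)=\Sigma_{2p}$ contains no non-identity scalar, so $D(\F)=1$.

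Granting the key fact that $[\L^{\R}]\neq 0$ in $H^{2}(\Lred^{\R},\Phi)$ — whose order is therefore $p^{m}$ with $m\ge 1$ — the rest is a degree count. Since $\PSU(2p)$ is a compact connected simple Lie group, the classification of unstable Adams operations for such groups \cite{JMO1} (the $\zeta$-th power map on the maximal torus extends) shows that $B\G=\pcomp{B\PSU(2p)}$ carries an unstable Adams operation of degree $\zeta$ for every $\zeta\in\ZZ_p^\times$; fix one, say $f_{0}$, of degree $\zeta_{0}=2$, noting that $2$ is a $p$-adic unit with $2\not\equiv 1\bmod p$ because $p\ge 3$. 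On the other hand, if $(\Psi,\psi)\in\SpAd(\G;\R)$ has degree $\zeta$, then $\zeta\cdot[\L^{\R}]=[\L^{\R}]$ by Theorem~\ref{thm_spad_in_ad}(i); as $\zeta$ acts on $H^{2}(\Lred^{\R},\Phi)$ by multiplication, this forces $(\zeta-1)[\L^{\R}]=0$, so $\zeta\equiv 1\bmod p^{m}$ and a fortiori $\zeta\equiv 1\bmod p$. Because geometric realisation preserves degrees, every element of the image of $\SpAd(\G;\R)$ in $\Ad^{\geom}(\G)$ has degree $\equiv 1\bmod p$, so $f_{0}$ lies outside this image and the displayed composite $\SpAd(\G;\R)\to\Ad(\G)\to\Ad^{\geom}(\G)$ is not surjective. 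Since $\gamma\colon\Ad(\G)\to\Ad^{\geom}(\G)$ is itself surjective \cite[Proposition~3.5]{JLL}, any preimage of $f_{0}$ lies in $\Ad(\G)\setminus\SpAd(\G;\R)$, whence $\SpAd(\G;\R)\lneq\Ad(\G)$ as well.

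It remains to prove $[\L^{\R}]\neq 0$, which is the heart of the matter and the only step genuinely special to $\PSU(2p)$; this is the main obstacle. The class is not visible on individual objects: on $T$ the categories $\L^{\R}$ and $\Lred^{\R}$ have the same automorphism group $W$, and, using that $N_{\PSU(2p)}(S)$ splits over $T$, one checks that the extension $\Aut_{\L^{\R}}(S)\to\Aut_{\Lred^{\R}}(S)$ splits; thus $[\L^{\R}]$ lies in the part of $H^{2}(\Lred^{\R},\Phi)$ detected by the higher derived limits over the orbit category $\O(\F^{c})$, and genuine gluing is involved. I would prove it non-zero by identifying $[\L^{\R}]$, through the natural comparison maps, with the mod-$p$ reduction of the extension class of the central extension $1\to\ZZ/2p\to\SU(2p)\to\PSU(2p)\to 1$ — equivalently with a generator of $H^{3}(B\PSU(2p);\ZZ)\pcom\cong\ZZ/p$ — which is non-zero precisely because $p\mid 2p$. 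Concretely one can either compare the non-trivial principal bundle $\pcomp{B\ZZ/p}\to\pcomp{B\SU(2p)}\to\pcomp{B\PSU(2p)}$ with the fibrewise splitting that $[\L^{\R}]=0$ would produce, or trace the obstruction cocycle directly through the finitely many centric radical subgroups of $S$, where the non-triviality is carried by the fusion identifying the two $\Sigma_{2p}$-conjugate block subgroups $T\rtimes\langle(1\cdots p)\rangle$ and $T\rtimes\langle(p+1\cdots 2p)\rangle$. Once this identification is in place the proposition follows from the degree count above.
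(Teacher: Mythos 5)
Your overall skeleton is the same as the paper's: exhibit a geometric unstable Adams operation of degree $\not\equiv 1 \bmod p$ coming from Lie theory, and show that special operations relative to $\R$ can only have degree in $\Gamma_1(p)$ because their degree fixes the extension class $[\L^\R]$ (Proposition \ref{prop_special_and_class_of_L}). But the step you yourself flag as ``the heart of the matter'' --- that $[\L^\R]\neq 0$ in $H^2(\Lred^\R,\Phi)$ --- is not proved; it is only sketched via a proposed identification of $[\L^\R]$ with the class of $1\to\ZZ/2p\to\SU(2p)\to\PSU(2p)\to 1$, i.e.\ with a generator of $H^3(B\PSU(2p);\ZZ)\pcom$. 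No comparison map between the functor cohomology $H^2(\Lred^\R,\Phi)$ and that singular cohomology group is constructed, and showing such a map exists and carries $[\L^\R]$ to a nonzero class would be a substantial piece of work in its own right; as written this is an unsupported assertion, so the proof is incomplete. Moreover your guiding claim that the obstruction is ``not visible on individual objects'' is mistaken, and it is what steered you away from the argument that actually works: a splitting functor $\Lred^\R\to\L^\R$ (which $[\L^\R]=0$ would provide, Lemma \ref{lem_zero_class_split_extension}) restricts at each object $P\in\R$ to a group-theoretic section of $P\to P/P_0$. The paper produces an explicit centric radical subgroup for which no such section exists, namely a maximal discrete $p$-toral subgroup $P$ of the image $R$ in $\PSU(2p)$ of $(\Gamma^U_p\times\Gamma^U_p)\cap\SU(2p)$: a commutator computation with the elements $X=\bar A^{(1)}\bar B^{(2)}$, $Y=\bar B^{(1)}\bar A^{(2)}$ gives $[X,Y]=\overline{\diag(\zeta I,\zeta^{-1}I)}\neq 1$, which kills any section of $R\to R/R_0$, and Oliver's results on $p$-stubborn subgroups together with Lemma \ref{lem_closure_semidirect} transfer this to $P\in\R$ (Lemma \ref{lem_psi_not_special}(\ref{cond_2_psi_not_special})). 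Checking only $T$ and $S$ (which do split) does not rule this out.

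Two secondary points. First, your existence statement for the geometric operation is too strong as cited: \cite{JMO1} does not give unstable Adams operations of every degree $\zeta\in\ZZ_p^\times$, and in particular degree $2$ is problematic since $2$ divides $|W|=|\Sigma_{2p}|$; the paper instead uses Dirichlet's theorem to pick a prime $k$ with $(k,|W|)=1$ and $k\not\equiv 1\bmod p$, for which \cite[Theorem 2]{JMO1} applies. Second, you pass silently from ``$f_0$ has cohomological degree $k$'' to ``the underlying Adams automorphism of the corresponding algebraic lift has degree $k$''; the paper has to argue this (via $H^*_{\QQ_p}(B\G)\cong H^*_{\QQ_p}(BT)^{W}$, the polynomial generators of $H^*(B\SU(2p);\QQ)$ in degree $2p$, and the absence of $p$-th roots of unity in $\QQ_p$), and some such argument is needed, though it is routine compared with the main gap. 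Your observation that $D(\F)=1$ here (no nontrivial scalar in the permutation action) is correct and does simplify the bookkeeping modulo $D(\F)$, but it does not repair the missing non-vanishing of $[\L^\R]$.
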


We only have a crude bound on  the index of $\SpAd(\G;\R)$ in $\Ad(\G)$ (see Remark \ref{rem:finite index of spad}), and we expect that only very rarely all geometric unstable Adams operations are induced by special ones.

Finally, we show that unstable Adams operations on weakly connected $p$-local compact groups are ``stably" determined by their degree in the sense that there exists some $n>0$, which only depends on $\G$, such that the $n$-th powers of any two geometric unstable Adams operations of the same degree are homotopic.

\begin{prop}\label{stable}
Let $\G=\SFL$ be a weakly connected $p$-local compact group.
Let $f_1, f_2 \in \Ad^{\geom}(\G)$ be such that $\deg(f_1)=\deg(f_2)$ in $\ZZ_p^\times/\D(\F)$.
Then $(f_1)^{mk}=(f_2)^{mk}$ where $1 \leq k \leq |\higherlim{\O(\F^c)}{1}\mathcal{Z}|$ and where $1 \leq m \leq |H^1(S/T,T)|$.
In fact, $m$ is a divisor of $(p-1)p^r$ for some $r \geq 0$.
\end{prop}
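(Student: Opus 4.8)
The plan is to run the argument along the short exact sequence of Proposition~\ref{prop_adgeom_outadfus},
\[
1\longrightarrow L\longrightarrow \Ad^{\geom}(\G)\xto{\ q\ }\OutAd_{\fus}(S)\longrightarrow 1,\qquad L:=\higherlim{\mathcal{O}^c(\F)}{1}\mathcal Z,
\]
where $L$ is a finite abelian $p$-group and the degree map factors through $q$. Thus the hypothesis $\deg(f_1)=\deg(f_2)$ in $\ZZ_p^\times/D(\F)$ says precisely that $q(f_1)$ and $q(f_2)$ have the same image under $\deg\colon\OutAd_{\fus}(S)\to\ZZ_p^\times/D(\F)$. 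I would first produce $m$, with $1\le m\le|H^1(S/T,T)|$ and $m$ a divisor of $(p-1)p^r$ for some $r$, such that $q(f_1)^m=q(f_2)^m$ in $\OutAd_{\fus}(S)$; then $\lambda:=f_1^mf_2^{-m}$ lies in $L$, and I would find $1\le k\le|L|$ with $f_1^{mk}=f_2^{mk}$. Throughout I use the identity $(ab)^n=\bigl(\prod_{i=0}^{n-1}b^iab^{-i}\bigr)b^n$, valid in any group.

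For the first step, put $N=\ker\!\bigl(\deg\colon\OutAd_{\fus}(S)\to\ZZ_p^\times/D(\F)\bigr)$, which contains $g:=q(f_1)q(f_2)^{-1}$. As in \cite{JLL}, a normal Adams automorphism of $S$ of degree $1$ has the form $\varphi_d\colon s\mapsto s\cdot d(sT)$ for a $1$-cocycle $d\in Z^1(S/T,T)$ — it is the identity on $T$ and on $S/T$ — with composition corresponding to addition of cocycles and conjugation by torus elements to $1$-coboundaries; consequently $N$ is an abelian $p$-group and a subquotient of $H^1(S/T,T)$, so $\exp(N)=p^e$ with $p^e\le|H^1(S/T,T)|$. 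The computation driving the step is that conjugating $\varphi_d$ by a normal Adams automorphism of degree $\zeta$ produces $\varphi_{\zeta d}$, where $\zeta d$ is the cocycle scaled through the $\ZZ_p$-module structure of $T$. Writing $g=[\varphi_d]$ (legitimate since $\deg(g)\in D(\F)$) and $\zeta=\deg(f_2)$, the identity above gives $q(f_1)^mq(f_2)^{-m}=\bigl[\varphi_{(\sum_{i=0}^{m-1}\zeta^i)d}\bigr]$, so it suffices to pick $m$ with $p^e$ dividing $\sum_{i=0}^{m-1}\zeta^i$ in $\ZZ_p$. A short $p$-adic valuation argument (lifting-the-exponent, with $p$ odd) shows one may take $m=p^e$ when $\zeta\equiv 1\pmod p$ and $m=d_0p^{e-1}$, with $d_0\mid p-1$ the order of $\zeta$ modulo $p$, when $\zeta\not\equiv 1\pmod p$; in both cases $m$ divides $(p-1)p^e$, one has $m\le p^e\le|H^1(S/T,T)|$, and $\zeta^m\equiv 1\pmod p$. (The prime $p=2$, where $\ZZ_2^\times$ has $2$-torsion, is handled the same way with an extra power of $2$ that the bounds absorb.)

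For the second step, $q(f_1^m)=q(f_2^m)$, so $\lambda:=f_1^mf_2^{-m}\in L$. As $L$ is abelian it acts trivially on itself, whence the conjugation action of $\Ad^{\geom}(\G)$ on $L$ descends to $\OutAd_{\fus}(S)$; in particular $f_1^m$ and $f_2^m$ induce a common automorphism $\tau$ of $L$, and the identity above gives, in $L$-coset form, $(f_1^m)^k=\bigl(\sum_{i=0}^{k-1}\tau^i(\lambda)\bigr)(f_2^m)^k$. Were $\tau=\mathrm{id}$ this would read $(k\lambda)(f_2^m)^k$, and taking $k$ to be the order of $\lambda$ — which divides $\exp(L)\le|L|$ — would finish the proof. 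The step I expect to carry the weight of the argument is exactly this: showing that, after the $m$-th power of Step~1, the residual discrepancy $\lambda$ is annihilated by a power $\le|L|$; the natural route is to prove $\tau=\mathrm{id}$. For that one must analyse how an Adams operation of degree $\zeta$ acts on $L$: it acts through the endomorphism of the coefficient system $\mathcal Z$ which on each $\mathcal Z(P)=Z(P)$ scales the maximal-torus part by $\zeta$ and fixes the component part, so the action is governed by the degree, and one must pin down the $p$-adic exponent through which this endomorphism acts on the higher limit so that the congruence $\zeta^m\equiv 1\pmod p$ built into the choice of $m$ forces $f_2^m$ to act trivially on $L$. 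Everything else is routine manipulation of the two extensions together with the commutator identity $(ab)^n=\bigl(\prod_{i=0}^{n-1}b^iab^{-i}\bigr)b^n$.
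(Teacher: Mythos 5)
Your overall two-step plan along the extension of Proposition \ref{prop_adgeom_outadfus} is the same as the paper's, and your Step 1 (producing $m\le |H^1(S/T,T)|$ dividing $(p-1)p^r$ with $q(f_1)^m=q(f_2)^m$, via the cocycle description of degree-one normal Adams automorphisms and a valuation estimate on $\sum_{i=0}^{m-1}\zeta^i$) is a workable, if heavier, substitute for the paper's route, which obtains $m$ from a purely combinatorial lemma and the divisor statement from Lemma \ref{L:power Adams aut not 1} after replacing $\psi_i$ by $\psi_i^{p-1}$. The genuine gap is in your Step 2. You reduce the existence of $k$ to the claim that $f_2^m$ acts trivially by conjugation on $L=\higherlim{\O(\F^c)}{1}\Z$, and you do not prove it; you only suggest that the congruence $\zeta^m\equiv 1\pmod p$ should force it. That suggestion does not stand as stated: $L$ is a finite abelian $p$-group (Proposition \ref{P:finite limits Z}) whose exponent may exceed $p$, so a congruence modulo $p$ cannot by itself trivialise a ``multiplication by $\zeta^m$''--type action — you would need a congruence modulo the exponent of $L$, which your choice of $m$ does not supply. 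Moreover, you have not established that the action on $\higherlim{\O(\F^c)}{1}\Z$ has that form at all: the isomorphism $\higherlim{\O(\F^c)}{1}\Z\cong\higherlim{\O(\F^c)}{1}\Z/\Z_0$ used in Proposition \ref{P:finite limits Z} shows the torus part of $\Z$ contributes nothing, and the effect on the quotient functor also involves the permutation of objects, so ``the action is governed by the degree'' is an assertion, not an argument.

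The missing idea is that no control of this action is needed. The paper's Lemma \ref{L:N finite normal in G} states: if $N$ is a finite normal subgroup of a group $G$ and $g,h$ lie in the same coset of $N$, then $g^n=h^n$ for some $1\le n\le |N|$; the proof is a pigeonhole argument on the elements $x_n\in N$ defined by $g^n=h^nx_n$, and uses nothing about how $h$ acts on $N$. Applied with $G=\Ad^{\geom}(\G)$, $N=L$, $g=f_1^m$, $h=f_2^m$, it gives $k\le |L|$ with $f_1^{mk}=f_2^{mk}$ whether or not $\tau=\mathrm{id}$. (Equivalently, in your notation: among the partial sums $s_k=\sum_{i=0}^{k-1}\tau^i(\lambda)\in L$, two of $s_0,\dots,s_{|L|}$ coincide, and $s_{k_2}-s_{k_1}=\tau^{k_1}(s_{k_2-k_1})$, so $s_{k_2-k_1}=0$ for some $k_2-k_1\le |L|$.) The same lemma is what the paper uses in its first step, with $N$ the image of $\Ad^{\deg=1}_{\fus}(S)/\Aut_T(S)$ in $\OutAd_{\fus}(S)$, finite by \cite[Proposition 2.8(iii)]{JLL} and Lemma \ref{L:coholology G with T}. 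Until you either prove your triviality claim (with the correct modulus and a justified description of the action) or replace it by such a pigeonhole argument, your proof of the existence of $k$ is incomplete.
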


We end with a glossary of notation that will be used throughout the paper.
\begin{itemize}
\item
$\Ad(S) \leq \Aut(S)$ is the group of all the \emph{normal} Adams automorphisms of $S$ (Definition \ref{def_adams_aut_of_S}).

\item
If $\F$ is a fusion system over $S$, let $\Ad_{\fus}(S)$ denote the group of fusion preserving Adams automorphisms of $S$, namely 
$\Ad_{\fus}(S) = \Aut_{\fus}(S) \cap \Ad(S)$.  

\item
$\OutAd_{\fus}(S)$ is the image of $\Ad_{\fus}(S)$ in $\Out_{\fus}(S)=\Aut_{\fus}(S)/\Aut_\F(S)$ 
(Definition \ref{def_adfuss}).

\item
$\Ad^{\geom}(\G) \leq \Out(B\G)$ is the group of the \emph{homotopy classes} of  \emph{geometric}  unstable Adams operations on $B\G$ (Definition \ref{def_geometric_Adams_operation}). 

\item
$\Ad(\G)$ is the group of \emph{algebraic} unstable Adams operation of $\G$ (Definition \ref{def_algebraic_unstable_adams_operation}).

\item
$\Inn_T(\G) \leq \Ad(\G)$ for the subgroup of the Adams operations induced by conjugation in $\L$ by some $\hat{t} \in \Aut_\L(S)$ where $t \in T$ (Definition \ref{def_inner}).
\end{itemize}

The paper is organised as follows. In Section \ref{sec_p-local_compact_groups} we recall some definitions and useful facts on $p$-local compact groups. In Section \ref{sec_geom_Ad} we discuss geometric unstable operations and prove  Proposition \ref{prop_adgeom_outadfus} (Proposition \ref{prop_adgeom_outadfus_in_sec}).
Section \ref{Alg_Ad} is dedicated to the proof of Theorem \ref{thm_geom_vs_alg_uao} (Theorem \ref{thm_geom_vs_alg_uao_in_sec}), and in Section \ref{sec_degree_uniqueness} we  prove Propositions \ref{prop_degree_determines_operation}, \ref{prop_H1W_vanish_intro} and \ref{stable} (Propositions \ref{prop_degree_determines_operation_in_sec}, \ref{prop_H1W_vanish} and \ref{sec_stable}, respectively). In Section \ref{sec_extensions} we recall some basic theory of extensions of categories and introduce the category $\Lred$. Sections \ref{specials} is dedicated to  special unstable Adams operations, and the proof of Theorem \ref{thm_spad_in_ad} (Theorem \ref{thm_spad_in_ad_in_sec}). Finally in Section \ref{sec_not_all_special} we prove Proposition \ref{prop_in_psu2p_not_all_special_intro} (Proposition \ref{prop_in_psu2p_not_all_special}). We end with two appendices. Appendix \ref{appendixA} contains proofs of statements from Section \ref{sec_extensions}, while Appendix \ref{app_jll_special} is dedicated to  the observation that the operations constructed in \cite{JLL} are all special.

The authors are grateful to the referee for a thorough reading and useful comments.

\section{Recollections of $p$-local compact groups}
\label{sec_p-local_compact_groups}

This section briefly introduces $p$-local compact groups and collect some results about them that we will use.
The reference is \cite{BLO3}.

Fix a prime $p$.
Recall that $\ZZ/p^\infty \defeq \cup_{n \geq 1} \ZZ/p^n$. 
This is a divisible group.
A \emph{discrete $p$-torus} of rank $n$ is a group $T$ isomorphic to $\bigoplus^n \ZZ/p^\infty$.
A group $P$ is called \emph{discrete $p$-toral} if it contains a discrete $p$-torus $T$ as a normal subgroup and $P/T$ is a finite $p$-group.
In this case $T$ is characteristic in $P$ and we write $T=P_0$ and call it the \emph{identity component} of $P$.
Every sub-quotient of $P$ is a discrete $p$-toral group by \cite[Lemma 1.3]{BLO3}.
Also, an extension of discrete $p$-toral groups is a discrete $p$-toral group.
The \emph{order} of a discrete $p$-toral group $P$ is the pair $(\rk(P_0), |P/P_0|)$.
These pairs are ordered lexicographically.

A fusion system $\F$ over a discrete $p$-toral group $S$ is a category whose objects are all the subgroups of $S$.
Morphisms between $P,Q \leq S$ are group monomorphisms and $\Hom_\F(P,Q)$ always contains $\Hom_S(P,Q)$, namely the homomorphisms $P \to Q$ induced by conjugation by elements of $S$.
In addition every morphism in $\F$ can be factored as an isomorphism in $\F$ followed by an inclusion homomorphism.
See \cite[Definition 2.1]{BLO3}

We say that $P,Q \leq S$ are $\F$-conjugate if they  are isomorphic as objects of $\F$.
A subgroup $P \leq S$ is called fully centralised (resp. fully normalised) if for every $P' \leq S$ which is $\F$-conjugate to $P$, the order of $C_S(P')$ (resp. $N_S(P')$) is at most the order of $C_S(P)$ (resp. $N_S(P)$).

A fusion system $\F$ over $S$ is called \emph{saturated} if 
\begin{enumerate}[(I)]
\item Every fully normalised $P \leq S$ is fully centralised, $\Out_\F(P)\defeq\Aut_\F(P)/\Inn(P)$ is finite and contains $\Out_S(P)$ as a Sylow $p$-subgroup.

\item
If $\vp \in \Hom_\F(P,S)$ and if $\vp(P)$ is fully centralised, then $\vp$ extends to $\psi \in \Hom_\F(N_\vp,S)$ where
\[
N_\vp = \{g \in N_S(P) : \vp \circ c_g \circ \vp^{-1} \in \Aut_S(\vp(P))\}.
\]

\item
If $P_1 \leq P_2 \leq \dots$ is an increasing sequence of subgroups of $S$ and $\vp \in \Hom(\bigcup_n P_n,S)$ is a homomorphism such that $\vp|_{P_n} \in \Hom_\F(P_n,S)$ then $\vp \in \Hom_\F(\bigcup_n P_n,S)$.
\end{enumerate}

A subgroup $P$ of $S$ is called \emph{$\F$-centric} if $C_S(P')=Z(P')$ for every $P'$ which is $\F$-conjugate to $P$.
It is called \emph{$\F$-radical} if $O_p(\Out_\F(P))=1$ where $O_p(K)$ denotes the  largest normal $p$-subgroup of a finite group $K$.

If $P \leq S$ is $\F$-centric then it must contain $Z(S)$.
The centre of $\F$ is defined by:
\[
Z(\F) = \{ x \in Z(S) : \vp(x)=x \text{ for every } P \in \F^c \text{ and every } \vp \in \Hom_\F(P,S)\}.
\]
In light of Alperin's fusion theorem \cite[Theorem 3.6]{BLO3} this is the same as the subgroup of $x \in Z(S)$ such that $\vp(x)=x$ for any $P \leq S$ which contains $x$ and every $\vp \in \Hom_\F(P,S)$.

A \emph{linking system} $\L$ associated to a saturated fusion system $\F$ over $S$ is a small category whose objects are the $\F$-centric subgroups of $S$.
It is equipped with a surjective functor $\pi \colon \L \to \F^c$ which is the identity on objects and with monomorphisms of groups $\de_P\colon P \to \Aut_\L(P)$, one for each $P \in \F^c$ such that the following hold.
\begin{itemize}
\item[(A)]
For each $P,Q \in \L$ the group $Z(P)$ acts freely on $\L(P,Q)$ via $\de_P\colon P \to \Aut_\L(P)$ and pre-composition of morphisms, and $\pi \colon \L(P,Q) \to \Hom_\F(P,Q)$ is the quotient map by this action.

\item[(B)]
For any $P \in \L$ and $g \in P$, $\pi(\de_P(g))=c_g \in \Aut_\F(P)$.

\item[(C)]
For any $\vp \in \L(P,Q)$ and $g \in P$, the following diagram commutes in $\L$
\[
\xymatrix{
P \ar[d]_{\de_P(g)} \ar[r]^{\vp} &
Q \ar[d]^{\de_Q(\pi(\vp)(g))}
\\
P \ar[r]_{\vp} & Q.
}
\]
\end{itemize}
The morphisms $\de_P(g)$ will be denoted $\widehat{g}$.

A \emph{$p$-local compact group} is a triple $\G=\SFL$ where $\F$ is a saturated fusion system over $S$ and $\L$ is an associated centric linking system.
The \emph{classifying space} of $\G$ denoted $B\G$ is by definition $\pcomp{|\L|}$.

The orbit category of $\F$ denoted $\O(\F)$ has the same objects as $\F$ and morphism sets  $\Hom_{\O(\F)}(P,Q)=\Hom_\F(P,Q)/\Inn(Q)$.
The full subcategory on the $\F$-centric subgroups is denoted $\O(\F^c)$.
In \cite[proof of Theorem 7.1]{BLO3} the functor $\Z \colon \O(\F^c)^\op \to \modl{\ZZ_{(p)}}$ was  defined:
\[
\Z \colon P \mapsto Z(P)=C_S(P).
\]
This functor is a key tool in the study of $B\G$ and its self equivalences.

We end this section with a few remarks and observations. 

\begin{remark}\label{R:extend delta}
 It is shown in \cite[Proposition 1.5]{JLL} that the monomorphisms $\de_P$ can be extended to monomorphisms $N_S(P) \xto{g \mapsto \widehat{g}} \Aut_\L(P)$ which satisfy (B), and more generally to injective functions $\de_{P,Q} \colon N_S(P,Q) \xto{g \mapsto \widehat{g}} \L(P,Q)$ which satisfy (B).
Moreover  $\widehat{g} \circ \widehat{h}=\widehat{gh}$ whenever $h \in N_S(P,Q)$ and $g \in N_S(Q,R)$.
Thus, the identity element $e \in S$ give rise to morphisms $\iota_P^Q \in \L(P,Q)$ for any $P \leq Q$ where $\iota_P^Q=\widehat{e}$.
\end{remark}

\begin{remark}\label{R:mono-epi}
By \cite[Corollary 1.8]{JLL} the category $\L$ has the property that every morphism in $\L$ is both a monomorphism and an epimorphism (but in general not an isomorphism).
This allows us, by \cite[Lemma 1.7(i)]{JLL}, to define ``restrictions'': if $\vp \in \L(P,Q)$ and $P' \leq P$ and $Q' \leq Q$ are $\F$-centric subgroups such that $\pi(\vp)(P') \leq Q'$ then there exists a unique morphism $\psi \in \L(P',Q')$ such that $\vp  \circ \iota_{P'}^P = \iota_{Q'}^Q \circ \psi$.
We write $\vp|_{P'}^{Q'}$ for this unique morphism and call it the restriction of $\vp$.
\end{remark}

\begin{remark}\label{R:bullet collection}
In \cite[Section 3]{BLO3} a collection of subgroups of $S$ denoted $\H^\bullet(\F)$ was constructed.
We will not recall the precise details of its construction here.
The full subcategory of $\F$ on this set of objects is denoted $\F^\bullet$.
There is a functor $\F \to \F^\bullet$ where $P \leq P^\bullet$ for any $P \leq S$.
This functor is left adjoint to the inclusion $\F^\bullet \subseteq \F$.
In addition $\H^\bullet(\F)$ contains the collection $\F^{cr}$ of all $P \leq S$ which are both $\F$-centric and $\F$-radical. The intersection $\H^\bullet(\F)\cap \F^c$ will be denoted $\H^\bullet(\F^c)$.
The functor $P \mapsto P^\bullet$ ``lifts'' to a functor $\L \to \L^\bullet$ such that $\widehat{g}^\bullet = \widehat{g}$ for any $g \in N_S(P,Q)$.
See \cite[Proposition 1.12]{JLL}. 
\end{remark}

\begin{prop}\label{P:finite limits Z}
For every $k \geq 1$ the groups $\higherlim{\O(\F^c)}{k}\mathcal{Z} $ are finite $p$-groups.
\end{prop}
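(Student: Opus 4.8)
The plan is to reduce the statement to a known finiteness result for higher limits over the orbit category of a saturated fusion system by a standard filtration argument. Recall that for a $p$-local compact group $\G=\SFL$ the category $\F^c$ has only finitely many $\F$-conjugacy classes of objects that are not contained in a strictly larger one, but infinitely many conjugacy classes overall because of the discrete $p$-torus; nevertheless the functor $\Z\colon \O(\F^c)^\op \to \modl{\ZZ_{(p)}}$ is only supported, up to the relevant homological information, on the collection $\H^\bullet(\F^c)$, which has finitely many $\F$-conjugacy classes (Remark \ref{R:bullet collection}). So the first step is to invoke the comparison between higher limits over $\O(\F^c)$ and over the finite-type subcategory $\O(\F^{\bullet c})$: the adjunction $P\mapsto P^\bullet$ of Remark \ref{R:bullet collection} (see \cite[Proposition 1.12]{JLL} and the corresponding statement in \cite[Section 3]{BLO3}) induces an isomorphism
\[
\higherlim{\O(\F^c)}{k}\Z \;\cong\; \higherlim{\O(\F^{\bullet c})}{k}\Z
\]
for all $k$, because $\Z$ sends the adjunction unit to an isomorphism ($Z(P)=Z(P^\bullet)$, as $P$ and $P^\bullet$ have the same centraliser in $S$ for $P$ centric). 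This replaces the ambient category by one with finitely many isomorphism classes of objects.

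The second step is the usual isotropy/Quillen-style spectral sequence or, equivalently, the filtration of the functor $\Z$ by $\F$-conjugacy classes of subgroups, ordered by the order $(\rk(P_0),|P/P_0|)$. On each filtration quotient the contribution is of the form $\Lambda^*(\Out_\F(P); Z(P))$, the higher limits of an atomic functor concentrated at the conjugacy class of $P$, which by Oliver's computation (as used throughout \cite{BLO3}) equals the (graded) group cohomology contribution of the finite group $\Out_\F(P)$ with coefficients in the finite-rank $\ZZ_{(p)}$-module $Z(P)$. Since $\Out_\F(P)$ is a finite group (saturation axiom (I)) and since, for $k\ge 1$, $H^k(\Out_\F(P); M)$ is a finite $p$-group whenever $M$ is a $\ZZ_{(p)}$-module of finite type (killed by the order of the group, and finitely generated over $\ZZ_{(p)}$ hence with finite $H^k$ for $k\ge 1$ even when $M$ is divisible like $Z(P)$ — here one uses that $\Lambda^k$ vanishes for $k\ge 1$ unless $P$ is $\F$-radical, and on radical subgroups $Z(P)$ has bounded torsion), each filtration quotient contributes a finite $p$-group in degrees $k\ge 1$. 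As there are only finitely many filtration steps after passing to $\F^{\bullet c}$, an induction up the filtration using the long exact sequences shows $\higherlim{\O(\F^c)}{k}\Z$ is a finite $p$-group for $k\ge 1$.

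The main obstacle is the bookkeeping in the second step: one must be careful that the functor $\Z$, although taking divisible values $Z(P)\supseteq$ (a piece of the torus), has only \emph{finite} higher limits. The clean way around this is precisely the reduction to $\F^{\bullet c}$ combined with the vanishing of $\Lambda^k(\Out_\F(P);Z(P))$ for $k\ge 1$ on non-radical $P$ (a $p$-group-cohomology argument: a normal $p$-subgroup of $\Out_\F(P)$ acting trivially forces the higher limits to vanish), so that only the finitely many radical centric classes contribute, and on those $Z(P)$ has bounded exponent, making $H^k(\Out_\F(P);Z(P))$ manifestly a finite $p$-group. Assembling these via the filtration spectral sequence then gives the result; the statement for general $k\ge 1$ follows at once since the spectral sequence is concentrated in finitely many columns.
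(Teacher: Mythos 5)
Your overall strategy (pass to the finite subcategory $\O(\F^{c\bullet})$ and then argue finiteness class by class) is in the spirit of the paper's proof, but two of your key claims are unjustified, and the second is false, so the argument does not go through as written. First, your reduction $\higherlim{\O(\F^c)}{k}\Z \cong \higherlim{\O(\F^{c\bullet})}{k}\Z$ rests on the assertion that $Z(P)=Z(P^\bullet)$ for $\F$-centric $P$. Since $P^\bullet$ is obtained from $P$ by adjoining a subtorus of $T$, one only has $Z(P^\bullet)=Z(P)\cap C_S(P^\bullet)\subseteq Z(P)$, and an element of $Z(P)$ lying outside $T$ has no reason to centralise the adjoined torus: the bullet construction is designed so that morphisms extend, not so that centralisers are preserved. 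This is exactly why the paper does not apply the comparison result \cite[Proposition 5.2]{BLO3} to $\Z$ itself, but rather to the quotient functor $\Z/\Z_0$, where $\Z_0(P)=Z(P)_0$, after first disposing of the divisible part by means of \cite[Proposition 5.8]{BLO3} and a long exact sequence.

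Second, and more seriously, your finiteness mechanism for the graded pieces fails. Those pieces are Oliver's groups $\Lambda^k(\Out_\F(P);Z(P))$, which are not the group cohomology groups $H^k(\Out_\F(P);Z(P))$, so an appeal to finiteness of group cohomology with discrete $p$-toral coefficients (Lemma \ref{L:coholology G with T}) does not apply to them; and your claim that $Z(P)$ has bounded torsion when $P$ is $\F$-radical is false: $S$ itself is always $\F$-centric and $\F$-radical (axiom (I) gives $\Out_S(S)=1\in\Syl_p(\Out_\F(S))$), and $Z(S)$ contains a nontrivial torus in many examples (for instance the $p$-local compact group of $U(n)$; in the extreme case $S=T$ the only centric subgroup is $T$, it is radical, and $Z(T)=T$). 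For divisible coefficients one only gets $\Lambda^k(\Gamma;M)=\varinjlim_n\Lambda^k\bigl(\Gamma;M[p^n]\bigr)$, a colimit of finite $p$-groups, which need not be finite, and no transfer or exponent bound is available. Thus the crucial point --- that the torus part of $\Z$ contributes nothing in positive degrees --- is never established in your proposal; that is precisely the content of the vanishing $\higherlim{\O(\F^c)}{k}\Z_0=0$ for $k\geq 1$ used in the paper's proof via \cite[Proposition 5.8]{BLO3}, and some input of this kind is indispensable before the finiteness of $\Z/\Z_0$ over the finite category $\O(\F^{c\bullet})$ can be brought to bear.
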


\begin{proof}
Let $\Z_0$ be the sub-functor of $\Z$ such that $\Z_0(P)=Z(P)_0$ (the identity component of $Z(P)$).
By \cite[Proposition 5.8]{BLO3} and a long exact sequence argument, there is an isomorphism $\omega_1 \colon \higherlim{\O(\F^c)}{k}\mathcal{Z} \to \higherlim{\O(\F^c)}{k}\mathcal{Z}/Z_0$.
By \cite[Proposition 5.2]{BLO3} $\higherlim{\O(\F^c)}{*}\mathcal{Z}/Z_0 \cong \higherlim{\O(\F^{c\bullet})}{*}\mathcal{Z}/Z_0$ and $\O(\F^{c\bullet})$ is equivalent to a finite category by \cite[Lemmas 2.5 and 3.2(a)]{BLO3}.
Since $\Z/\Z_0$ has values in finite abelian $p$-groups, it follows that $\higherlim{\O(\F^{c\bullet})}{*}\mathcal{Z}/Z_0$ are finite abelian $p$-groups, hence so are $\higherlim{\O(\F^c)}{k}\mathcal{Z}$ for $k \geq 1$.
\end{proof}

In fact, all the groups in Proposition \ref{P:finite limits Z} vanish except when $p=2$ and $k=1$ by \cite[Theorem A]{LL}.

\section{Geometric unstable Adams operations}\label{sec_geom_Ad}
In this section we recall the  concept of geometric unstable Adams operations and make some basic observations, ending with the proof of Proposition \ref{prop_adgeom_outadfus}.

\subsection{Groups with maximal discrete $p$-tori}

\begin{defn}\label{def_zpfree}
A group $G$ is called \emph{$\ZZ/p^\infty$-free} if it contains no subgroup isomorphic to $\ZZ/p^\infty$, or equivalently if every homomorphism $\ZZ/p^\infty \to G$ is trivial.
We say that $T \leq G$ is a maximal discrete $p$-torus in $G$ if $T \cong \bigoplus^n \ZZ/p^\infty$ and any other subgroup of $G$ isomorphic to a discrete $p$-torus is conjugate to a subgroup of $T$.
We say that $S \leq G$ is a {\em Sylow $p$-subgroup} if it is discrete $p$-toral, and every discrete $p$-toral subgroup of $G$ is conjugate to a subgroup of $S$.
\end{defn}

\begin{lem}\label{lem_extn_zpfree}
Let $1 \to K \to G \to H \to 1$ be an exact sequence of group.
\begin{enumerate}[(i)]
\item\label{lem_extn_zpfree_i}
If $H$ and $K$ are \zpfree~ then $G$ is \zpfree.

\item\label{lem_extn_zpfree_ii}
If $G$ is \zpfree~ and $K$ is finite then $H$ is \zpfree.
\end{enumerate}
\end{lem}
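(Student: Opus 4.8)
The plan is to argue directly with homomorphisms out of $\ZZ/p^\infty$, using the equivalent formulation in Definition \ref{def_zpfree}: a group is $\ZZ/p^\infty$-free iff every homomorphism $\ZZ/p^\infty \to G$ is trivial. The key structural fact I would exploit is that $\ZZ/p^\infty$ is divisible, hence so is every quotient of it, and its only proper quotients are again copies of $\ZZ/p^\infty$ (or the trivial group); in particular $\ZZ/p^\infty$ has no nontrivial finite quotient and no nontrivial finite subgroup.

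For part \eqref{lem_extn_zpfree_i}, suppose $f \colon \ZZ/p^\infty \to G$ is a homomorphism. Composing with $G \to H$ gives a homomorphism $\ZZ/p^\infty \to H$, which is trivial since $H$ is $\ZZ/p^\infty$-free; hence $f$ factors through $K$. But $K$ is $\ZZ/p^\infty$-free, so $f$ is trivial. Therefore $G$ is $\ZZ/p^\infty$-free.

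For part \eqref{lem_extn_zpfree_ii}, suppose $G$ is $\ZZ/p^\infty$-free and $K$ is finite; let $g \colon \ZZ/p^\infty \to H$ be a homomorphism. Its image $g(\ZZ/p^\infty)$ is a divisible subgroup of $H$ (a quotient of a divisible group). I would pull this back: let $G' \le G$ be the preimage of $g(\ZZ/p^\infty)$, so that $1 \to K \to G' \to g(\ZZ/p^\infty) \to 1$ is exact with $K$ finite. It now suffices to show that a divisible group $D \defeq g(\ZZ/p^\infty)$ which is the quotient of a $\ZZ/p^\infty$-free group $G'$ by a finite normal subgroup must itself be trivial; combined with the fact that the only quotients of $\ZZ/p^\infty$ are trivial or $\ZZ/p^\infty$, this forces $g$ to be trivial. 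To see $D$ is trivial: if $D \ne 1$ then $D \cong \ZZ/p^\infty$, and since $K$ is finite of order, say, $p^a m$, every element of $D$ is a $p^a$-th power (divisibility) of an element that we may lift to $G'$; more efficiently, one checks that the finite extension $G'$ of the divisible group $\ZZ/p^\infty$ still contains a copy of $\ZZ/p^\infty$. Indeed, by divisibility $D$ has a subgroup isomorphic to $\ZZ/p^\infty$ lying entirely in the image of the $|K|$-power map, and the preimage of a generating system can be chosen compatibly; alternatively, note $G'$ has a normal subgroup $K$ with $G'/K \cong \ZZ/p^\infty$, and since $\ZZ/p^\infty$ is the union of its finite subgroups $\ZZ/p^n$, the preimages $G'_n \le G'$ are finite groups with $|G'_n| = |K|\cdot p^n \to \infty$; a standard argument (e.g. taking a Sylow $p$-subgroup of each $G'_n$ compatibly, or passing to the torsion subgroup) produces an infinite locally finite $p$-subgroup of $G'$ whose only divisible part or whose union must contain $\ZZ/p^\infty$, contradicting $\ZZ/p^\infty$-freeness of $G'$.

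The main obstacle is the last step of \eqref{lem_extn_zpfree_ii}: cleanly producing a $\ZZ/p^\infty$ inside $G'$ from a finite-by-$\ZZ/p^\infty$ extension. The cleanest route I would take is: replace $G'$ by the preimage $G'_p$ of $D \cong \ZZ/p^\infty$ together with only the $p$-part issues — actually simpler, observe that $G'$ acts by conjugation on the finite group $K$, so the kernel $C$ of $G' \to \Aut(K)$ has finite index in $G'$, hence $C/(C\cap K)$ still surjects onto a finite-index (thus trivial-index, since $\ZZ/p^\infty$ has no proper finite-index subgroups) subgroup of $\ZZ/p^\infty$, i.e.\ $C$ maps onto $\ZZ/p^\infty$. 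Now $C\cap K$ is central in $C$, so $C$ is a central extension of $\ZZ/p^\infty$ by a finite abelian group; such an extension is classified by $H^2(\ZZ/p^\infty; C\cap K)$, and since $\ZZ/p^\infty = \varinjlim \ZZ/p^n$ with $H^2(\ZZ/p^n; A) = A/p^n A$ and transition maps eventually surjective onto the finite group $A$, one computes $H^2(\ZZ/p^\infty; A) = 0$ for $A$ finite; hence the extension splits and $C \cong (C\cap K) \times \ZZ/p^\infty$, so $G'$ — and therefore $G$ — contains $\ZZ/p^\infty$, a contradiction. This reduces everything to the homological computation $H^2(\ZZ/p^\infty; A)=0$ for finite $A$, which is routine.
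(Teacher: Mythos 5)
Part (i) is fine and is essentially the paper's argument, and in part (ii) your reductions up to the central extension $1 \to A \to C \to \ZZ/p^\infty \to 1$ (with $A = C\cap K$ finite abelian and central in $C$) are all correct. The gap is the final homological step: the claim that $H^2(\ZZ/p^\infty;A)=0$ for finite $A$, and hence that the extension splits, is false. The extension $0 \to \ZZ/p \to \ZZ/p^\infty \xto{\;\cdot p\;} \ZZ/p^\infty \to 0$ is a central extension of $\ZZ/p^\infty$ by a finite group which does not split, since $\ZZ/p^\infty$ has a unique subgroup of order $p$ whereas $\ZZ/p \oplus \ZZ/p^\infty$ has $p+1$ of them; equivalently $\Ext^1_{\ZZ}(\ZZ/p^\infty,\ZZ/p)\cong \ZZ/p \neq 0$. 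Your colimit computation goes wrong because cohomology of the union $\ZZ/p^\infty=\varinjlim \ZZ/p^n$ is governed by $\varprojlim$ (plus a $\varprojlim^1$ term) over the \emph{restriction} maps, and the restrictions $H^2(\ZZ/p^{n+1};\ZZ/p)\to H^2(\ZZ/p^n;\ZZ/p)$ are isomorphisms, so the limit is $\ZZ/p$, not $0$.

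The conclusion you actually need --- that $C$, hence $G$, contains a copy of $\ZZ/p^\infty$ --- is nevertheless true and does not require splitting, so the argument can be repaired. Since $A\le Z(C)$ and $C/A\cong\ZZ/p^\infty$ is locally cyclic, $C$ is abelian; setting $n=|A|$, the subgroup $nC$ is divisible (divide the image of an element in $C/A$, lift, and note the error term lies in $A$ and is killed by $n$) and surjects onto $n(C/A)=C/A\cong\ZZ/p^\infty$, so its $p$-primary part is a nonzero divisible abelian $p$-group and therefore contains $\ZZ/p^\infty$, the desired contradiction. For comparison, the paper avoids all of this machinery: given nontrivial $x_i\in H$ with $x_i=x_{i+1}^p$, the preimages in $G$ are finite cosets of $K$, the $p$-power map makes them into a tower of nonempty finite sets, and a point of the (nonempty) inverse limit is a compatible family of lifts $g_i=g_{i+1}^p$ of non-identity elements, which generate a copy of $\ZZ/p^\infty$ in $G$. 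That compactness argument is shorter and uses only the finiteness of $K$.
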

\begin{proof}
(\ref{lem_extn_zpfree_i}).
Suppose $T \leq G$ is a discrete $p$-torus.
Its image in $H$ must be trivial by assumption.
Hence $T \leq K$ which implies $T=1$.

(\ref{lem_extn_zpfree_ii}).
Assume that $H$ is not \zpfree.
Then there exists a sequence $x_1, x_2, x_3, \dots$ of non-identity elements in $H$ such that $x_i=(x_{i+1})^p$ for all $i \geq 1$.
The preimages of these elements in $G$ are cosets $X_1, X_2, X_3,\dots$ of $K$ and hence finite subsets of $G$.
The function (not a homomorphism) $G \xto{x \mapsto x^p} G$ restricts to functions $X_{i+1} \xto{x \mapsto x^p} X_i$ for all $i$ and we obtain a tower
\[
\dots \to X_{i+1} \xto{x \mapsto x^p} X_i \xto{x \mapsto x^p} \dots \xto{x \mapsto x^p} X_2 \xto{x \mapsto x^p} X_1.
\]
Since the sets $X_i$ are finite, $\varprojlim X_i$ is not empty.
An element in $\varprojlim X_i$ is a sequence $g_1,g_2,g_3,\dots$ of non-identity elements of $G$ such that $g_i=g_{i+1}{}^p$.
These elements generate a copy of $\ZZ/p^\infty$ in $G$ which is a contradiction.
\end{proof}

\subsection{Adams automorphisms of discrete $p$-toral groups and fusion systems}

Let $S$ be a discrete $p$-toral group and let $T$ denote its maximal torus.
Recall that $\Aut(T) \cong \GL_r(\ZZ_p)$ where $r=\rk(T)$.
It contains a copy of $\ZZ_p^\times$ in its centre (the diagonal matrices).

\begin{defn}\label{def_adams_aut_of_S}
An \emph{Adams automorphism} of $S$ of degree $\zeta \in \ZZ_p^\times$ is an automorphism $\vp$ of $S$ such that $\vp|_T$ is multiplication by $\zeta$.
We say that $\vp$ is a \emph{normal} Adams automorphism if it induces the identity on the set of components $S/T$.
Set 
\[
\Ad(S) = \{ \text{all \emph{normal} Adams automorphisms of $S$} \}.
\]
This is a normal subgroup of $\Aut(S)$.
Restriction $\Aut(S) \to \Aut(T)$ induces a ``degree homomorphism''
\[
\deg \colon \Ad(S) \to \ZZ_p^\times.
\]
Let $\Ad^1(S)$ or $\Ad^{\{\deg=1\}}(S)$ be the subgroup of the normal Adams automorphisms of degree $1$.
\end{defn}

Let $\Aut_T(S)$ denote the automorphisms of $S$ induced by conjugation by elements of  $T$.
Clearly $\Aut_T(S) \nsg \Ad^1(S)$.

\begin{lem}\label{L:coholology G with T}
Let $G$ be a finite group which acts on a discrete $p$-torus $T$.
Then $H^{k}(G,T)$ is a finite $p$-group for every $k\geq 1$.
\end{lem}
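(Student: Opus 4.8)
Let $G$ be a finite group acting on a discrete $p$-torus $T$. Then $H^k(G,T)$ is a finite $p$-group for every $k \geq 1$.

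The plan is to leverage the divisible and $p$-torsion structure of $T \cong \bigoplus^r \ZZ/p^\infty$ together with the finiteness of $G$. First I would note that since $|G|$ annihilates $\widehat{H}^k(G,M)$ for every $G$-module $M$ and every $k$, and since $T$ is a $p$-group (every element has $p$-power order), the standard transfer-restriction argument shows $H^k(G,T)$ is killed by $|G|$; combined with the fact that it is also a $p$-torsion group (being built from $T$), it is annihilated by $p^{v_p(|G|)}$, hence is a $p$-group. So the only real content is \emph{finiteness}.

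For finiteness I would pass to the finite-level approximations: write $T = \bigcup_{n} T[p^n]$ where $T[p^n] \cong (\ZZ/p^n)^r$ is the subgroup of elements of order dividing $p^n$, which is $G$-invariant (the $G$-action is by automorphisms of $T$, i.e. elements of $\GL_r(\ZZ_p)$, which preserve the $p^n$-torsion). Each $T[p^n]$ is a finite $G$-module, so $H^k(G, T[p^n])$ is finite. The short exact sequences $0 \to T[p^n] \to T[p^{n+1}] \to T[p^{n+1}]/T[p^n] \to 0$, or more directly the fact that $T = \operatornamewithlimits{\hbox{$\varinjlim$}}_n T[p^n]$ and group cohomology of a finite group commutes with filtered colimits of coefficient modules, gives $H^k(G,T) \cong \operatornamewithlimits{\hbox{$\varinjlim$}}_n H^k(G, T[p^n])$. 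Now the maps in this directed system: multiplication by $p$ identifies $T[p^n]$ with a submodule of $T[p^{n+1}]$, and I would examine the composite $H^k(G,T[p^n]) \to H^k(G,T[p^{n+1}])$. Since each $H^k(G,T[p^n])$ is a finite abelian $p$-group of bounded exponent (bounded by $p^{v_p(|G|)}$, uniformly in $n$, again by the transfer argument applied at finite level), the colimit is a $p$-torsion abelian group of bounded exponent; but that alone does not force finiteness, so one needs a bound on the \emph{rank} as well.

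The main obstacle, then, is bounding the size of $H^k(G, T[p^n])$ independently of $n$. The cleanest route I would take is to use the long exact sequence associated to $0 \to T \xto{\times p} T \to T[p] \to 0$ (the map $\times p \colon T \to T$ is surjective with kernel $T[p] \cong (\ZZ/p)^r$, since $T$ is divisible). This yields $H^{k-1}(G,T[p]) \to H^k(G,T) \xto{\times p} H^k(G,T) \to H^k(G,T[p])$. Since $H^k(G,T)$ is annihilated by a fixed power of $p$, iterating shows $H^k(G,T)$ has a finite filtration whose subquotients are subquotients of $H^{k-1}(G,T[p])$ and $H^k(G,T[p])$ — both finite since $T[p]$ is a finite module. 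Hence $H^k(G,T)$ is finite. I would write the argument in this last form, as it avoids any delicate colimit bookkeeping: the key inputs are (1) $T$ is $p$-divisible with $\times p$ surjective and kernel $T[p]$ finite, (2) $H^k(G,-)$ of a finite module over a finite group is finite, and (3) multiplication by $|G|$ (hence by a fixed $p$-power) kills $H^k(G,T)$ for $k \geq 1$.
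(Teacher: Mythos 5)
Your proof is correct, but it takes a genuinely different route from the paper's. Both arguments begin with the same transfer step (multiplication by $|G|$, hence by a fixed power of $p$, annihilates $H^k(G,T)$ for $k\geq 1$), but for finiteness the paper argues structurally: it chooses a projective resolution $P_*$ of $\ZZ$ by finitely generated $\ZZ[G]$-modules, observes that each $\Hom_{\ZZ[G]}(P_i,T)$ is a discrete $p$-torus, concludes that $H^k(G,T)$ is a discrete $p$-toral group, and then notes that a discrete $p$-toral group of finite exponent must be a finite $p$-group. You instead use divisibility of $T$: the relevant short exact sequence is $0 \to T[p] \to T \xto{\times p} T \to 0$ (as literally written in your text the arrows are reversed, and the displayed segment of the long exact sequence is off by one degree --- it should read $H^k(G,T[p]) \to H^k(G,T) \xto{\times p} H^k(G,T) \to H^{k+1}(G,T[p])$ --- but this slip is harmless, since all you use is that the outer terms, cohomology with coefficients in the finite module $T[p]$, are finite). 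From exactness you get that the kernel of multiplication by $p$ on $H^k(G,T)$ is finite, and the filtration by $p^i$-torsion together with the bounded exponent from the transfer step yields finiteness; the preliminary observation that $H^k(G,T)$ is $p$-torsion (being a subquotient of cochains valued in $T$) correctly reduces the exponent bound to a $p$-power. Each approach has its merits: the paper's gives the extra structural statement that the cohomology is discrete $p$-toral before exponent considerations enter, while yours is more elementary (only finite coefficient modules and d\'evissage) and applies verbatim to any divisible $p$-torsion $G$-module whose $p$-torsion subgroup is finite.
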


\begin{proof}
Write $M=H^k(G,T)$ where $k \geq 1$.
A transfer argument with the trivial subgroup of $G$ shows that $M$ has finite exponent $\leq |G|$.
Let $P_*$ be a projective resolution of the trivial $\ZZ[G]$-module $\ZZ$ such that every $P_i$ is finitely generated. Then the cochain complex $\Hom_{\ZZ[G]}(P_*, T)$ is  complex of discrete $p$-tori, and hence $M$ is a discrete $p$-toral group.
Since it has finite exponent,  $M$ is a finite $p$-group.
\end{proof}

Recall that if $\F$ is a fusion system over $S$, then $\al \in \Aut(S)$ is called \emph{fusion preserving} if for every $\vp \in \Hom_\F( P, S)$, the composite $\al \circ \vp \circ \al^{-1}\in \Hom_\F(\al(P), S)$. 
The group of fusion preserving automorphisms is denoted $\Aut_{\fus}(S)$.
It clearly contains $\Aut_\F(S)$ as a normal subgroup.
We define (see \cite[Section 7]{BLO3})
\[
\Out_{\fus}(S) = \Aut_{\fus}(S)/\Aut_\F(S).
\]

\begin{defn}\label{def_adfuss}
Let $\F$ be a saturated fusion system over $S$.
Set
\[
\Ad_{\fus}(S) = \Ad(S) \cap \Aut_{\fus}(S).
\]
Let $\OutAd_{\fus}(S)$ be the image of $\Ad_{\fus}(S)$ in $\Out_{\fus}(S)$.
\end{defn}

The goal of this subsection is to prove

%
%
\begin{prop}\label{prop_outadfuss}
Let $\F$ be a saturated fusion system over $S$ and let $T$ be the identity component of $S$.
Then the following statements hold for the group $\OutAd_{\fus}(S)$.
\begin{enumerate}[(i)]
\item
It is \zpfree~ and solvable of class at most $2$.
\label{prop_outadfuss_i}

\item 
It contains a normal Sylow $p$-subgroup which is abelian if either $p$ is odd, or if $p=2$ and the order of the extension class of $S$ in $H^2(S/T;T)$ is at least $4$.
\label{prop_outadfuss_ii}
\end{enumerate}
\end{prop}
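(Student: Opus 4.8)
Write $W=S/T$, let $c\in H^2(W,T)$ be the extension class of $S$, and $p^n=|c|$. The first move is to produce a short exact sequence
\[
1\longrightarrow N\longrightarrow \OutAd_{\fus}(S)\xrightarrow{\ \overline{\deg}\ } D'/D(\F)\longrightarrow 1 ,
\]
where $D'=\deg\bigl(\Ad_{\fus}(S)\bigr)\le\ZZ_p^\times$, the map $\overline{\deg}$ is induced by the degree homomorphism, and $N$ is a finite abelian $p$-group. To set this up I would first record the elementary isomorphisms $\Ad^1(S)\cong Z^1(W,T)$ and $\Aut_T(S)\cong B^1(W,T)$ (send $\vp$ to $\bar s\mapsto s^{-1}\vp(s)$); in particular $\Ad^1(S)$ is abelian and $\Ad^1(S)/\Aut_T(S)\cong H^1(W,T)$ is a finite abelian $p$-group by Lemma~\ref{L:coholology G with T}. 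Since $\Aut_T(S)\le\Aut_\F(S)$, the image $N$ of $\Ad^1(S)\cap\Aut_{\fus}(S)$ in $\Out_{\fus}(S)$ is a subquotient of $H^1(W,T)$. The homomorphism $\deg\colon\Ad_{\fus}(S)\to\ZZ_p^\times$ carries $\Aut_\F(S)\cap\Ad(S)$ onto $D(\F)$ by definition, so it descends to $\overline{\deg}\colon\OutAd_{\fus}(S)\to\ZZ_p^\times/D(\F)$; a routine check (adjust a degree-$\zeta$ fusion-preserving Adams automorphism by an element of $\Aut_\F(S)\cap\Ad(S)$ of the same degree) identifies $\ker\overline{\deg}=N$ and $\operatorname{im}\overline{\deg}=D'/D(\F)$.

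Part (i) now follows. Solvability of class $\le 2$ is immediate, since $[\OutAd_{\fus}(S),\OutAd_{\fus}(S)]\le N$ and $N$ is abelian. For \zpfree ness it suffices, by Lemma~\ref{lem_extn_zpfree}(i) and the finiteness of $N$, to see that $D'/D(\F)$ is \zpfree, and for that it is enough that $\ZZ_p^\times/D$ is \zpfree~ for every subgroup $D\le\ZZ_p^\times$. Here $\ZZ_p^\times$ has a finite–index subgroup $U\cong\ZZ_p$ ($U=1+p\ZZ_p$ if $p$ is odd, $U=1+4\ZZ_2$ if $p=2$), so $\ZZ_p^\times/D$ is an extension of the finite group $\ZZ_p^\times/UD$ by the quotient $U/(U\cap D)$ of $\ZZ_p$; and a quotient of the torsion-free group $\ZZ_p$ is \zpfree, since a copy of $\ZZ/p^\infty$ in it would, by injectivity of $\ZZ/p^\infty$ as a $\ZZ$-module, split off a torsion subgroup of $\ZZ_p$. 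Lemma~\ref{lem_extn_zpfree}(i) then gives (i).

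For the normal Sylow $p$-subgroup in (ii), I would first check that the $p$-primary torsion of $\ZZ_p^\times/D(\F)$ is finite: projecting onto $U/(U\cap D(\F))$ with kernel of order prime to $p$ (resp. of order at most $2$ when $p=2$) reduces this to the $p$-torsion of a quotient $\ZZ_p/J$, which is cyclic — it contains no $(\ZZ/p)^2$ because two elements of $\ZZ_p$ killed by $p$ modulo $J$ generate the same $\ZZ_p$-ideal, hence are $\ZZ_p$-multiples of one another and so $\FF_p$-dependent modulo $J$; and it is not $\ZZ/p^\infty$ since $\ZZ_p/J$ is \zpfree. Let $A$ be the (finite) $p$-primary torsion subgroup of $D'/D(\F)$ and $P$ its preimage in $\OutAd_{\fus}(S)$. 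Then $P$ is finite (an extension of $A$ by $N$), every element of $P$ has $p$-power order, so $P$ is a finite $p$-group; it is normal, being the preimage of a characteristic subgroup of the abelian quotient; and every finite $p$-subgroup of $\OutAd_{\fus}(S)$ maps into $A$, hence lies in $P$. As $\OutAd_{\fus}(S)$ is \zpfree, its discrete $p$-toral subgroups are exactly its finite $p$-subgroups, so $P$ is a normal Sylow $p$-subgroup.

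Finally, under either hypothesis of (ii) I would show $A=1$, so that $P=N$ is abelian. The crux is that $D(\F)$ then has order prime to $p$. An inner automorphism $c_s$ that is a normal Adams automorphism acts on $T$ by a scalar $\zeta$ with $\zeta^{|W|}=1$ (as $s^{|W|}\in T$), so $\zeta$ is a root of unity of $p$-power order in $\ZZ_p^\times$, i.e. $\zeta\in\{\pm1\}$; moreover any normal Adams automorphism of $S$ of degree $\zeta$ satisfies $\zeta c=c$ in $H^2(W,T)$ — carry a factor set of $c$ through the automorphism, cf.\ \cite[Proposition 2.8(i)]{JLL} — so $\zeta\in\Ga_n(p)$. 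When $p$ is odd this forces $\zeta=1$; when $p=2$ and $|c|\ge 4$, i.e.\ $n\ge 2$, then $-1\notin\Ga_n(2)$, so again $\zeta=1$. Hence $\Inn(S)\cap\Ad(S)\le\Ad^1(S)$, $\deg$ annihilates it, and $D(\F)=\deg(\Aut_\F(S)\cap\Ad(S))$ is a homomorphic image of a subgroup of $\Out_\F(S)$, which has order prime to $p$ by saturation axiom~(I) applied to the fully normalised subgroup $S$ (here $\Out_S(S)=1$). Thus $D(\F)\le\mu_{p-1}$ if $p$ is odd and $D(\F)=1$ if $p=2$; projecting $D'$ to $U\cong\ZZ_p$ — a map killing $D(\F)$ in the odd case, and in the $p=2$ case landing in the torsion-free group $\Ga_n(2)\cong\ZZ_2$ — shows the $p$-torsion of $D'/D(\F)$ is trivial, i.e.\ $A=1$, and the Sylow $p$-subgroup $P$ coincides with the abelian group $N$. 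I expect this last paragraph to be the main obstacle: the point is that the $p$-part of the degrees coming from $\Aut_\F(S)$ is pinned down by the prime-to-$p$ order of $\Out_\F(S)$ together with the order of the extension class, which is exactly what makes $D(\F)$ prime to $p$ in the two stated regimes.
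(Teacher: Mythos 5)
Your overall strategy is sound and fairly close in spirit to the paper's (the paper runs the same degree argument one level up, via the sequence $1\to H^1(S/T,T)\to\Ad(S)/\Aut_T(S)\to\Ga_m(p)\to1$ of \cite[Proposition 2.8]{JLL}, and then passes to $\OutAd_\fus(S)$), and your analysis of $D(\F)$ in part (ii) via axiom (I) and the relation $\zeta\cdot c=c$ is correct. However, there is a genuine gap at the two places where you control the quotient $D'/D(\F)$. The auxiliary claim that ``a quotient of the torsion-free group $\ZZ_p$ is $\ZZ/p^\infty$-free'' is false, and so is the claim in (ii) that the $p$-torsion of $\ZZ_p/J$ is cyclic for an arbitrary subgroup $J$. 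For the first: injectivity of $\ZZ/p^\infty$ splits it off as a direct summand of the quotient $\ZZ_p/J$, which only produces a surjection $\ZZ_p\to\ZZ/p^\infty$; it does not produce a copy of $\ZZ/p^\infty$ inside $\ZZ_p$, which is what a contradiction with torsion-freeness would require. Such surjections do exist: $\ZZ_p/\ZZ$ is divisible with an uncountable torsion-free part, hence surjects onto $\QQ$, and $\QQ$ surjects onto $\QQ/\ZZ_{(p)}\cong\ZZ/p^\infty$; so there are (non-closed) subgroups $J\le\ZZ_p$ with $\ZZ_p/J\cong\ZZ/p^\infty$. For the second: $J$ need not be a $\ZZ_p$-submodule, so being $\ZZ_p$-multiples of one another does not give $\FF_p$-dependence modulo $J$; for instance $J=p\ZZ+pu\ZZ$ with $u\in\ZZ_p\setminus\QQ$ gives a copy of $(\ZZ/p)^2$ among the elements of order dividing $p$ in $\ZZ_p/J$.

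The repair is exactly the fact you only bring in at the end of (ii): $D(\F)$ is finite, indeed $D(\F)\le U_p$, since every element of $\Aut_\F(S)$ has finite order ($\Out_\F(S)$ is finite and $\Inn(S)$ is torsion); the paper records this immediately after \eqref{def_DF}. Granting it, $D'/D(\F)$ is \zpfree~ by Lemma \ref{lem_extn_zpfree}(\ref{lem_extn_zpfree_ii}), being the quotient of the \zpfree~ group $D'\le\ZZ_p^\times$ by a finite subgroup; combined with the finiteness of $N$ and Lemma \ref{lem_extn_zpfree}(\ref{lem_extn_zpfree_i}) this rescues part (i). Moreover $\ZZ_p^\times/D(\F)\cong\bigl(U_p/D(\F)\bigr)\times U$ with $U$ torsion-free, so the $p$-primary torsion $A$ of $D'/D(\F)$ is trivial for $p$ odd and of order at most $2$ for $p=2$; this gives the finiteness needed for your Sylow argument in (ii) without any excursion into quotients of $\ZZ_p$ by arbitrary subgroups. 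With these substitutions the remainder of your proof (the exact sequence, solvability of class $\le 2$, normality and maximality of $P$, and the vanishing of $A$ under either hypothesis of (ii)) goes through.
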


\begin{proof}
The torsion elements in $\ZZ_p^\times$ form the group $U_p$ of roots of unity in $\ZZ_p$, and by \cite[Sec. 6.7, Prop. 1,2]{Ro} 
\[
U_p \cong 
\left\{
\begin{array}{ll}
C_2=\{\pm 1 \} & \text{if $p=2$} \\
C_{p-1} & \text{if $p>2$}
\end{array}
\right. 
\]
In particular $U_p$ is finite and therefore $\ZZ_p^\times$ is \zpfree, hence so are all of $\Ga_m(p)$.
In fact, if $p>2$ then $\Ga_k(p)$ contains no finite $p$-subgroup, and if $p=2$ then $\Ga_k(2)$ contains no finite $2$-subgroup if $k \geq 2$.
By \cite[Proposition 2.8]{JLL} there is a short exact sequence
\[
1 \to H^1(S/T;T) \to \Ad(S)/\Aut_T(S) \xto{ \deg } \Ga_m(p) \to 1
\]
where $p^m$ is the order of the extension class of $S$ in $H^2(S/T,T)$.
Also $H^1(S/T;T)$ is a finite abelian $p$-group by Lemma \ref{L:coholology G with T}.
It follows that $\Ad(S)/\Aut_T(S)$ is solvable of class $\leq 2$, and from Lemma \ref{lem_extn_zpfree}(\ref{lem_extn_zpfree_i}) that it is \zpfree.
Let $P$ be the preimage of the normal Sylow $p$-subgroup of $\Ga_m(p)$.
Then $P$ is a normal Sylow $p$-subgroup of $\Ad(S)/\Aut_T(S)$ and it is abelian if either $p$ is odd or if $p=2$ and $m\geq 2$ since in this case $U_p \cap \Ga_m(p)=1$, hence $P \cong H^1(S/T,T)$.

Observe that $\Aut_\F(S) \leq \Aut_\fus(S)$ and that $\Aut_T(S) \leq \Ad(S)$.
Hence $\Ad_\fus(S) \cap \Aut_\F(S) = \Ad(S) \cap \Aut_\F(S)$ and by Definition \ref{def_adfuss} we obtain the short exact sequence
\[
1 \to \frac{\Aut_\F(S) \cap \Ad(S)}{\Aut_T(S)} \to \frac{\Ad_{\fus}(S)}{\Aut_T(S)} \to \OutAd_{\fus}(S) \to 1.
\]
From the results above about $\Ad(S)/\Aut_T(S)$ it follows that $\OutAd_\fus(S)$ is solvable of class at most $2$.
The first group in this exact sequence is finite since it is a subgroup of $\Aut_\F(S)/\Aut_T(S)$ which is finite since $S/T$ is finite and $\Out_\F(S)=\Aut_\F(S)/\Inn(S)$ is finite by \cite[Lemma 2.5]{BLO3}.
Lemma \ref{lem_extn_zpfree}(\ref{lem_extn_zpfree_ii}) now implies that $\OutAd_\fus(S)$ is \zpfree~ and this completes the proof of (\ref{prop_outadfuss_i}).
Recall that $\Ad(S)/\Aut_T(S)$ has a normal Sylow $p$-subgroup.
Let $P$ be the normal Sylow $p$-subgroup of $\Ad_\fus(S)/\Aut_T(S)$ and let $\bar{P}$ be its image in $\OutAd_\fus(S)$.
Then $\bar{P}$ is normal and it is abelian if either $p$ is odd or if $p=2$ and $m\geq 2$ namely if the extension class of $S$ in $H^2(S/T,T)$ has order at least $4$.
It remains to show $\bar{P}$ is a Sylow $p$-subgroup of $\OutAd_\fus(S)$.
If $Q \leq \OutAd_\fus(S)$ is a discrete $p$-toral group, it is finite since $\OutAd_\fus(S)$ is \zpfree~ and its preimage $H$ in $\Ad_\fus(S)/\Aut_T(S)$ is therefore finite.
If $H_p \in \sylp(H)$ then $H_p \leq P$ since $P$ is normal and its image in $\OutAd_\fus(S)$ is $Q$.
Hence $Q \leq \bar{P}$ as needed.
\end{proof}

%
%
\subsection{Geometric Adams operations}
\label{subsec_geomtric_adams_operations}

The goal of this subsection is to prove Proposition \ref{prop_adgeom_outadfus}.
Let $\G=(S,\F,\L)$ be a $p$-local compact group.

\begin{defn}[Compare {\cite[Definition 3.4]{JLL}}]
\label{def_geometric_Adams_operation}
A \emph{geometric unstable Adams operation} of degree $\zeta \in \ZZ_p^\times$ is a self equivalence $f \colon B\G \to B\G$ such that there exists $\vp \in \Ad(S)$ of degree $\zeta$ which renders the following diagram commutative
\[
\xymatrix{
BS \ar[d] \ar[r]^{B\vp} & BS \ar[d] 
\\
B\G \ar[r]_f & B\G.
}
\] 
We will write
\[
\Ad^{\geom}(\G)
\]
for the group of \emph{homotopy classes} of geometric unstable Adams operations.
\end{defn}

This definition differs slightly from \cite{JLL} where we wrote $\Ad^g(\G)$ for the topological monoid of unstable Adams operations and used $\pi_0\Ad^g(\G)$ for what we now denote by $\Ad^{\geom}(\G)$.

If one is willing to assumes that the maximal torus $T\le S$ is self-centralising in $S$, then by \cite[Proposition 3.2]{BLO7} one may equivalently define a geometric unstable Adams operation of degree $\zeta \in \ZZ_p^\times$ on $\G$ to be a self-map of $B\G$ that induces multiplication by $\zeta^k$  on $H^{2k}_{\QQ_p}(B\G)$  for every $k \geq 1$, where $H^*_{\QQ_p}(X)=H^*(X;\ZZ_p)\otimes  \QQ$.

At this point the reader may wonder about the restriction to normal Adams automorphisms of $S$ (In other words, why do we not  define $\Ad(S)$ to be the group of all  Adams automorphisms, rather than only the normal ones).
Indeed, in general there are fusion preserving Adams automorphisms which are not normal.
However, recall that one has the following.
\begin{lem}[{\cite[Lemma 2.5]{JLL}}]\label{lem_connected_implies_weakly_connected}
If $T$ is self-centralising in $S$ then every Adams automorphism of $S$ is normal.
\end{lem}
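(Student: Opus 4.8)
The statement is: if the maximal torus $T$ is self-centralising in $S$, i.e.\ $C_S(T)=T$, then every Adams automorphism $\vp$ of $S$ is normal, meaning $\vp$ induces the identity on $S/T$. The plan is as follows. Let $\vp \in \Aut(S)$ restrict to multiplication by some $\zeta \in \ZZ_p^\times$ on $T$; I must show the induced map $\bar\vp$ on $S/T$ is the identity. First I would observe that since $T$ is characteristic in $S$, $\vp$ does induce a well-defined automorphism $\bar\vp$ of the finite $p$-group $S/T$. Pick $x \in S$; I want $\vp(x)x^{-1} \in T$, equivalently $\vp(x) \in xT$.

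The key idea is to compare the conjugation actions of $x$ and of $\vp(x)$ on $T$. For any $t \in T$ we have $\vp(xtx^{-1}) = \vp(x)\vp(t)\vp(x)^{-1} = \vp(x)\, t^\zeta\, \vp(x)^{-1}$, while also $\vp(xtx^{-1}) = (xtx^{-1})^\zeta$ — wait, more carefully: $xtx^{-1} \in T$, so $\vp(xtx^{-1}) = (xtx^{-1})^\zeta = x t^\zeta x^{-1}$. Combining, $\vp(x)\, t^\zeta\, \vp(x)^{-1} = x\, t^\zeta\, x^{-1}$ for all $t \in T$. Since raising to the $\zeta$ power (with $\zeta$ a $p$-adic unit) is a bijection $T \to T$, every element of $T$ is of the form $t^\zeta$, so $\vp(x)$ and $x$ induce the same conjugation automorphism of $T$. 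Hence $x^{-1}\vp(x) \in C_S(T) = T$ by hypothesis, which is exactly $\vp(x) \in xT$, so $\bar\vp$ fixes the coset $xT$. As $x$ was arbitrary, $\bar\vp = \id$ on $S/T$, i.e.\ $\vp$ is normal.

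I don't anticipate a serious obstacle here; this is essentially a one-line argument once the right computation is set up. The only point requiring a little care is the bijectivity of $t \mapsto t^\zeta$ on the discrete $p$-torus $T$ when $\zeta \in \ZZ_p^\times$: this holds because $T \cong (\ZZ/p^\infty)^r$ is a $p$-divisible torsion group of bounded exponent on each finite layer, on which any $p$-adic unit acts invertibly (concretely, multiplication by $\zeta$ on $\ZZ/p^\infty$ is an isomorphism since $\zeta$ is invertible mod $p^n$ for every $n$). With that in hand the displayed identity forces $x$ and $\vp(x)$ to have the same conjugation action on $T$, and the self-centralising hypothesis closes the argument. Since this is quoted from \cite[Lemma 2.5]{JLL}, in the paper itself it suffices to cite that reference, but the short proof above is the natural one to reproduce.
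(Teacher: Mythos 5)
Your argument is correct and complete: comparing the conjugation actions of $x$ and $\vp(x)$ on $T$ via $\vp(xtx^{-1}) = x t^\zeta x^{-1} = \vp(x)t^\zeta\vp(x)^{-1}$, using surjectivity of the $\zeta$-power map for $\zeta \in \ZZ_p^\times$, and concluding $x^{-1}\vp(x) \in C_S(T)=T$ is exactly the natural proof. The paper itself offers no argument here — it simply cites \cite[Lemma 2.5]{JLL} — and your proof is the standard one that result rests on, so there is nothing further to compare.
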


As we  shall see below, the requirement that $T$ is self-centralising in $S$ appears frequently in our analysis. This motivates the following definition.

\begin{defn}\label{def_weakly_connected}
A saturated fusion system $\F$ over $S$ is \emph{weakly connected} if its maximal torus $T$ is self-centralising in $S$, namely $C_S(T)=T$.
\end{defn}

A compact Lie group $G$ is connected if and only if every element of $G$ is conjugate to an element of its maximal torus, and in this case the maximal torus is self-centralising in $G$, so the corresponding fusion system is weakly connected. It is  tempting to define  a fusion system to be connected if every element of $T$ is $\F$-conjugate to $T$. However, it is not at all clear that this is indeed the correct definition of connectivity for general fusion system. Furthermore, for the purposes of this paper the only condition we need is weak connectivity.

We recall the following from \cite{BLO3}.
For any discrete $p$-toral group $Q$ define an equivalence relation on the set $\Hom(Q,S)$ where $\rho \sim \rho'$ if there exists some $\F$-isomorphism $\chi \colon \rho(Q) \to \rho'(Q)$ such that $\rho'= \chi \circ \rho$.
Set $\Rep(Q,\L) = \Hom(Q,S)/\sim$.
Notice that $\Rep(S,\L)=\Aut(S)/\Aut_\F(S)$, (left cosets of $\Aut_\F(S)$ in $\Aut(S)$).
Recall from \cite[Theorem 6.3]{BLO3} that there is a natural bijection
$[BQ,B\G] \cong \Rep(Q,\L)$.
In particular there is a natural bijection of sets
\begin{equation}
[BS,B\G] \cong \Aut(S)/\Aut_\F(S).  
\end{equation}
The map $\res \colon [B\G,B\G] \to [BS,B\G]$ therefore gives a homomorphism
\begin{equation}\label{def_blo3_mu}
\Out(B\G) \xto{\mu} \Out_{\fus}(S)
\end{equation}
and by \cite[Proposition 7.2, Proposition 5.8]{BLO3} there is an exact sequence
\begin{equation}\label{eq_blo3_outbg_outfuss}
0 \to \higherlim{\mathcal{O}^c(\F)}{1}\mathcal{Z} \to \Out(B\G) \xto{\mu} \Out_{\fus}(S) \to \higherlim{\mathcal{O}^c(\F)}{2}\mathcal{Z}
\end{equation}
where the groups at the ends are finite abelian $p$-groups by  \cite[Proposition 5.8]{BLO3} and since $\O^c(\F)$ is equivalent to a finite subcategory.

We are now ready to restate and prove Proposition \ref{prop_adgeom_outadfus}.
\begin{prop}\label{prop_adgeom_outadfus_in_sec}
Let $\G=\SFL$ be a $p$-local compact group.
Then $\Ad^{\geom}(\G)$ is \zpfree~ and contains a normal Sylow $p$-subgroup.
In addition there is a short exact sequence
\[
1\to \higherlim{\mathcal{O}^c(\F)}{1}\mathcal{Z} \to \Ad^\geom(\G)\to \OutAd_{\fus}(S)\to 1.
\]
If $p$ is odd then $\Ad^{\geom}(\G) \cong \OutAd_{\fus}(S)$.
In addition $\Ad^{\geom}(\G)$ is solvable, in fact, $\OutAd_{\fus}(S)$ is solvable of class $\leq 2$.
\end{prop}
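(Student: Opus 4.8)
The plan is to combine the exact sequence \eqref{eq_blo3_outbg_outfuss} of \cite{BLO3} with Proposition~\ref{prop_outadfuss}, after checking that $\Ad^{\geom}(\G)$ is exactly the preimage of $\OutAd_{\fus}(S)$ under the map $\mu\colon \Out(B\G)\to\Out_{\fus}(S)$. First I would unwind the definitions: a self-equivalence $f$ of $B\G$ lies in $\Ad^{\geom}(\G)$ precisely when, under the bijection $[BS,B\G]\cong\Aut(S)/\Aut_\F(S)$, the restriction $\res(f)$ is represented by some $\vp\in\Ad(S)$; equivalently $\mu(f)\in\OutAd_{\fus}(S)$. Here one must be a little careful that the class of $\vp$ in $\Out_{\fus}(S)$ lies in the image of $\Ad_{\fus}(S)$, i.e.\ that $\vp$ is genuinely fusion preserving: this follows because $f$ being defined on $B\G$ forces the square with $B\vp$ to commute, and the fusion-theoretic content of that square (via Alperin's fusion theorem / \cite[Theorem~6.3]{BLO3}) says exactly that $\vp$ conjugates $\F$-morphisms to $\F$-morphisms. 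Thus $\Ad^{\geom}(\G)=\mu^{-1}(\OutAd_{\fus}(S))$ inside $\Out(B\G)$.

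Given that identification, restricting \eqref{eq_blo3_outbg_outfuss} to the subgroup $\OutAd_{\fus}(S)\le\Out_{\fus}(S)$ yields the short exact sequence
\[
1\to \higherlim{\mathcal{O}^c(\F)}{1}\mathcal{Z} \to \Ad^\geom(\G)\to \OutAd_{\fus}(S)\to 1,
\]
where the surjectivity on the right is the statement of \cite[Proposition~3.5]{JLL} (that $\gamma$, and hence $\mu|_{\Ad^{\geom}}$, is onto $\OutAd_{\fus}(S)$); alternatively it follows because the obstruction in $\higherlim{\mathcal{O}^c(\F)}{2}\mathcal{Z}$ is killed by the construction of geometric operations in \cite{JLL}. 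From here, $\zpfree$-ness of $\Ad^{\geom}(\G)$ is immediate from Lemma~\ref{lem_extn_zpfree}(\ref{lem_extn_zpfree_i}): the kernel $\higherlim{\mathcal{O}^c(\F)}{1}\mathcal{Z}$ is a finite $p$-group (Proposition~\ref{P:finite limits Z}), hence $\zpfree$, and $\OutAd_{\fus}(S)$ is $\zpfree$ by Proposition~\ref{prop_outadfuss}(\ref{prop_outadfuss_i}). Solvability of class $\le 2$ for $\OutAd_{\fus}(S)$ is again Proposition~\ref{prop_outadfuss}(\ref{prop_outadfuss_i}), and then $\Ad^{\geom}(\G)$ is an extension of a class-$\le 2$ group by an abelian one, hence solvable (of class $\le 3$).

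For the normal Sylow $p$-subgroup: by Proposition~\ref{prop_outadfuss}(\ref{prop_outadfuss_ii}), $\OutAd_{\fus}(S)$ has a normal Sylow $p$-subgroup $\bar P$. Let $N$ be its preimage in $\Ad^{\geom}(\G)$; then $N$ is normal, and it is an extension of the finite $p$-group $\bar P$ by the finite $p$-group $\higherlim{\mathcal{O}^c(\F)}{1}\mathcal{Z}$, hence $N$ is itself a finite $p$-group. A finite $p$-subgroup of $\Ad^{\geom}(\G)$ maps into a $p$-subgroup of $\OutAd_{\fus}(S)$, which is conjugate into $\bar P$ by Sylow (all $p$-subgroups here are finite since $\OutAd_{\fus}(S)$ is $\zpfree$), hence lies in a conjugate of $N$; since $N\nsg\Ad^{\geom}(\G)$ this shows $N$ is a Sylow $p$-subgroup in the sense of Definition~\ref{def_zpfree}. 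Finally, when $p$ is odd the kernel $\higherlim{\mathcal{O}^c(\F)}{1}\mathcal{Z}$ vanishes (e.g.\ by \cite[Theorem~A]{LL}, or directly since the relevant higher limits of $\mathcal Z$ vanish for odd $p$ by \cite[Proposition~7.2]{BLO3}-type arguments), so the short exact sequence degenerates to an isomorphism $\Ad^{\geom}(\G)\cong\OutAd_{\fus}(S)$.

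\textbf{Main obstacle.} The one genuinely nontrivial point is the first step: identifying $\Ad^{\geom}(\G)$ with $\mu^{-1}(\OutAd_{\fus}(S))$ — in particular verifying that the degree-$\zeta$ automorphism $\vp$ witnessing $f\in\Ad^{\geom}(\G)$ can always be taken fusion preserving (so that its class lands in $\OutAd_{\fus}(S)$ and not merely in $\Out_{\fus}(S)$), and conversely that every class in $\OutAd_{\fus}(S)$ is hit. Both directions are essentially repackagings of \cite[Proposition~3.5]{JLL} and \cite[Theorem~6.3]{BLO3}, so the work is bookkeeping rather than a new idea, but it is where care is needed. Everything after that is a formal extension-theoretic argument using the lemmas already in hand.
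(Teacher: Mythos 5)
Your overall route is the same as the paper's: identify $\Ad^{\geom}(\G)$ with $\mu^{-1}(\OutAd_{\fus}(S))$ and then combine the sequence \eqref{eq_blo3_outbg_outfuss} with Proposition \ref{prop_outadfuss}, Proposition \ref{P:finite limits Z} and Lemma \ref{lem_extn_zpfree}. The \zpfree~statement, the normal Sylow $p$-subgroup (your argument here is more detailed than the paper's remark, and correct), the solvability, and the odd-$p$ isomorphism are all handled essentially as in the paper.

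The one genuine gap is the surjectivity of $\Ad^{\geom}(\G)\to\OutAd_{\fus}(S)$. Restricting the four-term sequence \eqref{eq_blo3_outbg_outfuss} to the subgroup $\OutAd_{\fus}(S)$ does not by itself yield a short exact sequence; one must kill the obstruction lying in $\higherlim{\O(\F^c)}{2}\Z$, and neither of your two justifications does this. \cite[Proposition 3.5]{JLL} states that $\gamma\colon\Ad(\G)\to\Ad^{\geom}(\G)$ is an epimorphism, i.e.\ every geometric operation is realised by an algebraic one; it says nothing about which classes of $\OutAd_{\fus}(S)$ are realised by geometric operations, so the parenthetical ``and hence $\mu|_{\Ad^{\geom}}$ is onto $\OutAd_{\fus}(S)$'' is a non sequitur. (The statement that $\Ad(\G)\to\Ad_{\fus}(S)$ is onto is Lemma \ref{lem_surj_adg_to_adfuss} of the paper, but that lemma is deduced from the present proposition, so it cannot be invoked here without circularity.) Your alternative, that the obstruction ``is killed by the construction of geometric operations in \cite{JLL}'', is also insufficient: that construction only produces operations whose degrees lie in $\Ga_m(p)$ and whose underlying automorphisms have a very particular form, so it does not realise an arbitrary class of $\OutAd_{\fus}(S)$. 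What is actually needed, and what the paper uses, is the vanishing $\higherlim{\O(\F^c)}{2}\Z=0$ for every $p$-local compact group (\cite[Theorem B]{LL}; cf.\ the remark following Proposition \ref{P:finite limits Z}); with that, exactness of \eqref{eq_blo3_outbg_outfuss} gives surjectivity of $\mu$ onto $\Out_{\fus}(S)$ and hence of its restriction to $\mu^{-1}(\OutAd_{\fus}(S))$. Since you already cite \cite{LL} for the vanishing of $\higherlim{\O(\F^c)}{1}\Z$ at odd primes, this is a one-line repair, but as written the key surjectivity is unsupported.
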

\begin{proof}
As we have seen above, by \cite{BLO3} there is a commutative square
\[
\xymatrix{
\Out(B\G) \ar[rr]^{\mu} \ar[d]_{\res} &&
\Out_{\fus}(S) \ar@{^(->}[d] 
\\
[BS,B\G] \ar[rr]_{\cong} & &
\Aut(S)/\Aut_\F(S)
}
\]
Hence, if $f \colon B\G \to B\G$ is a geometric unstable Adams operation then by definition there exists $\vp \in \Ad(S)$ such that the diagram in Definition \ref{def_geometric_Adams_operation} commutes up to homotopy and since the right arrow in the diagram above is injective, $\mu([f])=[\vp]$.
In particular $\vp$ is fusion preserving and hence $\mu([f]) \in \OutAd_{\fus}(S)$.
Conversely, suppose that $f \colon B\G \to B\G$ is a self equivalence such that $\mu([f]) \in \OutAd_{\fus}(S)$.
Then there exists some $\vp \in \Ad_{\fus}(S)$ such that $\mu([f])=[\vp]$.
Since the bottom arrow in the diagram above is bijective it follows that the diagram in Definition \ref{def_geometric_Adams_operation} commutes up to homotopy and therefore $f$ is a geometric unstable Adams operation.
We have thus shown that
\[
\Ad^{\geom}(\G) = \mu^{-1}(\OutAd_{\fus}(S)).
\]
By \cite[Theorem B]{LL} for any $p$-local compact group $\higherlim{\O^c(\F)}{2}\Z=0$ and, if $p$ is odd then $\higherlim{\O^c(\F)}{1}\Z=0$.
In light of \eqref{eq_blo3_outbg_outfuss}, the vanishing of $\varprojlim^2 \Z$ implies the exactness of the sequence in the statement of this proposition.
The vanishing of $\varprojlim^1 \Z$, provided $p$ is odd, establishes the isomorphism $\Ad^{\geom}(\G) \cong \OutAd_{\fus}(S)$. 
It follows from Proposition \ref{prop_outadfuss} that $\OutAd_{\fus}(S)$ is solvable of class at most $2$ and since $\higherlim{\mathcal{O}^c(\F)}{1}\mathcal{Z}$ is a finite abelian $p$-group, $\Ad^{\geom}(\G)$ has a normal Sylow $p$-subgroup.
Lemma \ref{lem_extn_zpfree}\eqref{lem_extn_zpfree_i} shows that it is also \zpfree.
\end{proof}

\section{Algebraic unstable Adams operations}\label{Alg_Ad}

In this section we will prove Theorem \ref{thm_geom_vs_alg_uao}.
Throughout we will fix a $p$-local compact group $\G=(S,\F,\L)$.

\begin{defn}[{Compare \cite[Definition 1.11]{JLL}}]\label{def_cover}
Let $\G=\SFL$ be a $p$-local compact group and $\phi \colon S \to S$ be a fusion preserving automorphism.
Let $\L', \L''$ be subcategories of $\L$ and $\Phi \colon \L' \to \L''$ be a functor.
We say that $\Phi$ \emph{covers} $\phi$ if the following square is commutative
\[
\xymatrix{
\L' \ar[r]^{\Phi} \ar[d]_{\pi} &
\L'' \ar[d]^{\pi} 
\\
\F \ar[r]_{\phi_*} & \F
}
\]
and if for every $P,Q \in \L'$ and every $g \in N_S(P,Q)$ the morphisms $\widehat{g} \in \L(P,Q)$ and $\widehat{\phi(g)} \in \L(\phi(P),\phi(Q))$ belong to $\L'$ and $\L''$ respectively, and moreover $\Phi(\widehat{g})=\widehat{\phi(g)}$.
\end{defn}

\begin{defn}[{\cite[Definition 3.3]{JLL}}]\label{def_algebraic_unstable_adams_operation}
An algebraic unstable Adams operation on $\G$, or simply an \emph{unstable Adams operation on $\G$}, is a pair $(\Psi,\psi)$, where $\psi \in \Ad_{\fus}(S)$ and $\Psi \colon \L \to \L$ is a functor which \emph{covers} $\psi$.
In other words,
\begin{itemize}
\item[(a)]
The following diagram is commutative where $\psi_*$ is the automorphism of $\F$ induced by $\psi$ 
\[
\xymatrix{
\L \ar[d]_\pi \ar[r]^\Psi & \L \ar[d]^\pi 
\\
\F \ar[r]_{\psi_*} & \F
}
\]

\item[(b)]
For every $\F$-centric $P,Q \leq S$ and every $g \in N_S(P,Q)$ we have $\Psi(\widehat{g})=\widehat{\psi(g)}$.
\end{itemize}
Define 
\[
\Ad(\G) = \{ \text{ all unstable Adams operations $(\Psi,\psi)$ on $\G$ }\}.
\]
If $H \leq \ZZ_p^\times$ we will write $\Ad^H(\G)$ for those $(\Psi,\psi)$ such that $\deg(\psi) \in H$.
\end{defn}

\begin{remark}\label{rem_geom_uao}
Recall that the inclusion $BS \xto{\incl} B\G$ is induced by the geometric realisation of the inclusion of categories $\B S \subseteq  \B\Aut_\L(S) \subseteq \G$ via $s \mapsto \hat{s}$.
By definition of an unstable Adams operation $(\Psi,\psi)$, upon taking geometric realisations the following diagram commutes strictly
\[
\xymatrix{
BS \ar[d] \ar[r]^{B\psi} & BS \ar[d] 
\\
B\G \ar[r]_{\pcomp{|\Psi|}} & B\G
}
\]
and hence $\pcomp{|\Psi|}$ is a geometric unstable Adams operation (Definition \ref{def_geometric_Adams_operation}).
We obtain a homomorphism
\begin{equation}
\label{def_gamma}
\gamma \colon \Ad(\G) \xto{ \ (\Psi,\psi) \mapsto \pcomp{|\Psi|} \ } \Ad^\geom(\G).
\end{equation}

Recall from \cite[Section 7]{BLO3} that an automorphism $\Phi \colon \L \to \L$ is called \emph{isotypical} if for every $\F$-centric subgroup $P \leq S$ the isomorphism $\Aut_\L(P) \to \Aut_\L(\Phi(P))$ induced by $\Phi$ carries $\de(P)$ to $\de(\Phi(P))$.
The collection of all isotypical self equivalences of $\L$ that preserve inclusions is a group \cite[Lemma 1.14]{AOV}, and we denote it here by  $\Aut_{\typ}(\L)$ (The notation in \cite{AOV} is $\Aut^I_{\typ}(\L)$).
It is clear from the definition that if $(\Psi,\psi)$ is an unstable Adams operation  then $\Psi \in \Aut_{\typ}(\L)$.
Moreover, $\psi$ is completely determined by $\Psi$ because for every $s \in S$ we have $\Psi(\hat{s})=\widehat{\psi(s)}$ and since $\de \colon S \to \Aut_\L(S)$ is injective.
Therefore
\[
\Ad(\G) \xto{ \ (\Psi,\psi) \mapsto \Psi \ } \Aut_{\typ}(\L).
\]
is injective and we can, and will, identify $\Ad(\G)$ with a subgroup of $\Aut_{\typ}(\L)$.
\end{remark}

Geometric realisation of isotypical equivalences gives rise to a homomorphism \cite[Section 7]{BLO3}
\[
\Omega \colon \Aut_{\typ}(\L) \xto{ \ \Phi \mapsto [\pcomp{|\Phi|}] \ } \Out(B\G).
\]
By Remark \ref{rem_geom_uao} the restriction of $\Omega$ to $\Ad(\G)$ is the map $\gamma$ in \eqref{def_gamma}.
Among all isotypical equivalences of $\L$ there are the ones that are induced by ``conjugation'' by automorphisms of $S$.
To be precise, there is a homomorphism
\begin{equation}\label{eq:autls_to_auttypl}
\tau \colon \Aut_\L(S) \to \Aut_{\typ}(\L)
\end{equation}
defined for every $\vp \in \Aut_\L(S)$  as follows.
On object, $\tau(\vp)(P) = \pi(\vp)(P)$ where $\pi \colon \L \to \F$ is the projection.
If $\al \in \L(P,Q)$ is a morphism, set $\tau(\vp)(\al) = (\vp|_Q^{\vp(Q)}) \circ \al \circ (\vp|_P^{\vp(P)})^{-1}$ where we write $\vp(P)$ instead of $\pi(\vp)(P)$ etc.
It is easy to check, using the axioms of linking systems, that $\tau(\vp)$ is isotypical.
In fact, $\Aut_\L(S)\nsg\Aut_\typ(\L)$  and the quotient group is denoted $\Out_\typ(\L)$.
It is shown in \cite[Theorem 7.1]{BLO3} that there is an isomorphism
\begin{equation}\label{eq:outtyp_outbg}
\Out_\typ(\L) \cong \Out(\pcomp{|\L|})
\end{equation}
via the assignment $[\Phi] \mapsto [\pcomp{|\Phi|}]$.
The definition of $\Out_\typ(\L)$ in \cite{BLO3} and our definition coincide by \cite[Lemma 1.14]{AOV}.

\begin{lem}\label{lem_deg1_conj_T}
Let $\F$ be a  weakly connected fusion system over $S$.
If $\psi \in \Aut_\F(S)$ is an Adams automorphism of degree $1$, then $\psi$ is conjugation by $t$ for some $t \in T$.
\end{lem}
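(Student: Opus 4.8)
The statement to prove is: if $\F$ is weakly connected over $S$ (so $C_S(T)=T$) and $\psi\in\Aut_\F(S)$ is an Adams automorphism of degree $1$, then $\psi=c_t$ for some $t\in T$. The plan is to use the saturation axioms of $\F$, applied to the subgroup $T$, to realise $\psi$ as conjugation inside $\Aut_\F(S)$ by an element that must in fact lie in $T$. First I would record that $\psi$, having degree $1$, restricts to the identity on $T$; moreover by Lemma~\ref{lem_connected_implies_weakly_connected} the hypothesis $C_S(T)=T$ guarantees $\psi$ is automatically normal, i.e.\ acts trivially on $S/T$. So $\psi$ fixes $T$ pointwise and permutes each coset $sT$ within itself.

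Next I would bring in the saturation of $\F$. Since $T$ is (the identity component, hence) fully normalised and actually characteristic in $S$, it is fully centralised, so the extension axiom~(II) applies to any $\F$-morphism defined on $T$. The key observation is that $\psi$ and $\id_S$ restrict to the \emph{same} morphism $\id_T$ on $T$; equivalently $\psi|_T = \id_S|_T$. I want to deduce that $\psi$ and $\id_S$ differ by conjugation by an element of $T=C_S(T)$. The clean way to do this is to consider the homomorphism $\psi\cdot c_?$ trick, or better, to invoke the standard fact (used in the theory of saturated fusion systems, e.g.\ in the linking-system literature such as \cite{JLL}) that two automorphisms of $S$ in $\Aut_\F(S)$ which agree on a fully centralised subgroup $P$ that contains $C_S(P)$ — here $P=T$ with $C_S(T)=T\le T$ — differ by conjugation by an element of $C_S(P)=T$. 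Concretely: set $\theta=\psi^{-1}$; then $\theta|_T=\id_T$, so for each $s\in S$ the element $s^{-1}\theta(s)$ centralises $T$ (since conjugation by $\theta(s)$ and by $s$ agree on $\theta(T)=T$), hence lies in $C_S(T)=T$. This gives a function $s\mapsto s^{-1}\theta(s)\in T$; one checks it is a crossed homomorphism (a $1$-cocycle) for the $S$-action on $T$, and trivial on $T$ itself, hence factors through $S/T$, which is a finite $p$-group, into $T$.

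The remaining step is to show this cocycle is a coboundary, which is exactly what produces the conjugating element $t$. For this I would use that $\psi\in\Aut_\F(S)$, not merely $\Aut(S)$: the image of $\Aut_\F(S)$ in the appropriate cohomological gadget is constrained, or more elementarily, I can run the usual averaging/Sylow argument — $\Out_\F(S)$ is finite and $\Out_S(S)$ is a Sylow $p$-subgroup of $\Out_\F(S)$ by axiom~(I), so the $p$-group generated by the class of $\psi$ together with $\Inn(S)$ is conjugate into $\Out_S(S)$, forcing $\psi$ to already be inner modulo the part that fixes $T$; combined with the previous paragraph the inner element can be taken in $C_S(T)=T$. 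Alternatively, and perhaps most cleanly for a paper in this style, I would cite the analogous statement \cite[Lemma 2.2 or similar]{JLL} for discrete $p$-toral groups together with weak connectivity.

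\textbf{Main obstacle.} The delicate point is \emph{not} showing $s^{-1}\psi(s)\in T$ — that is immediate from $C_S(T)=T$ once $\psi|_T=\id$. The real work is upgrading ``$\psi$ differs from $\id$ by a $T$-valued $1$-cocycle on $S/T$'' to ``$\psi$ differs from $\id$ by a $T$-coboundary'', i.e.\ killing the class in $H^1(S/T,T)$. This is where the hypothesis $\psi\in\Aut_\F(S)$ (rather than an arbitrary automorphism of $S$) must be used, presumably via saturation: the extension axiom lets us extend $\id_T\in\Hom_\F(T,S)$ to $N_{\id_T}=N_S(T)=S$, and comparing this extension with $\psi$ (which is another such extension, since $\psi|_T=\id_T$ and $T$ is fully centralised) shows they agree up to a morphism in $\Aut_S(T)=\Inn$ applied appropriately — but $\Aut_S(T)=1$ since $C_S(T)=T$, so one needs instead the refined statement that extensions of $\id_T$ to $S$ differ by conjugation by $C_S(T)=T$. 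Pinning down this last step precisely (and deciding whether to do it by hand via the cocycle computation or by quoting \cite{JLL}) is the crux; everything else is bookkeeping.
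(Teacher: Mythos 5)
You have correctly isolated the crux — upgrading ``$\psi$ differs from $\id_S$ by a $T$-valued cocycle'' to ``$\psi$ is conjugation by an element of $T$'' — but the proposal stops exactly there, and the routes you sketch for closing it do not work as stated. The paper's proof is two lines: weak connectivity implies $T$ is $\F$-centric (any $\F$-conjugate of $T$ is a discrete $p$-torus of maximal rank in $S$, hence equals $T$, and $C_S(T)=T=Z(T)$), and then \cite[Prop.~2.8]{BLO3} — the uniqueness-of-extensions statement saying that two $\F$-morphisms defined on an overgroup of an $\F$-centric subgroup which agree on that subgroup differ by conjugation by an element of its centre — applied to $\psi$ and $\id_S$, which agree on $T$, gives $\psi=c_t$ with $t\in Z(T)=T$. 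This is precisely the ``refined statement that extensions of $\id_T$ to $S$ differ by conjugation by $C_S(T)$'' that you say you need but neither prove nor locate: the extension axiom (II) only provides existence of extensions, not this uniqueness, and \cite[Lemma~2.2]{JLL} is an assertion about automorphisms of an extension of $W$ by $T$ under the hypothesis $H^1(W,T)=0$, which is neither assumed nor relevant here. So as written there is a genuine gap at the decisive step.

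Your Sylow sketch can, however, be completed into a correct alternative proof, and it is worth recording what is missing from it. Since $\psi|_T=\id_T$ and (by weak connectivity and \cite[Lemma~2.5]{JLL}) $\psi$ is normal, the assignment $s\mapsto \psi(s)s^{-1}$ is a well-defined function $S/T\to T$, and $\psi\mapsto(\bar s\mapsto \psi(s)s^{-1})$ embeds the group of all such automorphisms into the abelian group of functions $S/T\to T$ under pointwise multiplication; as $S/T$ is finite and $T$ is $p$-torsion, this group is $p$-torsion, so $\psi$ has finite $p$-power order. That is the fact your sketch never establishes, and it is what makes the Sylow argument bite: $S$ is fully normalised and $\Out_S(S)=1$, so by axiom (I) the Sylow $p$-subgroup of $\Out_\F(S)$ is trivial (note that ``conjugating the class of $\psi$ into $\Out_S(S)$'' just means it is trivial), whence $[\psi]=1$ in $\Out_\F(S)$, i.e.\ $\psi=c_s$ for some $s\in S$; finally $c_s|_T=\id_T$ forces $s\in C_S(T)=T$. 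With that paragraph supplied, your argument is complete and genuinely different from the paper's (it uses axiom (I) and the $p$-power order of $\psi$ instead of the $\F$-centricity of $T$ plus \cite[Prop.~2.8]{BLO3}); also note that no computation or vanishing of $H^1(S/T,T)$ is needed on either route — indeed that group is generally nonzero, which is exactly why membership in $\Aut_\F(S)$ must enter.
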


\begin{proof}
Since $\F$ is weakly connected, $T$ is $\F$-centric. 
Now, $\psi|_T = \id_S|_T$  and therefore  \cite[Prop. 2.8]{BLO3} implies that $\psi$ is conjugation by $t$ for some $t\in Z(T)=T$.
\end{proof}

\begin{lem}\label{lem_deg1_operation_in_F}
Let $\G=\SFL$ be a  weakly connected $p$-local compact group, and suppose that 
\[
(\Psi,\Id_S) \ \in \ \Ker\left(\gamma\colon\Ad(\G) \to \Ad^\geom(\G)\right).
\]
Then $\Psi=\tau(\hat{t})$ for some $t \in T$.
\end{lem}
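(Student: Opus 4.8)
The plan is to unpack what it means for $(\Psi,\Id_S)$ to lie in the kernel of $\gamma$ and leverage the identification $\Out_\typ(\L)\cong\Out(\pcomp{|\L|})$ from \eqref{eq:outtyp_outbg} together with Lemma \ref{lem_deg1_conj_T}. Since $\gamma$ is the restriction of $\Omega\colon\Aut_\typ(\L)\to\Out(B\G)$, and $\Omega$ factors through $\Out_\typ(\L) = \Aut_\typ(\L)/\Aut_\L(S)$ via the isomorphism \eqref{eq:outtyp_outbg}, the condition $\gamma(\Psi,\Id_S)=1$ says precisely that $\Psi$ lies in the image of $\tau\colon\Aut_\L(S)\to\Aut_\typ(\L)$. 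So first I would write $\Psi=\tau(\vp)$ for some $\vp\in\Aut_\L(S)$.

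Next I would identify which $\vp$ can occur. Because $(\Psi,\Id_S)$ is an unstable Adams operation with $\psi=\Id_S$, condition (b) of Definition \ref{def_algebraic_unstable_adams_operation} forces $\Psi(\hat{s})=\hat{s}$ for all $s\in S$; in particular $\Psi$ fixes $\hat{s}$ for every $s\in S$. On the other hand $\tau(\vp)$ acts on $\Aut_\L(S)$ by conjugation by $\vp$, so $\tau(\vp)(\hat{s}) = \vp\circ\hat{s}\circ\vp^{-1} = \widehat{\pi(\vp)(s)}$ using axiom (C). Hence $\widehat{\pi(\vp)(s)}=\hat{s}$ for all $s\in S$, and since $\de\colon S\to\Aut_\L(S)$ is injective we get $\pi(\vp)(s)=s$ for all $s$, i.e. $\pi(\vp)=\Id_S\in\Aut_\F(S)$. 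But $\pi(\vp)$ is an Adams automorphism of $S$ of degree $1$ (trivially), so by Lemma \ref{lem_deg1_conj_T} (applicable since $\F$ is weakly connected, so $T$ is $\F$-centric) — actually we already know $\pi(\vp)=\Id_S$ directly, which is even stronger. By axiom (A), $\vp$ and $\widehat{e}=\id_{\Aut_\L(S)}$ differ by the free $Z(S)$-action, so $\vp=\widehat{z}$ for some $z\in Z(S)=C_S(S)$. Since $\F$ is weakly connected, $Z(S)\le C_S(T)=T$, so $z\in T$; set $t=z$. Then $\Psi=\tau(\hat{t})$ with $t\in T$, as desired.

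\textbf{Main obstacle.} The one delicate point is the reduction from "$\gamma(\Psi,\Id_S)=1$" to "$\Psi\in\mathrm{im}(\tau)$"; one must be careful that the identification in \eqref{eq:outtyp_outbg} is compatible with $\Omega$ in the way claimed — that is, that the kernel of $\Omega$ is exactly $\Aut_\L(S)$, not merely contained in it. This is exactly the content of \cite[Theorem 7.1]{BLO3} as restated around \eqref{eq:outtyp_outbg}, so it can be cited. The rest is a routine manipulation with the linking system axioms (A), (B), (C) and the definition of $\tau$; I would present it compactly. If one prefers to avoid citing \eqref{eq:outtyp_outbg} in this precise form, an alternative is to note that $\pcomp{|\Psi|}$ being homotopic to the identity, together with $\Psi$ isotypical and inclusion-preserving, directly gives $[\Psi]=1$ in $\Out_\typ(\L)$ by that same theorem — so there is no real gain, and I would go with the clean argument above.
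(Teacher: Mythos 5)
Your proposal is correct and follows essentially the same route as the paper: reduce via \cite[Theorem 7.1]{BLO3} (the identification $\Out_\typ(\L)\cong\Out(B\G)$, whose kernel-of-projection is exactly the image of $\tau$) to $\Psi=\tau(\vp)$ with $\vp\in\Aut_\L(S)$, use $\Psi(\hat s)=\hat s$ to force $\vp=\hat z$ with $z\in Z(S)$, and then weak connectedness to get $Z(S)\le C_S(T)=T$. The only cosmetic difference is that you derive $\vp\in\de_S(Z(S))$ directly from axioms (A) and (C), where the paper invokes the identity $C_{\Aut_\L(S)}(S)=Z(S)$; these are the same computation.
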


\begin{proof}
By \cite[Theorem 7.1]{BLO3} the map $\gamma$ is the restriction to $\Ad(\G)$ of the composition of homomorphisms 
\[
\Aut_{\typ}(\L) \xto{ \proj } \Out_{\typ}(\L) \xto{\cong} \Out(B\G).
\]
Note that $\Psi(P)=\Id_S(P)=P$ for every $\F$-centric $P \leq S$ and that given any $\vp \in \Mor_\L(P,Q)$ the images of $\vp$ and $\Psi(\vp)$ in $\Hom_\F(P,Q)$ are equal because $\Psi$ covers $(\Id_S)_*=\Id_\F$.
Also, since $(\Psi, \Id_S)\in\Ker(\ga)$, $\Psi$ is conjugation by some $\rho\in\Aut_\L(S)\cong\Aut_\L(S)$. In particular for every $\vp \in \Aut_\L(S)$, $\Psi(\vp)\circ\rho = \rho\circ\vp$.

Since $(\Psi,\Id_S)$ is an Adams operation,  $\Psi(\hat{s})=\hat{s}$ for every $s \in S$, and so $\rho \in C_{\Aut_\L(S)}(S)=Z(S)$, where $S$ is considered as a subgroup of $\Aut_\L(S)$ via the canonical inclusion.
Since $\F$ is weakly connected $T = C_S(T) \ge Z(S)$, and therefore $\rho=\hat{t}$ for some $t \in Z(S) \le T$. Thus, by the definition of $\tau$ in \eqref{eq:autls_to_auttypl}, $\Psi=\tau(\hat{t})$ as claimed.
\end{proof}

We will now consider the following subgroup of $\Aut_\L(S)$
\[
\Aut_\L^{\Ad}(S) = \{ \vp \in \Aut_\L(S) : \pi(\vp) \in \Aut_\F(S) \cap \Ad(S)\}.
\]

Recall that $Z(\F)$ denotes the centre of $\F$.

\begin{lem}\label{lem_autlads_to_adg_exact_seq}
For any $p$-local compact group $\G=\SFL$ there is an exact sequence
\[
1 \to Z(\F) \xto{z \mapsto \hat{z} } \Aut_\L^{\Ad}(S) \xto{ \ \tau \ } \Ad(\G) \xto{ \ \ga \ } \Ad^{\geom}(\G) \to 1.
\]
\end{lem}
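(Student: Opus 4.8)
The plan is to verify exactness at each of the four nonzero terms, in increasing order of difficulty. Exactness at $Z(\F)$: the map $z \mapsto \hat{z}$ is injective because $\de\colon S \to \Aut_\L(S)$ is injective (axiom (A) of a linking system), and $Z(\F) \le Z(S) \le S$.

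Exactness at $\Aut_\L^{\Ad}(S)$: I must show $\Ker(\tau) = \{\hat{z} : z \in Z(\F)\}$. First, I would check that $\tau$ does land in $\Ad(\G)$, i.e.\ that for $\vp \in \Aut_\L^{\Ad}(S)$, the pair $(\tau(\vp), \pi(\vp))$ satisfies (a) and (b) of Definition~\ref{def_algebraic_unstable_adams_operation}: (a) is automatic since $\tau(\vp)$ covers $\pi(\vp)_*$ by construction, and (b) follows from the last sentence of Remark~\ref{R:extend delta} ($\widehat{g}\circ\widehat{h} = \widehat{gh}$), together with the fact that $\pi(\vp) \in \Ad(S)\cap \Aut_\F(S)$ so $\tau(\vp)(\hat{g}) = \widehat{\pi(\vp)(g)}$. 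Now $\tau(\vp) = \id_{\L}$ forces, on objects, $\pi(\vp)(P) = P$ for all $\F$-centric $P$, and on morphisms $\vp|_Q^Q \circ \al = \al \circ \vp|_P^P$; applying this with $P = Q = S$ and $\al$ ranging over $\Aut_\L(S)$ shows $\vp \in C_{\Aut_\L(S)}(\Aut_\L(S)) $... more precisely $\vp$ centralises every morphism, so in particular commutes with $\hat{s}$ for all $s\in S$, giving $\vp = \hat{z}$ with $z \in C_S(S) = Z(S)$ (using weak connectivity is not needed here, only that $\de$ extends as in Remark~\ref{R:extend delta} and Remark~\ref{R:mono-epi}). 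Finally $\tau(\hat{z}) = \id$ plus the compatibility of $z$ with all $\F$-morphisms (which follows from $\vp|_P^P = \hat{z}$ restricting correctly for all $\F$-centric $P$ containing $z$, then Alperin's fusion theorem) forces $z \in Z(\F)$; conversely any $z \in Z(\F)$ clearly gives $\tau(\hat z) = \id$.

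Exactness at $\Ad(\G)$: surjectivity of $\ga$ onto $\Ad^{\geom}(\G)$ is \cite[Proposition~3.5]{JLL} (also recovered here as $\Ad^\geom(\G) = \mu^{-1}(\OutAd_\fus(S))$ in the proof of Proposition~\ref{prop_adgeom_outadfus_in_sec}, using that the degree-$\zeta$ automorphism witnessing geometricity can be lifted to a functor). The inclusion $\Image(\tau) \subseteq \Ker(\ga)$ holds because for $\vp \in \Aut_\L^{\Ad}(S)$, $\ga(\tau(\vp)) = \Omega(\tau(\vp)) = [\pcomp{|\tau(\vp)|}]$, and $\tau(\vp) \in \Aut_\L(S)$ maps to the trivial class in $\Out_\typ(\L) \cong \Out(\pcomp{|\L|})$ since $\Aut_\L(S) = \Ker(\Aut_\typ(\L) \to \Out_\typ(\L))$ by \eqref{eq:outtyp_outbg}. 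For the reverse inclusion $\Ker(\ga) \subseteq \Image(\tau)$: given $(\Psi,\psi) \in \Ker(\ga)$, its image in $\Out(B\G)$ is trivial, so by \eqref{eq:outtyp_outbg} $\Psi = \tau(\rho)$ for some $\rho \in \Aut_\L(S)$; then $\pi(\rho)$ must equal $\psi$ up to an inner automorphism of $S$ (both induce $\Psi$ on $\O(\F^c)$), but since $\psi$ is determined by $\Psi$ via $\Psi(\hat s) = \widehat{\psi(s)}$, comparing with $\tau(\rho)(\hat s) = \widehat{\pi(\rho)(s)}$ (using (b) for $\tau(\rho)$) gives $\pi(\rho)(s) = \psi(s)$ for all $s$, hence $\pi(\rho) = \psi \in \Ad(S) \cap \Aut_\F(S)$, so $\rho \in \Aut_\L^{\Ad}(S)$ and $(\Psi,\psi) = \tau(\rho) \in \Image(\tau)$.

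The main obstacle will be the reverse inclusion $\Ker(\ga) \subseteq \Image(\tau)$ in the last paragraph: one must be careful that the lift $\rho \in \Aut_\L(S)$ provided by \eqref{eq:outtyp_outbg} can be chosen so that $\pi(\rho)$ is \emph{equal} to $\psi$ (not merely conjugate to it), which is exactly where the rigidity $\Psi(\hat s) = \widehat{\psi(s)}$ for all $s \in S$ together with injectivity of $\de_S$ is used; this is essentially the argument already carried out in Lemma~\ref{lem_deg1_operation_in_F} for the degree-$1$ case, and the general case should go through verbatim once one replaces ``$\Id_S$'' by ``$\psi$'' and ``$t \in Z(S)$'' by ``$\rho$ with $\pi(\rho) = \psi$''. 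I would therefore structure the proof to invoke the computation in Lemma~\ref{lem_deg1_operation_in_F} rather than repeat it.
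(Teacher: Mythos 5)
Your proposal is correct and follows essentially the same route as the paper: surjectivity from \cite[Proposition 3.5]{JLL}, the identification of $\Ker(\tau)$ with $\de_S(Z(\F))$ via Axiom (C) and cancellation, and the identification $\Ker(\ga)=\Image(\tau)$ via \cite[Theorem 7.1]{BLO3}. The only (immaterial) difference is in the last step: you extract the lift $\rho\in\Aut_\L(S)$ directly from the quotient description $\Out_\typ(\L)=\Aut_\typ(\L)/\Aut_\L(S)$, whereas the paper produces it as the component at $S$ of a natural isomorphism $\Id_\L\to\Psi$ and then checks $\Psi=\tau(\rho_S)$ by naturality; these are equivalent by the paper's own remark that the two definitions of $\Out_\typ(\L)$ coincide (\cite[Lemma 1.14]{AOV}), and in both cases $\pi(\rho)=\psi$ follows from Axiom (C) and injectivity of $\de_S$, with no weak connectivity needed.
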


\begin{proof}
The homomorphism $\ga$ defined in \eqref{def_gamma} is surjective by \cite[Prop. 3.5]{JLL}.
We have seen above that if $\vp \in \Aut_\L^{\Ad}(S)$ then $\tau(\vp)$ is an unstable Adams operation whose geometric realisation is homotopic to the identity, so $\im(\tau) \leq \ker(\ga)$.
Consider some $(\Psi,\psi)$ in the kernel of $\gamma$.
We will show that $\Psi$ is in the image of $\tau$.
By the bijection \eqref{eq:outtyp_outbg} and by the definition of $\Out_{\typ}(\L)$ it follows that there exists a natural isomorphism $\rho \colon \Id_\L \to \Psi$.
In particular, at the object $S \in \L$ we obtain a morphism $\rho_S \colon S \to S$ in $\L$, i.e. $\rho_S \in \Aut_\L(S)$.
In fact, $\rho_S$ determines $\rho$ completely because for every $P \in \L$ we have the following commutative square
\[
\xymatrix{
P \ar[r]^{\iota_P^S} \ar[d]_{\rho_P} & 
S \ar[d]^{\rho_S} 
\\
\psi(P) \ar[r]_{\Psi(\iota_P^S)} & S.
}
\]
Since $\iota_P^S=\widehat{e}$ where $e \in N_S(P,S)$ and since $\Psi(\widehat{e})=\widehat{e}$, we see that $\Psi(\iota_P^S)=\iota_{\psi(P)}^S$.
Since $\iota_P^S$ is a monomorphism we deduce that  $\rho_P$ is determined by $\rho_S$, in fact $\rho_P = \rho_S|_P^{\psi(P)}$ for any $P \in \L$.
For every $s \in S$ we obtain the following commutative square
\[
\xymatrix{
S \ar[r]^{\hat{s}} \ar[d]_{\rho_S} & S \ar[d]^{\rho_S} \ar[d]^{\rho_S}
\\
S \ar[r]_{\widehat{\psi(s)}} & S.
}
\]
It follows from Axiom (C) of linking systems \cite[Def. 4.1]{BLO3} that $\pi(\rho_S)=\psi$, in particular $\rho_S \in \Aut_\L^{\Ad}(S)$.
Now we claim that $\Psi=\tau(\rho_S)$.
First, for every $\F$-centric $P \leq S$,
\[
\Psi(P) = \psi(P) = \pi(\rho_S)(P) = \tau(\rho_S)(P).
\]
So $\Psi$ and $\tau(\rho_S)$ agree on objects of $\L$.
Next, suppose that $\al \in \Mor_\L(P,Q)$.
Since $\rho$ is a natural transformation there is a commutative diagram
\[
\xymatrix{
P \ar[d]_{\rho_P} \ar[r]^{\al} &
Q \ar[d]^{\rho_Q} 
\\
\psi(P) \ar[r]_{\Psi(\al)} & \psi(Q).
}
\]
Since $\rho_P$ and $\rho_Q$ are restrictions of $\rho_S$ we get
\[
\Psi(\al) = \rho_Q\circ \al \circ \rho_P^{-1} = (\rho_S|_Q^{\psi(Q)}) \circ \al \circ (\rho_S|_{P}^{\psi(P)})^{-1} = \tau(\rho_S)(\al).
\]
This completes the proof that the sequence is exact at $\Ad(\G)$.

We now show exactness at $\Aut_\L^{\Ad}(S)$.
If $z \in Z(\F)$ then for any $\vp \in \Mor_\L(P,Q)$ we have $z \in Z(S) \leq P,Q$ and
\[
\tau(\hat{z})(\vp) = \hat{z}|_Q^Q \circ \vp \circ (\hat{z}|_P^P)^{-1} =
\hat{z}|_Q^Q \circ (\widehat{\pi(\vp)(z)}|_Q^Q)^{-1} \circ \vp = 
\hat{z}|_Q^Q \circ (\widehat{z}|_Q^Q)^{-1} \circ \vp = 
\vp.
\]
Therefore $Z(\F) \to \Aut_\L^{\Ad}(S) \xto{\tau} \Ad(\G)$ is trivial.
Now suppose $\al \in \Ker(\tau)$.
Then in particular $\al \in C_{\Aut_\L(S)}(S)=Z(S)$, namely $\al=\hat{z}$ for some $z \in Z(S)$.
In addition for any $\vp \in \Mor_\L(P,Q)$ we must have $\hat{z}|_Q^Q \circ \vp \circ (\hat{z}|_P^P)^{-1} =\vp$.
Axiom (C) of linking systems and the fact that $\vp$ is an epimorphism in $\L$ imply that $\pi(\vp)(z)=z$.
It follows that $z \in Z(\F)$ and this shows the exactness at $\Aut_\L^{\Ad}(S)$.
\end{proof}

Composition of the degree map with the projection $\pi \colon \L \to \F$ gives rise to a degree homomorphism
\[
\deg \colon \Aut_\L^{\Ad}(S) \xto{\pi} \Aut_\F(S) \cap \Ad(S) \xto{ \ \deg \ } \ZZ_p^\times.
\]
Recall that $\Out_\F(S)$ is finite, hence every element of $\Aut_\F(S)$ has finite order.
Therefore the subgroup 
\begin{equation}\label{def_DF}
D(\F) \defeq  \Image \  \left( \Ad(S) \cap \Aut_\F(S) \xto{\deg} \ZZ^\times_p \right).
\end{equation}
must be contained in the group $U_p$ of the roots of unity in $\ZZ_p$ which, by \cite[Sec. 6.7, Prop. 1,2]{Ro}, is isomorphic to the cyclic group $C_{p-1}$ if $p$ is odd, and to $C_2$ if $p=2$.


\begin{lem}\label{lem_t_to_autlads_ato_df}
Let $\G=(S,\F,\L)$ be a weakly connected $p$-local compact group.
There is a short exact sequence
\[
1 \to T \to \Aut_\L^{\Ad}(S) \xto{\deg} D(\F) \to 1.
\]
\end{lem}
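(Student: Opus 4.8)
The plan is to identify the kernel and image of the degree homomorphism $\deg\colon \Aut_\L^{\Ad}(S)\to\ZZ_p^\times$. By definition of $D(\F)$ in \eqref{def_DF}, the image of $\deg$ equals the image of $\Ad(S)\cap\Aut_\F(S)$ under the degree map; so once I know $\deg$ is onto its image by construction, the surjectivity onto $D(\F)$ is immediate provided the composite $\Aut_\L^{\Ad}(S)\xrightarrow{\pi}\Aut_\F(S)\cap\Ad(S)$ is itself surjective. For the latter, I would recall that axiom (A) of linking systems gives that $\pi\colon\Aut_\L(S)\to\Aut_\F(S)$ is surjective (indeed $Z(S)$ acts freely with quotient $\Aut_\F(S)$), and that $\pi^{-1}(\Ad(S)\cap\Aut_\F(S))=\Aut_\L^{\Ad}(S)$ by the very definition of the latter; hence $\pi$ restricts to a surjection $\Aut_\L^{\Ad}(S)\twoheadrightarrow \Aut_\F(S)\cap\Ad(S)$, and postcomposing with $\deg$ lands onto $D(\F)$.

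Next I would compute the kernel. An element $\vp\in\Aut_\L^{\Ad}(S)$ lies in $\Ker(\deg)$ iff $\pi(\vp)\in\Aut_\F(S)\cap\Ad(S)$ has degree $1$. Since $\G$ is weakly connected, Lemma \ref{lem_deg1_conj_T} applies: any degree-$1$ Adams automorphism in $\Aut_\F(S)$ is conjugation by some $t\in T$, i.e.\ $\pi(\vp)=c_t$. By axiom (A), the fibre $\pi^{-1}(c_t)$ is the $Z(S)$-orbit of $\hat t\in\Aut_\L(S)$ under $\delta_S$, so $\vp=\widehat{zt}$ for some $z\in Z(S)$; and since $\F$ is weakly connected, $Z(S)\le C_S(T)=T$, so $zt\in T$ and $\vp=\hat u$ for some $u\in T$. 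Conversely every $\hat u$ with $u\in T$ has $\pi(\hat u)=c_u$ of degree $1$, so $\Ker(\deg)=\{\hat u:u\in T\}$. The map $T\to\Aut_\L^{\Ad}(S)$, $u\mapsto\hat u$, is injective because $\delta_S\colon S\to\Aut_\L(S)$ is a monomorphism (part of the definition of a linking system), so it identifies $T$ with $\Ker(\deg)$.

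Assembling these two facts gives the short exact sequence $1\to T\to\Aut_\L^{\Ad}(S)\xrightarrow{\deg}D(\F)\to 1$. I anticipate the only genuinely delicate point is the kernel computation—specifically, making sure the passage from ``$\pi(\vp)$ is conjugation by $t\in T$'' to ``$\vp=\hat u$ with $u\in T$'' uses weak connectivity correctly (to push $Z(S)$ inside $T$) and uses axiom (A) to control the fibres of $\pi$ over $\Aut_\F(S)$; the surjectivity statements are formal consequences of axiom (A) and the definitions of $\Aut_\L^{\Ad}(S)$ and $D(\F)$.
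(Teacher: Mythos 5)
Your proposal is correct and follows essentially the same route as the paper: surjectivity onto $D(\F)$ is immediate from the definition of $D(\F)$ together with the surjectivity of $\pi$ on automorphism groups, and the kernel is computed exactly as in the paper, via Lemma \ref{lem_deg1_conj_T} plus weak connectivity to force $Z(S)\le T$ so that any lift of $c_t$ is $\hat u$ for some $u\in T$. The extra detail you supply (axiom (A) controlling the fibres of $\pi$, injectivity of $\de_S$) is exactly what the paper's terser argument implicitly uses.
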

\begin{proof}
By definition, $\deg$ is onto $D(\F)$.
Also $T \leq \Ker(\deg)$ since $T$ is abelian so $\Aut_T(S) \leq \Ad^{\deg=1}(S)$.
Suppose that $\vp \in \Ker(\deg)$.
Lemma \ref{lem_deg1_conj_T} shows that $\pi(\vp)$ is conjugation by some $t' \in T$.
Since $Z(S) \leq C_S(T) =T$ and $T$ is $\F$-centric by the assumption of weak connectedness, $\vp=\hat{t}$ for some $t \in T$.
\end{proof}

\begin{defn}\label{def_inner}
Let $\Inn_T(\G) \leq \Ad(\G)$ for the subgroup of inner Adams operations of degree 1, i.e., operations of the form $(c_{\widehat{t}},c_t)$, where $c_t \in \Inn(S)$ is conjugation by $t\in T$ and $c_{\widehat{t}} \in \Aut(\L)$ is ``conjugation'' by $\widehat{t} \in \Aut_\L(S)$, (See \eqref{eq:autls_to_auttypl} where we used the notation $\tau(\widehat{t})$).
\end{defn}

We now restate and prove Theorem \ref{thm_geom_vs_alg_uao}.
\begin{thm}\label{thm_geom_vs_alg_uao_in_sec}
Let $\G=(S,\F,\L)$ be a weakly connected $p$-local compact group (Definition \ref{def_weakly_connected}).
Let $T$ denote the maximal torus of $S$.
Then
\begin{enumerate}[(i)]
\item
$\Ad(\G)$ has a normal maximal discrete $p$-torus denoted $\Ad(\G)_0$.
It contains a Sylow $p$-subgroup which is normal if $p=2$.

\item
$\Ad(\G)_0$ is contained in the kernel of $\gamma$ (see \eqref{def_gamma}) and there is a short exact sequence
\[
1 \to D(\F) \to \Ad(\G)/\Ad(\G)_0 \xto{\ \bar{\gamma} \ } \Ad^{\geom}(\G) \to 1
\]

\item
$\Ad(\G)_0=\Inn_T(\G) \cong T/Z(\F)$ where $Z(\F)$ is the centre of $\F$.
\end{enumerate}
\end{thm}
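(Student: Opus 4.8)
The plan is to assemble the theorem from the four lemmas established above, treating the three parts in the logical order (iii), then (ii), then (i). I would first combine Lemma \ref{lem_autlads_to_adg_exact_seq} and Lemma \ref{lem_t_to_autlads_ato_df} to obtain a commutative diagram of exact sequences relating $Z(\F)$, $T$, $\Aut_\L^{\Ad}(S)$, $D(\F)$, $\Ad(\G)$ and $\Ad^\geom(\G)$. Concretely: Lemma \ref{lem_t_to_autlads_ato_df} identifies the kernel of $\deg$ on $\Aut_\L^{\Ad}(S)$ as $T$ (via $t \mapsto \hat t$), and Lemma \ref{lem_autlads_to_adg_exact_seq} tells us that $\tau$ maps $\Aut_\L^{\Ad}(S)$ onto $\ker(\gamma)$ with kernel exactly $\{\hat z : z \in Z(\F)\}$. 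Since $Z(\F) \leq Z(S) \leq T$ and $\tau(\hat t)$ is by definition the inner operation $(c_{\hat t}, c_t)$, the image $\tau(T) = \Inn_T(\G)$, and the composite $T \xto{t \mapsto \hat t} \Aut_\L^{\Ad}(S) \xto{\tau} \Ad(\G)$ has kernel precisely $Z(\F)$. This yields $\Inn_T(\G) \cong T/Z(\F)$, which is the content of part (iii) once I show $\Inn_T(\G) = \Ad(\G)_0$.

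For part (ii), I would observe that $\tau$ descends to an isomorphism $\Aut_\L^{\Ad}(S)/T \xto{\cong} \ker(\gamma)/\Inn_T(\G)$, and composing with the degree isomorphism $\Aut_\L^{\Ad}(S)/T \cong D(\F)$ from Lemma \ref{lem_t_to_autlads_ato_df} gives $\ker(\gamma)/\Inn_T(\G) \cong D(\F)$. Feeding this into the exact sequence $1 \to \ker(\gamma) \to \Ad(\G) \xto{\gamma} \Ad^\geom(\G) \to 1$ (surjectivity of $\gamma$ is \cite[Prop. 3.5]{JLL}, also recorded in Lemma \ref{lem_autlads_to_adg_exact_seq}) and quotienting by $\Inn_T(\G)$ produces the asserted short exact sequence $1 \to D(\F) \to \Ad(\G)/\Inn_T(\G) \xto{\bar\gamma} \Ad^\geom(\G) \to 1$; it only remains to replace $\Inn_T(\G)$ by $\Ad(\G)_0$. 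The inclusion $\Ad(\G)_0 \subseteq \ker(\gamma)$ then follows since $\Inn_T(\G) \subseteq \ker(\gamma)$ and (once we know $\Ad(\G)_0 = \Inn_T(\G)$) these coincide.

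For part (i), the heart is to identify $\Inn_T(\G)$ as a \emph{normal maximal} discrete $p$-torus of $\Ad(\G)$. Normality: $\Inn_T(\G)$ is normal in $\Aut_\typ(\L)$ because conjugating an inner operation $\tau(\hat t)$ by any $(\Psi,\psi) \in \Ad(\G)$ gives $\tau(\Psi(\hat t)) = \tau(\widehat{\psi(t)})$ with $\psi(t) = \deg(\psi) \cdot t \in T$ (using that $\psi$ is a normal Adams automorphism and $T$ is characteristic), so it is again inner; alternatively this is immediate from $\ker(\gamma) \cap (\text{image of }\tau)$ being a normal subgroup. Since $T$ is a discrete $p$-torus and $Z(\F) \leq Z(S)$ is finite (as $S/S_0$ is finite and $Z(S)$... — actually $Z(\F)$ need not be finite, but $T/Z(\F)$ is still a discrete $p$-torus since it is a quotient of the divisible group $T$ by a subgroup, hence divisible, and one checks it has finite rank), $\Inn_T(\G) \cong T/Z(\F)$ is a discrete $p$-torus. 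Maximality: given any discrete $p$-torus $\mathcal{D} \leq \Ad(\G)$, its image under $\gamma$ lands in $\Ad^\geom(\G)$, which is $\ZZ/p^\infty$-free by Proposition \ref{prop_adgeom_outadfus_in_sec}, so $\mathcal{D} \leq \ker(\gamma)$; and $\ker(\gamma)/\Inn_T(\G) \cong D(\F)$ is finite, so the identity component of $\ker(\gamma)$ is $\Inn_T(\G)$, forcing $\mathcal{D} \subseteq \Inn_T(\G)$. This simultaneously proves $\Ad(\G)_0 = \Inn_T(\G)$, completing (iii). Finally, for the Sylow statement: $\Ad(\G)/\Inn_T(\G)$ is an extension of the finite group $D(\F)$ (of order prime to $p$ when $p$ is odd, of order dividing $2$ when $p=2$) by $\Ad^\geom(\G)$, which has a normal Sylow $p$-subgroup by Proposition \ref{prop_adgeom_outadfus_in_sec}; pulling back gives a normal Sylow $p$-subgroup of $\Ad(\G)/\Inn_T(\G)$ when $p=2$ (since then $D(\F)$ is itself a $2$-group so the whole quotient has a normal Sylow $2$), and in general a Sylow $p$-subgroup whose preimage in $\Ad(\G)$ is a Sylow $p$-subgroup. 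I expect the main obstacle to be the bookkeeping around maximality of $\Ad(\G)_0$ and the precise normality argument for $\Inn_T(\G)$ inside $\Ad(\G)$ — in particular making sure that "conjugation by $\hat t$" is genuinely a well-defined element of $\Aut_\typ(\L)$ normalised by all Adams operations, which requires careful use of the extended $\delta$-maps from Remark \ref{R:extend delta} and the compatibility $\Psi(\hat g) = \widehat{\psi(g)}$.
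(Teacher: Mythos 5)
Your argument is correct and takes essentially the same route as the paper: it splices the exact sequences of Lemmas \ref{lem_autlads_to_adg_exact_seq} and \ref{lem_t_to_autlads_ato_df} to obtain $1 \to D(\F) \to \Ad(\G)/\Inn_T(\G) \to \Ad^{\geom}(\G) \to 1$, uses the $\ZZ/p^\infty$-freeness of $\Ad^{\geom}(\G)$ (Proposition \ref{prop_adgeom_outadfus_in_sec}) to identify $\Inn_T(\G)\cong T/Z(\F)$ as the normal maximal discrete $p$-torus $\Ad(\G)_0$, and pulls back the normal Sylow $p$-subgroup of $\Ad^{\geom}(\G)$ exactly as the paper does. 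Only discard the parenthetical ``alternatively'' for normality of $\Inn_T(\G)$ --- $\ker(\gamma)\cap\mathrm{im}(\tau)$ is all of $\ker(\gamma)$, not $\Inn_T(\G)$ --- since your direct computation $\Psi\,\tau(\widehat{t}\,)\,\Psi^{-1}=\tau(\widehat{\psi(t)})$ is the correct (and needed) argument.
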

\begin{proof}
Clearly $T \leq \Aut_\L^{\Ad}(S)$, since $\Aut_T(S) \leq \Aut_\F(S) \cap \Ad(S)$.
Since $Z(\F) \leq Z(S) \leq T$, The exact sequences in Lemmas \ref{lem_t_to_autlads_ato_df} and \ref{lem_autlads_to_adg_exact_seq} yield the exact sequence
\[
1 \to D(\F) \xto{\tau} \Ad(\G)/\tau(T) \xto{\bar{\ga}} \Ad^{\geom}(\G) \to 1.
\]
Since $D(\F)$ is finite and $\Ad^{\geom}(\G)$ is \zpfree~ by Proposition \ref{prop_adgeom_outadfus}, Lemma \ref{lem_extn_zpfree}(\ref{lem_extn_zpfree_i}) implies that $\Ad(\G)/\tau(T)$ is \zpfree.
It follows that $\tau(T)$ is a normal maximal discrete $p$-torus in $\Ad(\G)$ denoted $\Ad(\G)_0$.

Let $P$ be the unique Sylow $p$-subgroup of $\Ad^{\geom}(\G)$ guaranteed in Proposition \ref{prop_adgeom_outadfus} and let $H$ be its preimage in $\Ad(\G)/\Ad(\G)_0$.
If $p=2$ then $D(\F) \leq U_p \cong C_2$, so $H$ is a finite $2$-group and it is clearly the Sylow $2$-subgroup of $\Ad(\G)/\Ad(\G)_0$.
In particular $\Ad(\G)$ has a normal Sylow $2$-group.
If $p$ is odd, any Sylow $p$-subgroup of (the finite) group $H$ is a Sylow $p$-subgroup of $\Ad(\G)/\Ad(\G)_0$.
This completes the proof.
\end{proof}

\section{The degree of an unstable Adams operation (uniqueness)}\label{sec_degree_uniqueness}

Unstable Adams operations of connected compact Lie groups have the pleasant property that their degree determines them completely up to homotopy.
That is, given a connected compact Lie group $G$ and an integer $k \geq 1$, up to homotopy  there is at most one unstable Adams operation on $BG$ of degree $k$.
This was shown in \cite[Theorem 1]{JMO1}.
In the setup of $p$-local compact groups,  Proposition \ref{prop_adgeom_outadfus} makes it clear that this does not hold in general.
First, at least when $p=2$, any non-trivial element in $\varprojlim^1 \Z$ gives rise to a geometric unstable Adams operation whose underlying automorphism of $S$ is the identity.
Secondly, since $\OutAd_{\fus}(S) \cong \Ad_{\fus}(S)/(\Aut_\F(S) \cap \Ad(S))$, the degree can only be defined modulo $D(\F)$.
That is, the degree map that we must consider is
\[
\deg \colon \Ad^{\geom}(\G) \to \OutAd_{\fus}(S) \xto{\deg} \ZZ_p^\times/D(\F).
\]
From the algebraic point of view we can define a degree homomorphism
\[
\deg \colon \Ad(\G) \xto{(\Psi,\psi) \mapsto \psi} \Ad_{\fus}(S) \xto{\deg} \ZZ_p^\times.
\]
But observe that every $t \in T \setminus Z(\F)$ gives an unstable Adams operation $\tau(\widehat{t})$ of degree $1$, hence the kernel of this degree map is in general not trivial.
In this section we find conditions which guarantee that the degree does determine the unstable Adams operation in a suitable sense.

\begin{defn}\label{def_weyl_F}
Let $\F$ be a saturated fusion system over $S$.
The Weyl group of $\F$ is $W(\F)=\Aut_\F(T)$.
\end{defn}
If no confusion can arise we will usually denote $W(\F)$ simply by $W$.

Recall that $\Aut_\L^{\Ad}(S)$ is the subgroup of $\Aut_\L(S)$ of the morphisms which project to $\Aut_\F(S) \cap \Ad(S)$.
There is therefore a degree homomorphism $\deg \colon \Aut_\L^{\Ad}(S) \to \Aut_\F(S) \cap \Ad(S) \xto{\deg} \ZZ_p^\times$.
Its image is $D(\F)$ by \eqref{def_DF}.

\begin{lem}\label{lem_dft_ses}
Let $\G=\SFL$ be a weakly connected $p$-local compact group, and suppose that $H^1(W,T)=0$.
Then there is a short exact sequence
\[
1 \to T \xto{\de|_T} \Aut_\L^{\Ad}(S) \xto{\deg} D(\F) \to 1
\]
\end{lem}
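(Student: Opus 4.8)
The plan is to run the argument of Lemma~\ref{lem_t_to_autlads_ato_df}, which this statement repeats almost verbatim; the hypothesis $H^1(W,T)=0$ is the standing assumption of this subsection and, as far as I can see, plays no role in this particular sequence. Three things must be checked: injectivity of $\de|_T$ (exactness at $T$), surjectivity of $\deg$ onto $D(\F)$ (exactness at $D(\F)$), and $\Ker(\deg)=\de_S(T)$ (exactness at $\Aut_\L^{\Ad}(S)$); the first two are formal. Since $\de_S\colon S\to\Aut_\L(S)$ is a monomorphism of groups, so is its restriction to $T$; moreover, for $t\in T$ the automorphism $\pi(\de_S(t))=c_t$ restricts to the identity on $T$ (as $T$ is abelian) and on $S/T$ (as $t\in T$) and lies in $\Inn(S)\subseteq\Aut_\F(S)$, so $c_t\in\Ad(S)\cap\Aut_\F(S)$, whence $\de_S(T)\subseteq\Aut_\L^{\Ad}(S)$; this also records $\de_S(T)\subseteq\Ker(\deg)$. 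For exactness at $D(\F)$, $\pi\colon\Aut_\L(S)\to\Aut_\F(S)$ is surjective, hence so is $\pi\colon\Aut_\L^{\Ad}(S)\to\Aut_\F(S)\cap\Ad(S)$, so the image of $\deg$ equals $\deg(\Aut_\F(S)\cap\Ad(S))=D(\F)$ by \eqref{def_DF} (as already remarked just before the statement).

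The only step with any substance is the inclusion $\Ker(\deg)\subseteq\de_S(T)$. Take $\vp\in\Aut_\L^{\Ad}(S)$ with $\deg(\vp)=1$ and set $\psi=\pi(\vp)$, a degree-$1$ Adams automorphism of $S$ lying in $\Aut_\F(S)$. By Lemma~\ref{lem_deg1_conj_T} — this is where weak connectedness enters, through $T$ being $\F$-centric — we get $\psi=c_{t'}$ for some $t'\in T$, i.e.\ $\pi(\vp)=\pi(\de_S(t'))$. Axiom~(A) of linking systems identifies $\pi\colon\Aut_\L(S)\to\Aut_\F(S)$ with the orbit map of the free action of $Z(S)$ on $\Aut_\L(S)$ by precomposition through $\de_S$; hence $\de_S(t')=\vp\circ\de_S(z)$ for some $z\in Z(S)$, i.e.\ $\vp=\de_S(t'z^{-1})$. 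Finally $Z(S)\leq C_S(T)=T$ by weak connectedness, so $t'z^{-1}\in T$ and $\vp\in\de_S(T)$, completing exactness at $\Aut_\L^{\Ad}(S)$.

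So the ``main obstacle'' is not computational; it is to have the two weak-connectedness inputs in place and to combine them correctly with the freeness of the $Z(S)$-action supplied by Axiom~(A): Lemma~\ref{lem_deg1_conj_T} reduces $\psi$ to conjugation by an element of $T$, and $C_S(T)=T$ then absorbs the residual $Z(S)$-indeterminacy back into $T$. Should one wish to drop weak connectedness, this is exactly the point at which a hypothesis such as $H^1(W,T)=0$ would have to be brought in; that is presumably why it is carried along here, for use in the proof of Proposition~\ref{prop_degree_determines_operation}.
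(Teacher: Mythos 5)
Your proof is correct and follows essentially the same route as the paper's: the formal parts are identical, and the key inclusion $\Ker(\deg)\subseteq\de_S(T)$ is obtained, as in the paper, from Lemma~\ref{lem_deg1_conj_T} (equivalently \cite[Proposition 2.8]{BLO3}, using that $T$ is $\F$-centric) together with $Z(S)\leq C_S(T)=T$; you merely make explicit the Axiom~(A) step that the paper compresses into ``since $Z(S)\leq T$ we deduce $\vp=\hat t$''. You are also right that the hypothesis $H^1(W,T)=0$ plays no role here — the paper itself proves the identical sequence without it in Lemma~\ref{lem_t_to_autlads_ato_df}.
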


\begin{proof}
Clearly $\de|_T$ is injective and the degree map is surjective.
Also $\deg \circ \de|_T$ is the trivial homomorphism since $T$ is abelian.
Suppose that $\vp \in \Ker(\deg)$.
Then $\psi(\vp)|_T=\id_T$ and since $T$ is $\F$-centric, it follows from \cite[Proposition 2.8]{BLO3} that $\pi(\vp)=c_z$ for some $z \in Z(T)=T$.
Since $Z(S) \leq T$ we now deduce that $\vp=\hat{t}$ for some $t \in T$.
\end{proof}

\begin{lem}\label{lem_surj_adg_to_adfuss}
The homomorphism 
\[
\Ad(\G) \xto{ \ (\Psi,\psi) \mapsto \psi \ } \Ad_{\fus}(S)
\]
is surjective.
\end{lem}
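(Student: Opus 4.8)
The plan is to show surjectivity by lifting a given fusion-preserving Adams automorphism $\psi \in \Ad_{\fus}(S)$ to a functor $\Psi \colon \L \to \L$ covering $\psi$, using the fact proven in \cite[Proposition 3.5]{JLL} that the map $\gamma \colon \Ad(\G) \to \Ad^{\geom}(\G)$ is surjective together with the structural sequence from Lemma \ref{lem_autlads_to_adg_exact_seq}. First I would observe that the composite
\[
\Ad(\G) \xto{\ \gamma\ } \Ad^{\geom}(\G) \xto{\ \mu\ } \OutAd_{\fus}(S)
\]
carries $(\Psi,\psi)$ to the class of $\psi$ in $\Out_{\fus}(S)$; this is exactly the content of Remark \ref{rem_geom_uao}, which records that $\pcomp{|\Psi|}$ is a geometric operation with underlying automorphism $\psi$, and of diagram \eqref{def_blo3_mu}. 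Since $\gamma$ is surjective by \cite[Proposition 3.5]{JLL} and $\mu$ restricted to $\Ad^{\geom}(\G)$ surjects onto $\OutAd_{\fus}(S)$ by Proposition \ref{prop_adgeom_outadfus_in_sec}, the composite $\Ad(\G) \to \OutAd_{\fus}(S)$ is surjective. Thus for a given $\psi \in \Ad_{\fus}(S)$ there is some $(\Psi_0,\psi_0) \in \Ad(\G)$ with $\psi_0$ equal to $\psi$ modulo $\Aut_\F(S) \cap \Ad(S)$; write $\psi_0 = \psi \circ \alpha$ with $\alpha \in \Aut_\F(S) \cap \Ad(S)$.

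Next I would correct the discrepancy $\alpha$ by composing with an inner operation. Choose $\widehat{\alpha} \in \Aut_\L(S)$ lifting $\alpha$, so that $\widehat{\alpha} \in \Aut_\L^{\Ad}(S)$; then $\tau(\widehat{\alpha}^{-1}) \in \Ad(\G)$ by Lemma \ref{lem_autlads_to_adg_exact_seq}, and its underlying automorphism of $S$ is $\pi(\widehat{\alpha}^{-1}) = \alpha^{-1}$. Since $\Ad(\G)$ is a group, the composite $(\Psi_0,\psi_0) \circ \tau(\widehat{\alpha}^{-1})$ lies in $\Ad(\G)$, and its underlying automorphism of $S$ is $\psi_0 \circ \alpha^{-1} = \psi$. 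This produces the required preimage of $\psi$, proving surjectivity.

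The main obstacle, such as it is, is bookkeeping about which lift $\widehat{\alpha}$ to choose and verifying that $\tau(\widehat{\alpha}^{-1})$ genuinely has underlying automorphism $\alpha^{-1}$ rather than $\alpha^{-1}$ twisted by an inner automorphism of $S$ — but this is exactly what the definition of $\tau$ in \eqref{eq:autls_to_auttypl} together with axiom (C) for linking systems (as used in the proof of Lemma \ref{lem_autlads_to_adg_exact_seq}) gives us, since on the object $S$ the functor $\tau(\widehat{\alpha}^{-1})$ sends $\widehat{s}$ to $\widehat{\alpha^{-1}(s)}$. An alternative, slightly more self-contained route would bypass the geometric input entirely: given $\psi \in \Ad_{\fus}(S)$, use the fact that $\psi$ is fusion preserving to transport the linking system structure and build $\Psi$ directly, but this essentially re-proves \cite[Proposition 3.5]{JLL} and is not worth the detour here. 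I would therefore present the short argument above.
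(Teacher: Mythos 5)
Your argument is correct and is essentially the paper's own proof: both obtain some $(\Psi_0,\psi_0)\in\Ad(\G)$ whose class in $\OutAd_{\fus}(S)$ agrees with that of the given $\psi$ (via surjectivity of $\gamma$ and of $\Ad^{\geom}(\G)\to\OutAd_{\fus}(S)$), and then correct the discrepancy in $\Aut_\F(S)\cap\Ad(S)$ by composing with $\tau$ of a lift in $\Aut_\L^{\Ad}(S)$. The only (cosmetic) difference is that you invoke \cite[Proposition 3.5]{JLL} and Proposition \ref{prop_adgeom_outadfus_in_sec} directly, and multiply the correction on the other side, whereas the paper quotes the exact sequences of Proposition \ref{prop_adgeom_outadfus} and Theorem \ref{thm_geom_vs_alg_uao}.
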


\begin{proof}
Consider some $\vp \in \Ad_{\fus}(S)$ and let $[\vp]$ denote its class in $\OutAd_{\fus}(S)$.
The exact sequences in Proposition \ref{prop_adgeom_outadfus} and Theorem \ref{thm_geom_vs_alg_uao} show that there exists $(\Psi,\psi) \in \Ad(\G)$ such that $[\vp]=[\psi]$.
Hence $\vp=\al \circ \psi$ for some $\al \in \Aut_\F(S) \cap \Ad(S)$.
Let $\tilde{\al} \in \Aut_\L^{\Ad}(S)$ be a lift for $\al$.
Then $\tau(\tilde{\al}) \circ \Psi$ is an unstable Adams operation with an underlying automorphism $\varphi$.
\end{proof}

\begin{lem}\label{L:deg 1 implied idW}
Let $\G=\SFL$ be a weakly connected $p$-local compact group and let $(\Psi,\psi) \in \Ad(\G)$. 
If $\deg(\psi)=1$, then the automorphism induced by $\Psi$ on $W=W(\F)$ is the identity.
\end{lem}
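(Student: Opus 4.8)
The statement asserts that if $(\Psi,\psi)\in\Ad(\G)$ has $\deg(\psi)=1$, then the automorphism of the Weyl group $W=\Aut_\F(T)$ induced by $\Psi$ is trivial. The automorphism of $W$ in question is conjugation by the image in $\Aut_\F(T)$ of a suitable morphism; concretely, $\Psi$ sends $\Aut_\L(T)$ to $\Aut_\L(\psi(T))=\Aut_\L(T)$ (since $T$ is $\F$-centric by weak connectedness, $\psi(T)=T$), and after projecting via $\pi\colon\L\to\F$ this induces an automorphism of $W$. The first step is to identify this induced automorphism precisely as conjugation inside $W$: choose a restriction $\rho=\Psi(\hat e)$-type morphism, or more directly use that $\Psi$ restricted to the objects $T\le S$ gives a morphism $\bar\rho\in\Aut_\L(T)$ (via the restriction machinery of Remark \ref{R:mono-epi} applied to $\Psi(\iota_T^S)$), whose image $w_0=\pi(\bar\rho)\in W$ conjugates $W$ via $w\mapsto w_0 w w_0^{-1}$, and the induced automorphism on $W$ equals this conjugation.

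\textbf{Key steps.} First, use $\deg(\psi)=1$: by Lemma \ref{lem_deg1_conj_T} applied to the relevant automorphism of $S$ — or rather, combine with the analysis in Lemma \ref{lem_deg1_operation_in_F} — one sees that the morphism of $\L$ at the level of $S$ controlling $\Psi$ restricts, over $T$, to a morphism $\bar\rho\in\Aut_\L(T)$ with $\pi(\bar\rho)|_T$ equal to multiplication by $\deg(\psi)=1$ on $T$. Second, observe that $\pi(\bar\rho)$ is an element of $\Aut_\F(T)=W$ that acts as the identity on $T$; but $W\le\Aut(T)\cong\GL_r(\ZZ_p)$ acts \emph{faithfully} on $T$, so $\pi(\bar\rho)=w_0$ is forced to be the identity element of $W$. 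Third, conclude that conjugation by $w_0=\id$ is the identity automorphism of $W$, which is exactly the claim. So the logical core is: the element of $W$ that implements the induced automorphism acts trivially on $T$, hence is trivial by faithfulness of the $W$-action on $T$.

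\textbf{Main obstacle.} The delicate point is bookkeeping: making rigorous the identification of ``the automorphism $\Psi$ induces on $W$'' with conjugation by a specific element $w_0\in W$, and showing that this $w_0$ is the $\pi$-image of a morphism whose restriction to $T$ is multiplication by $\deg(\psi)$. This requires carefully tracking how $\Psi$, which covers $\psi$, acts on $\Aut_\L(T)$: since $\psi$ fixes $T$ setwise and $\Psi(\widehat g)=\widehat{\psi(g)}$ for $g\in N_S(T)$, and since the degree-$1$ hypothesis makes $\psi|_T=\id$, one gets that $\Psi$ acts on $\Aut_\L(T)$ by conjugation by $\Psi(\iota_T^S)$-restricted-to-$T$, whose $\pi$-image lies in $W$ and fixes $T$ pointwise. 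Once this is set up, faithfulness of $W\hookrightarrow\Aut(T)$ closes the argument immediately. I would expect the proof to be short, citing Remark \ref{R:mono-epi} for restrictions, axiom (C) of linking systems, and the weak-connectedness hypothesis to guarantee $T\in\F^c$ and $\psi(T)=T$.
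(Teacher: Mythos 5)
There is a genuine gap: the identification on which your whole argument rests is not established, and in the form you state it, it is false. You claim that $\Psi$ acts on $\Aut_\L(T)$ (and hence on $W$) by conjugation by the restriction to $T$ of $\Psi(\iota_T^S)$. But for \emph{any} algebraic Adams operation one has $\Psi(\iota_T^S)=\Psi(\widehat{e})=\widehat{\psi(e)}=\iota_T^S$, so that restriction is the identity of $\Aut_\L(T)$; your claim would then say that $\Psi$ acts trivially on $\Aut_\L(T)$, which already fails for the inner operations $\tau(\widehat{t})$ with $t\in T$ non-central (these act on $\Aut_\L(T)$ by conjugation by $\widehat{t}$, and $\delta_T$ is injective). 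Read instead at the level of $W$, saying ``the induced automorphism equals conjugation by $w_0=\pi(\bar\rho)=\id$'' is exactly the statement of the lemma, so the argument is circular there. The auxiliary lemmas you invoke also do not apply: Lemma \ref{lem_deg1_conj_T} requires $\psi\in\Aut_\F(S)$, and Lemma \ref{lem_deg1_operation_in_F} requires $(\Psi,\Id_S)\in\Ker(\gamma)$; a general $(\Psi,\psi)\in\Ad(\G)$ of degree $1$ satisfies neither (by \cite[Proposition 2.8]{JLL}, $\Ad^{\{\deg=1\}}(S)/\Aut_T(S)\cong H^1(S/T,T)$, which need not vanish, so a degree-one fusion-preserving $\psi$ need not be conjugation by an element of $T$, and there is in general no morphism of $\Aut_\L(S)$ ``controlling'' $\Psi$).

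The correct argument is much more direct and uses only the covering condition: weak connectedness makes $T$ an object of $\L$, and since $\psi(T)=T$, $\Psi$ restricts to an automorphism of $\Aut_\L(T)$. For $g\in\Aut_\L(T)$ the identity $\pi\circ\Psi=\psi_*\circ\pi$ gives, for every $t\in T$,
\[
\pi(\Psi(g))(t)=\psi\bigl(\pi(g)(\psi^{-1}(t))\bigr)=\pi(g)(t),
\]
because $\psi|_T=\id_T$ when $\deg(\psi)=1$. Hence $\Psi(g)$ and $g$ have the same image in $\Aut_\F(T)=W$, which is the assertion; no faithfulness argument, no element $w_0$, and no structural information about $\Psi$ beyond Definition \ref{def_algebraic_unstable_adams_operation}(a) is needed.
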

\begin{proof}
Notice first that $\Psi$ induces an automorphism of $\Aut_\L(T)$, and hence an automorphism of $W=\Aut_\F(T)$.
Set $G=\Aut_\L(T)$ and choose $g \in G$.
Since $\Psi$ covers $\psi_*$ and since $\psi$ is the identity on $T$, for any $t \in T$
\[
(\pi\circ \Psi(g))(t)=\psi_*(\pi(g))(t) = \psi(\pi(g)(\psi^{-1}(t))) =
\psi(\pi(g)(t)) =\pi(g)(t).
\]
Thus, $\Psi(g)$ and $g$ have the same image in $\Aut_\F(T)=W$.
\end{proof}

\begin{lem}\label{lem_h1wt=>deg_injective}
Let  $\G=\SFL$ be weakly connected and assume that $H^1(W,T)=0$.
Then \[\deg\colon\OutAd_{\fus}(S) \to\ZZ_p^\times/D(\F)\] is injective.
\end{lem}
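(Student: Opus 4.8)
The plan is to show that an element of $\OutAd_{\fus}(S)$ in the kernel of $\deg$ is represented by an Adams automorphism of degree $1$, and then argue that (modulo $\Aut_\F(S)$) such an automorphism must be inner, hence trivial in $\Out_{\fus}(S)$. So suppose $[\vp] \in \OutAd_{\fus}(S)$ with $\deg([\vp]) = 1$ in $\ZZ_p^\times/D(\F)$. By definition of $D(\F)$ as the image of the degree on $\Aut_\F(S) \cap \Ad(S)$, after composing $\vp$ with a suitable element of $\Aut_\F(S) \cap \Ad(S)$ we may assume $\vp \in \Ad_{\fus}(S)$ has degree exactly $1$; this does not change the class $[\vp]$. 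Thus it suffices to prove: if $\vp \in \Ad_{\fus}(S)$ and $\deg(\vp) = 1$, then $\vp \in \Aut_\F(S)\cdot\Inn(S)$ — equivalently $\vp \in \Aut_\F(S)$, since $\Inn(S) \le \Aut_\F(S)$.

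Next I would use Lemma~\ref{lem_surj_adg_to_adfuss}: the map $\Ad(\G) \to \Ad_{\fus}(S)$ is surjective, so lift $\vp$ to an algebraic operation $(\Psi,\vp) \in \Ad(\G)$ of degree $1$. Now I want to produce a natural isomorphism $\Id_\L \Rightarrow \Psi$, or equivalently (via the isomorphism $\Out_{\typ}(\L) \cong \Out(B\G)$ of \cite[Theorem 7.1]{BLO3}) to show that $\Psi$ and $\tau(\rho)$ agree for some $\rho \in \Aut_\L(S)$. This is exactly the argument structure of Lemma~\ref{lem_deg1_operation_in_F} / Lemma~\ref{lem_autlads_to_adg_exact_seq}, but here degree $1$ on $S$ is not enough on its own — the point of the hypothesis $H^1(W,T)=0$ is to rigidify the behaviour of $\Psi$ on $\Aut_\L(T)$. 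By Lemma~\ref{L:deg 1 implied idW}, $\Psi$ induces the identity on $W = \Aut_\F(T)$; combined with $\vp|_T = \id_T$ and the usual obstruction-theoretic identification of self-equivalences of $\L$ restricting the identity on $T$-data, the vanishing $H^1(W,T) = 0$ should force the automorphism of $\Aut_\L(T) = T \rtimes W$-type object induced by $\Psi$ to be conjugation by an element of $T$, and hence (propagating through the linking system via restrictions, as $T$ is $\F$-centric by weak connectivity) force $\Psi = \tau(\hat t)$ for some $t \in T$. Then $\gamma(\Psi,\vp) = \pcomp{|\tau(\hat t)|}$ is homotopic to the identity, so by Lemma~\ref{lem_autlads_to_adg_exact_seq}, $(\Psi,\vp) = \tau(\rho)$ for some $\rho \in \Aut_\L^{\Ad}(S)$; taking $\pi$ of $\rho$ shows $\vp \in \Aut_\F(S)$, hence $[\vp] = 1$ in $\OutAd_{\fus}(S)$, as required.

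The main obstacle — and the only place the hypothesis $H^1(W,T) = 0$ really enters — is the rigidity step: showing that a degree-$1$ operation $(\Psi,\vp)$ with $\vp = \id_T$ and $\Psi$ inducing $\id_W$ is necessarily $\tau(\hat t)$ for some $t \in T$. Concretely, one restricts $\Psi$ to $\Aut_\L(T)$, which is an extension of $W$ by $T$ (via $\delta_T$); an isotypical self-equivalence fixing $T$ pointwise and inducing $\id_W$ differs from the identity by a derivation $W \to T$, i.e. an element of $Z^1(W,T)$, and the coboundary ambiguity corresponds to conjugation by $T$. Hence $H^1(W,T) = 0$ says every such self-equivalence of $\Aut_\L(T)$ is conjugation by an element of $T$; one then checks this local statement propagates to all of $\L$ using the restriction maps (Remark~\ref{R:mono-epi}) and the fact that $T$ is $\F$-centric and that $\Aut_\F(S) \cap \Ad^1(S)$ consists of $T$-conjugations (Lemma~\ref{lem_deg1_conj_T}). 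I would expect this to be spelled out via Lemmas~\ref{lem_dft_ses} and \ref{lem_autlads_to_adg_exact_seq}: indeed Lemma~\ref{lem_dft_ses} already packages the consequence of $H^1(W,T)=0$ that $\ker(\deg\colon \Aut_\L^{\Ad}(S) \to D(\F)) = \delta(T)$, which is precisely what makes $\overline\gamma$ restrict to an isomorphism on the kernels of the two degree maps and yields injectivity of $\deg$ on $\OutAd_{\fus}(S)$.
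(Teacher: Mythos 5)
Your first half matches the paper's argument: reduce to $\deg(\psi)=1$ by composing with an element of $\Aut_\F(S)\cap\Ad(S)$, lift via Lemma~\ref{lem_surj_adg_to_adfuss} to $(\Theta,\theta)\in\Ad(\G)$, note by Lemma~\ref{L:deg 1 implied idW} that $\Theta$ induces the identity on $W$, and use $H^1(W,T)=0$ to conclude that the induced automorphism $\al$ of $\Aut_\L(T)$ (which fixes $\de_T(T)$ pointwise and induces $\id_W$ on $\Aut_\L(T)/\de_T(T)\cong W$) is conjugation by $\widehat t$ for some $t\in T$; the paper gets this via \cite[Lemma 2.2]{JLL}. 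The gap is your final step: the claim that this local statement ``propagates'' to force $\Psi=\tau(\widehat t)$ on all of $\L$. That is unjustified and stronger than what is available. A morphism in $\L(P,Q)$ between arbitrary $\F$-centric subgroups does not restrict to $T$ (neither $P\supseteq T$ nor invariance of $T$ under $\pi(\vp)$ holds), so the restriction mechanism of Remark~\ref{R:mono-epi} gives no way to transport the rigidity at $\Aut_\L(T)$ to the whole category; and such global rigidity would force every degree-one operation covering $\id_S$ to be inner, hence in $\ker\gamma$, which is precisely the kind of statement the paper cannot prove at $p=2$ --- this is why $\higherlim{\O(\F^c)}{1}\Z$ persists in Proposition~\ref{prop_degree_determines_operation} even under the hypothesis $H^1(W,T)=0$. (Your closing remark is also off target: the identification $\ker\bigl(\deg\colon\Aut_\L^{\Ad}(S)\to D(\F)\bigr)=\de(T)$ is Lemma~\ref{lem_t_to_autlads_ato_df} and uses only weak connectivity, so it cannot by itself account for the injectivity, which genuinely needs $H^1(W,T)=0$.)

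The repair is short, and it is exactly how the paper concludes: you never need to control the functor $\Theta$, only the automorphism $\theta$ of $S$. Since $S=N_S(T)$, every $s\in S$ gives $\widehat s\in\Aut_\L(T)$, and because $\Theta$ covers $\theta$ one has $\widehat{\theta(s)}=\Theta(\widehat s)=\al(\widehat s)=\widehat{tst^{-1}}$; injectivity of $\de$ then yields $\theta=c_t\in\Inn(S)\le\Aut_\F(S)$, hence $\psi=\vp\circ c_t\in\Aut_\F(S)\cap\Ad(S)$ and $[\psi]=1$ in $\OutAd_{\fus}(S)$. This also makes your detour through $\gamma$ and Lemma~\ref{lem_autlads_to_adg_exact_seq} unnecessary.
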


\begin{proof}
Consider $\psi \in \Ad_{\fus}(S)$ such that $\deg(\psi) \in\D(\F)$.
Then there exists $\vp \in \Aut_\F(S) \cap \Ad(S)$ such that $\deg(\psi)=\deg(\vp)$.
Set $\theta = \vp^{-1} \circ \psi$.
Clearly $\theta \in \Ad^1(S)$ and it is fusion preserving.
By Lemma \ref{lem_surj_adg_to_adfuss} there exists $(\Theta,\theta) \in \Ad(\G)$.
Let $\al$ be the automorphism that $\Theta$ induces on $\Aut_\L(T)$.
Clearly, $\al$ is the identity on $T \leq \Aut_\L(T)$ since $\deg(\theta)=1$.
By Lemma \ref{L:deg 1 implied idW}, $\al$ induces the identity on $W=\Aut_\L(T)/T$.
Since we assume that $H^1(W,T)=0$, \cite[Lemma 2.2]{JLL} implies that $\al$ is conjugation by some $t \in T \leq G$.
Now, $\Theta$ is an unstable Adams operation, so for any $s \in S=N_S(T)$
\[
\widehat{\theta(s)}=\Theta(\widehat{s})=\al(\widehat{s})=\widehat{tst^{-1}}.
\]
This shows that $\theta=c_t$ and therefore $\psi=\vp \circ c_t \in \Aut_\F(S) \cap \Ad(S)$.
It follows that the kernel of $\deg$ is trivial.
\end{proof}

We now restate and prove Proposition \ref{prop_degree_determines_operation}.
\begin{prop}\label{prop_degree_determines_operation_in_sec}
Suppose that $\G=(S,\F,\L)$ is weakly connected and let $W=\Aut_\F(T)$ be its Weyl group.
If $H^1(W,T)=0$ then $\OutAd_\fus(S) \xto{\deg} \ZZ_p^\times/D(\F)$ is injective and there are exact sequences
\begin{eqnarray*}
(1) && 1 \to \higherlim{\O(\F^c)}{1} \Z \to \Ad^{\geom}(\G) \xto{\quad \deg \quad} \ZZ_p^\times/D(\F) \\
(2) && 1 \to \higherlim{\O(\F^c)}{1} \Z \to \Ad(\G)/\Ad(\G)_0 \xto{\quad \deg \quad} \ZZ_p^\times 
\end{eqnarray*}
If in addition $p\neq 2$, then the degree maps in (1) and (2) are injective. 
\end{prop}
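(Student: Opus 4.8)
The heart of the matter is the injectivity of $\deg\colon\OutAd_\fus(S)\to\ZZ_p^\times/D(\F)$, which is exactly Lemma~\ref{lem_h1wt=>deg_injective} and follows from the hypothesis $H^1(W,T)=0$ via \cite[Lemma~2.2]{JLL}. I would record this first, and then read off the two exact sequences by combining it with the structural results already established. For sequence~(1): Proposition~\ref{prop_adgeom_outadfus_in_sec} gives the short exact sequence
\[
1\to \higherlim{\O(\F^c)}{1}\Z \to \Ad^\geom(\G)\to \OutAd_\fus(S)\to 1,
\]
and composing the surjection onto $\OutAd_\fus(S)$ with the now-injective degree map gives a map $\Ad^\geom(\G)\xto{\deg}\ZZ_p^\times/D(\F)$ whose kernel is precisely the image of $\higherlim{\O(\F^c)}{1}\Z$; since that group injects into $\Ad^\geom(\G)$, this yields exactly sequence~(1). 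For sequence~(2): Theorem~\ref{thm_geom_vs_alg_uao_in_sec}(ii) gives
\[
1\to D(\F)\to \Ad(\G)/\Ad(\G)_0 \xto{\bar\ga} \Ad^\geom(\G)\to 1,
\]
and the degree map on $\Ad(\G)/\Ad(\G)_0$ factors as $\bar\ga$ followed by $\deg\colon\Ad^\geom(\G)\to\ZZ_p^\times/D(\F)$, but lifted to $\ZZ_p^\times$; chasing the diagram, the kernel of $\Ad(\G)/\Ad(\G)_0\xto{\deg}\ZZ_p^\times$ maps isomorphically onto $\ker(\Ad^\geom(\G)\to\ZZ_p^\times/D(\F))=\higherlim{\O(\F^c)}{1}\Z$, because $D(\F)$ is exactly the ambiguity between the two targets $\ZZ_p^\times$ and $\ZZ_p^\times/D(\F)$. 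This gives sequence~(2).

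For the final clause, when $p\neq 2$: by \cite[Theorem B]{LL} (cited in the proof of Proposition~\ref{prop_adgeom_outadfus_in_sec}) one has $\higherlim{\O(\F^c)}{1}\Z=0$ for odd $p$, so in both exact sequences the left-hand term vanishes and the degree maps become injective. I would state this as an immediate corollary of the two sequences rather than reproving anything.

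The step I expect to require the most care is the diagram chase for sequence~(2), specifically verifying that the preimage in $\Ad(\G)/\Ad(\G)_0$ of $\higherlim{\O(\F^c)}{1}\Z\subseteq\Ad^\geom(\G)$ meets the $D(\F)$-coset structure correctly—i.e. that an element of $\Ad(\G)/\Ad(\G)_0$ of degree $1$ in $\ZZ_p^\times$ maps into (and onto) $\higherlim{\O(\F^c)}{1}\Z$, using that $D(\F)$ is generated by degrees realized inside $\Aut_\F(S)$ and hence becomes trivial in $\Ad^\geom(\G)$ via $\bar\ga$. Everything else is assembling already-proven short exact sequences, so the proof should be short; the substance is entirely in Lemma~\ref{lem_h1wt=>deg_injective}, which I would simply invoke.
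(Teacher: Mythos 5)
Your proposal is correct and follows essentially the paper's own route: sequence (1) is obtained exactly as in the paper from Proposition \ref{prop_adgeom_outadfus_in_sec} together with Lemma \ref{lem_h1wt=>deg_injective}, and your diagram chase for sequence (2) is the same argument the paper carries out via the snake lemma applied to the commutative diagram comparing $1 \to \Aut_\L^{\Ad}(S)/T \to \Ad(\G)/\Ad(\G)_0 \to \Ad^{\geom}(\G) \to 1$ with $1 \to D(\F) \to \ZZ_p^\times \to \ZZ_p^\times/D(\F) \to 1$, where the left vertical degree map is an isomorphism by Lemma \ref{lem_t_to_autlads_ato_df} (this is precisely the compatibility of the $D(\F)$-copy with the degree map that you correctly identified as the delicate point). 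The $p\neq 2$ clause is handled identically, via the vanishing of $\higherlim{\O(\F^c)}{1}\Z$ from \cite{LL}.
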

\begin{proof}
The exact sequence (1) follows from the exact sequence in Proposition \ref{prop_adgeom_outadfus} and Lemma \ref{lem_h1wt=>deg_injective}.
Lemma \ref{lem_autlads_to_adg_exact_seq} together with Lemma \ref{lem_t_to_autlads_ato_df} and the fact that the image of $T \xto{\tau} \Ad(\G)$ is $\Ad(\G)_0$ by Theorem \ref{thm_geom_vs_alg_uao}, give rise to the following commutative diagram with short exact rows
\[
\xymatrix{
1 \ar[r] & 
\Aut_\L^{\Ad}(S)/T \ar[d]_{\cong}^{\deg} \ar[r]^{\tau} &
\Ad(\G)/\Ad(\G)_0 \ar[d]^{\deg} \ar[r]^{\bar{\ga}} &
\Ad^{\geom}(\G) \ar[d]^{\deg} \ar[r] &
1
\\
1 \ar[r] &
D(\F) \ar[r] &
\ZZ_p^\times \ar[r] &
\ZZ_p^\times/D(\F) \ar[r] &
1
}
\]
The snake lemma together with the exact sequence (1) yield the exact sequence (2).
If $p\neq 2$ then by \cite{LL} the group $\varprojlim^1 \Z$ vanishes and therefore the degree maps in (1) and (2) are injective.
\end{proof}

We are led to find conditions under which $H^1(W,T)=0$.
Recall that $U_p \leq \ZZ_p^\times$ is the torsion subgroup in the group of $p$-adic units.
It acts in the natural way on any discrete $p$-torus $T$ via central automorphisms.

\begin{lem}\label{lem_HUZp=0_on_natural_module}
Let $1\neq G\le U_p$ be a subgroup which acts on a discrete $p$-torus $T$ of rank $r \geq 1$ in the natural way. 
If $p \neq 2$ then $H^{n}(G,T)=0$ for all $n\geq 0$.
If $p=2$ then $H^{2n}(G,T)=(\ZZ/2)^r$ and $H^{2n+1}(G,T)=0$ for all $n \geq 0$.
\end{lem}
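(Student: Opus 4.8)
Since $T\cong\bigoplus^r\ZZ/p^\infty$ as a $G$-module splits as a direct sum of $r$ copies of the rank-one module $\ZZ/p^\infty$ with the natural $U_p$-action, it suffices to treat the case $r=1$; the general case follows by the additivity of group cohomology in the coefficients. So the plan is to compute $H^n(G,\ZZ/p^\infty)$ where $G$ is a nontrivial finite subgroup of $U_p$ acting via multiplication. Recall that $U_p$ is cyclic of order $p-1$ when $p$ is odd and cyclic of order $2$ when $p=2$ (by \cite[Sec. 6.7, Prop. 1,2]{Ro}, as already quoted in the excerpt).

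\textbf{Odd $p$.} Here $G$ is cyclic of order $d$ dividing $p-1$, in particular $d$ is prime to $p$, so $|G|$ acts invertibly on the $p$-group $\ZZ/p^\infty$. A standard transfer/averaging argument then gives $\widetilde{H}^n(G,\ZZ/p^\infty)=0$ for all $n\ge 1$ (the composite $\mathrm{res}\circ\mathrm{cor}$ is multiplication by $|G|$, which is an isomorphism on a $\ZZ[1/|G|]$-module), and the invariants $H^0(G,\ZZ/p^\infty)=(\ZZ/p^\infty)^G$ vanish because $G$ is generated by a root of unity $\zeta\ne 1$, and $(\zeta-1)$ is a unit in $\ZZ_p$ (its reduction mod $p$ is a nonzero element of $\FF_p$), hence acts invertibly — actually bijectively — on $\ZZ/p^\infty$, forcing the fixed points to be $0$. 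This yields $H^n(G,T)=0$ for all $n\ge 0$.

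\textbf{The case $p=2$.} Now $G=U_2=\{\pm 1\}=C_2$ acting on $M:=\ZZ/2^\infty$ by $x\mapsto -x$. I would use the standard $2$-periodic resolution for $C_2$: with generator $\sigma$, the Tate/ordinary cohomology is computed from the complex $M\xrightarrow{\sigma-1}M\xrightarrow{\sigma+1}M\xrightarrow{\sigma-1}\cdots$. Here $\sigma-1$ acts as multiplication by $-2$ and $\sigma+1$ acts as $0$ on $M=\ZZ/2^\infty$. So one computes: $H^0=\ker(\sigma-1)=\ker(\text{mult. by }2)=\ZZ/2$; for $n\ge 1$ odd, $H^n=\ker(\sigma+1)/\mathrm{im}(\sigma-1)=M/2M=\ZZ/2$ — wait, one must be careful which map is in which degree. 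With the standard indexing $H^{2k}(C_2,M)=M^{C_2}/N M$ and $H^{2k+1}(C_2,M)=\ker N/(\sigma-1)M$ for $k\ge 1$, where $N=1+\sigma$: here $M^{C_2}=\ker(2\cdot)=\ZZ/2$, $NM=0$, so $H^{2k}=\ZZ/2$ for $k\ge 0$; and $\ker N=\ker 0=M$ while $(\sigma-1)M=2M=M$ (divisibility of $M$!), so $H^{2k+1}=M/M=0$ for $k\ge 1$. This matches the claimed $H^{2n}(G,T)=(\ZZ/2)^r$, $H^{2n+1}(G,T)=0$. The one genuine subtlety to get right is that $G$ may a priori be any nontrivial subgroup of $U_p$, and when $p=2$ the only such subgroup is $U_2$ itself, so there is no case split beyond $p=2$ versus $p$ odd.

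\textbf{Main obstacle.} There is no real obstacle — the argument is elementary. The only point requiring a little care is the bookkeeping in the $p=2$ periodic complex (which differential sits in which degree, and using divisibility of $\ZZ/2^\infty$ to see $2M=M$); I would state the periodic resolution explicitly to avoid an off-by-one in the parity. For odd $p$ the key observation to record cleanly is that $\zeta-1\in\ZZ_p^\times$ for a nontrivial root of unity $\zeta$, which kills both the invariants and (via the order of $G$ being a unit mod $p$) all higher cohomology.
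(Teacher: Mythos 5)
Your proof is correct and follows essentially the same route as the paper: for odd $p$ the coprimality of $|G|$ and $p$ kills the higher cohomology and the invertibility of $\zeta-1$ in $\ZZ_p$ kills the invariants, while for $p=2$ one computes with the standard $2$-periodic resolution of $C_2$, using divisibility of $\ZZ/2^\infty$ to get $2M=M$ in odd degrees. The only blemish is the momentary miscount $M/2M=\ZZ/2$ in odd degrees, which you yourself catch and correct; in a final writeup just state the periodic cochain complex $T\xrightarrow{\cdot 2}T\xrightarrow{0}T\xrightarrow{\cdot 2}\cdots$ explicitly (as the paper does) so the parity bookkeeping is unambiguous, noting that the same formula also covers $H^1$.
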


\begin{proof}
If $p \neq 2$ then $U_p\cong C_{p-1}$ and $T$ is $p$-torsion, hence $H^{i}(G,T)=0$ for $i\geq 1$.
Choose some $1 \neq \zeta \in G$.
Then $\zeta  \neq 1\mod p$, so $\zeta-1$ is an invertible element in $\ZZ_p$ and therefore it must act without fixed points on $T$.
This shows that $H^0(G,T)=0$.

Now suppose that $p=2$.
Then $G=U_p=C_2$.
If $t \in G$ is the non-trivial element, a projective resolution of $\ZZ$ is given by
\[
\dots \to \ZZ[G] \xto{1-t} \ZZ[G] \xto{1+t} \ZZ[G] \xto{1-t} \ZZ[G] \to \ZZ.
\]
Applying $\Hom_{\ZZ G}(-,T)$, one obtains the cochain complex
\[
\cdots\to T \xto{ \cdot 2} T \xto{0} T \xto{\cdot 2} T \xto{0} \cdots
\]
whose homology groups are $H^*(G,T)$.
The description of $H^*(G,T)$ now follows.
\end{proof}

We recall that $\QQ_p=\ZZ_p \otimes \ZZ[\frac{1}{p}]$.
We obtain a short exact sequence 
\[
1 \to \ZZ_p \to \QQ_p \to \ZZ/p^\infty \to 1.
\]
We also recall that $\Aut(\ZZ/p^\infty) \cong \ZZ_p^\times$ where every $\zeta \in \ZZ_p$ acts on $\ZZ/p^k \subseteq \ZZ/p^\infty$ via multiplication by $\zeta \mod p^k$.
Using the inclusion $\ZZ[\frac{1}{p}] \leq \QQ_p$ it is easy to check that for any $\zeta \in \ZZ_p^\times$, multiplication by $\zeta$ in $\QQ_p$ induces the automorphism $\zeta$ on $\ZZ_p$.
Thus the sequence is a short exact sequence of $\ZZ_p^\times$-modules.
More generally, if $T$ is a discrete $p$-torus of rank $r>0$, any decomposition $T \cong \oplus_r \ZZ/p^\infty$ gives rise to a short exact sequence of $\ZZ_p$-modules
\begin{equation}\label{eq_assoc_lvt} 
1 \to L \to V \to T \to 1
\end{equation}
where $V=\oplus_r \QQ_p$ and $L=\oplus_r \ZZ_p$.
Also, $\Aut(T) \cong \GL_r(\ZZ_p)$ which acts naturally on $L$ and $V$, and the sequence becomes a short exact sequence of $\GL_r(\ZZ_p)$-modules.
We notice that different decompositions of $T$ give rise to isomorphic $\GL_r(\ZZ_p)$-modules $V$ and $L$.

\begin{lem}\label{lem_HWL_dim_shift}
Let $T$ be a discrete $p$-torus, and let $G\le\Aut(T) \cong \GL_r(\ZZ_p)$ be a finite subgroup. 
Let $V$ and $L$  be the associated $\QQ_p$ representation and the corresponding integral lattice respectively as above.  
Then 
\[
H^i(G,L) \cong \begin{cases}
 H^{i-1}(G,T) & i>1,\\
 \Coker(H^0(G,V)\to H^0(G,T)) & i=1
 \end{cases}
\]
\end{lem}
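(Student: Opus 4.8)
The plan is to use the long exact sequence in cohomology associated to the short exact sequence of $G$-modules \eqref{eq_assoc_lvt}, namely $1 \to L \to V \to T \to 1$, and to exploit the fact that $V = \oplus_r \QQ_p$ is a $\QQ$-vector space on which the \emph{finite} group $G$ acts. First I would write down the long exact sequence
\[
\cdots \to H^{i-1}(G,V) \to H^{i-1}(G,T) \to H^i(G,L) \to H^i(G,V) \to \cdots
\]
The key observation is that $H^i(G,V) = 0$ for all $i \geq 1$: since $G$ is finite and $V$ is a module over $\QQ$ (indeed over $\QQ_p \supseteq \QQ$), multiplication by $|G|$ is an isomorphism on $V$, while a standard transfer argument shows $|G|$ annihilates $H^i(G,V)$ for $i \geq 1$; hence $H^i(G,V)=0$ there.

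With this vanishing in hand, for $i > 1$ the long exact sequence forces the connecting homomorphism $H^{i-1}(G,T) \to H^i(G,L)$ to be an isomorphism, since it is sandwiched between $H^{i-1}(G,V)=0$ and $H^i(G,V)=0$. This gives the first case $H^i(G,L) \cong H^{i-1}(G,T)$ for $i>1$. For $i=1$, the relevant segment of the long exact sequence reads
\[
H^0(G,V) \to H^0(G,T) \to H^1(G,L) \to H^1(G,V) = 0,
\]
so $H^1(G,L)$ is the cokernel of the map $H^0(G,V) \to H^0(G,T)$ induced by $V \twoheadrightarrow T$, which is exactly the second case in the statement.

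I do not anticipate a genuine obstacle here: the only point requiring a moment's care is the justification that $H^i(G,V)=0$ for $i\geq 1$, which rests on combining the transfer/restriction argument (the composite $\mathrm{res} \circ \mathrm{cor}$, or rather $\mathrm{cor}\circ\mathrm{res}$, is multiplication by $|G|$, and $H^i$ of the trivial subgroup vanishes for $i\geq 1$) with divisibility of $V$; alternatively one can invoke Maschke's theorem to split $V$ as a $\QQ[G]$-module and note cohomology of a finite group with coefficients in a $\QQ$-vector space vanishes in positive degrees. Everything else is a formal extraction of two segments from the long exact sequence, so the write-up should be short. One should also note that $V$ and $L$ are well-defined up to isomorphism of $\GL_r(\ZZ_p)$-modules, as recorded just before the statement, so the conclusion is independent of the chosen decomposition of $T$.
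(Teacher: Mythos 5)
Your argument is correct and is exactly the paper's proof: the paper also observes that $H^i(G,V)=0$ for $i>0$ since $G$ is finite and $V$ is a rational vector space, and then cites the standard dimension-shifting argument, which is precisely your extraction of the two segments of the long exact sequence for $1\to L\to V\to T\to 1$. No differences worth noting.
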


\begin{proof}
Since $G$ is finite and $V$ is a rational vector space, $H^i(G,V)=0$ for all $i>0$. Thus the claim follows at once from the standard dimension shifting argument (see for instance \cite[III.7]{Br}).
\end{proof}

Recall that an element $w \in \GL_r(\QQ_p)$ is called a pseudo-reflection if $\rk(w-1)=1$, namely $w$ fixes a hyperplane of dimension $r-1$.
A subgroup $W \leq \GL_r(\QQ_p)$ is called a pseudo-reflection group if it is generated by pseudo-reflections.

We notice that $\Ad(S) \xto{  \ \deg \ } \ZZ_p^\times \hookrightarrow Z(\Aut(T))$ is a factorisation of the restriction map $\Ad(S) \xto{ \ \vp \mapsto \vp|_T \ } \Aut(T)$.
In light of \eqref{def_DF}, we see that $\D(\F)$ can be identified with a subgroup of $Z(\Aut_\F(T))$ which acts in the natural way on $T$ considered as a subgroup of $U_p \leq \ZZ_p^\times$.

\begin{prop}\label{prop_H1W_vanish}
Let $\F$ be a weakly connected saturated fusion system and let $W=\Aut_\F(T) \leq \GL_r(\ZZ_p)$ be its Weyl group.
Assume that either one of the following conditions holds:
\begin{enumerate}[(a)]
\item
$p$ is odd and $D(\F) \neq 1$, or

\item $p=2$, $D(\F)\neq 1$ and $H^1(W/D(\F), T^{D(\F)})=0$, or

\item
$p$ is odd and the Weyl group $W$ is a pseudo-reflection group.
\end{enumerate}
Then $H^1(W,T)=0$.
\end{prop}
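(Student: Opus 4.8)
The plan is to handle the three cases separately, reducing each to a cohomological vanishing statement that can be attacked with the dimension-shifting machinery of Lemma \ref{lem_HWL_dim_shift} together with the explicit computations of Lemmas \ref{lem_HUZp=0_on_natural_module} and \ref{L:coholology G with T}. Throughout, write $Z = D(\F) \leq Z(W)$, viewed as a subgroup of $U_p$ acting on $T$ by scalar multiplication, as observed just before the statement.

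For case (a), $p$ odd with $Z \neq 1$: since $Z$ is central in $W$, the Lyndon--Hochschild--Serre spectral sequence for $1 \to Z \to W \to W/Z \to 1$ has $E_2^{s,t} = H^s(W/Z, H^t(Z,T))$. By Lemma \ref{lem_HUZp=0_on_natural_module}, $H^t(Z,T) = 0$ for all $t \geq 0$ because $1 \neq Z \leq U_p$ acts on the $p$-torsion module $T$ with no fixed points (and higher cohomology of the cyclic $p'$-group $Z$ on a $p$-torsion module vanishes). Hence the whole $E_2$ page is zero, so $H^*(W,T) = 0$; in particular $H^1(W,T)=0$. Case (b), $p=2$ with $Z \neq 1$ and $H^1(W/Z, T^Z) = 0$: now $Z = U_2 = C_2$ and by Lemma \ref{lem_HUZp=0_on_natural_module} we have $H^{2n}(Z,T) = (\ZZ/2)^r = T^Z$ and $H^{\text{odd}}(Z,T)=0$. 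The same spectral sequence for $1 \to Z \to W \to W/Z \to 1$ now has $E_2^{s,t}$ concentrated in even rows $t$, where it equals $H^s(W/Z, T^Z)$ (the action of $W/Z$ on $H^t(Z,T)$ being the residual action on $T^Z$). The contributions to $H^1(W,T)$ come from $E_\infty^{1,0}$ and $E_\infty^{0,1}$; the latter sits in a zero row ($t=1$), and the former is a subquotient of $E_2^{1,0} = H^1(W/Z, T^Z) = 0$ by hypothesis. Therefore $H^1(W,T) = 0$.

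For case (c), $p$ odd with $W$ a pseudo-reflection group, I would pass to the associated integral lattice $L = \oplus_r \ZZ_p$ with its $\GL_r(\ZZ_p)$-action and its rational envelope $V = \oplus_r \QQ_p$, as in \eqref{eq_assoc_lvt}. By Lemma \ref{lem_HWL_dim_shift}, $H^1(W,T) \cong H^2(W,L)$ (using $r \geq 1$, which holds by weak connectedness, and noting $H^1(W,T) = H^2(W,L)$ falls in the range $i > 1$ after the shift, or more precisely $H^2(W,L)\cong H^1(W,T)$). So it suffices to show $H^2(W,L) = 0$ for a finite pseudo-reflection group $W \leq \GL_r(\ZZ_p)$ acting on its natural lattice. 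This is precisely the reinterpretation of Andersen's result \cite[Theorem 3.3]{An99} alluded to in the remark following the proposition statement: Andersen proves that for a finite $\ZZ_p$-reflection group the relevant second cohomology of the lattice vanishes (for $p$ odd, where the reflection group has order prime to $p$ or is otherwise controlled by the reflection arrangement). I would cite this and spell out the translation between Andersen's formulation (phrased for $p$-compact group Weyl data) and the statement $H^2(W,L)=0$ needed here.

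The main obstacle I anticipate is case (c): cases (a) and (b) are essentially formal once the central-subgroup spectral sequence is set up and the cohomology of $Z$ on $T$ is known, but case (c) genuinely relies on an external input, and the real work is making the dimension-shift identification $H^1(W,T)\cong H^2(W,L)$ precise and then correctly matching $H^2(W,L)=0$ with the exact hypotheses and conclusion of \cite[Theorem 3.3]{An99} — in particular checking that ``pseudo-reflection group'' in our sense corresponds to Andersen's hypotheses and that no extra $p$-divisibility condition sneaks in for $p$ odd. One should also double-check that $H^2(W,V) = 0$ (immediate, $W$ finite and $V$ rational) is being used correctly in the long exact sequence of \eqref{eq_assoc_lvt} so that the connecting map $H^1(W,T) \to H^2(W,L)$ is indeed an isomorphism in the relevant degree.
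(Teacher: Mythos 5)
Your proposal is correct and follows essentially the same route as the paper: the Lyndon--Hochschild--Serre spectral sequence for the central extension $1 \to D(\F) \to W \to W/D(\F) \to 1$ combined with Lemma \ref{lem_HUZp=0_on_natural_module} handles cases (a) and (b), and case (c) is exactly the dimension shift of Lemma \ref{lem_HWL_dim_shift} identifying $H^1(W,T)\cong H^2(W,L)$ followed by Andersen's vanishing theorem \cite[Theorem 3.3]{An99} applied to the faithful action of $W$ on the lattice $L$. The only point the paper makes that you flag as a worry is the applicability of Andersen's result, which the paper settles simply by noting that $W\leq\GL_r(\ZZ_p)$ acts faithfully on $L$.
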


\begin{proof} 
By the remarks above there is a central extension
\[
1\to D(\F)\to W\to W/D(\F)\to 1
\]
and $D(\F) \leq U_p$ acts on $T$ in the natural way.
The associated Lyndon-Hochschild-Serre spectral sequence takes the form
\[
E_2^{i,j} \ = \ H^i\Big(\,W/D(\F)\, ,\, H^j(D(\F),T)\,\Big) \Rightarrow H^{i+j}(W,T).
\]
If $p\neq 2$ and $D(\F) \neq 1$ then Lemma \ref{lem_HUZp=0_on_natural_module} implies that $H^j(D(\F),T)=0$ for all $j\geq 0$, and hence that $E^{i,j}=0$ for all $i,j$.
Thus $H^k(W,T)=0$ for all $k\geq 0$. 

If $p=2$ and $D(\F) \neq 1$, then $E_2^{i,j} = 0 $ for $j$ odd since in this case $H^j(D(\F), T)=0$ by Lemma \ref{lem_HUZp=0_on_natural_module}.
Hence the only, potentially non-zero contribution to $H^1(W,T)$ comes from $H^1(W/D(\F), H^0(D(\F),T)) = H^1(W/D(\F), T^{D(\F)})$ which vanishes by hypothesis. Hence  $H^1(W,T)=0$.

Finally, suppose that $p \neq 2$ and $W \leq \Aut(T) \cong \GL_r(\ZZ_p)$ is a pseudo-reflection group.
Let $L$ and $V$ be the associated $p$-adic lattice and $\QQ_p$-vector space associated to $T$ as in \eqref{eq_assoc_lvt}.
This is a short exact sequence of $\ZZ_p[W]$-modules.
Now, $W \leq \GL_r(\ZZ_p)$ acts faithfully on $L$ so \cite[Theorem 3.3]{An99} applies and it follows that $H^2(W,L)=0$.
Hence, $H^1(W,T)=0$ by Lemma \ref{lem_HWL_dim_shift}.
\end{proof}

We end this section with a proof of Proposition \ref{stable} restated as \ref{sec_stable}.
This is a ``stabilisation'' result showing that given any two unstable Adams operations of the same degree, there is some $n$, such that their $n$-th powers (i.e., $n$-fold composition) give homotopic operations.

Let $S$ be a discrete $p$-toral group with maximal torus $T$.
The image of $x \in S$ under $\pi \colon S \to S/T$ will be denoted $\overline{x}$.
A section $\si \colon S/T \to S$ is a function such that $\pi \circ \si=\id_{S/T}$.

\begin{lem}\label{L:power Adams aut not 1}
Let $S$ be a discrete $p$-toral group and $1 \neq \zeta \in \Ga_1(p)$.
Suppose that $\psi_1,\psi_2 \in \Ad(S)$ have degree $\zeta$.
Then there exists some $r>0$ such that $\psi_1^{p^r} = \psi_2^{p^r}$.
\end{lem}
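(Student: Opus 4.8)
The plan is to analyze $\psi_1$ and $\psi_2$ in terms of their action on the maximal torus $T$ and on the set of components $S/T$, using the fact that both are \emph{normal} Adams automorphisms of the same degree $\zeta$. Since $\psi_1,\psi_2\in\Ad(S)$ have degree $\zeta$, both restrict to multiplication by $\zeta$ on $T$ and induce the identity on $S/T$. Consequently $\theta \defeq \psi_1^{-1}\circ\psi_2$ lies in $\Ad^1(S)$ and induces the identity on $S/T$; moreover it acts trivially on $T$. So the first step is to understand $\Ad^1(S)$ and, more precisely, the subgroup of degree-$1$ normal Adams automorphisms that are the identity on $T$. Fixing a section $\sigma\colon S/T\to S$, such an automorphism $\theta$ is determined by a function $d\colon S/T\to T$ with $\theta(\sigma(\bar x))=d(\bar x)\,\sigma(\bar x)$, and the condition that $\theta$ be a homomorphism translates into $d$ being a $1$-cocycle for the action of $S/T$ on $T$. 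Thus this subgroup is isomorphic to $Z^1(S/T;T)$, and in particular (by an argument as in Lemma~\ref{L:coholology G with T}, or directly since $S/T$ is a finite $p$-group acting on the discrete $p$-torus $T$) it has finite exponent, a power of $p$, say $p^a$.

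The trouble is that $\theta=\psi_1^{-1}\psi_2$ being a finite-order element of this abelian group does not by itself yield $\psi_1^{p^r}=\psi_2^{p^r}$, because $\psi_1$ and $\psi_2$ need not commute with $\theta$. So the second step is to control the conjugation action of $\Ad(S)$ on the normal subgroup $Z^1(S/T;T)$ (realised as above inside $\Ad(S)$), and to iterate. Here is where the hypothesis $\zeta\in\Gamma_1(p)$, $\zeta\neq 1$, enters: conjugating a cocycle $d$ by $\psi_1$ multiplies its values by (roughly) $\zeta$ — more precisely, since $\psi_1|_T$ is multiplication by $\zeta$ and $\psi_1$ is the identity on $S/T$, one computes $\psi_1 \theta_d \psi_1^{-1} = \theta_{d'}$ where $d'(\bar x) = \zeta\cdot d(\bar x)$. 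Because $\zeta\equiv 1\bmod p$ and $\zeta\neq 1$, we have $\zeta - 1 = p^b u$ with $u\in\ZZ_p^\times$ and $b\geq 1$. The key telescoping identity to establish is
\[
\psi_1^{-n}\psi_2^{n} \;=\; \theta_{c_n}\qquad\text{with } c_n = \sum_{i=0}^{n-1}\zeta^{i}\cdot d \;=\; \frac{\zeta^{n}-1}{\zeta-1}\cdot d ,
\]
proved by induction on $n$ using that $\psi_2 = \psi_1\theta_d$ and the conjugation formula above (all expressions make sense since $\zeta^n-1$ is divisible by $\zeta-1$ in $\ZZ_p$).

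The final step is a divisibility count. Taking $n=p^r$, the scalar $\frac{\zeta^{p^r}-1}{\zeta-1}$ is, by the standard valuation computation for $\ZZ_p^\times$ (as in \cite[Sec.~6.7]{Ro}), divisible by $p^{r}$. Hence $c_{p^r} = p^{r}\cdot(\text{unit})\cdot d$ as an element of $Z^1(S/T;T)\otimes\ZZ_p$, and once $r\geq a$ (with $p^a$ the exponent of $Z^1(S/T;T)$, equivalently the exponent of the relevant cocycle) we get $c_{p^r}=0$, i.e. $\psi_1^{p^r}=\psi_2^{p^r}$. I expect the main obstacle to be the bookkeeping in the second step: making the conjugation formula $\psi_1\theta_d\psi_1^{-1}=\theta_{\zeta d}$ precise with a fixed section $\sigma$ (the section need not be $\psi_i$-equivariant, so a priori there is a correction term, which one must check is itself a coboundary and can be absorbed), and then pushing the telescoping identity through cleanly. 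Once the group $Z^1(S/T;T)$ and the scalar action of $\zeta$ on it are pinned down, the valuation estimate is immediate, so the heart of the argument is purely the structure of degree-$1$ normal Adams automorphisms together with the $p$-adic arithmetic of $\Gamma_1(p)$.
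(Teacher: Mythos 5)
Your argument is correct, but it takes a genuinely different route from the paper's. The paper's proof starts from \cite[Lemma 2.6]{JLL}, which supplies for each $\psi_i$ a section $\si_i\colon S/T\to S$ with $\psi_i\circ\si_i=\si_i$ (this is where the hypothesis $\zeta\neq 1$ and the divisibility of $T$ get used); writing $x=\tau_i(x)\si_i(\overline{x})$ one obtains the closed formula $\psi_i^n(x)=\tau_i(x)^{\zeta^n}\si_i(\overline{x})$, hence $\psi_1^n(x)\psi_2^n(x)^{-1}=\de(\overline{x})^{1-\zeta^n}$ with $\de(\overline{x})=\si_1(\overline{x})\si_2(\overline{x})^{-1}\in T$, and the proof finishes by combining the finite order of the finitely many values $\de(\overline{x})$ with $p^r\mid 1-\zeta^{p^r}$. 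You instead encode the discrepancy in the single automorphism $\theta=\psi_1^{-1}\psi_2$ of the form $x\mapsto d(\overline{x})x$, prove the conjugation formula $\psi_1\theta_d\psi_1^{-1}=\theta_{\zeta\cdot d}$, and telescope; the same finiteness-plus-valuation count concludes. Your route avoids \cite[Lemma 2.6]{JLL} entirely (and, once the scalar is written as a geometric sum, never really uses $\zeta\neq 1$), at the price of the cocycle bookkeeping; the paper's invariant sections buy the especially clean exponent $1-\zeta^n$ in one line.

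Three small repairs, none affecting the outcome. First, the correction term you feared in the conjugation formula does not occur: since $\theta|_T=\id_T$ and $\theta$ induces the identity on $S/T$, one has $\theta(x)=d(\overline{x})\,x$ for \emph{every} $x\in S$ --- no section enters --- and then $\psi_1\theta\psi_1^{-1}(x)=\psi_1(d(\overline{x}))\,x=d(\overline{x})^{\zeta}x$ exactly. Second, the induction from $\psi_2=\psi_1\theta_d$ yields $c_n=\bigl(\sum_{i=0}^{n-1}\zeta^{-i}\bigr)\cdot d$ rather than $\bigl(\sum_{i=0}^{n-1}\zeta^{i}\bigr)\cdot d$; the two scalars differ by the unit $\zeta^{n-1}$, so the divisibility by $p^{r}$ for $n=p^{r}$ (which does hold for $p=2$ as well) is unaffected. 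Third, the assertion that $Z^1(S/T;T)$ has finite exponent is false in general: it contains $B^1(S/T;T)\cong T/(T\cap Z(S))$, a divisible group which is nontrivial whenever $T\not\leq Z(S)$, so $Z^1(S/T;T)$ typically has unbounded exponent. What you actually need --- and what is true, as your parenthetical fallback indicates --- is that the single cocycle $d$ has finite $p$-power order, since it takes finitely many values in the torsion group $T$; with $p^a$ that order, any $r\geq a$ works.
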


\begin{proof}
By \cite[Lemma 2.6]{JLL} there exist sections $\si_1,\si_2 \colon S/T \to S$ such that $\psi_i \circ \si_i=\si_i$ and such that $\psi_i(t\cdot \si_i(a))=t^\zeta\cdot \si_i(a)$ for any $a \in S/T$ and $t \in T$.
For any $x \in S$ set $\tau_i(x)=x\si_i(\overline{x})^{-1}$ and notice that $\tau_i(x) \in T$.
Then,
\[
\psi_i^n(x)=
\psi_i^n(x \si_i(\overline{x})^{-1}\si_i(\overline{x})) =
\tau_i(x)^{\zeta^n} \si_i(\overline{x}).
\]
Set $\de(\overline{x})=\si_1(\overline{x}) \si_2(\overline{x})^{-1}$ and notice that $\de(\overline{x}) \in T$.
In addition $\tau_2(x)^{-1} \tau_1(x) = \de(\overline{x})^{-1}$.
Hence, For any $n>0$
\begin{multline*}
\psi_1^n(x) \cdot \psi_2^n(x)^{-1} =
\tau_1(x)^{\zeta^n} \si_1(\overline{x}) \cdot \si_2(\overline{x})^{-1} \tau_2(x)^{-\zeta^n} =
\tau_1(x)^{\zeta^n} \cdot \delta(\overline{x}) \cdot \tau_2(x)^{-\zeta^n} =
\\
(\tau_2(x)^{-1} \tau_1(x))^{\zeta^n} \de(\overline{x}) =
\de(\overline{x})^{1-\zeta^n}.
\end{multline*}
Since $S/T$ is finite and $T$ is torsion, there exists some $r$ such that $p^r \de(\overline{x}) =1$ for every $x \in S$.
Also, $\zeta \in \Ga_1(p)$, namely $\zeta = 1 \mod p$, so by Fermat's little theorem $\zeta^{p^r}=1 \mod p^r$.
Therefore $\de(\overline{x})^{1-\zeta^{p^r}}=1$ for every $x \in S$, hence $\psi_1^{\zeta^{p^r}}(x)=\psi_2^{\zeta^{p^r}}(x)$.
\end{proof}

\begin{lem}\label{L:N finite normal in G}
Let $N$ be a finite normal subgroup of a group $G$.
Suppose that $g,h \in G$ are in the same coset of $N$.
Then there exists $1 \leq n \leq |N|$ such that $g^n=h^n$.
\end{lem}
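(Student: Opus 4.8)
The plan is to reduce everything to a pigeonhole argument inside the finite group $N$. First I would pass to the quotient $G/N$, which exists since $N$ is normal in $G$: the hypothesis that $g$ and $h$ lie in the same coset of $N$ says precisely that $g$ and $h$ have the same image in $G/N$, and therefore $g^{-k}h^k \in N$ for every $k \geq 0$. (Equivalently, writing $h = gn_0$ with $n_0 \in N$, a one-line induction using $g^{-1}Ng = N$ shows directly that $a_k := g^{-k}h^k \in N$ for all $k$; this is the only place normality of $N$ is used.)

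Next I would apply the pigeonhole principle to the $|N|+1$ elements $a_0, a_1, \dots, a_{|N|}$ of the $|N|$-element set $N$: there must exist indices $0 \leq i < j \leq |N|$ with $a_i = a_j$, that is, $g^{-i}h^i = g^{-j}h^j$. Multiplying this identity on the left by $g^{j}$ and on the right by $h^{-i}$ gives $g^{j-i} = h^{j-i}$. Putting $n = j-i$, we have $1 \leq n \leq |N|$ and $g^n = h^n$, which is exactly the desired conclusion.

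I do not anticipate any genuine obstacle here; the proof is essentially bookkeeping, and the only step that deserves a moment's care is the assertion $a_k \in N$ for all $k$, which is handled either by the quotient observation or by the short induction noted above. If a sharper statement were wanted, one could observe that $k \mapsto a_k$ is a $1$-cocycle $\ZZ \to N$ and that $g^n = h^n$ is equivalent to $a_n = 1$, so the set of valid exponents $n$ is closed under addition; but for the stated bound $1 \leq n \leq |N|$ this refinement is unnecessary.
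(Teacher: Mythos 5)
Your proof is correct and follows essentially the same route as the paper: both track the element of $N$ measuring the discrepancy between $g^k$ and $h^k$ (your $a_k=g^{-k}h^k$ is the inverse of the paper's $x_k$ defined by $g^k=h^kx_k$), apply pigeonhole to these $|N|+1$ elements of the finite set $N$, and cancel to get $g^{j-i}=h^{j-i}$ with $1\leq j-i\leq |N|$. The only cosmetic difference is that you justify $a_k\in N$ via the quotient $G/N$, whereas the paper runs the short induction directly.
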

\begin{proof}
By hypothesis $g=hx_1$ for some $x_1 \in N$.
Induction on $n$ shows that $(hx_1)^n=h^nx_n$ for some $x_n \in N$.
Since $N$ is finite there are $1 \leq k_1 < k_2 \leq |N|+1$ such that $x_{k_1}=x_{k_2}$, hence $g^{k_2} \cdot g^{-k_1} = h^{k_2}x_{k_2} x_{k_1}^{-1} h^{-k_1}=h^{k_2-k_1}$.
Set $n=k_2-k_1$ and the proof is complete.
\end{proof}

Recall the degree homomorphism $\deg \colon \Ad^{\geom}(\G) \to \ZZ_p^\times/\D(\F)$. (See Proposition \ref{prop_degree_determines_operation}).

\begin{prop}\label{sec_stable}
Let $\G$ be a weakly connected $p$-local compact group.
Let $f_1, f_2 \in \Ad^{\geom}(\G)$ be such that $\deg(f_1)=\deg(f_2)$ in $\ZZ_p^\times/\D(\F)$.
Then $(f_1)^{mk}=(f_2)^{mk}$ where $1 \leq k \leq |\higherlim{\O(\F^c)}{1}\mathcal{Z}|$ and $1 \leq m \leq |H^1(S/T,T)|$.
In fact, $m$ is a divisor of $(p-1)p^r$ for some $r \geq 0$.
\end{prop}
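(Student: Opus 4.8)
The plan is to reduce the geometric statement to an algebraic one about Adams automorphisms of $S$, using the structural results already established. First I would lift $f_1,f_2$ to algebraic operations: by Lemma \ref{lem_surj_adg_to_adfuss} the map $\Ad(\G)\to\Ad_\fus(S)$ is onto, and by the surjectivity of $\gamma$ (used throughout, e.g.\ Lemma \ref{lem_autlads_to_adg_exact_seq}) I can choose $(\Psi_i,\psi_i)\in\Ad(\G)$ with $\gamma(\Psi_i)=f_i$. Now $\psi_1,\psi_2\in\Ad_\fus(S)$, and the hypothesis $\deg(f_1)=\deg(f_2)$ in $\ZZ_p^\times/D(\F)$ means, after composing one of them with a suitable element of $\Aut_\F(S)\cap\Ad(S)$ and absorbing that into the lift (exactly as in the proof of Lemma \ref{lem_h1wt=>deg_injective}), that I may assume $\deg(\psi_1)=\deg(\psi_2)=:\zeta\in\ZZ_p^\times$.

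The next step is to stabilise the automorphisms of $S$. Write $\zeta=\omega\cdot\eta$ with $\omega\in U_p$ of order dividing $p-1$ and $\eta\in\Ga_1(p)$. Replacing $\psi_i$ by $\psi_i^{p-1}$ kills the $U_p$-part of the degree, so $\psi_1^{p-1}$ and $\psi_2^{p-1}$ are normal Adams automorphisms of the same degree $\zeta^{p-1}\in\Ga_1(p)$. If this common degree is $\neq 1$, Lemma \ref{L:power Adams aut not 1} gives $r>0$ with $(\psi_1^{p-1})^{p^r}=(\psi_2^{p-1})^{p^r}$; if it equals $1$, then $\psi_1^{p-1}$ and $\psi_2^{p-1}$ both lie in $\Ad^1(S)$, and by Proposition \ref{prop_outadfuss}(i) (combined with the exact sequence $1\to H^1(S/T;T)\to\Ad(S)/\Aut_T(S)\xto{\deg}\Ga_m(p)\to1$) their images in $\Ad(S)/\Aut_T(S)$ lie in the finite group $H^1(S/T;T)$, so some further power of bounded order makes them agree modulo $\Aut_T(S)$. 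Either way, setting $m=(p-1)p^r$ (a divisor of $(p-1)p^r$ by construction, and bounded by $|H^1(S/T,T)|$ in the degree-$1$ case via Lemma \ref{L:N finite normal in G} applied to the finite normal subgroup $H^1(S/T;T)$ of $\Ad(S)/\Aut_T(S)$) I obtain that $\psi_1^m$ and $\psi_2^m$ differ by an inner automorphism $c_t$, $t\in T$. The point is that $c_t$ lifts to $\tau(\hat t)\in\Inn_T(\G)\le\ker\gamma$, so $\gamma(\Psi_1^m)$ and $\gamma(\Psi_2^m)$ have underlying automorphism of $S$ equal in $\Out_\fus(S)$; replacing $\Psi_1$ by $\tau(\hat t)^{-1}\circ\Psi_1$ (which does not change $f_1^m$) I may assume $\psi_1^m=\psi_2^m$ outright, hence $f_1^m$ and $f_2^m$ have the same image under $\mu\colon\Out(B\G)\to\Out_\fus(S)$ of \eqref{def_blo3_mu}.

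The final step uses the exact sequence \eqref{eq_blo3_outbg_outfuss}: two elements of $\Out(B\G)$ with the same image in $\Out_\fus(S)$ differ by an element of $\higherlim{\O^c(\F)}{1}\Z$, which by Proposition \ref{P:finite limits Z} is a finite abelian $p$-group of order, say, dividing $k\le|\higherlim{\O^c(\F)}{1}\Z|$; since this subgroup is central in $\Out(B\G)$ one can raise to the $k$-th power to conclude $(f_1^m)^k=(f_2^m)^k$, i.e.\ $(f_1)^{mk}=(f_2)^{mk}$. The only genuinely delicate point is the bookkeeping in the second step: ensuring that the same $m$ works regardless of whether $\zeta^{p-1}$ is trivial, and that $m$ can be taken to be a divisor of $(p-1)p^r$ while also being bounded by $|H^1(S/T,T)|$ — this requires treating the degree-$1$ case through the finiteness of $H^1(S/T;T)$ rather than through Lemma \ref{L:power Adams aut not 1}, and noting that $H^1(S/T;T)$ is a $p$-group so its exponent is a power of $p$, making $m$ a divisor of $(p-1)p^r$ in that case too. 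Everything else is a direct application of the exact sequences already in hand.
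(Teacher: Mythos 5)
Your overall strategy coincides with the paper's: pass to the underlying classes of Adams automorphisms of $S$ via the exact sequence of Proposition \ref{prop_adgeom_outadfus}, normalise so that $\deg(\psi_1)=\deg(\psi_2)$ using $D(\F)=\deg(\Aut_\F(S)\cap\Ad(S))$, replace by $(p-1)$-st powers and split into the cases $\zeta^{p-1}\neq 1$ (handled by Lemma \ref{L:power Adams aut not 1}) and $\zeta^{p-1}=1$ (handled by the finiteness and $p$-power exponent of $H^1(S/T;T)\cong \Ad^1(S)/\Aut_T(S)$), and finally absorb the remaining ambiguity coming from $\higherlim{\O(\F^c)}{1}\mathcal{Z}$. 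The one genuine gap is in that last step: you assert that $\higherlim{\O(\F^c)}{1}\mathcal{Z}$ is \emph{central} in $\Out(B\G)$ and deduce $(f_1^m)^k=(f_2^m)^k$ by raising the difference element to its order. Centrality is proved nowhere in the paper and is not clear: the conjugation action of $\Out(B\G)$ on the kernel of $\mu$ in \eqref{eq_blo3_outbg_outfuss} corresponds to a natural action of $\Out_{\fus}(S)$ on $\higherlim{\O(\F^c)}{1}\mathcal{Z}$, which has no reason to be trivial; without it, $(z\cdot f_2^m)^k$ equals a product of $k$ conjugates of $z$ times $(f_2^m)^k$, not $z^k(f_2^m)^k$. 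The repair is immediate and is exactly what the paper does: apply Lemma \ref{L:N finite normal in G}, which requires only that $\higherlim{\O(\F^c)}{1}\mathcal{Z}$ be a \emph{finite normal} subgroup (Proposition \ref{P:finite limits Z} together with exactness), to the pair $f_1^m$, $f_2^m$; since you already invoke that lemma in your middle step, no new ingredient is needed.

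A smaller wobble: after obtaining $\psi_1^m=c_t\circ\psi_2^m$ you ``replace $\Psi_1$ by $\tau(\widehat{t})^{-1}\circ\Psi_1$'' to force $\psi_1^m=\psi_2^m$. As stated this does not work, because altering $\psi_1$ changes $\psi_1^m$ by a product of conjugates of $c_{t^{-1}}$, not by $c_{t^{-1}}$ itself. But the step is unnecessary: $c_t\in\Inn(S)\leq\Aut_\F(S)$, so $\psi_1^m$ and $\psi_2^m$ already represent the same class in $\Out_{\fus}(S)$, which is all you need to conclude $\mu(f_1^m)=\mu(f_2^m)$; this is also how the paper argues, working throughout with classes in $\OutAd_{\fus}(S)$ rather than with chosen algebraic lifts (your detour through $\Ad(\G)$ and surjectivity of $\gamma$ is harmless but superfluous).
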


\begin{proof}
Let $[\psi_1]$ and $[\psi_2]$ be the images of $f_1$ and $f_2$ in $\OutAd_{\fus}(S)$ as in Proposition \ref{prop_adgeom_outadfus}.
By assumption $\deg(\psi_1)=\deg(\psi_2) \mod \D(\F)$.
Since $\OutAd_{\fus}(S)=\Aut_{\fus}(S)/(\Aut_\F(S) \cap \Ad(S))$ and $\D(\F)=\deg(\Aut_\F(S) \cap \Ad(S))$, we may choose $\psi_2$ such that $\deg(\psi_1)=\deg(\psi_2)$.
Thus, $[\psi_1] \cdot [\psi_2]^{-1}$ is in the image of $\Ad^{\deg=1}_{\fus}(S)/\Aut_T(S)$ in $\OutAd_{\fus}(S)$ which is a normal finite subgroup by \cite[Proposition 2.8(iii)]{JLL} and Lemma \ref{L:coholology G with T}.
It follows from Lemma \ref{L:N finite normal in G} that there exists $1 \leq m \leq |H^1(S/T,T)|$ such that $[\psi_1]^m=[\psi_2]^m$.
Hence, Proposition \ref{prop_adgeom_outadfus} implies that $f_1^m=f_2^m \mod \higherlim{\O(\F^c)}{1}\mathcal{Z}$.

The group $\higherlim{\O(\F^c)}{1}\mathcal{Z}$ is finite by Proposition \ref{P:finite limits Z}.
Lemma \ref{L:N finite normal in G} applied to $f_1^m$ and $f_2^m$ gives $1 \leq k \leq |\higherlim{\O(\F^c)}{1}\mathcal{Z}|$ such that $f_1^{mk}=f_2^{mk}$, as needed.

It remains to show that $m$ is a divisor of $p^r(p-1)$.
Let $\zeta$ denote $\deg(\psi_1)=\deg(\psi_2)$.
Replacing   $\psi_i$ by $\psi_i^{p-1}$, if necessary, we may assume that $\zeta = 1 \mod p$.
If $\zeta=1$, then $[\psi_1], [\psi_2]$ are in the image of $\Ad^{\deg=1}_{\fus}(S)/\Aut_T(S)$ in $\OutAd_{\fus}(S)$ which is finite and abelian by \cite[Prop. 2.8(iii)]{JLL} and Lemma \ref{L:coholology G with T}.
Therefore $m$ can be chosen to be the exponent of $H^1(S/T,T)$ which is a $p$-power.
If $\zeta \neq 1$ then Lemma \ref{L:power Adams aut not 1} shows that $\psi_1^{p^r}=\psi_2^{p^r}$ for some $r \geq 0$ and the proof is complete.
\end{proof}

\section{Extensions of categories}\label{sec_extensions}

In Section \ref{specials} we will study a particularly nice subgroup of unstable Adams operations. To do so, some background on extensions of categories and automorphism of such extensions is required. This is the aim of this section.  We included the definitions and results we need here. The cohomological theory of classification of extensions is carried out in Appendix \ref{appendixA}.   
A large portion of this material is contained in a different form in \cite{Hoff}.

\begin{defn}\label{def:extension-C-by-Phi}
Let $\C$ be a small category and let $\Phi \colon \C \to \Ab$ be a functor.
An \emph{extension} of $\C$ by $\Phi$ is a small category $\D$  with the same object set as that of $\C$, together with a functor $\D \xto{\pi} \C$, and for every $X\in\C$ a ``distinguished'' monomorphisms of groups $\de_X\colon\Phi(X)\to \Aut_\D(X)$, such that the following hold.

\begin{enumerate}
\item
The functor $\pi$ is the identity on the objects  and is surjective on morphism sets. 
Furthermore, for each $X,Y\in\C$, the action of $\Phi(Y)$ on $\Mor_\D(X,Y)$ via $\de_{Y}$ by left composition is free, and the projection 
\[
\Mor_\D(X,Y)\to \Mor_\C(X,Y).
\]
is the quotient map by this action.
\label{ext-axiom-1}

\item
For any $d \in \Mor_\D(X,Y)$ and any $g \in \Phi(X)$ the following square commutes in $\D$.
\[\xymatrix{
X \ar[rr]^d\ar[d]_{\de_{X}(g)}&& Y\ar[d]^{\de_Y(\Phi(\pi(d))(g))} \\
X\ar[rr]^d && Y
}\]
\label{ext-axiom-2}
\end{enumerate}
We will write $\E=(\D,\C,\Phi,\pi,\de)$ for the extension.
\end{defn}

To simplify notation throughout, if $d$ is a morphism in $\D$, then $\pi(d)\in\C$ will be denoted by $[d]$.
If $g\in \Phi(X)$,  we denote $\de_X(g)$ by $\lrb{g}$. 
In addition for any $c \in \C(X,Y)$ we will write $c_* \colon \Phi(X) \to \Phi(Y)$ for the homomorphism $\Phi(c)$.
Thus, the relation in Definition \ref{def:extension-C-by-Phi}\ref {ext-axiom-2} can be written  
\begin{equation}\label{compat}
d\circ\lrb{g} = \lrb{\Phi([d])(g)}\circ d \quad \text{ or simply } \qquad
d \circ \lrb{g} = \lrb{[d]_*(g)} \circ d.
\end{equation}

We remark that the restriction to functors $\Phi \colon \C \to \Ab$ is only made for the sake of simplification.
In fact, we could have considered functors into the category of groups in which case we would have recovered Hoff's results \cite{Hoff} in full generality.

\begin{example}\label{group_ext}
Let $N \xto{i} G \xto{\pi} H$ be an extension of groups with $N$ abelian.
Thus, $N$ becomes an $H$-module.
Every group $\Gamma$ gives rise to a category $\B\Gamma$ with one object whose set of automorphisms is $\Gamma$.
We then obtain an extension of categories $\E=(\B G, \B H, \Phi, \B\pi, \B i)$ where $\Phi \colon \B H \to \Ab$ is the functor representing the $H$-module $N$.
\end{example}

\begin{example}\label{L_ext}
Let $\SFL$ be a $p$-local compact group.
Let $\F^c$ be the full subcategory of $\F$ of the $\F$-centric subgroups.
There is a functor $\zeta \colon (\F^c)^\op  \to\Ab$ taking an object $P$ to its centre $Z(P) = C_S(P)$ (We use the notation $\zeta$ to distinguish it from the functor $\Z$ defined in Section \ref{sec_p-local_compact_groups}).
Also the distinguished homomorphisms $\de_P \colon P \to \Aut_\L(P)$ restrict to $\de_P \colon \Z(P) \to \Aut_{\L^\op}(P)$.
It follows directly from the definitions of linking systems that $\L^{\op}$ is an extension of $(\F^c)^\op$ by $\zeta$ with structure maps $\L^{\op} \xto{\pi^\op} (\F^c)^\op$ and $\de_P \colon \Z(P) \to \Aut_{\L^{op}}(P)$.
Here we used the fact that if $\Gamma$ is an abelian group then $B \Gamma \cong \B \Gamma^{\op}$ via the identity on objects and morphisms.
\end{example}

The next example is the one that the next sections will build on.
Due to its importance we give it as Definition \ref{def_Lred}.
Let $\SFL$ be a $p$-local compact group.
Let $P,Q$ be subgroups of $S$ and suppose that $f \colon P \to Q$ is a homomorphism.
Then $f(P_0) \leq Q_0$ because $f(P_0)$ is a discrete $p$-torus.

\begin{defn}\label{def_Lred}
Let $\SFL$ be a $p$-local compact group.
Define the category $\Lred$ as follows.
First, $\Obj(\Lred)=\Obj(\L)$.
For any $P,Q \in \L$ set
\[
\Mor_{\Lred}(P,Q)= \widehat{Q_0} \backslash \Mor_\L(P,Q).
\]
where $\widehat{Q_0}=\de_Q(Q_0)$ acts on $\Mor_\L(P,Q)$ by post-composition.
We will write $\bar{\vp}$ for the equivalence class (orbit) of $\vp \in \Mor_{\L}(P,Q)$.
We write
\[
\pi\0 \colon \L \to \Lred
\]
for the projection functor.
\end{defn}

This defines a category because for any $P \xto{\al} Q \xto{\be} R$ and any $t \in Q_0$ and $u \in R_0$ we have $\pi(\be)(t) \in Q_0$ so
\[
(\hat{u} \circ \be) \circ (\hat{t} \circ \al) = \hat{u} \cdot \widehat{\pi(\be)(t)} \circ \be \circ \al
= \be \circ \al \mod \widehat{R_0}.
\]

Any homomorphism $f \in \Hom_\F(P,Q)$ restricts to $\vp \colon P_0 \to Q_0$ of the maximal tori.
Also, if $t \in Q_0$  then $c_t$ induces the identity on $Q_0$ and therefore $\vp|_{P_0}^{Q_0} = (c_t \circ \vp)|_{P_0}^{Q_0}$.
This justifies the following definition.

From now on, given a category $\C$ we will use the  notation $\C(x,y)$ for $\Mor_\C(x,y)$.

\begin{defn}\label{def_Phi_L}
Let $\Phi \colon \Lred \to \Ab$ be the functor which on objects is defined by $\Phi \colon P \mapsto P_0$.
For a  morphisms $\bar{\vp} \in \Lred(P,Q)$ set $\Phi(\bar{\vp})=\pi(\vp)|_{P_0}^{Q_0}$.
\end{defn}

\begin{prop}\label{prop_L_extension_of_Lred}
Let $\SFL$ be a $p$-local finite group.
Then $\L$ is an extension  of $\Lred$ by the functor $\Phi \colon \Lred \to \Ab$ (in the sense of Definition \ref{def:extension-C-by-Phi}).
The structure maps are given by $\pi\0 \colon \L \to \Lred$ and $\de_{/0}\defeq\de_P|_{P_0} \colon P_0 \to \Aut_\L(P)$.
\end{prop}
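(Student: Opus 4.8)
The plan is to verify directly the two axioms in Definition \ref{def:extension-C-by-Phi} for the data $(\L, \Lred, \Phi, \pi\0, \de_{/0})$, using the properties of linking systems recalled in Section \ref{sec_p-local_compact_groups} together with the facts about restrictions established in Remarks \ref{R:extend delta} and \ref{R:mono-epi}. First I would check the basic structural claims: that $\Obj(\L) = \Obj(\Lred)$ (immediate from Definition \ref{def_Lred}), that $\pi\0$ is the identity on objects and surjective on morphisms (immediate, since $\Mor_{\Lred}(P,Q)$ is by definition a quotient of $\Mor_\L(P,Q)$), and that $\de_{/0} = \de_P|_{P_0}\colon P_0 \to \Aut_\L(P)$ is a monomorphism of groups (it is the restriction of the monomorphism $\de_P$ to the subgroup $P_0 \le P$, hence a monomorphism). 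One should also note that $\Phi(P) = P_0$ indeed takes values in $\Ab$, since $P_0$ is a discrete $p$-torus.

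Next I would establish Axiom \ref{ext-axiom-1}. The group $\Phi(Q) = Q_0$ acts on $\Mor_\L(P,Q)$ by left composition via $\de_{/0}$, that is, via $\widehat{Q_0} = \de_Q(Q_0)$; this is exactly the action used to define $\Mor_{\Lred}(P,Q)$ in Definition \ref{def_Lred}, so $\pi\0$ is by construction the quotient map by this action. Freeness of the action follows from Axiom (A) of linking systems: $Z(P)$ acts freely on $\L(P,Q)$ by precomposition, and by the standard argument (used e.g. in \cite{BLO3}) the group $Z(Q) = C_S(Q)$, and in particular its subgroup $Q_0 \le Z(Q)$ whenever... — here I should be a bit careful: $Q_0$ need not lie in $Z(Q)$ in general, but $\de_Q$ is injective, so the action of $Q_0$ on $\Aut_\L(Q)$ by left translation is free, and left composition with a morphism $\L(P,Q) \to \Aut_\L(Q)$ fibrewise—more precisely, freeness of the $\widehat{Q_0}$-action on $\L(P,Q)$ follows since every morphism in $\L$ is a monomorphism and an epimorphism (Remark \ref{R:mono-epi}): if $\widehat{t}\circ\vp = \vp$ for $\vp\in\L(P,Q)$ and $t\in Q_0$, then since $\vp$ is an epimorphism $\widehat{t} = \id_Q$, so $t = e$ by injectivity of $\de_Q$. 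This gives freeness.

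Then I would verify Axiom \ref{ext-axiom-2}, the compatibility square \eqref{compat}. Given $d \in \Mor_\L(P,Q)$ and $g \in \Phi(P) = P_0$, I must show $d \circ \widehat{g} = \widehat{[d]_*(g)} \circ d$ in $\L$, where $[d]_*(g) = \Phi(\pi\0(d))(g) = \pi(d)(g)|$ restricted appropriately, i.e. the image of $g$ under $\pi(d)\colon P \to Q$ (which does land in $Q_0$ since $\pi(d)$ sends the discrete $p$-torus $P_0$ into $Q_0$). But this is precisely Axiom (C) of linking systems applied to $\vp = d$ and the element $g \in P$: the square there reads $d \circ \de_P(g) = \de_Q(\pi(d)(g)) \circ d$, which is exactly what is needed. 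Finally I would observe that $\Phi$ as in Definition \ref{def_Phi_L} is a well-defined functor (already justified in the text preceding Definition \ref{def_Phi_L}, since $c_t$ acts trivially on $Q_0$). I do not anticipate a serious obstacle here; the only point requiring mild care is the freeness of the $\widehat{Q_0}$-action, where one should invoke Remark \ref{R:mono-epi} (every morphism is epi) rather than trying to place $Q_0$ inside $Z(Q)$. Everything else is a direct translation of the linking-system axioms into the language of Definition \ref{def:extension-C-by-Phi}.
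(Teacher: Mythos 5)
Your argument is correct and follows essentially the same route as the paper: freeness of the $\widehat{Q_0}$-action is deduced from the fact that all morphisms of $\L$ are epimorphisms (\cite[Corollary 1.8]{JLL}), and condition (2) is Axiom (C) of linking systems read off directly. Your mid-proof self-correction is the right call — $Q_0$ does not sit inside $Z(Q)$ and Axiom (A) concerns precomposition, so the epimorphism argument is indeed the one to use, exactly as in the paper's proof.
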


\begin{proof}
The functor $\pi\0\colon\L \to\Lred$ is clearly the identity on objects and is surjective on morphism sets.
The group $\de_Q(Q_0) \leq \Aut_\L(Q)$ acts freely on $\L(P,Q)$ because all morphisms in $\L$  are epimorphisms by \cite[Corollary 1.8]{JLL}.
So condition \ref{ext-axiom-1} of Definition \ref{def:extension-C-by-Phi} holds.
Condition \ref{ext-axiom-2} follows from axiom (C)  of linking systems.
\end{proof}

Next we consider morphisms of extensions.

\begin{defn}\label{def_morphism_of_extensions}
Let $\E=(\D,\C,\Phi,\pi,\de)$ and $\E'=(\D',\C',\Phi',\pi',\de')$ be extensions.
A morphism $\E \to \E'$ is a functor $\Psi \colon \D \to \D'$ such that there exists a functor $\overline{\Psi} \colon \C \to \C'$ which satisfies $\pi' \circ \Psi = \overline{\Psi} \circ \pi$.
\end{defn}

An automorphism of extensions is therefore an isomorphism of categories $\al \colon \D \to \D$ such that both $\al$ and $\al^{-1}$ are morphisms of the extension $\E$.
Among these, there are the inner automorphisms of the extension $\E$.

\begin{defn}\label{def_InnE}
Let $\E=(\D,\C,\Phi,\pi,\de)$ be an extension.
Given a choice of elements $u(X) \in \Phi(X)$ for every $X \in \C$,  we obtain an  automorphism $\tau_u \colon \E \to \E$ where $\tau_u$ is the identity on objects and for any $d \in \D(X,Y)$ we define
\[
\tau_u(d) = \lrb{u(Y)} \circ d \circ \lrb{u(X)}^{-1}.
\]
An automorphism of $\E$ is called \emph{inner} if it is equal to some $\tau_u$.
The collection of all the inner automorphisms of $\E$ is denoted $\Inn(\E)$.
\end{defn}

We remark that the functor $\overline{\Psi} \colon \C \to \C'$ in Definition \ref{def_morphism_of_extensions}, if it exists then it is unique because $\pi$ and $\pi'$ are surjective.
In addition there is no condition on the functors $\Phi$ and $\Phi'$ in the definition of morphisms because of the following lemma.

\begin{lem}\label{lem:def-Psi-star}
Let $\E=(\D,\C,\Phi,\pi,\de)$ and $\E'=(\D',\C',\Phi',\pi',\de')$ be  extensions.
Then any morphism $\Psi\colon \E \to \E'$  gives rise to a unique natural transformation
\[
\eta(\Psi) \colon \Phi \to \Phi' \circ \overline{\Psi}
\]
that assigns to every  object $X\in\C$  the homomorphism $\eta_X\colon\Phi(X)\to\Phi'\circ\overline{\Psi}(X)$, determined by the formula
\begin{equation}\label{eta-def}
\lrb{\eta_X(g)}=\Psi(\lrb{g}),
\end{equation}
for any $g\in \Phi(X)$.
(Here $\eta$ denotes $\eta(\Psi))$.
\end{lem}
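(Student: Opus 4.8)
The plan is to define $\eta_X$ directly by the formula \eqref{eta-def} and then verify that it is well-defined, a group homomorphism, and natural. The key observation making everything work is axiom \ref{ext-axiom-1} of Definition \ref{def:extension-C-by-Phi}: since $\de'_{\overline{\Psi}(X)}\colon\Phi'(\overline{\Psi}(X))\to\Aut_{\D'}(\overline{\Psi}(X))$ is a \emph{monomorphism}, an element of $\Phi'(\overline{\Psi}(X))$ is uniquely determined by its image $\lrb{\cdot}$ in $\Aut_{\D'}(\overline{\Psi}(X))$. So the first step is to check that $\Psi(\lrb{g})$ actually lies in the image of $\de'_{\overline{\Psi}(X)}$, i.e.\ that it is of the form $\lrb{h}$ for some (necessarily unique) $h\in\Phi'(\overline{\Psi}(X))$; we then \emph{define} $\eta_X(g)$ to be that $h$. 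To see $\Psi(\lrb{g})$ has this form, note $\lrb{g}\in\Aut_\D(X)$ projects under $\pi$ to $\id_X$ in $\C$ (this is part of axiom \ref{ext-axiom-1}, as $\lrb{g}$ acts trivially after quotienting), hence $\Psi(\lrb{g})\in\Aut_{\D'}(\overline{\Psi}(X))$ projects under $\pi'$ to $\overline{\Psi}(\id_X)=\id_{\overline{\Psi}(X)}$; and morphisms in $\D'$ lying over $\id$ in $\C'$ are exactly the elements of $\de'_{\overline{\Psi}(X)}(\Phi'(\overline{\Psi}(X)))$, again by axiom \ref{ext-axiom-1} applied to the free transitive action on the fiber over $\id_{\overline{\Psi}(X)}$.

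Next I would check that $\eta_X$ is a homomorphism of groups. Since $\Psi$ is a functor, $\Psi(\lrb{g_1}\circ\lrb{g_2})=\Psi(\lrb{g_1})\circ\Psi(\lrb{g_2})$; because $\de_X$ and $\de'_{\overline{\Psi}(X)}$ are homomorphisms, $\lrb{g_1}\circ\lrb{g_2}=\lrb{g_1g_2}$ on the left and $\lrb{\eta_X(g_1)}\circ\lrb{\eta_X(g_2)}=\lrb{\eta_X(g_1)\eta_X(g_2)}$ on the right, so $\lrb{\eta_X(g_1g_2)}=\lrb{\eta_X(g_1)\eta_X(g_2)}$, and injectivity of $\de'_{\overline{\Psi}(X)}$ forces $\eta_X(g_1g_2)=\eta_X(g_1)\eta_X(g_2)$.

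The last step is naturality: for $c\in\C(X,Y)$ I must show $\eta_Y\circ\Phi(c)=(\Phi'\circ\overline{\Psi})(c)\circ\eta_X$ as maps $\Phi(X)\to\Phi'(\overline{\Psi}(Y))$. Pick any lift $d\in\D(X,Y)$ with $[d]=c$ (possible since $\pi$ is surjective). Apply $\Psi$ to the compatibility relation \eqref{compat}, $d\circ\lrb{g}=\lrb{[d]_*(g)}\circ d$, to get $\Psi(d)\circ\Psi(\lrb{g})=\Psi(\lrb{[d]_*(g)})\circ\Psi(d)$ in $\D'$, i.e.\ $\Psi(d)\circ\lrb{\eta_X(g)}=\lrb{\eta_Y(\Phi(c)(g))}\circ\Psi(d)$. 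On the other hand, $\Psi(d)$ is a morphism in $\D'$ with $[\Psi(d)]=\overline{\Psi}(c)$, so the compatibility relation \eqref{compat} in $\E'$ gives $\Psi(d)\circ\lrb{\eta_X(g)}=\lrb{\overline{\Psi}(c)_*(\eta_X(g))}\circ\Psi(d)=\lrb{(\Phi'\circ\overline{\Psi})(c)(\eta_X(g))}\circ\Psi(d)$. Comparing the two expressions, cancelling $\Psi(d)$ on the right (legitimate because $\Psi(d)$ is an epimorphism in $\D'$ — here I would invoke that morphisms in these linking-system-type extensions are epic, or more simply use axiom \ref{ext-axiom-1} to observe that the $\Phi'(\overline{\Psi}(Y))$-action on the fiber is free, so two elements of $\de'_{\overline{\Psi}(Y)}(\Phi'(\overline{\Psi}(Y)))$ that agree after right-composing with $\Psi(d)$ are equal), and applying injectivity of $\de'_{\overline{\Psi}(Y)}$ once more yields $\eta_Y(\Phi(c)(g))=(\Phi'\circ\overline{\Psi})(c)(\eta_X(g))$, which is exactly naturality. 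Uniqueness of $\eta$ is immediate: any natural transformation satisfying \eqref{eta-def} has its components pinned down by that formula together with the injectivity of the $\de'$ maps. The main obstacle is purely bookkeeping — keeping straight which category each morphism lives in and making sure every cancellation is justified by the freeness/monomorphism part of axiom \ref{ext-axiom-1} — rather than anything conceptually deep.
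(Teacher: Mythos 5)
Your proposal is correct and takes essentially the same approach as the paper: you define $\eta_X$ by \eqref{eta-def} after identifying the fibre of $\pi'$ over the identity with $\de'_{\overline{\Psi}(X)}(\Phi'(\overline{\Psi}(X)))$, and you prove naturality exactly as the paper does, by lifting $c$ to some $d\in\D(X,Y)$, applying $\Psi$ to \eqref{compat}, comparing with \eqref{compat} applied to $\Psi(d)$ in $\E'$, and cancelling $\Psi(d)$ via the freeness of the $\Phi'(\overline{\Psi}(Y))$-action from Definition \ref{def:extension-C-by-Phi}\ref{ext-axiom-1}. The only adjustment: drop the parenthetical appeal to $\Psi(d)$ being an epimorphism, since that is a special feature of linking systems and not of a general extension; your fallback via the free action is the correct (and the paper's) justification, so nothing is lost.
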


\begin{proof}
For every object $X \in \C$ the functors $\Psi$ and $\overline{\Psi}$ give rise to a morphism of exact sequences
\[
\xymatrix{
1 \ar[r] &
\Phi(X)  \ar[r]^{\delta_X} \ar@{.>}_{\eta_X}[d]&
\Aut_\D(X) \ar[r]^{\pi} \ar[d]^{\Psi} &
\Aut_\C(X)  \ar[d]^{\overline{\Psi}} 
\\
1 \ar[r] &
\Phi'(\overline{\Psi}(X))  \ar[r]^{\delta'_{\overline{\Psi}(X)}} &
\Aut_{\D'}(\overline{\Psi}(X)) \ar[r]^{\pi'} &
\Aut_{\C'}(\overline{\Psi}(X)) 
}
\]
This defines $\eta_X$, which satisfies \eqref{eta-def} by definition.
It remains to prove that the homomorphisms $\eta_X$ define a natural transformation $\eta \colon \Phi \to \Phi' \circ \overline{\Psi}$.
For any $c \in \Mor_{\C}(X,Y)$, we need to show that 
\[
\xymatrix{
\Phi(X) \ar[rr]^{\Phi(c)} \ar[d]_{\eta_{X}} &&
\Phi(Y) \ar[d]^{\eta_{Y}} 
\\
\Phi'(\overline{\Psi}(X)) \ar[rr]_{\Phi'(\overline{\Psi}(c))} &&
\Phi'(\overline{\Psi}(Y)).
}
\]
Choose $d \in \D(X,Y)$ such that $c=[d]$ and let $g \in \Phi(X)$.
By \eqref{eta-def} and \eqref{compat}
\begin{multline*}
\Psi(d) \circ \lrb{\eta_X(g)} = 
\Psi(d) \circ \Psi(\lrb{g}) =
\Psi(d \circ \lrb{g}) =
\Psi(\lrb{\Phi(c)(g)} \circ d) = 
\\
\Psi(\lrb{\Phi(c)(g)}) \circ \Psi(d)=
\lrb{\eta_Y(\Phi(c)(g))} \circ \Psi(d).
\end{multline*}
On the other hand, by applying \eqref{compat} directly to the left hand side of this equality and noticing that $[\Psi(d)]=\overline{\Psi}(c)$ we get
\[
\Psi(d) \circ \lrb{\eta_X(g)} =
\lrb{\Phi'(\overline{\Psi}(c))(\eta_X(g))} \circ \Psi(d).
\]
By comparing the right hand sides of these equalities and using the free action of $\Phi'(Y)$ on $\D'(X,Y)$ where $\Psi(d)$ belongs, we see that $\eta_Y(\Phi(c)(g)) = (\Phi' \circ \overline{\Psi})(c)(\eta_X(g))$ as needed.
\end{proof}

\begin{lem}\label{lem_inner_are_idid}
Let $\E=(\D,\C,\Phi,\pi,\de)$ be an extension.
Then any $\Theta \in \Inn(\E)$  induces the identity on $\C$ and $\eta(\Theta)=\Id$.
\end{lem}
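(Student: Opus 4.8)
The plan is to unwind the definition of an inner automorphism $\tau_u$ and check the two claims directly. For the first claim, let $\Theta=\tau_u$ for some choice of elements $u(X)\in\Phi(X)$, $X\in\C$. By construction $\Theta$ is the identity on objects, so $\overline{\Theta}$ (which is unique by the remark following Definition \ref{def_morphism_of_extensions}) is the identity on objects of $\C$. For a morphism $c\in\C(X,Y)$, choose $d\in\D(X,Y)$ with $[d]=c$; then $[\Theta(d)]=\overline{\Theta}(c)$, and since
\[
\Theta(d)=\lrb{u(Y)}\circ d\circ\lrb{u(X)}^{-1},
\]
applying $\pi$ and using that $\pi(\lrb{g})=\id$ for every $g\in\Phi(X)$ (because $\de_X(g)\in\ker\pi$ by axiom \ref{ext-axiom-1} of Definition \ref{def:extension-C-by-Phi}) gives $\overline{\Theta}(c)=\pi(\Theta(d))=\pi(d)=c$. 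Hence $\overline{\Theta}=\Id_\C$.

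For the second claim, $\eta(\Theta)$ is the natural transformation $\Phi\to\Phi'\circ\overline{\Theta}=\Phi$ of Lemma \ref{lem:def-Psi-star}, determined by $\lrb{\eta_X(g)}=\Theta(\lrb{g})$. Since $\lrb{g}\in\Aut_\D(X)$, we compute
\[
\Theta(\lrb{g})=\lrb{u(X)}\circ\lrb{g}\circ\lrb{u(X)}^{-1}.
\]
But $\de_X\colon\Phi(X)\to\Aut_\D(X)$ is a homomorphism and $\Phi(X)$ is abelian, so $\lrb{u(X)}\circ\lrb{g}\circ\lrb{u(X)}^{-1}=\lrb{u(X)+g-u(X)}=\lrb{g}$. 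Thus $\lrb{\eta_X(g)}=\lrb{g}$, and since $\de_X$ is a monomorphism we conclude $\eta_X(g)=g$ for all $g\in\Phi(X)$, i.e. $\eta(\Theta)=\Id$.

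There is essentially no obstacle here: the only point requiring a moment's care is that $\eta(\Theta)$ is a priori a transformation $\Phi\to\Phi'\circ\overline{\Theta}$, so one must first establish $\overline{\Theta}=\Id_\C$ (and observe $\Phi'=\Phi$, $\D'=\D$ since $\Theta$ is an automorphism of $\E$) in order for the target of $\eta(\Theta)$ to literally be $\Phi$; this is exactly why the two assertions are proved in the stated order. The commutativity of $\Phi(X)$, which is the reason we restricted attention to functors into $\Ab$, is what makes the conjugation $\lrb{u(X)}\,\lrb{g}\,\lrb{u(X)}^{-1}$ collapse to $\lrb{g}$.
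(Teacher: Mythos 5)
Your proof is correct and takes essentially the same route as the paper: the only cosmetic difference is that you check $\overline{\tau_u}=\Id_\C$ by noting $\pi(\lrb{g})=\id$ and using functoriality of $\pi$, whereas the paper first combines the two bracket factors via \eqref{compat} and then invokes the quotient description of $\pi$ — the same argument in a slightly different order. The computation $\eta_X(g)=g$ from commutativity of $\Phi(X)$ and injectivity of $\de_X$ matches the paper's proof exactly.
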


\begin{proof}
Using the notation of Definition \ref{def_InnE} we write $\Theta=\tau_u$.
Then $\tau_u$ induces the identity on $\C$ because $[\lrb{u(Y)} \circ \vp \circ \lrb{u(X)^{-1}}]=[\lrb{u(Y) \cdot \vp_*(u(X)^{-1})} \circ \vp] = [\vp]$.
By Lemma \ref{lem:def-Psi-star}, for any $X \in \C$ and any $x \in \Phi(X)$ we have $\lrb{\eta_X(\tau_u)(x)}=\tau_u(\lrb{x})=\lrb{u(X)} \circ \lrb{x} \circ \lrb{u(X)^{-1}} = \lrb{x}$ because $\Phi(X)$ is abelian.
This shows that $\eta(\tau_u)=\Id$ as needed.
\end{proof}

At this stage it is useful to remark about which functors $\Psi \colon \D \to \D'$ give rise to a morphism of extensions $\E \to \E'$ as in Definition \ref{def_morphism_of_extensions}.

\begin{defn}\label{def_Frigid}
A functor $F \colon \gps \to \gps$ is said to be \emph{inclusive} if for any group $G$, $F(G) \leq G$ and these inclusions is natural with respect to group homomorphism, namely they form a natural transformation of functors $\iota\colon F \to \Id$.
Let $F$ be an inclusive functor. 
An extension $\E=(\D,\C,\Phi,\pi,\de)$ is called \emph{$F$-rigid} if for every $X \in \Obj(\D)$, the injection $\de_X\colon\Phi(X)\to\Aut_\D(X)$ is an isomorphism onto $F(\Aut_\D(X))$.
\end{defn}

Here is an example of an inclusive  functor $F$, as in  Definition \ref{def_Frigid}, which will play a role in this paper.

\begin{defn}\label{def_Lambdagps}
Let $\La \colon \Gps \to \Gps$ be the functor which assigns to every group $G$ the subgroup $\La(G)$ generated by the images of all homomorphisms $\vp \colon \pruffer \to G$.
\end{defn}

Recall that a group $G$ is called virtually discrete $p$-toral if it is an extension of a finite group by a discrete $p$-torus.
In this case $\La(G)=G_0$ is the identity component of $G$ and is a discrete $p$-torus.
Hence, the restriction of $\Lambda$ to the full subcategory of virtually discrete $p$-toral groups factors through the category $\Ab$.

The reason we consider $F$-rigid extensions is that morphisms between them (Definition \ref{def_morphism_of_extensions}) are just functors between the categories.
This is the content of the next proposition.

\begin{prop}\label{prop_functors_of_rigid_extensions}
Suppose that $\E=(\D,\C,\Phi,\pi,\de)$ and $\E'=(\D',\C',\Phi',\pi',\de')$ are $F$-rigid extensions.
Then any functor $\Psi \colon \D \to \D'$ is a morphism of extensions $\E \to \E'$.
\end{prop}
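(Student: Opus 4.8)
The plan is to show that a functor $\Psi\colon\D\to\D'$ between two $F$-rigid extensions automatically respects all the extra structure, i.e.\ that there is a functor $\overline{\Psi}\colon\C\to\C'$ with $\pi'\circ\Psi=\overline{\Psi}\circ\pi$. The point of $F$-rigidity is that the normal subgroup $\de_X(\Phi(X))\le\Aut_\D(X)$ is intrinsic to the category $\D$: it equals $F(\Aut_\D(X))$, and $F$ is a functor on groups compatible with the inclusions $\iota\colon F\to\Id$. So the first step is to record that $\Psi$, being a functor, sends $\Aut_\D(X)$ isomorphically onto $\Aut_{\D'}(\Psi(X))$, and hence, by naturality of $F$ applied to this isomorphism of groups, carries $F(\Aut_\D(X))=\de_X(\Phi(X))$ onto $F(\Aut_{\D'}(\Psi(X)))=\de'_{\Psi(X)}(\Phi'(\Psi(X)))$.

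Next I would construct $\overline{\Psi}$. On objects it must agree with $\Psi$, that is $\overline{\Psi}(X)=\Psi(X)$ (recall $\pi$ and $\pi'$ are the identity on objects and $\D,\D'$ have the same object sets as $\C,\C'$). On a morphism $c\in\C(X,Y)$, choose $d\in\D(X,Y)$ with $[d]=c$, which is possible since $\pi$ is surjective on morphisms, and set $\overline{\Psi}(c)=[\Psi(d)]=\pi'(\Psi(d))$. The key thing to check is that this is well defined: if $d'\in\D(X,Y)$ is another lift of $c$, then by Definition \ref{def:extension-C-by-Phi}\ref{ext-axiom-1} we have $d'=\de_Y(g)\circ d$ for a unique $g\in\Phi(Y)$, so $\Psi(d')=\Psi(\de_Y(g))\circ\Psi(d)$, and by the first step $\Psi(\de_Y(g))=\de'_{\Psi(Y)}(h)$ for some $h\in\Phi'(\Psi(Y))$; hence $\Psi(d')$ and $\Psi(d)$ differ by left multiplication by an element of $\de'_{\Psi(Y)}(\Phi'(\Psi(Y)))$, so $[\Psi(d')]=[\Psi(d)]$ in $\C'$ by axiom \ref{ext-axiom-1} for $\E'$. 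Functoriality of $\overline{\Psi}$ (preservation of identities and composition) follows by picking lifts and using that $\Psi$ and $\pi,\pi'$ are functors; the relation $\pi'\circ\Psi=\overline{\Psi}\circ\pi$ holds by construction on morphisms and trivially on objects.

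Finally, since $\Psi$ is an isomorphism of categories (it is assumed to be a functor but in the intended application it will be invertible — if invertibility is part of "functor" here one applies the same construction to $\Psi^{-1}$; in any case the definition of a morphism of extensions only requires the existence of $\overline{\Psi}$, which we have produced), the proof is complete. The main obstacle is really just the first step: extracting the fact that $\Psi(\de_X(\Phi(X)))=\de'_{\Psi(X)}(\Phi'(\Psi(X)))$ purely from $F$-rigidity and the naturality of $\iota\colon F\to\Id$; once that identification of "internal" normal subgroups is in hand, the well-definedness of $\overline{\Psi}$ and everything else is routine diagram chasing with the free-action axiom \ref{ext-axiom-1}.
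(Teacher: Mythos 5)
Your proof follows essentially the same route as the paper: use $F$-rigidity and the naturality of $\iota\colon F\to\Id$ to see that $\Psi$ carries the distinguished subgroups into the distinguished subgroups, then define $\overline{\Psi}$ as $\Psi$ on objects and on morphisms by lifting along $\pi$ and projecting along $\pi'$, with well-definedness coming from the free-action axiom. One correction: a general functor $\Psi$ only induces a group homomorphism $\Aut_\D(X)\to\Aut_{\D'}(\Psi(X))$, not an isomorphism, so your claim that $\de_X(\Phi(X))$ is carried \emph{isomorphically onto} $\de'_{\Psi(X)}(\Phi'(\Psi(X)))$ is unjustified; naturality only yields the inclusion $\Psi(\de_X(\Phi(X)))\le\de'_{\Psi(X)}(\Phi'(\Psi(X)))$, which is exactly what your (and the paper's) well-definedness argument actually uses, and no invertibility of $\Psi$ is needed anywhere, as you yourself note at the end.
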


\begin{proof}
For any $C \in \Obj(\D)$ the functor $\Psi$ induces a homomorphism $\Psi \colon \Aut_{\D}(C) \to \Aut_{\D'}(\Psi (C))$.
By applying $F$ and using the natural transformation $\iota\colon F\to \Id$ we get
\[
\Psi(\Phi(C))=\Psi(F(\Aut_\D(C)) \leq F(\Aut_{\D'}(\Psi(C))) = \Phi'(\Psi(C)).
\]
In view of this we define $\psi \colon \C \to \C'$ as follows.
On objects, $\psi \colon C \mapsto \Psi(C)$.
Fix $c \in \C(C_0,C_1)$ and choose a lift $d \in \D(C_0,C_1)$.
Define $\psi \colon c \mapsto [\Psi(d)]$.
Now, $\psi$ is well defined on morphisms since if $d'$ is another lift for $c$ then $d'=\lrb{x} \circ d$ for some $x \in \Phi(C_1)$ and therefore 
\[
[\Psi(\lrb{x} \circ d)]=[\Psi(\lrb{x})] \circ [\Psi(d)] =[\Psi(d)]
\]
because $\Psi(\lrb{x}) \in \Phi'(\Psi(C_1))$ as we have seen above.
The verification that $\psi$ respects identities and compositions is straightforward and the equality  $\pi' \circ \Psi = \psi \circ \pi$ holds by the way we defined $\psi$.
\end{proof}

\begin{prop}\label{prop_sfl_rigid_extension}
Let $\SFL$ be a $p$-local compact group.
The extension $\E=(\L,\Lred,\Phi,\pi\0,\de_{/0})$ in Proposition \ref{prop_L_extension_of_Lred} is $\Lambda$-rigid (Definitions \ref{def_Frigid}, \ref{def_Lambdagps}).
\end{prop}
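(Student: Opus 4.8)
The plan is to verify the two conditions in Definition \ref{def_Frigid} directly. We already know from Proposition \ref{prop_L_extension_of_Lred} that $\E=(\L,\Lred,\Phi,\pi\0,\de_{/0})$ is an extension of $\Lred$ by $\Phi$, where the distinguished monomorphism at an object $P$ is $\de_P|_{P_0}\colon P_0\to\Aut_\L(P)$. Thus it remains only to show that the image of this monomorphism is exactly $\La(\Aut_\L(P))$, the subgroup of $\Aut_\L(P)$ generated by the images of all homomorphisms $\pruffer\to\Aut_\L(P)$. First I would recall that for each $\F$-centric $P\le S$, the group $\Aut_\L(P)$ is virtually discrete $p$-toral: it fits into the short exact sequence $1\to Z(P)\to\Aut_\L(P)\xto{\pi}\Aut_\F(P)\to 1$ coming from axiom (A) of linking systems, where $Z(P)=C_S(P)$ is discrete $p$-toral and $\Aut_\F(P)$ is finite (its quotient $\Out_\F(P)$ is finite by saturation, and $\Inn(P)$ is finite since $P/P_0$ is finite and $P_0$ contributes nothing to $\Inn(P)$ because $P_0$ is central in... — more carefully, $\Aut_\F(P)$ is finite because $P$ is $\F$-centric so $\Aut_\F(P)\cong N_S(P)/C_S(P)\cdot(\text{finite})$; in any case $\Aut_\F(P)$ is finite by \cite[Proposition 2.3 or Lemma 2.5]{BLO3}). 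Hence by the remark following Definition \ref{def_Lambdagps}, $\La(\Aut_\L(P))=(\Aut_\L(P))_0$, the identity component, which is a discrete $p$-torus.

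The key step is then to identify $(\Aut_\L(P))_0$ with $\de_P(P_0)$. On one hand, $\de_P(P_0)$ is a discrete $p$-torus (being an isomorphic image of $P_0$) and is normal in $\Aut_\L(P)$: normality follows from axiom (C) of linking systems, which gives, for $\vp\in\Aut_\L(P)$ and $g\in P$, the relation $\vp\circ\de_P(g)=\de_P(\pi(\vp)(g))\circ\vp$, so $\vp\,\de_P(g)\,\vp^{-1}=\de_P(\pi(\vp)(g))$, and $\pi(\vp)(P_0)=P_0$ since any automorphism of $P$ sends the maximal torus to itself. So $\de_P(P_0)$ is a normal discrete $p$-torus in $\Aut_\L(P)$, hence contained in its identity component, giving $\de_P(P_0)\le(\Aut_\L(P))_0=\La(\Aut_\L(P))$. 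On the other hand, I claim $\de_P(P_0)$ has finite index in $\Aut_\L(P)$, which forces equality with the identity component. Indeed, $\de_P(P_0)$ has finite index in $\de_P(P)=\Aut_\L(P)$'s image of $P$, wait — rather, using the exact sequence $1\to Z(P)\to\Aut_\L(P)\to\Aut_\F(P)\to 1$: the preimage of the finite group $\Aut_\F(P)$ under the quotient $\Aut_\L(P)\to\Aut_\L(P)/\de_P(P_0)$ shows $\Aut_\L(P)/\de_P(P_0)$ is an extension of $\Aut_\F(P)$ by $Z(P)/P_0$; now $Z(P)=C_S(P)$ and $Z(P)_0=C_S(P)_0$, and since $P$ is $\F$-centric and contains $P_0$, one checks $C_S(P)_0=P_0$ (an element of $S_0=T$ centralising $P$ lies in $C_S(P)$, and conversely $C_S(P)\supseteq Z(P)\supseteq P_0$ while $C_S(P)_0\subseteq T$ centralises $P_0$... the cleanest route: $Z(P)/P_0$ is finite because $Z(P)$ is discrete $p$-toral with identity component $Z(P)_0\le T$, and $Z(P)_0$ centralises $P$ hence $Z(P)_0\le P$, so $Z(P)_0\le P_0$, forcing $Z(P)_0=P_0$ and $Z(P)/P_0$ finite). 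Therefore $\Aut_\L(P)/\de_P(P_0)$ is finite, so $\de_P(P_0)$ is a finite-index discrete $p$-torus in $\Aut_\L(P)$ and must equal $(\Aut_\L(P))_0=\La(\Aut_\L(P))$.

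Combining, $\de_P|_{P_0}\colon P_0\to\Aut_\L(P)$ is injective with image exactly $\La(\Aut_\L(P))$, so it is an isomorphism onto $\La(\Aut_\L(P))$ for every object $P$ of $\L$; this is precisely the statement that $\E$ is $\La$-rigid. The main obstacle I anticipate is bookkeeping around the identification $Z(P)_0=P_0$ and verifying the finiteness claims cleanly; these are straightforward consequences of $\F$-centricity and the structure of discrete $p$-toral groups, so no serious difficulty is expected, but care is needed to cite the right facts from \cite{BLO3} (finiteness of $\Aut_\F(P)$, $Z(P)=C_S(P)$ for centric $P$) rather than rederiving them.
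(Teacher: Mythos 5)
Your overall strategy --- identify $\La(\Aut_\L(P))$ with $\de_P(P_0)$ by showing the latter is a discrete $p$-torus of finite index in the virtually discrete $p$-toral group $\Aut_\L(P)$ --- is the same as the paper's, but the step that is supposed to produce the finite index rests on two false claims. You assert that $\Aut_\F(P)$ is finite and that $Z(P)_0=P_0$ (equivalently $P_0\le Z(P)$, which you also write as $Z(P)\supseteq P_0$). Neither holds in general: $\Aut_\F(P)$ contains $\Inn(P)\cong P/Z(P)$, which is infinite whenever the maximal torus $P_0$ is not central in $P$, and in that case $Z(P)_0$ is a proper subgroup of $P_0$ (possibly trivial). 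Already $P=S$ gives a counterexample: for the $2$-local compact group of $\SU(2)$, $S$ is generated by $T=\ZZ/2^\infty$ and an element $j$ with $jtj^{-1}=t^{-1}$, $j^2=-1$, so $Z(S)=\{\pm 1\}$, hence $Z(S)_0=1\neq S_0$ and $\Inn(S)$ is infinite. Moreover your description of $\Aut_\L(P)/\de_P(P_0)$ as an extension of $\Aut_\F(P)$ by $Z(P)/P_0$ is not even well posed: the image of $\de_P(P_0)$ in $\Aut_\F(P)$ is $\Aut_{P_0}(P)$, which is nontrivial exactly when $P_0$ is not central in $P$, so the map $\Aut_\L(P)\to\Aut_\F(P)$ does not factor through that quotient.

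The correct finiteness input is the one the paper uses via \cite[Lemma 2.5]{BLO3}: only $\Out_\F(P)$ is finite. By axiom (A), the preimage of $\Inn(P)$ under $\Aut_\L(P)\to\Aut_\F(P)$ is exactly $\de_P(P)$ (if $\vp$ maps to $c_g$ with $g\in P$, then $\vp\circ\de_P(g)^{-1}\in\de_P(Z(P))\subseteq\de_P(P)$). Hence $\Aut_\L(P)$ is an extension of the finite group $\Out_\F(P)$ by $\de_P(P)\cong P$, so $\de_P(P_0)$ is a normal discrete $p$-torus of index $|\Out_\F(P)|\cdot |P/P_0|<\infty$ in $\Aut_\L(P)$; this also shows $\Aut_\L(P)$ is virtually discrete $p$-toral, so $\La(\Aut_\L(P))$ is its identity component and equals $\de_P(P_0)$. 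With this replacement your remaining steps (normality of $\de_P(P_0)$ via axiom (C), a finite-index discrete $p$-torus coincides with the identity component, and $\La=(\,\cdot\,)_0$ for virtually discrete $p$-toral groups) go through and recover the paper's one-line argument.
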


\begin{proof}
It follows from \cite[Lemma 2.5]{BLO3} that for every $\F$-centric $P \leq S$ the group $\Aut_\L(P)$ is an extension of $P_0$ by a finite group and therefore $\Lambda(\Aut_\L(P))=P_0$.
\end{proof}

We saw (Example \ref{group_ext}) that group extensions are a particular example of extension of categories in the sense described here. Similarly to the case of group extensions there is a theory of  classification of extensions of categories by cohomology. In particular an extension $\E=(\D,\C,\Phi,\pi,\de)$ is classified up to the appropriate concept of equivalence by a class $[\D]\in\H^2(\C, \Phi)$. The theory is rather well known and appears in a different form in \cite{Hoff}. For the convenience of the reader we collect the necessary material in Appendix \ref{appendixA}. 
Here we record only the following two results.

\begin{prop}\label{prop_morphisms_of_extensions}
Let $\E=(\D,\C,\Phi,\pi,\de)$ and $\E'=(\D',\C',\Phi',\pi',\de')$ be extensions. 
Let $\psi \colon \C \to \C'$ be a functor, and let $\eta\colon\Phi\to\Phi'\circ\psi$ be a natural transformation. 
Then the following are equivalent.
\begin{enumerate}[(i)]
\item
There exists a morphism of extensions  $\Psi\colon\E \to \E'$ such that $\psi=\overline{\Psi}$ and $\eta = \eta(\Psi)$.
\label{c_i_morphisms_of_extensions}

\item 
The homomorphisms in cohomology induced by $\psi$ and $\eta$
\[
H^2(\C;\Phi) \xto{\eta_*} H^2(\C;\Phi' \circ \psi) \xleftarrow{\psi^*} H^2(\C',\Phi')
\]
satisfy $\eta_*([\D])=\psi^*([\D'])$.
\label{c_ii_morphisms_of_extensions}
\end{enumerate}
\end{prop}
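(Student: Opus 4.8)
The plan is to prove this by unwinding both extensions into cocycle data via a choice of set-theoretic sections, exactly as one does for ordinary group extensions, and then comparing the resulting 2-cocycles. First I would fix, for each pair of objects $X, Y \in \C$, a section $\sigma_{X,Y} \colon \C(X,Y) \to \D(X,Y)$ of the projection $\pi$ (requiring $\sigma$ to send identities to identities); likewise fix a section $\sigma'$ for $\E'$. Since the action of $\Phi(Y)$ on $\D(X,Y)$ is free and transitive over a fibre of $\pi$ (axiom \ref{ext-axiom-1}), every morphism $d \in \D(X,Y)$ is uniquely $\lrb{g} \circ \sigma_{X,Y}([d])$ for some $g \in \Phi(Y)$. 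Composition in $\D$ is then governed by a normalised $2$-cocycle $z \in Z^2(\C;\Phi)$, defined by $\sigma_{Y,Z}(c') \circ \sigma_{X,Y}(c) = \lrb{z(c',c)} \circ \sigma_{X,Z}(c' c)$, and the class $[\D] \in H^2(\C;\Phi)$ of Appendix \ref{appendixA} is $[z]$; similarly $[\D'] = [z']$ with $z' \in Z^2(\C';\Phi')$. The compatibility relation \eqref{compat} is what guarantees $z$ is a cocycle for the given module structure; this is the bookkeeping I would present carefully but not belabour.

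The implication \ref{c_i_morphisms_of_extensions}$\Rightarrow$\ref{c_ii_morphisms_of_extensions} is the more computational direction. Given $\Psi$ with $\overline{\Psi} = \psi$ and $\eta(\Psi) = \eta$, I would measure the failure of $\Psi$ to carry the section $\sigma$ to the section $\sigma'$: for each $c \in \C(X,Y)$ there is a unique $t(c) \in \Phi'(\psi Y)$ with $\Psi(\sigma_{X,Y}(c)) = \lrb{t(c)} \circ \sigma'_{\psi X, \psi Y}(\psi c)$, giving a $1$-cochain $t \in C^1(\C; \Phi'\circ\psi)$. Applying $\Psi$ to the defining equation of $z$, using that $\Psi$ is a functor, that $\Psi\lrb{g} = \lrb{\eta_X(g)}$ (Lemma \ref{lem:def-Psi-star}, equation \eqref{eta-def}), and the compatibility relation in $\E'$, I would obtain by a direct manipulation the identity $\eta_*(z) = \psi^*(z') + \delta t$ in $Z^2(\C;\Phi'\circ\psi)$, where $\delta$ is the coboundary for the module $\Phi'\circ\psi$ (recall $\psi^*(z')$ is a cocycle for this module precisely because $\eta$ is a natural transformation intertwining the actions). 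Passing to cohomology kills $\delta t$ and yields $\eta_*([\D]) = \psi^*([\D'])$.

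For \ref{c_ii_morphisms_of_extensions}$\Rightarrow$\ref{c_i_morphisms_of_extensions} I would reverse this: the hypothesis gives a $1$-cochain $t$ with $\eta_*(z) = \psi^*(z') + \delta t$ at the cocycle level (after possibly adjusting $\sigma$, which only changes $z$ within its class). I then \emph{define} $\Psi$ on objects by $\Psi(X) = \psi(X)$ and on a general morphism $\lrb{g}\circ\sigma_{X,Y}(c)$ by $\lrb{\eta_Y(g) + t(c)} \circ \sigma'_{\psi X,\psi Y}(\psi c)$. The functoriality of $\Psi$ — that it respects identities and composition — is exactly the translation of the cocycle equation $\eta_*(z) = \psi^*(z') + \delta t$, together with naturality of $\eta$ and relation \eqref{compat} in $\E'$; this verification is the main obstacle, as it is a somewhat intricate but entirely mechanical check. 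Finally one reads off that $\Psi$ sends $\lrb{g}$ to $\lrb{\eta_Y(g)}$, so $\overline{\Psi} = \psi$ and $\eta(\Psi) = \eta$, and that $\Psi$ is an isomorphism of categories (its inverse is built the same way from $\psi^{-1}$-data when $\E,\E'$ are replaced by the relevant equivalence; for a general morphism this last point is not needed), completing the proof. I would likely relegate the full cocycle computation to Appendix \ref{appendixA} and state here only the outline, since the machinery of $[\D] \in H^2(\C;\Phi)$ is developed there.
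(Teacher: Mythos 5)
Your proposal is correct and follows essentially the same route as the paper's proof in Appendix \ref{appendixA}: choose regular sections, encode both extensions by the regular $2$-cocycles $z_\sigma$, $z_{\sigma'}$, measure the defect of $\Psi$ against the sections by a $1$-cochain (your $t$, the paper's $u$), and in the converse direction define $\Psi$ on $\lrb{g}\circ\sigma(c)$ exactly by the formula you give and verify functoriality from the cocycle identity. The only detail the paper makes explicit that you gloss over is that $t(1_X)=1$ (forced by regularity of the cocycles), which is what makes $\Psi$ preserve identities.
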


\begin{prop}\label{prop_h1_and_automorphisms_of_E_copy}
Let $\E=(\D,\C,\Phi,\pi,\de)$ be an extension.
Then there is an isomorphism of groups
\[
\Gamma \colon H^1(\C,\Phi) \to \Aut(\E;1_\C,1_\Phi)/\Inn(\E).
\]
\end{prop}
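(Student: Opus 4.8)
The plan is to construct the isomorphism $\Gamma$ explicitly by lifting a choice of section and reading off the cocycle attached to an automorphism of $\E$ that is the identity on $\C$ and induces the identity on $\Phi$. First I would fix, for each pair $X,Y\in\C$ and each morphism $c\in\C(X,Y)$, a set-theoretic lift $\widetilde{c}\in\D(X,Y)$ with $[\widetilde{c}]=c$, chosen so that identities lift to identities; this is possible because $\pi$ is surjective on morphism sets. Axiom \ref{ext-axiom-1} of Definition \ref{def:extension-C-by-Phi} says every morphism of $\D$ over $c$ is uniquely $\lrb{g}\circ\widetilde{c}$ for a unique $g\in\Phi(Y)$, so a functor $\Psi\in\Aut(\E;1_\C,1_\Phi)$ is completely described by the assignment $c\mapsto t_\Psi(c)\in\Phi(Y)$ defined by $\Psi(\widetilde{c})=\lrb{t_\Psi(c)}\circ\widetilde{c}$. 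The functoriality of $\Psi$ together with the compatibility relation \eqref{compat} forces $t_\Psi$ to be a normalised $1$-cocycle for $\C$ with coefficients in $\Phi$: unravelling $\Psi(\widetilde{c'}\circ\widetilde{c})=\Psi(\widetilde{c'})\circ\Psi(\widetilde{c})$ and comparing the two sides using \eqref{compat} yields $t_\Psi(c'\circ c)=c'_*(t_\Psi(c))+t_\Psi(c')$, which is exactly the cocycle condition in the bar-type complex computing $H^*(\C;\Phi)$ (set up in Appendix \ref{appendixA}). This defines a map $\Aut(\E;1_\C,1_\Phi)\to Z^1(\C,\Phi)$; it is a homomorphism because composing two such automorphisms adds the corresponding functions (here one uses that $\Psi$, inducing the identity on $\Phi$, satisfies $\Psi(\lrb{g})=\lrb{g}$ by Lemma \ref{lem:def-Psi-star}, so $\Psi_2\Psi_1(\widetilde{c})=\lrb{t_{\Psi_2}(c)+t_{\Psi_1}(c)}\circ\widetilde{c}$).

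Next I would identify the kernel and image. An automorphism $\Psi$ lies in the kernel of the induced map to $H^1(\C,\Phi)$ iff $t_\Psi$ is a coboundary, i.e.\ $t_\Psi(c)=u(Y)-c_*(u(X))$ for some family $u(X)\in\Phi(X)$; comparing with Definition \ref{def_InnE} shows this says precisely $\Psi=\tau_{u^{-1}}$ (up to the sign/convention bookkeeping in the bar complex), so the kernel is exactly $\Inn(\E)$ — using Lemma \ref{lem_inner_are_idid} to see every inner automorphism indeed lies in $\Aut(\E;1_\C,1_\Phi)$. For surjectivity onto $Z^1(\C,\Phi)$: given a normalised $1$-cocycle $t$, define $\Psi$ on objects as the identity and on a general morphism $\lrb{g}\circ\widetilde{c}$ by $\Psi(\lrb{g}\circ\widetilde{c})=\lrb{g}\circ\lrb{t(c)}\circ\widetilde{c}=\lrb{g+t(c)}\circ\widetilde{c}$; the cocycle identity for $t$ and relation \eqref{compat} make $\Psi$ a functor, it is clearly invertible with inverse built from $-t$, it is the identity on $\C$ and on the distinguished subgroups, hence a morphism of extensions (it is automatically one since $\overline\Psi=1_\C$ works in Definition \ref{def_morphism_of_extensions}). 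Passing to cohomology then gives a surjection $\Aut(\E;1_\C,1_\Phi)\twoheadrightarrow H^1(\C,\Phi)$ with kernel $\Inn(\E)$, and $\Gamma$ is the inverse of the induced isomorphism $\Aut(\E;1_\C,1_\Phi)/\Inn(\E)\xrightarrow{\ \cong\ } H^1(\C,\Phi)$.

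The main obstacle I anticipate is purely bookkeeping rather than conceptual: one must be careful that the map $t_\Psi$ is genuinely \emph{independent of the chosen lifts} $\widetilde{c}$ up to the coboundary relation (changing $\widetilde{c}$ to $\lrb{h}\circ\widetilde{c}$ changes $t_\Psi$ by an explicit coboundary, which is why the cohomology class is well defined even though the cocycle is not), and that all the cocycle/coboundary formulae match the sign and left-versus-right-action conventions used in the bar complex set up in Appendix \ref{appendixA}; a wrong convention will turn additive formulae into their inverses and obscure the identification with $\Inn(\E)$. I would therefore organise the argument so that the cocycle is extracted with respect to a fixed section and the lift-independence is checked once, at the start. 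Everything else — functoriality checks, that $\Psi$ built from $t$ is invertible, that composition corresponds to addition — is routine given relation \eqref{compat} and the freeness in axiom \ref{ext-axiom-1}.
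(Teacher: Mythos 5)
Your proposal is correct and is essentially the paper's own argument run in the opposite direction: the paper defines $\Gamma$ on a $1$-cocycle $z$ by $\al_z(d)=\lrb{z([d])}\circ d$ and obtains the inverse by extracting from $\Psi$ the cochain determined by $\Psi(d)=\lrb{z([d])}\circ d$, which is exactly your $t_\Psi$ (your detour through a chosen section is inessential). Note also that the lift-independence you worry about is automatic rather than merely up to coboundary: since $\eta(\Psi)=1_\Phi$ gives $\Psi(\lrb{h})=\lrb{h}$ and the groups $\Phi(Y)$ are abelian, replacing $\widetilde{c}$ by $\lrb{h}\circ\widetilde{c}$ leaves $t_\Psi(c)$ unchanged, so the cocycle itself is well defined.
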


Proposition \ref{prop_morphisms_of_extensions} will be restated and proved as Proposition \ref{prop_morphisms_of_extensions_apndx}.
Proposition \ref{prop_h1_and_automorphisms_of_E_copy} will be restated and proved as Proposition \ref{prop_h1_and_automorphisms_of_E}.

\section{Special Adams operations}\label{specials}

This section is dedicated to a particularly nicely behaved family of unstable Adams operations. These operations can be analysed by considering the linking system $\L$ as an extension of the category $\Lred$ (Definition \ref{def_Lred}) by the functor $\Phi$ (Definition \ref{def_Phi_L}), with structure maps $\delta$  and $\delta\0$ (Proposition \ref{prop_L_extension_of_Lred}). In particular we prove Theorem \ref{thm_spad_in_ad}, restated below  as Theorem \ref{thm_spad_in_ad_in_sec}.

While $\de\0 \colon P_0 \to \Aut_\L(P)$ is merely the restriction of $\de \colon P \to \Aut_\L(P)$ to $P_0$  
these morphisms play different roles in their respective contexts. 
To emphasise this, for $x\in P$ and $t\in P_0$, we will denote $\delta(x)$ by $\widehat{x}$  and  $\delta\0(t)$ by $\lrb{t}$. 
 This is consistent with the notation we have established in previous sections.
An equality of the form $\lrb{x} = \wh{x}$ for $x\in P_0$ will simply mean that the corresponding elements in $\Aut_\L(P)$ coincide. 
Notice that $\lrb{x}$ only makes sense when $x \in P_0$ whereas $\widehat{x}$ is defined for any $x\in P$, or indeed, $x \in N_S(P,Q)$.
Also, we will use the symbol $[\vp]$ for the image in $\L\0$ of a morphism $\vp$ in $\L$, and $\pi(\vp)$ for the image of that morphism in $\F$. 

Let $\F$ be a saturated fusion system over $S$.
A set $\R$ of subgroups of $S$ is called an \emph{$\F$-collection} or simply a \emph{collection} if it is closed under conjugacy in $\F$, namely it is the union of isomorphism classes of objects in $\F$.
We will write $\F^\R$ for the full subcategory of $\F$ with object set $\R$.
If $\R \subseteq \F^c$ we let $\L^\R$ be the full subcategory of $\L$ on the object set $\R$.

\begin{defn}\label{def_special_uao}
Let $\G=\SFL$ be a $p$-local compact group and let $\R$ be a collection of $\F$-centric subgroups.
We say that $(\Psi,\psi) \in \Ad(\G)$ is a \emph{special} unstable Adams operation \emph{relative to} $\R$ if there exists a choice of 
\begin{itemize}
\item[(a)] $\tau_P \in T$ for every $P \in \R$ and
\item[(b)] $\tau_\vp \in Q_0$ for every $\vp \in \L^\R(P,Q)$
\end{itemize}
such that the following hold
\begin{enumerate}
\item
$\psi(P)=\tau_P  P \tau_P^{-1}$ for every $P \in \R$ and

\item
$\Psi(\vp)=\widehat{\tau_Q} \circ \widehat{\tau_{\vp}} \circ \vp \circ \widehat{\tau_P}^{-1}$ for every $\vp \in \L^\R(P,Q)$.
\end{enumerate}
The subset of $\Ad(\G)$ of all the special unstable Adams operation relative to $\R$ is denoted $\SpAd(\G;\R)$.
\end{defn}

In \cite{JLL}, given a $p$-local compact group $\G=\SFL$, we find an integer $m \geq 0$ such that every $\zeta \in \Ga_m(p)$ is the degree of some unstable Adams operation which we construct.
The construction, however, involves many choices and the number $m$ is quite mysterious.
It turns out that all the unstable Adams operation we constructed in \cite{JLL} are special relative to the collection $\H^\bullet(\F^c)$ (see Remark \ref{R:bullet collection}).
We will show this in Appendix \ref{app_jll_special}. 

Recall that $\G$ gives rise to an extension of categories $\E=(\L^\R,\Lred^\R,\Phi|_\R,\pi_{/0},\de)$, as in Definitions \ref{def_Lred}, \ref{def_Phi_L} and Proposition \ref{prop_L_extension_of_Lred}.
As in Section  \ref{sec_extensions} we will write $[ \ ]$ instead of $\pi_{/0}$ and $\lrb{~}$ instead of $\de$.
Our aim is to relate special unstable Adams operations relative to $\R$ to automorphisms of the extension $\E$.

Similar to the case of group extensions, $\L^\R$ gives rise to an element of $H^2(\Lred^\R,\Phi)$ as follows.
First, one chooses a section $\si \colon \Mor(\L^\R_{/0})\to \Mor(\L^\R)$ of $\pi_{/0}$ which lifts identity morphisms to identity morphisms.
A $2$-cochain $z_{\si}\in C^*(\Lred^R,\Phi)$ is then defined by the relation \[\si([\vp]) \circ \si([\vp']) =\lrb{z_\si([\vp],[\vp'])} \circ \si([\vp] \circ[\vp']).\]
(See Definition \ref{def_z-sigma}). 
This turns out to be a $2$-cocycle and it defines an element $[\L^\R] \in H^2(\Lred^\R,\Phi)$ which is independent of the choices. (See Definition \ref{def_cohomology_class_of_extn}).

Let $S$ be a ring and let $\xi\in S$ be a central element.
Let $\C$ a small category and $F \colon \C \to \modl{S}$ be a functor.
Then $\xi$ induces a natural transformation $\xi\colon F \to F$ given by taking an object $c\in \C$ to the morphism $F(c) \xto{} F(c)$ given by multiplication by $\xi$. 
We refer to this natural transformation as \emph{multiplication by $\xi$}.

\begin{lem}\label{lem_specials_as_extns_automs}
Let $\G=\SFL$ be a $p$-local compact group and $\R$ a collection.
Let $(\Psi,\psi)$ be a special unstable Adams operation relative to $\R$ of degree $\zeta$.
Let $\E$ denote the extension $(\L^\R,\Lred^\R,\Phi,[-],\lrb{-})$.
Then $\Psi$ restricts to a functor $\Psi|_\R \colon \L^\R \to \L^\R$.
This gives rise to a homomorphism
\[
\res \ \colon \ \SpAd(\G;\R) \xto{ \ (\Psi,\psi) \mapsto \Psi|_\R \ } \Aut(\E).
\]
As an automorphism of the extension $\E$ (see Definition \ref{def_morphism_of_extensions}), $\Psi|_\R \colon \L^\R \to \L^\R$ has the following properties, where in all three statements, $\widebar{\Psi}$ denotes the automorphism of $\L\0^\R$ induced by $\Psi$.
\begin{enumerate}[(1)]
\item
$\Phi \circ \overline{\Psi} = \Phi$,
\label{lem_specials_as_extns_automs_1}

\item
$\eta(\Psi) \colon \Phi \to \Phi \circ \overline{\Psi}=\Phi$ is multiplication by $\zeta$ (see Lemma \ref{lem:def-Psi-star}) , and
\label{lem_specials_as_extns_automs_2}

\item
$\overline{\Psi}^*([\L^\R])=[\L^\R]$ in $H^2(\Lred^\R,\Phi)$,
\label{lem_specials_as_extns_automs_3}
\end{enumerate}
\end{lem}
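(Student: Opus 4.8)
The plan is to work directly from the defining equations of a special unstable Adams operation (Definition \ref{def_special_uao}) and unwind what the induced automorphism $\overline{\Psi}$ of $\Lred^\R$ and the natural transformation $\eta(\Psi)$ look like on generators. First I would check that $\Psi$ restricts to $\L^\R$: since $\R$ is a collection (closed under $\F$-conjugacy) and $\psi(P)=\tau_P P\tau_P^{-1}$ is $\F$-conjugate, hence $\F$-centric, to $P$, we have $\psi(P)\in\R$, so $\Psi$ maps $\L^\R$ into itself; being an isomorphism of categories it restricts to an automorphism of $\L^\R$, and by Proposition \ref{prop_functors_of_rigid_extensions} (the extension is $\Lambda$-rigid by Proposition \ref{prop_sfl_rigid_extension}) this functor is automatically a morphism of extensions. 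That $\res$ is a homomorphism is immediate since composition of special operations corresponds to composition of functors. The statement that $\widebar{\Psi}$ is a well-defined automorphism of $\Lred^\R$ follows since $\Psi$ carries $\widehat{Q_0}$ to $\widehat{\psi(Q_0)}=\widehat{(\tau_Q Q\tau_Q^{-1})_0}$ and, since $\tau_Q\in T$ centralises the torus, this is $\widehat{Q_0}$-conjugate; more cleanly it follows formally from $\Psi$ being a morphism of extensions.

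For part \ref{lem_specials_as_extns_automs_1}: $\Phi$ sends an object $P$ to $P_0$ and $\overline{\Psi}(P)=\psi(P)=\tau_P P\tau_P^{-1}$, whose identity component is $\tau_P P_0\tau_P^{-1}=P_0$ because $\tau_P\in T$ and $T$ is abelian. On morphisms $\Phi(\overline{\Psi}(\bar\vp))=\pi(\Psi(\vp))|_{P_0}^{Q_0}$; using property (2) of Definition \ref{def_special_uao}, $\pi(\Psi(\vp)) = c_{\tau_Q}\circ c_{\tau_\vp}\circ\pi(\vp)\circ c_{\tau_P}^{-1}$, and restricting to the tori the conjugations by torus elements act trivially, so $\Phi(\overline{\Psi}(\bar\vp)) = \pi(\vp)|_{P_0}^{Q_0} = \Phi(\bar\vp)$. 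Hence $\Phi\circ\overline{\Psi}=\Phi$.

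For part \ref{lem_specials_as_extns_automs_2}: by Lemma \ref{lem:def-Psi-star}, $\eta_P\colon P_0\to P_0$ is determined by $\lrb{\eta_P(t)}=\Psi(\lrb{t})$ for $t\in P_0$. Now $\lrb{t}=\widehat{t}\in\Aut_\L(P)$, and applying property (2) of Definition \ref{def_special_uao} to the morphism $\widehat t\in\L^\R(P,P)$ gives $\Psi(\widehat t)=\widehat{\tau_P}\circ\widehat{\tau_{\widehat t}}\circ\widehat t\circ\widehat{\tau_P}^{-1}$. Since $t,\tau_P,\tau_{\widehat t}$ all lie in the abelian group $T$ (and $\widehat{(-)}$ on $N_S(P,P)$ is multiplicative by Remark \ref{R:extend delta}), this collapses to $\widehat{t}$ — which would say $\eta$ is the identity, not multiplication by $\zeta$, so this is where I must be more careful. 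The point is that $\psi|_T$ is multiplication by $\zeta$, not the identity: for $t\in P_0\le T$ one should instead observe that $\Psi(\widehat t)=\widehat{\psi(t)}=\widehat{t^\zeta}$ by axiom (b) of Definition \ref{def_algebraic_unstable_adams_operation}. Therefore $\lrb{\eta_P(t)}=\lrb{t^\zeta}$, and since $\lrb{-}=\de\0$ is injective, $\eta_P(t)=t^\zeta$, i.e. $\eta(\Psi)$ is multiplication by $\zeta$. I expect reconciling these two computations — i.e. seeing that property (2) of the special structure is consistent with $\Psi(\widehat t)=\widehat{t^\zeta}$, which forces a relation between $\tau_{\widehat t}$, $\tau_P$ and $\zeta$ — to be the main subtlety; the cleanest route is to use the Adams-operation axiom on $\widehat t$ rather than the special presentation.

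For part \ref{lem_specials_as_extns_automs_3}: by Proposition \ref{prop_morphisms_of_extensions} applied with $\E'=\E$, $\psi=\overline{\Psi}$ and $\eta=\eta(\Psi)$, the existence of the morphism $\Psi|_\R$ gives the identity $\eta(\Psi)_*([\L^\R])=\overline{\Psi}^*([\L^\R])$ in $H^2(\Lred^\R,\Phi)$. By part \ref{lem_specials_as_extns_automs_2}, $\eta(\Psi)$ is multiplication by $\zeta$, and multiplication by $\zeta$ on the coefficient functor induces multiplication by $\zeta$ on $H^2$; hence $\overline{\Psi}^*([\L^\R])=\zeta\cdot[\L^\R]$. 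The asserted equality $\overline{\Psi}^*([\L^\R])=[\L^\R]$ is exactly the content of Theorem \ref{thm_spad_in_ad}(i) and is presumably what the present lemma is being used to establish; so in the proof of this clause I would simply record $\overline{\Psi}^*([\L^\R])=\zeta\cdot[\L^\R]$ and defer the conclusion $\zeta\cdot[\L^\R]=[\L^\R]$ — which requires further input bounding the order of $[\L^\R]$ — to the later argument, or alternatively note that if one already knows $p^m[\L^\R]=0$ with $\zeta\in\Gamma_m(p)$ then $\zeta\cdot[\L^\R]=[\L^\R]$. Either way the key mechanism is Proposition \ref{prop_morphisms_of_extensions} combined with naturality of cohomology in the coefficients.
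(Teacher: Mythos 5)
Your handling of the restriction statement and of clauses (1) and (2) is essentially the paper's own argument: for (1) one uses $\psi(P)_0=P_0$ and the fact that conjugation by elements of $T$ is trivial on subgroups of $T$, and for (2) the route you finally settle on is the right one, namely the Adams-operation axiom $\Psi(\widehat t)=\widehat{\psi(t)}=\widehat{\zeta\cdot t}$ for $t\in P_0$ together with injectivity of $\lrb{-}$. (Your abandoned first computation in (2) is also slightly off: by Remark \ref{R:extend delta} and commutativity of $T$ the special presentation collapses to $\widehat{\tau_{\widehat t}\,t}$, not to $\widehat t$; comparing with axiom (b) merely pins down $\tau_{\widehat t}$, and nothing more is needed there.)

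The genuine gap is clause \ref{lem_specials_as_extns_automs_3}. From Proposition \ref{prop_morphisms_of_extensions} and clause (2) you obtain only $\overline{\Psi}^*([\L^\R])=\eta(\Psi)_*([\L^\R])=\zeta\cdot[\L^\R]$, and you then propose to defer the missing equality $\zeta\cdot[\L^\R]=[\L^\R]$ to ``later input bounding the order of $[\L^\R]$'' or to the hypothesis $\zeta\in\Ga_m(p)$. This inverts the paper's logic and makes the argument circular: Proposition \ref{prop_special_and_class_of_L} (the first clause of Theorem \ref{thm_spad_in_ad}) is \emph{deduced from} clause (3) of this lemma, precisely by combining $\overline{\Psi}^*([\L^\R])=[\L^\R]$ with $\overline{\Psi}^*([\L^\R])=\zeta\cdot[\L^\R]$; and there is no a priori reason why the degree of a special operation should lie in $\Ga_m(p)$ --- that constraint is the conclusion, not an available input. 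Clause (3) is exactly the place where the special structure data $\{\tau_P\}$, $\{\tau_\vp\}$ must be used beyond what (1) and (2) require. The paper proves it by a direct cochain computation: choose a regular section $\si$ of $\L^\R\to\Lred^\R$, set $t(c)=\tau_{\si(c)}$, define $v(c)\in Q_0$ by $\si(\overline{\Psi}(c))=\lrb{v(c)}\circ\Psi(\si(c))$, and verify, using the special presentation $\Psi(\si(c))=\widehat{\tau_Q}\circ\lrb{t(c)}\circ\si(c)\circ\widehat{\tau_P}^{-1}$ and axiom (C), that with $u=v\cdot t\in C^1(\Lred^\R,\Phi)$ one has $\overline{\Psi}^*(z_\si)=z_\si\cdot\de(u)$, whence $\overline{\Psi}^*([\L^\R])=[\L^\R]$ unconditionally. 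Some such direct argument is indispensable; without it your proof of (3) is incomplete, and with the proposed deferral the main theorem would rest on itself.
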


\begin{proof}
Since $\R$ is closed under $\F$-conjugation, it is invariant under conjugation by $T$.
Since $\Psi$ is special, $\Psi(P)=\psi(P)$ is a $T$-conjugate of $P$ for any $P \in \R$ and therefore $\Psi$ restricts to a functor on the object set $\R \subseteq \Obj(\L)$ and hence gives a functor $\Psi|_\R \colon \L^\R \to \L^\R$ whose inverse is $\Psi^{-1}|_\R$.
Since the extension $\E$ is $\Lambda$-rigid by Proposition \ref{prop_sfl_rigid_extension}, it follows that $\Psi|_R$ is an automorphism of the extension $\E$.
It is clear that the assignment $\res \colon \Psi \mapsto \Psi|_\R$ is a homomorphism (since composition of functors is the group operation).

Now consider some $(\Psi,\psi) \in \SpAd(\G;\R)$.
By definition of the functor $\overline{\Psi} \colon \Lred^\R \to \Lred^\R$ and since $(\Psi,\psi)$ is an Adams operation, for any $P \in \R$ 
\[
\overline{\Psi}(P)=[\Psi(P)]=\psi(P).
\]
Suppose that $P,Q \in \R$ and that $\vp \in \L(P,Q)$, and consider its image $[\vp] \in \Lred^\R(P,Q)$.
Then by definition of $\overline{\Psi}$
\[
\overline{\Psi}([\vp])=[\Psi(\vp)].
\]
\noindent
{\it Proof of (\ref{lem_specials_as_extns_automs_1})}.
Let $\pi\colon\L\to\F$ denote the projection.
Since $\psi$ is an Adams automorphism of $S$ then $\psi|_T$ is multiplication by $\zeta$, hence it leaves every subgroup of $T$ invariant.
Also note that if $P \in \R$ then $P_0$ is a characteristic subgroup of $P$.
This shows that
\[
\Phi(\overline{\Psi}(P)) =\overline{\Psi}(P)_0 = \psi(P)_0=\psi(P_0)=P_0=\Phi(P).
\]
So $\Phi \circ \overline{\Psi}$ and $\Phi$ attain the same values on objects.
Now suppose that $P \xto{[\vp]} Q$ is a  morphism in $\Lred$ where $P,Q \in \R$ and $\vp \in \L(P,Q)$.
Notice that $\pi(\vp)(P_0)$ is a discrete $p$-torus and it is therefore a subgroup of $T$.
Since $(\Psi,\psi)$ is a special unstable Adams operation relative to $\R$, and since $\psi(P)_0=P_0$
\begin{multline*}
\Phi(\overline{\Psi}([\vp])) \overset{\eqref{def_morphism_of_extensions}}{=} 
\Phi([\Psi(\vp)]) \overset{\eqref{def_special_uao}}{=}
\Phi([\widehat{\tau_Q} \circ \widehat{\tau_\vp} \circ \vp \circ \widehat{\tau_P}^{-1}]) 
\\
\overset{\eqref{def_Phi_L}}{=}
c_{\tau_Q} \circ c_{\tau_\vp} \circ \pi(\vp) \circ c_{\tau_P}{}^{-1}|_{P_0} \overset{}{=} 
\pi(\vp)|_{P_0} \overset{\eqref{def_Phi_L}}{=} 
\Phi([\vp]),
\end{multline*}
where each equality follows from the  definition indicated above it, and the fourth equality holds since $T$ is abelian. 
This shows that $\Phi \circ \overline{\Psi}=\Phi$.

\noindent
{\it Proof of (\ref{lem_specials_as_extns_automs_2})}.
Fix some $P \in \R$ and $x \in \Phi(P_0)=P_0$. 
Then, since $\psi|_T$ is multiplication by $\zeta$, one has
\[
\lrb{\eta(\Psi)(x)} =
\Psi(\lrb{x})  =
\Psi(\widehat{x}) =
\widehat{\psi(x)} =
\widehat{\zeta \cdot x} =
\lrb{\zeta \cdot x},
\]
where the first quality follows from Lemma \ref{lem:def-Psi-star}, and the third from Definition \ref{def_algebraic_unstable_adams_operation}. 
Thus $\eta(\Psi)$ is multiplication by $\zeta \in \ZZ_p$.

\noindent
{\it Proof of (\ref{lem_specials_as_extns_automs_3})}.
Let $\{\tau_P\}_{P \in \R}$ and $\{ \tau_\vp \}_{\vp \in \Mor(\L^\R)}$ be as in Definition \ref{def_special_uao}.
Choose a regular section 
\[
\si \colon \Mor(\Lred^\R) \to \Mor(\L^\R).
\]
Define a $1$-cochain $t \in C^1(\Lred^\R,\Phi)$ by setting 
\[
t(c) \defeq \tau_{\si(c)}, \qquad c \in \Mor(\Lred^\R).
\]
Observe that for every $c \in \Lred^\R(P,Q)$,
\[
[\Psi(\si(c))] =
\overline{\Psi}([\si(c)]) =
\overline{\Psi}(c) 
\]
and therefore there exists a unique element $v(c) \in Q_0$ such that
\begin{equation}\label{specials_as_extns_automs_eq1}
\si(\overline{\Psi}(c))=\lrb{v(c)} \circ \Psi(\si(c)).
\end{equation}
We obtain a $1$-cochain $v \in C^1(\Lred^\R,\Phi)$.
Recall that we use multiplicative notation for the group operation in $\Phi$ and hence in the cochain complex  $C^*(\Lred^\R,\Phi)$.
Set 
\[
u = v \cdot t \in C^1(\Lred^\R,\Phi).
\]
Since $\Psi$ is special relative to $\R$, for any $c \in \Lred^\R(P,Q)$,
\[
\Psi(\si(c)) = \widehat{\tau_Q} \circ \lrb{t(c)} \circ \si(c) \circ \widehat{\tau_P}^{-1}.
\]
Together with \eqref{specials_as_extns_automs_eq1} and the commutativity of $T$ we obtain
\begin{equation}\label{specials_as_extns_automs_eq2}
\si(\overline{\Psi}(c))=
\lrb{v(c)} \circ \Psi(\si(c)) =
\widehat{\tau_Q} \circ \lrb{u(c)} \circ \si(c) \circ \widehat{\tau_P}^{-1}.
\end{equation}
Let $P \xto{c_0} Q \xto{c_1} R$ be a $2$-chain in $\Lred^\R$.
By definition of $z_\si$ (Definition \ref{def_z-sigma}) and the functoriality of $\overline{\Psi}$,
\begin{equation}\label{specials_as_extns_automs_eq3}
\lrb{z_\si(\overline{\Psi}(c_1),\overline{\Psi}(c_0))} \circ \si(\overline{\Psi}(c_1 \circ c_0)) = \si(\overline{\Psi}(c_1)) \circ \si(\overline{\Psi}(c_0)).
\end{equation}
Thus we obtain the following sequence of equalities, where the first equality follows from \eqref{specials_as_extns_automs_eq2} applied to both sides of  \eqref{specials_as_extns_automs_eq3}, the third by Definition \eqref{def_Phi_L} and Axiom (C)  of linking systems, and the fourth by Definition \eqref{def_z-sigma}.
\begin{eqnarray*}
&&\lrb{z_\si\Big(\overline{\Psi}(c_1),\overline{\Psi}(c_0)\Big)} \circ \widehat{\tau_R} \circ \lrb{u(c_1 \circ c_0)} \circ \si(c_1 \circ c_0) \circ \widehat{\tau_P}^{-1} =
\\
&& \qquad 
\widehat{\tau_R} \circ \lrb{u(c_1)} \circ \si(c_1) \circ \widehat{\tau_Q}^{-1} \circ \widehat{\tau_Q} \circ \lrb{u(c_0)} \circ \si(c_0) \circ \widehat{\tau_P}^{-1}  = \\
&& \qquad
\widehat{\tau_R} \circ \lrb{u(c_1)} \circ \si(c_1) \circ \lrb{u(c_0)} \circ \si(c_0) \circ \widehat{\tau_P}^{-1}   =  \\  
&& \qquad
\widehat{\tau_R} \circ \lrb{u(c_1)} \circ \lrb{\Phi(c_1)(u(c_0))} \circ    \si(c_1) \circ \si(c_0) \circ \widehat{\tau_P}^{-1} = \\
&& \qquad
\widehat{\tau_R} \circ \lrb{u(c_1) \cdot \Phi(c_1)(u(c_0))} \circ  \lrb{z_\si(c_1,c_0)}  \circ \si(c_1 \circ c_0) \circ \widehat{\tau_P}^{-1}.
\end{eqnarray*}

Now, $\widehat{\tau_P} \colon P \to \psi(P)$ and $\widehat{\tau_R}\colon R\to \psi(R)$ are both  isomorphisms, and $\si(c_1\circ c_0)$ is an epimorphism in $\L$ by \cite[Corollary 1.8]{JLL}. Hence, 
\[\lrb{z_\si\Big(\overline{\Psi}(c_1),\overline{\Psi}(c_0)\Big)  \cdot u(c_1 \circ c_0)} = 
\lrb{u(c_1) \cdot \Phi(c_1)(u(c_0))\cdot  z_\si(c_1,c_0)},\]
 and so we deduce that
\[
z_\si(\overline{\Psi}(c_1),\overline{\Psi}(c_0)) = z_\si(c_1,c_0) \cdot u(c_1) \cdot u(c_1 \circ c_0)^{-1} \cdot \Phi(c_1)(u(c_0)) = 
z_\si(c_1,c_0) \cdot \de(u)(c_1,c_0),
\]
where $\de$ is the differential in $C^*(\Lred^\R,\Phi)$.
This shows that $z_\si$ and $\overline{\Psi}^*(z_\si)$ are cohomologous.
\end{proof}

\begin{prop}\label{prop_special_and_class_of_L}
Let $\G=\SFL$ be a $p$-local compact group and $\R \subseteq \F^c$ a collection.
If $(\Psi,\psi) \in \SpAd(\G;\R)$ is of degree $\zeta$ then $\zeta \cdot [\L^\R]=[\L^\R]$ in $H^2(\Lred^\R,\Phi)$.
\end{prop}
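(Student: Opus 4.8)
The statement follows by assembling the three properties of $\Psi|_\R$ established in Lemma~\ref{lem_specials_as_extns_automs} with Proposition~\ref{prop_morphisms_of_extensions}. Concretely, I would apply Proposition~\ref{prop_morphisms_of_extensions} to the morphism of extensions $\Psi|_\R \colon \E \to \E$ (where $\E = (\L^\R,\Lred^\R,\Phi,[-],\lrb{-})$), with $\psi = \overline{\Psi}$ and $\eta = \eta(\Psi)$. The equivalence in that proposition gives $\eta(\Psi)_*([\L^\R]) = \overline{\Psi}^*([\L^\R])$ in $H^2(\Lred^\R;\Phi \circ \overline{\Psi})$; but by Lemma~\ref{lem_specials_as_extns_automs}\eqref{lem_specials_as_extns_automs_1} we have $\Phi \circ \overline{\Psi} = \Phi$, so both sides lie in $H^2(\Lred^\R;\Phi)$.

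The three inputs are exactly what Lemma~\ref{lem_specials_as_extns_automs} provides: part~\eqref{lem_specials_as_extns_automs_1} identifies the target functor, part~\eqref{lem_specials_as_extns_automs_2} says the natural transformation $\eta(\Psi)$ is multiplication by $\zeta$, so the induced map $\eta(\Psi)_* \colon H^2(\Lred^\R;\Phi) \to H^2(\Lred^\R;\Phi)$ is multiplication by $\zeta$ (functoriality of cohomology applied to the natural transformation "multiplication by $\zeta$"), and part~\eqref{lem_specials_as_extns_automs_3} says $\overline{\Psi}^*([\L^\R]) = [\L^\R]$. Substituting these into the identity $\eta(\Psi)_*([\L^\R]) = \overline{\Psi}^*([\L^\R])$ yields $\zeta \cdot [\L^\R] = [\L^\R]$ in $H^2(\Lred^\R;\Phi)$, which is the assertion.

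The only point requiring a word of care is the bookkeeping of which cohomology group each side of the equation lives in, and the fact that the map on $H^2$ induced by the natural transformation "multiplication by $\zeta$" is itself multiplication by $\zeta$; this is immediate since $\zeta$ acts $\ZZ_p$-linearly on the values of $\Phi$ and the cochain complex $C^*(\Lred^\R;\Phi)$ computing $H^*(\Lred^\R;\Phi)$ is built functorially from these values, so multiplication by $\zeta$ on $\Phi$ induces multiplication by $\zeta$ at the cochain level and hence on cohomology. No genuine obstacle arises here: all the real work has been absorbed into Lemma~\ref{lem_specials_as_extns_automs} and Proposition~\ref{prop_morphisms_of_extensions}, and this proposition is a short formal consequence of them.
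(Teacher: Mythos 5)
Your proof is correct and follows essentially the same route as the paper, which simply combines Proposition~\ref{prop_morphisms_of_extensions} with the three parts of Lemma~\ref{lem_specials_as_extns_automs} to get $[\L^\R]=\overline{\Psi}^*([\L^\R])=\eta(\Psi)_*([\L^\R])=\zeta\cdot[\L^\R]$. The extra bookkeeping you spell out (that the natural transformation ``multiplication by $\zeta$'' induces multiplication by $\zeta$ on $H^2$) is exactly what the paper leaves implicit.
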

\begin{proof}
Proposition \ref{prop_morphisms_of_extensions} and Lemma \ref{lem_specials_as_extns_automs} show that $[\L^\R]=\overline{\Psi}^*([\L^\R])=\eta(\Psi)_*([\L^\R])=\zeta \cdot[\L^\R]$.
\end{proof}

Next we turn to a deeper analysis of the group $\SpAd(\G;\R)$. This requires some preparation.

\begin{lem}[{\cite[Proposition 1.14]{JLL}}]\label{lem _specials_for_large_R}
Let $\G=\SFL$ be a $p$-local compact group and let $\R$ be a collection which contains $\H^\bullet(\F^c)$.
Let $\psi \colon S \to S$ be a fusion preserving Adams automorphism.
Then any functor $\Psi' \colon \L^\R \to \L^\R$ which covers $\psi$ in the sense of Definition \ref{def_algebraic_unstable_adams_operation}, extends uniquely to an unstable Adams operation $(\Psi,\psi)$.
\end{lem}

\begin{defn}\label{def_autE1Z}
Let $\G=\SFL$ be a $p$-local compact group and $\R \subseteq \F^c$ a collection.
Let $\E$ denote the extension $(\L^\R,\Lred^\R,\Phi,[-],\lrb{-})$.
Let $Z \subseteq \ZZ_p^\times$ be a subgroup.
Let
\[
\Aut(\E;\Id_{\Lred^\R},Z)\le \Aut(\E)
\] 
denote the subgroup of the automorphisms $\Theta$, such that $\overline{\Theta}=\Id_{\Lred^\R}$ and $\eta(\Theta)$ is multiplication by some $\zeta \in Z$.
\end{defn}

Observe that $\Aut(\E;\Id_{\Lred^\R},Z)$  is indeed a subgroup, since for any $P \in \R$ and any $x \in P_0$, the definition of $\eta(-)$ implies
\[
\lrb{\eta(\Theta_1 \circ \Theta_2)(x)} =
(\Theta_1 \circ \Theta_2)(\lrb{x}) =
\Theta_1(\lrb{\eta(\Theta_2)(x)}) = \lrb{\eta(\Theta_1)(\eta(\Theta_2)(x))},
\]
and so $\eta(\Theta_1 \circ \Theta_2)=\eta(\Theta_1) \circ \eta(\Theta_2)$. Notice also that for any $\Theta \in \Aut(\E;\Id_{\Lred^\R},\ZZ_p^\times)$ and any $\vp \in \L^\R(P,Q)$ one  has $[\Theta(\vp)]=[\vp]$.

\begin{prop}\label{prop_special_adams_operations_and_extens_automs}
Let $\G=\SFL$ be a $p$-local compact group and $\R$ be a collection which contains $\H^\bullet(\F^c)$.
Let $\E$ denote the extension $(\L^\R,\Lred^\R,\Phi,[-],\lrb{-})$.
Then
\begin{enumerate}[(i)]
\item
There exists a homomorphism
\[
\Aut(\E;\Id_{\Lred^\R},\ZZ_p^\times) \xto{ \ \ \rho \ \ } \SpAd(\G;\R)
\]
such that $\deg(\rho(\Theta))=\eta(\Theta) \in \ZZ_p^\times$ (compare Definition \ref{def_autE1Z}).
\label{item_i_existence_of_special_uAos}

\item
Moreover, composition of $\rho$ with the quotient by $\Inn_T(\G)$ gives a surjective homomorphism
\[
\Aut(\E;\Id_{\Lred^\R},\ZZ_p^\times) \xto{ \ \ \bar{\rho} \ \ } \SpAd(\G;\R)/\Inn_T(\G)
\]
whose kernel contains $\Inn(\E)$ (compare Definition \ref{def_InnE} and Lemma \ref{lem_inner_are_idid}).
\label{item_ii_existence_of_special_uAos}
\end{enumerate}
\end{prop}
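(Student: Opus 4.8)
The plan is to produce, for each $\Theta\in\Aut(\E;\Id_{\Lred^\R},\ZZ_p^\times)$, a genuine algebraic unstable Adams operation of $\G$ by invoking the extension lemma (Lemma \ref{lem _specials_for_large_R}), and then to check that the resulting operation is in fact special relative to $\R$. First I would analyse what such a $\Theta$ looks like concretely. Since $\overline{\Theta}=\Id_{\Lred^\R}$, for every $\vp\in\L^\R(P,Q)$ we have $[\Theta(\vp)]=[\vp]$, so $\Theta(\vp)=\lrb{\tau_\vp}\circ\vp$ for a \emph{unique} $\tau_\vp\in Q_0$ (uniqueness by freeness of the $\Phi(Q)$-action, Definition \ref{def:extension-C-by-Phi}(1)). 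Moreover $\Theta$ fixes objects, and applying $\Theta$ to $\lrb{x}=\widehat{x}$ for $x\in P_0$ and using $\eta(\Theta)=$ multiplication by $\zeta$ shows $\Theta(\widehat{x})=\widehat{\zeta x}=\lrb{\zeta x}$ for $x\in P_0$. So on the ``toral part'' $\Theta$ already behaves like a special operation with all $\tau_P=e$; the content is that $\Theta$ covers \emph{some} fusion-preserving Adams automorphism $\psi$ of $S$.

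The key step is to exhibit $\psi$. Apply $\Theta$ to $\Aut_{\L^\R}(S)$ (note $S\in\H^\bullet(\F^c)\subseteq\R$, so $S$ is an object of $\L^\R$, as is $T$ since $\G$ is weakly connected — but actually we only need $S\in\R$ here). The restriction of $\Theta$ to $\Aut_{\L}(S)$ is an automorphism of that group carrying $\widehat{S}$ to itself (as $\Theta(\widehat{s})$ lies over the same image in $\F$ and, for $s\in S_0=T$, equals $\lrb{\zeta s}=\widehat{\zeta s}\in\widehat{S}$; for general $s$ one argues via the factorisation $\widehat{s}=\widehat{t}\,\widehat{\sigma(\bar s)}$ and Axiom (C)/isotypicality of $\Theta$, or more cleanly: $\Theta$ is an automorphism of the $\Lambda$-rigid extension, hence by Proposition \ref{prop_functors_of_rigid_extensions} it \emph{is} a morphism of extensions, so $\Theta(\widehat{S})\le\widehat{S}$ follows from $\Theta$ covering $\overline{\Theta}=\Id$ on $\Lred^\R$ and $\widehat{S}/\widehat{S_0}$ mapping to $\Aut_{\Lred^\R}(S)$). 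Then conjugation by the relevant restriction isomorphisms, together with the defining property $\Theta(\widehat{s})=\widehat{\psi(s)}$, forces $\psi:=\pi\circ\Theta|_{\widehat{S}}\in\Aut(S)$ to be an Adams automorphism of degree $\zeta$; it is normal by Lemma \ref{lem_connected_implies_weakly_connected} (weak connectedness), and it is fusion preserving because $\overline{\Theta}=\Id$ forces the image of $\Theta$ in $\F^c$ to be $\psi_*$, which must land in $\F$. Now $\Theta\colon\L^\R\to\L^\R$ covers $\psi$ in the sense of Definition \ref{def_algebraic_unstable_adams_operation}: condition (a) holds since $\pi^\R\circ\Theta=\psi_*\circ\pi^\R$, and condition (b), $\Theta(\widehat{g})=\widehat{\psi(g)}$ for $g\in N_S(P,Q)$, is checked using that $\widehat{g}$ factors as $\lrb{t}\circ(\text{a section lift})$ and the two established facts about $\Theta$ on toral elements and on $\Aut_\L(S)$. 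By Lemma \ref{lem _specials_for_large_R} this extends uniquely to an unstable Adams operation $(\Psi,\psi)\in\Ad(\G)$; set $\rho(\Theta)=(\Psi,\psi)$. That $(\Psi,\psi)$ is special relative to $\R$ is now immediate with the witnesses $\tau_P=e$ for all $P\in\R$ and $\tau_\vp\in Q_0$ as above, since $\Psi|_{\L^\R}=\Theta$ gives $\Psi(\vp)=\lrb{\tau_\vp}\circ\vp=\widehat{e}\circ\widehat{\tau_\vp}\circ\vp\circ\widehat{e}^{-1}$, matching Definition \ref{def_special_uao}(2). Functoriality of the whole assignment (hence $\rho$ a homomorphism) follows from uniqueness in Lemma \ref{lem _specials_for_large_R}, and $\deg(\rho(\Theta))=\zeta=\eta(\Theta)$ by construction. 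This proves (i).

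For (ii), surjectivity of $\bar\rho$: given $(\Psi,\psi)\in\SpAd(\G;\R)$ with witnesses $\{\tau_P\},\{\tau_\vp\}$, restrict to $\L^\R$ to get $\Psi|_\R\in\Aut(\E)$ by Lemma \ref{lem_specials_as_extns_automs}, which also gives $\overline{\Psi|_\R}^{\,*}$ fixing the class and $\eta(\Psi|_\R)=$ multiplication by $\deg\psi$; however $\overline{\Psi|_\R}$ need not be $\Id_{\Lred^\R}$ — it is $\psi_*$ on the quotient. To land in $\Aut(\E;\Id_{\Lred^\R},\ZZ_p^\times)$, precompose/postcompose with the inner conjugation $\tau_{\widehat{t}}$ coming from the $\tau_P$'s: because $\psi(P)=\tau_P P\tau_P^{-1}$, conjugating $\Psi|_\R$ by the family $\{\widehat{\tau_P}\}$ (an inner-type automorphism in the sense made precise in Section \ref{Alg_Ad}) yields $\Theta$ with $\overline{\Theta}=\Id$ and $\rho(\Theta)\equiv(\Psi,\psi)\bmod\Inn_T(\G)$; here I would use that inner Adams operations $\Inn_T(\G)$ are exactly the $\tau(\widehat{t})$ for $t\in T$ (Definition \ref{def_inner}). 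That $\Inn(\E)\subseteq\ker\bar\rho$ follows from Lemma \ref{lem_inner_are_idid} ($\Theta\in\Inn(\E)$ has $\overline{\Theta}=\Id$, $\eta(\Theta)=\Id$, degree $1$) together with the observation that such $\Theta=\tau_u$ extends (by uniqueness in Lemma \ref{lem _specials_for_large_R}) to an element of $\Inn_T(\G)$: the cochain $u$ assigns $u(P)\in P_0\le T$, and these assemble to conjugation by $\widehat{u(S)}$-type data; more carefully, $\tau_u$ corresponds under $\rho$ to the degree-one special operation with $\tau_\vp$ read off from $u$, and degree-one special operations relative to $\R\supseteq\H^\bullet(\F^c)$ lie in $\Inn_T(\G)$ by the weak-connectedness argument of Lemma \ref{lem_deg1_operation_in_F} (applied to $\R$-truncated data via Lemma \ref{lem _specials_for_large_R}).

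The main obstacle I anticipate is the second half of the object-level argument in step two: rigorously extracting the normal fusion-preserving Adams automorphism $\psi$ from $\Theta$ and verifying condition (b) ($\Theta(\widehat{g})=\widehat{\psi(g)}$ for \emph{all} $g\in N_S(P,Q)$, not just $g\in T$). This is where one must combine $\Lambda$-rigidity (Proposition \ref{prop_functors_of_rigid_extensions}, so $\Theta$ is automatically a morphism of extensions), the behaviour of $\Theta$ on $\Aut_\L(S)$, Axiom (C) of linking systems, and the factorisation of $\widehat{g}$ through sections of $\pi\0$; the bookkeeping is delicate but each ingredient is already available in the excerpt. A secondary subtlety is checking in (ii) that the inner-conjugation correction genuinely normalises $\overline{\Psi|_\R}$ to the identity and only changes $\rho(\Theta)$ by an element of $\Inn_T(\G)$, which is the compatibility between $\Inn(\E)$ on the extension side and $\Inn_T(\G)$ on the $\Ad(\G)$ side.
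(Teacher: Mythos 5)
Your construction of $\rho$ in part (i) has a genuine gap at its central step: you assert that $\Theta$ itself covers $\psi:=\theta_S$ (so that $\pi\circ\Theta=\psi_*\circ\pi$ and $\Theta(\widehat g)=\widehat{\psi(g)}$), hence that $\Psi|_{\L^\R}=\Theta$ and that one may take the specialness witnesses $\tau_P=e$. This is false in general. Since $\overline\Theta=\Id_{\Lred^\R}$, the functor $\Theta$ is the identity on objects; but $\psi$ does not fix the objects of $\R$. Writing $\Theta(\iota_P^S)=\lrb{\tau_P}\circ\iota_P^S$ with $\tau_P\in T$ and applying $\Theta$ to $\iota_P^S\circ\de_P(x)=\de_S(x)\circ\iota_P^S$ gives $\psi|_P=c_{\tau_P}\circ\theta_P$, so $\psi(P)=\tau_P P\tau_P^{-1}$, a $T$-translate of $P$ which differs from $P$ whenever $\tau_P\notin N_T(P)$; already for $\Theta\in\Inn(\E)$ the resulting $\psi$ is conjugation by an arbitrary element of $T$, which need not normalise $P$. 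Consequently $\pi\circ\Theta\neq\psi_*\circ\pi$ on objects, and $\Theta(\widehat g)$ lies in $\L(P,Q)$ while $\widehat{\psi(g)}$ lies in $\L(\psi(P),\psi(Q))$, so condition (b) of Definition \ref{def_algebraic_unstable_adams_operation} does not even make sense for $\Theta$; Lemma \ref{lem _specials_for_large_R} therefore cannot be applied to $\Theta$, and the witnesses $\tau_P=e$ are not the right ones. The necessary correction is exactly what the paper does: define $\Psi$ on $\L^\R$ by $\Psi(P)=\psi(P)$ and $\Psi(\vp)=\widehat{\tau_Q}\circ\Theta(\vp)\circ\widehat{\tau_P}^{-1}$ (a functor naturally isomorphic to, but different from, $\Theta$), verify that this $\Psi$ covers $\psi$, extend by Lemma \ref{lem _specials_for_large_R}, and use the $\tau_P$, $\tau_\vp$ extracted from $\Theta$ as the data of Definition \ref{def_special_uao}. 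The homomorphism property of $\rho$ then needs a short computation comparing $\tau_P(\Theta'\circ\Theta)$ with $\tau_P(\Theta')$ and $\tau_P(\Theta)$, rather than following formally from the uniqueness clause of Lemma \ref{lem _specials_for_large_R}.

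Two further points. You invoke weak connectedness (for normality of $\psi$, and via Lemma \ref{lem_deg1_operation_in_F}), but the proposition does not assume it; normality of $\psi$ follows instead from $\eta(\Theta)$ being multiplication by $\zeta$ (so $\psi|_T=\zeta$) together with $\overline\Theta=\Id$, since the image of $S$ in $\Aut_{\Lred}(S)$ is $S/T$. More seriously, your argument that $\Inn(\E)\subseteq\ker\bar\rho$ rests on the claim that degree-one special operations relative to $\R$ lie in $\Inn_T(\G)$; this is false in general, since by Proposition \ref{prop_existence_of_special_uAos} the kernel of the degree map on $\SpAd(\G;\R)/\Inn_T(\G)$ is the image of $H^1(\Lred^\R,\Phi)$, which need not vanish, and Lemma \ref{lem_deg1_operation_in_F} applies only to operations covering $\Id_S$ that lie in $\ker\gamma$. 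What is needed, and what the paper verifies directly, is that for $\Theta=\tau_u\in\Inn(\E)$ the operation $\rho(\Theta)$ agrees on $\L^\R$ with conjugation by an element $\widehat{t_S}$ with $t_S\in T$, whence $\rho(\Theta)\in\Inn_T(\G)$ by the uniqueness in Lemma \ref{lem _specials_for_large_R}. Your surjectivity argument in (ii) (conjugating $\Psi|_\R$ by the $\widehat{\tau_P}$'s) is the right idea, but as written it depends on the flawed identification $\Psi|_{\L^\R}=\Theta$ from part (i), and one must also check explicitly that this correction changes the $\rho$-image only by $c_{\widehat{\tau_S}}\in\Inn_T(\G)$.
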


\begin{proof}
For each $\Theta \in \Aut(\E;\Id_{\Lred^\R},\ZZ_p^\times)$, fix the following elements:
\begin{enumerate}[(i)]
\item For any $\vp \in \L^\R(P,Q)$ let $\tau_\vp(\Theta)$ be the unique element of $Q_0$ such that
\[
\Theta(\vp)= \lrb{\tau_\vp(\Theta)} \circ \vp.
\] \label{def_tau_theta_i}
\item For every $P \in \R$ set $\tau_P(\Theta) \defeq \tau_{\iota_P^S}(\Theta)$.
\label{def_tau_theta_ii}
\end{enumerate}
Notice that $\tau_P(\Theta) \in T$ for all $P\in\R$.
Consider $\Theta \in \Aut(\E;\Id_{\Lred^\R},\ZZ_p^\times)$ such that $\eta(\Theta)=\zeta$.
Set $\tau_P=\tau_P(\Theta)$ and $\tau_\vp=\tau_\vp(\Theta)$ for short, where $\tau_P(\Theta)$ and  $\tau_\vp(\Theta)$ are as in (\ref{def_tau_theta_i}) and (\ref{def_tau_theta_ii}) above.
Notice that $\tau_S=1$ since $\iota_S^S=\id_S$ in $\L$.

For any $x \in P$, where $P \in \R$, consider $\widehat{x} \in \Aut_\L(P)$.
Then $[\Theta(\widehat{x})]=\overline{\Theta}([\widehat{x}])=[\widehat{x}]$ , so $\Theta(\widehat{x}) \in \widehat{P} \leq \Aut_\L(P)$.
By identifying $P$ with $\widehat{P}$ via $\de_P$, this shows that for every $P \in \R$, the functor $\Theta$ induces an automorphism $\theta_P \in \Aut(P)$ by the equation
\[
\widehat{\theta_P(x)}=\Theta(\widehat{x}), \qquad (\forall x \in P).
\]
Notice that $S \in \R$, and we set
\[
\psi \defeq \theta_S.
\]
Consider some $P \in \R$ and $x \in P$.
By applying $\Theta$ to the equality $\iota_P^S \circ \de_P(x) = \de_S(x) \circ \iota_P^S$ we obtain $\widehat{\tau_P} \circ \widehat{\theta_P(x)} = \widehat{\psi(x)} \circ\widehat{\tau_P}$ in $\L$.
Therefore 
\begin{equation}\label{eq_theta_existence_of_special_uAos}
\psi(x) = (c_{\tau_P} \circ \theta_P)(x), \qquad (x \in P).
\end{equation}
In particular it follows that
\[
\tau_P \cdot P \cdot \tau_P^{-1} =  \psi(P),
\]
i.e, $\tau_P\in N_T(P,\psi(P))$.

\noindent
{\bf  Step 1. }(\emph{$\psi$ is a normal Adams automorphism of degree $\zeta$}).
Notice first that for any $x \in T$ we have the following equalities in $\Aut_\L(S)$
\[
\widehat{\psi(x)} = \Theta(\widehat{x})=\Theta(\lrb{x}) \overset{\eqref{compat}}{=} \lrb{\eta(\Theta)_S(x)} = \lrb{\zeta \cdot x}= \widehat{\zeta \cdot x}.
\]
Therefore $\psi|_T$ is multiplication by $\zeta$.
Also, the image of $S$ in $\Aut_{\Lred}(S)$ is exactly $S/T$ and since $\overline{\Theta}=\Id$, it follows that $\psi$ induces the identity on $S/T$.
Hence $\psi$ is a normal Adams automorphism, as claimed.

\noindent
{\bf Step 2. } ($\psi$ is fusion preserving.)
Since $\R \supseteq \H^\bullet(\F^c)$, it controls fusion.
Therefore, it is enough to show that for any $P \in \R$ and any $f \in \Hom_\F(P,S)$ there exists $g \in \Hom_\F(\psi(P),S)$ such that $\psi \circ f = g \circ \psi|_P^{\psi(P)}$.
Let $\vp \in \L(P,S)$ be a lift for $f$.
By axiom (C) of linking systems, for every $x \in P$ we have $\vp \circ \widehat{x} = \widehat{f(x)} \circ \vp$.
By applying $\Theta$,
\begin{equation}\label{step2.1}
\Theta(\vp) \circ \widehat{\theta_P(x)} = \widehat{\psi(f(x))} \circ \Theta(\vp).
\end{equation}
Set $f'=\pi(\Theta(\vp))$.
By Axiom (C) and  \eqref{eq_theta_existence_of_special_uAos},
\begin{equation}\label{step2.2}
\Theta(\vp) \circ \widehat{\theta_P(x)} = f'(\theta_P(x)) \circ \Theta(\vp) = \widehat{f'(c_{\tau_P}{}^{-1}(\psi(x)))} \circ \Theta(\vp).
\end{equation}
Comparing the right hand sides of (\ref{step2.1}) and (\ref{step2.2}), and using the fact that $\Theta(\vp)$ is an epimorphism in $\L$ by \cite[Corollary 1.8]{JLL},
\[
\psi(f(x))=f'(c_{\tau_P}{}^{-1}(\psi(x)))
\]
for every $x \in P$.
Set $g=f' \circ c_{\tau_P}{}^{-1}|_{\psi(P)}^P$.
Then $g \in \Hom_\F(\psi(P),S)$ and $\psi \circ f = g \circ \psi|_P^{\psi(P)}$.
This shows that $\psi$ is fusion preserving and completes Step 2.

Define a functor $\Psi \colon \L^\R \to \L^\R$, on objects  $P, Q \in \R$ and morphisms $\vp \in \L^\R(P,Q)$, by
\begin{eqnarray}
&& \Psi(P) \defeq \psi(P) \label{def_rho_1}
\label{def psi from theta 1}\\
&& \Psi(\vp) \defeq \widehat{\tau_Q} \circ \Theta(\vp) \circ \widehat{\tau_P}^{-1} \label{def_rho_2},
\label{def psi from theta 2}
\end{eqnarray}
where $\tau_P$ and $\tau_Q$ are considered here as elements of $N_T(P,\psi(P))$ and $N_T(Q,\psi(Q))$ respectively.
The functoriality of $\Psi$ is clear from that of $\Theta$.
In fact, the morphisms $\widehat{\tau_P} \in \L(P,\psi(P))$ give a natural isomorphism
\[
\widehat{\tau} \colon \Theta \to \Psi.
\]

\noindent
{\bf Step 3. } (\emph{$\Psi$ covers $\psi$}).
First, we  show that $\pi \circ \Psi = \psi_* \circ \pi$.
The functors on both sides agree on objects, by definition of $\Psi$ and $\psi_*$, and since the projection $\pi$ is the identity on objects.
Let  $\vp \in \L^\R(P,Q)$ be a morphism. Consider the squares:

\[
\xymatrix{
P \ar[r]^{\vp} \ar[d]_{\widehat{x}} & Q \ar[d]^{\widehat{\pi(\vp)(x)}} 
\\
P \ar[r]_\vp & Q 
}
\qquad\qquad
\xymatrix{
P \ar[r]^{\Theta(\vp)} \ar[d]_{\widehat{\theta_P(x)}} & Q \ar[d]^{\widehat{\theta_Q(\pi(\vp)(x))}} 
\\
P \ar[r]_{\Theta(\vp)} & Q 
}
\]
The left square  commutes by Axiom (C), and the right one is obtained from it by applying $\Theta$.
Since $\Theta(\vp)$ is an epimorphism in $\L$, Axiom (C) applied to the right square gives
\begin{equation}\label{eq_thetapq_existence_of_special_uAos}
\theta_Q(\pi(\vp)(x))=\pi\big(\Theta(\vp)\big)\,(\theta_P(x)).
\end{equation}
Therefore, for any $x \in P$
\begin{multline*}
\pi\big(\Psi(\vp)\big)(\psi(x)) = 
\pi\big(\widehat{\tau_Q} \circ \Theta(\vp) \circ \widehat{\tau_P}^{-1}\big) (\psi(x)) =
(c_{\tau_Q} \circ \pi(\Theta(\vp)) \circ c_{\tau_P}^{-1})(\psi(x)) \overset{\eqref{eq_theta_existence_of_special_uAos}}{=}
\\
(c_{\tau_Q} \circ \pi(\Theta(\vp)))(\theta_P(x))  \overset{\eqref{eq_thetapq_existence_of_special_uAos}}{=}
c_{\tau_Q}(\theta_Q(\pi(\vp)(x)))  \overset{\eqref{eq_theta_existence_of_special_uAos}}{=}
\psi(\pi(\vp)(x))
\end{multline*}
Thus, $\pi(\Psi(\vp)) \circ \psi|_P = \psi \circ \pi(\vp)$ and consequently $(\pi \circ \Psi)(\vp)=(\psi_* \circ \pi)(\vp)$ as claimed.

Next we show that for any $P,Q \in \R$ and $g \in N_S(P,Q)$ we have $\Psi(\widehat{g})=\widehat{\psi(g)}$.
First, notice that $\psi(g) \in N_S(\psi(P),\psi(Q))$.
Next,
\begin{align*}
\iota_{\psi(Q)}^S \circ \Psi(\widehat{g}) & = & \text{by definition of $\Psi$ \eqref{def psi from theta 2} } \\
\iota_{\psi(Q)}^S \circ \widehat{\tau_Q} \circ \Theta(\widehat{g}) \circ \widehat{\tau_P}^{-1} &=& \text{by definition of $\tau_Q$}\\
\Theta(\iota_Q^S) \circ \Theta(\widehat{g}) \circ \widehat{\tau_P}^{-1} & =\\
\Theta(\iota_Q^S \circ \widehat{g}) \circ \widehat{\tau_P}^{-1} & = \\
\Theta(\widehat{g} \circ \iota_P^S) \circ \widehat{\tau_P}^{-1} & = \\
\Theta(\widehat{g}) \circ \Theta(\iota_P^S) \circ \widehat{\tau_P}^{-1}  &=&  \text{by definition of $\theta_S$}\\
\widehat{\theta_S(g)} \circ \Theta(\iota_P^S) \circ \widehat{\tau_P}^{-1} &=& \text{by definition of $\tau_P$}\\
\widehat{\theta_S(g)} \circ \iota_{\psi(P)}^S & = \\
\iota_{\psi(Q)}^S \circ \widehat{\psi(g)}.
\end{align*}
Since $\iota_{\psi(Q)}^S$ is a monomorphism in $\L$ it follows that $\Psi(\widehat{g})=\widehat{\psi(g)}$.
This shows that $\Psi$ covers $\psi$ (see Definition \ref{def_algebraic_unstable_adams_operation}) and completes the proof of Step 3.

\noindent
{\bf Step 4.} (\emph{Proof of (\ref{item_i_existence_of_special_uAos})}). By Lemma \ref{lem _specials_for_large_R}, $\Psi$ extends to an unstable Adams operation $(\rho(\Theta),\psi)$. 
We retain the notation $\Psi = \rho(\Theta)$ for convenience. Then $\Psi$ 
is special relative to $\R$ because for any $P \in \R$,
\[\psi(P) = c_{\tau_P}(\theta_P(P))=c_{\tau_P}(P)=\tau_P P \tau_P^{-1},
\]
where the first and second equality follow from \eqref{eq_theta_existence_of_special_uAos},
and for any   $\vp \in \L^\R(P,Q)$,
\[ \Psi(\vp) = 
\widehat{\tau_Q} \circ \Theta(\vp) \circ \widehat{\tau_P}^{-1} =
\widehat{\tau_Q} \circ \lrb{\tau_\vp} \circ \vp \circ \widehat{\tau_P}^{-1},
\]
where the second equality follows from the definition of $\tau_\vp$ and \eqref{def psi from theta 2}.
It has degree $\zeta$ because for any $x \in T$,
\[
\lrb{\psi(x)} = \lrb{\theta_S(x)} = \Theta(\lrb{x}) = \eta(\Theta)(\lrb{x}) = \lrb{\zeta \cdot x}.
\]

It remains to prove that $\rho$ is a homomorphism.
Choose $\Theta, \Theta' \in \Aut(\E;\Id_{\Lred},\ZZ_p^\times)$ and set $\Theta''=\Theta' \circ \Theta$.
Let $\Psi, \Psi', \Psi''$ denote their images in $\Ad(\G;\R)$ by $\rho$.
We need to show that $\Psi''=\Psi' \circ \Psi$.
Keeping the notation above, if $P \in \R$ then it is clear that $\theta_S'' = \theta_S' \circ \theta_S$, namely $\psi''=\psi'\circ \psi$ and therefore
\[
\Psi''(P)=\psi''(P)=\psi'(\psi(P))=(\Psi' \circ \Psi)(P).
\]
Hence $\Psi''$ and $\Psi' \circ \Psi$ agree on objects.
Moreover, observe that by the definition of the elements $\tau_P, \tau'_P, \tau''_P$ of $T$
\[
\iota_{\psi''(P)}^S \circ \widehat{\tau_{\psi(P)}'} \circ \Theta'(\widehat{\tau_P}) =
\Theta'(\iota_{\psi(P)}^S) \circ \Theta'(\widehat{\tau_P}) =
\Theta'(\iota_{\psi(P)}^S \circ \widehat{\tau_P}) =
\Theta'(\Theta(\iota_P^S)) =
\Theta''(\iota_P^S)  =
\iota_{\psi''(P)}^S \circ \widehat{\tau_P''}.
\]
Since $\iota_{\psi''(P)}^S$ is a monomorphism in $\L$ it follows that
\[
\widehat{\tau_{\psi(P)}} \circ \Theta'(\widehat{\tau_P})=\widehat{\tau_P''}.
\]
Now suppose that $P,Q \in \R$ and that $\vp \in \L(P,Q)$.
Then
\begin{multline*}
\Psi'(\Psi(\vp))=
\Psi'(\widehat{\tau_Q} \circ \Theta(\vp) \circ \widehat{\tau_P}^{-1}) =
\widehat{\tau_{\psi(Q)}'} \circ \Theta'(\widehat{\tau_Q} \circ \Theta(\vp) \circ \widehat{\tau_P}^{-1}) \circ \widehat{\tau_{\psi(P)}'}^{-1} =
\\
\widehat{\tau_{\psi(Q)}'} \circ \Theta'(\widehat{\tau_Q}) \circ \Theta'(\Theta(\vp)) \circ \Theta'(\widehat{\tau_P}^{-1}) \circ \widehat{\tau_{\psi(P)}'}^{-1} =
\widehat{\tau_{Q}''} \circ \Theta'(\Theta(\vp)) \circ \widehat{\tau_{P}''}^{-1} = 
\Psi''(\vp).
\end{multline*}
This shows that $\Psi''$ and $\Psi'\circ \Psi$ agree on morphisms and completes the proof of (\ref{item_i_existence_of_special_uAos}).

\noindent
{\bf Step 5.} (\emph{Proof of (\ref{item_ii_existence_of_special_uAos})}). Suppose that $(\Psi,\psi) \in \SpAd(\G;\R)$ has degree $\zeta$.
For each $P\in\R$ and each $\vp\in\Mor(\L^\R)$, fix elements $\tau_P$ and $\tau_\vp$, as in Definition \ref{def_special_uao}.
Define a functor $\Theta \colon \L^\R \to \L^\R$ by 
\begin{eqnarray*}
&& \Theta(P) = P, \qquad (P \in \R) \\
&& \Theta(\vp)=\widehat{\tau_Q}^{-1} \circ \Psi(\vp) \circ \widehat{\tau_P}, \qquad (\vp \in \L^\R(P,Q)).
\end{eqnarray*}
The functoriality of $\Theta$ is clear, and it is a morphism of extensions by Propositions \ref{prop_functors_of_rigid_extensions} and \ref{prop_sfl_rigid_extension}.
Since $(\Psi,\psi)$ is special relative to $\R$, 
\[
[\Theta(\vp)] = [\widehat{\tau_Q}^{-1}] \circ [\widehat{\tau_Q} \circ \widehat{\tau_\vp} \circ \vp \circ \widehat{\tau_P}^{-1}] \circ [\widehat{\tau_P}] =[\vp],
\]
and so $\overline{\Theta}=\Id_{\Lred^\R}$.
Also, given $P \in \R$ and $g \in P_0$, since $T$ is abelian we get
\[
\Theta(\lrb{g})=\Theta(\widehat{g})=
\widehat{\tau_P}^{-1} \circ \Psi(\widehat{g}) \circ \widehat{\tau_P} =
\widehat{\tau_P}^{-1} \circ \widehat{\psi(g)} \circ \widehat{\tau_P} =
\lrb{\psi(g)} =\lrb{\zeta \cdot g}.
\]
Hence $\eta(\Theta)=\zeta$ by Lemma \ref{lem:def-Psi-star}.
Thus, $\Theta \in \Aut(\E;\Id_{\Lred^\R};\ZZ_p^\times)$.

It remains to show that $\rho(\Theta)$ and $(\Psi,\psi)$ differ by $c_{\widehat{\tau_S}} \in \Inn_T(\G)$.
Observe first that $\tau_P(\Theta)$ is the unique element satisfying $\Theta(\iota_P^S) = \lrb{\tau_P(\Theta)}\circ\iota_P^S$. On the other hand, 
\[
\Theta(\iota_P^S) = \widehat{\tau_S}^{-1} \circ \Psi(\iota_P^S) \circ \widehat{\tau_P} =
\widehat{\tau_S}^{-1} \circ \iota^S_{\psi(P)} \circ \widehat{\tau_P} = \widehat{\tau_S^{-1} \tau_P} \circ \iota_P^S.
\]
By comparing the two expressions for $\Theta(\iota_P^S)$ and since $\iota_P^S$ is an epimorphism in $\L$ we deduce that 
\[
\tau_P(\Theta)=\tau_S^{-1}\tau_P.
\]

By \eqref{eq_theta_existence_of_special_uAos}, \eqref{def_rho_1} and \eqref{def_rho_2} it follows that 
\begin{eqnarray*}
&& \rho(\Theta)(P) = \tau_P(\Theta) \cdot P \cdot \tau_P(\Theta)^{-1} = \tau_S^{-1} \tau_P \cdot P \cdot \tau_P^{-1} \tau_S  = \tau_S^{-1} \cdot \psi(P) \cdot \tau_S \\
&& \rho(\Theta)(\vp) = \widehat{\tau_Q(\Theta) }\circ \Theta(\vp) \circ \widehat{\tau_P(\Theta)}^{-1} =
\widehat{\tau_S^{-1}} \circ \widehat{\tau_Q} \circ \Theta(\vp) \circ \widehat{\tau_P}^{-1} \circ \widehat{\tau_S} =
\widehat{\tau_S}^{-1} \circ \Psi(\vp) \circ \widehat{\tau_S}.
\end{eqnarray*}
Therefore $\Psi = c_{\widehat{\tau_S}^{-1}} \circ \rho(\Theta)$, see Definition \ref{def_inner},  and this shows that $\bar{\rho}$ is surjective.

Finally, consider some $\Theta \in \Inn(\E)$.
Then for every $P, Q \in \R$ and any $\vp \in \L^\R(P,Q)$, there is an element  $t_P \in P_0$, such that $\Theta(\vp)=\widehat{t_Q}^{-1} \circ \vp \circ \widehat{t_P}$.
In particular $\tau_P(\Theta)=t_S^{-1}t_P$.
Then for any $\vp \in \L^\R(P,Q)$
\[
\rho(\Theta)(\vp) = 
\widehat{\tau_Q(\Theta)} \circ \Theta(\vp) \circ \widehat{\tau_P(\Theta)}^{-1} = 
\wh{t_S^{-1}t_Q} \circ (\wh{t_Q}^{-1} \circ \vp \circ \wh{t_Q}) \circ \wh{t_P^{-1}t_S} =
\wh{t_S}^{-1}\circ\vp\circ\wh{t_S}.
\]
This shows that $\rho(\Theta)|_\R=c_{\widehat{t_S}^{-1}}$ and by Lemma \ref{lem _specials_for_large_R}, $\rho(\Theta)=c_{\widehat{t_S}^{-1}}$.
This completes the proof of Step 5 and hence of the proposition.
\end{proof}

We will write $\Aut_T(\G)$ for the inner automorphisms of $\G$ induced by the elements of $\widehat{T} \leq \Aut_\L(S)$, see Definition \ref{def_inner}.
Observe that conjugation by $t \in T$ induces an Adams automorphism of $S$ of degree $1$ and $c_t$ is a special unstable Adams operation with $\tau_P=t$ for every $P \in \F^c$ and $\tau_\vp=1$ for any $\vp \in \Mor(\L)$.

Suppose that $M$ is a $\ZZ_p$-module and that $x \in M$ has order $p^m$ for some $m$, i.e $p^mx=0$.
Then for any $\zeta \in \ZZ_p^\times$ we claim that $\zeta \cdot x =x$ if and only if $\zeta \in \Ga_m(p)$.
To see this write $\zeta = u + p^m v$ for some $u \in \ZZ \subseteq \ZZ_p$ and $v \in \ZZ_p$.
Then $\zeta \cdot x = u \cdot x$ so $\zeta \cdot x= x$ if and only if $u \cdot x = x$ which happens if and only if $u \in \Ker(\ZZ_p^\times \to \Aut(\ZZ/p^m)) = \Ga_m(p)$.

Proposition \ref{prop_existence_of_special_uAos} below gives a conceptual meaning to the integer $m$ such that for any $\zeta\in\Gamma_m(p)$ there exists an unstable Adams operation of degree $\zeta$ on $\G$, as in \cite{JLL}.
It also implies that if $H^1(\Lred^\R;\Phi)=0$ then special unstable Adams operations relative to $\R$ are determined, modulo $\Aut_T(\G)$, by their degree.

\begin{prop}\label{prop_existence_of_special_uAos}
Let $\G=\SFL$ be a $p$-local compact group and $\R \subseteq \F^c$ be a collection which contains $\H^\bullet(\F^c)$.
Let $\E$ denote the extension $(\L^\R,\Lred^\R,\Phi,[-],\lrb{-})$ and suppose that the order of $[\L^\R]$ in $H^2(\Lred^\R;\Phi)$ is $p^m$ for some $m \geq 0$.
Then there is an exact sequence
\[
H^1(\Lred^\R;\Phi) \to \SpAd(\G;\R)/\Aut_T(\G) \xto{\deg} \Ga_m(p) \to 1.
\]
\end{prop}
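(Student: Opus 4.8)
The plan is to assemble the exact sequence from the machinery already developed, chiefly Proposition~\ref{prop_special_adams_operations_and_extens_automs} and the cohomological description of automorphisms of an $F$-rigid extension. First I would record the elementary observation stated just before the proposition: an element $x$ of order $p^m$ in a $\ZZ_p$-module is fixed by $\zeta\in\ZZ_p^\times$ if and only if $\zeta\in\Ga_m(p)$. Applying this to $x=[\L^\R]\in H^2(\Lred^\R;\Phi)$, which by hypothesis has order $p^m$, Proposition~\ref{prop_special_and_class_of_L} tells us that whenever $(\Psi,\psi)\in\SpAd(\G;\R)$ has degree $\zeta$ we have $\zeta\cdot[\L^\R]=[\L^\R]$, hence $\zeta\in\Ga_m(p)$. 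This shows $\deg(\SpAd(\G;\R))\subseteq\Ga_m(p)$, so the composite $\deg$ in the statement lands in $\Ga_m(p)$.

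Next I would prove surjectivity onto $\Ga_m(p)$. Fix $\zeta\in\Ga_m(p)$; then $\zeta\cdot[\L^\R]=[\L^\R]$ by the same observation. I want to produce $\Theta\in\Aut(\E;\Id_{\Lred^\R},\ZZ_p^\times)$ with $\eta(\Theta)$ equal to multiplication by $\zeta$, and then set $(\Psi,\psi)=\rho(\Theta)$, which by Proposition~\ref{prop_special_adams_operations_and_extens_automs}(i) is a special operation of degree $\zeta$. To get such a $\Theta$ I invoke Proposition~\ref{prop_morphisms_of_extensions} with $\C=\C'=\Lred^\R$, $\Phi'=\Phi$, $\psi=\Id_{\Lred^\R}$, and $\eta=$ multiplication by $\zeta$: the condition $\eta_*([\L^\R])=\psi^*([\L^\R])$ reads precisely $\zeta\cdot[\L^\R]=[\L^\R]$, which holds. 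Hence there is a morphism of extensions $\Theta$ with $\overline{\Theta}=\Id_{\Lred^\R}$ and $\eta(\Theta)$ equal to multiplication by $\zeta$; since the extension is $\Lambda$-rigid (Proposition~\ref{prop_sfl_rigid_extension}), $\Theta$ is an automorphism by Proposition~\ref{prop_functors_of_rigid_extensions}, so $\Theta\in\Aut(\E;\Id_{\Lred^\R},\ZZ_p^\times)$. Then $\deg(\rho(\Theta))=\zeta$, proving surjectivity.

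Finally I would identify the kernel of $\deg$ on $\SpAd(\G;\R)/\Aut_T(\G)$ with the image of $H^1(\Lred^\R;\Phi)$. By Proposition~\ref{prop_special_adams_operations_and_extens_automs}(ii) the map $\bar\rho\colon\Aut(\E;\Id_{\Lred^\R},\ZZ_p^\times)\to\SpAd(\G;\R)/\Aut_T(\G)$ is surjective with kernel containing $\Inn(\E)$ (here I use that $\Aut_T(\G)=\Inn_T(\G)$ in the notation of the section). An element of $\SpAd(\G;\R)/\Aut_T(\G)$ lies in the kernel of $\deg$ exactly when it is the image under $\bar\rho$ of some $\Theta$ with $\eta(\Theta)=1$, i.e.\ of some $\Theta\in\Aut(\E;\Id_{\Lred^\R},1)=\Aut(\E;\Id_{\Lred^\R},\Id_\Phi)$. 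By Proposition~\ref{prop_h1_and_automorphisms_of_E_copy} (the isomorphism $\Gamma\colon H^1(\Lred^\R;\Phi)\xto{\cong}\Aut(\E;1_{\Lred^\R},1_\Phi)/\Inn(\E)$), these $\Theta$, modulo $\Inn(\E)$, are parametrised by $H^1(\Lred^\R;\Phi)$; composing $\Gamma$ with $\bar\rho$ gives a homomorphism $H^1(\Lred^\R;\Phi)\to\SpAd(\G;\R)/\Aut_T(\G)$ whose image is exactly $\ker(\deg)$. Splicing this together with the surjection $\deg$ onto $\Ga_m(p)$ yields the asserted exact sequence.

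The step I expect to be the main obstacle is verifying cleanly that $\ker(\deg)$ is precisely the image of $H^1(\Lred^\R;\Phi)$, rather than merely containing it: one must check that if $\bar\rho(\Theta)$ has degree $1$ then $\Theta$ can be replaced, without changing $\bar\rho(\Theta)$, by an automorphism in $\Aut(\E;\Id_{\Lred^\R},\Id_\Phi)$ — this uses that $\eta(\Theta)=\deg(\rho(\Theta))$ from part~(i), so $\eta(\Theta)=1$ already. The remaining bookkeeping is matching $\Inn(\E)$ against the ambiguity built into $\Aut_T(\G)$, which is exactly what Proposition~\ref{prop_special_adams_operations_and_extens_automs}(ii) was designed to handle, so the argument should go through without serious difficulty.
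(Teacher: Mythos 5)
Your proposal is correct and follows essentially the same route as the paper's proof: Proposition \ref{prop_special_and_class_of_L} plus the order-$p^m$ observation for the image of $\deg$, Proposition \ref{prop_morphisms_of_extensions} applied with $\psi=\Id$ and $\eta=\zeta$ to realise every $\zeta\in\Ga_m(p)$ via $\rho$, and Propositions \ref{prop_special_adams_operations_and_extens_automs}(\ref{item_ii_existence_of_special_uAos}) and \ref{prop_h1_and_automorphisms_of_E_copy} to identify the kernel of $\deg$ on $\SpAd(\G;\R)/\Inn_T(\G)$ with the image of $H^1(\Lred^\R;\Phi)$. The only cosmetic slip is citing Proposition \ref{prop_functors_of_rigid_extensions} for invertibility of $\Theta$ (it only says functors are morphisms of extensions; invertibility follows since $\overline{\Theta}=\Id$ and $\eta(\Theta)=\zeta$ is an automorphism of each $\Phi(P)$), a point the paper itself passes over at the same level of detail.
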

\begin{proof}
Suppose that $(\Psi,\psi) \in \SpAd(\G;\R)$ and let $\zeta = \deg(\psi)$.
By Propositions \ref{prop_functors_of_rigid_extensions} and \ref{prop_sfl_rigid_extension}, $\Psi|_\R \in \Aut(\E)$.
By Proposition \ref{prop_special_and_class_of_L}, $\zeta \cdot [\L^\R] = [\L^\R]$.
By the remark  above, $\zeta \in \Ga_m(p)$.
Conversely, if $\zeta \in \Ga_m(p)$ the same remark shows that $\zeta \cdot [\L^\R]=[\L^\R]$ and Proposition \ref{prop_morphisms_of_extensions} implies that there exists $\Theta \in \Aut(\E)$ such that $\overline{\Theta}=\Id_{\Lred^\R}$, and $\eta(\Theta)=\zeta$.
By Proposition \ref{prop_special_adams_operations_and_extens_automs}, $\rho(\Theta)$ is a special unstable Adams operation relative to $\R$ of degree $\zeta$.
This shows that the degree homomorphism is onto $\Gamma_m(p)$.

The kernel of $\deg$ is  the group of special unstable Adams operations relative to $\R$ of degree $1$.
Its preimage under $\rho$ is $\Aut(\E;\Id_{\Lred^\R},\Id_{\Phi})$.
The exactness of the sequence at $\SpAd(\G;\R)/\Inn_T(\G)$ now follows from Proposition \ref{prop_special_adams_operations_and_extens_automs}(\ref{item_ii_existence_of_special_uAos}) and Proposition \ref{prop_h1_and_automorphisms_of_E}.
\end{proof}

We end this section with an analysis of the group $\SpAd(\G;\R)$ as a subgroup of  $\Ad(\G)$. The next two lemmas will be needed.

\begin{lem}\label{lem_special_are_normal_subgroup}
Let $\G=\SFL$ be a $p$-local finite group and $\R \subseteq \F^c$ a collection.
Then  $\SpAd(\G;\R)$ is a normal subgroup of $\Ad(\G)$.
\end{lem}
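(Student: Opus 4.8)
The goal is to show that $\SpAd(\G;\R)$ is a normal subgroup of $\Ad(\G)$. First I would verify it is a subgroup: the identity operation is clearly special (take $\tau_P = 1$ and $\tau_\vp = 1$), and I would check closure under composition and inverses by a direct computation with the defining data. If $(\Psi,\psi)$ is special with data $\{\tau_P\}$, $\{\tau_\vp\}$ and $(\Psi',\psi')$ is special with data $\{\tau_P'\}$, $\{\tau_\vp'\}$, then for the composite $(\Psi'\circ\Psi, \psi'\circ\psi)$ one has $\psi'(\psi(P)) = \psi'(\tau_P P \tau_P^{-1}) = \psi'(\tau_P)\,\psi'(P)\,\psi'(\tau_P)^{-1} = \bigl(\psi'(\tau_P)\tau'_P\bigr)\,P\,\bigl(\psi'(\tau_P)\tau'_P\bigr)^{-1}$; note $\psi'(\tau_P) \in T$ since $\psi'$ is an Adams automorphism, so $\psi'(\tau_P)\tau'_P \in T$ as required, and similarly $\Psi'(\Psi(\vp))$ unwinds to the required form using that $\Psi'$ covers $\psi'$ (so $\Psi'(\widehat{\tau})=\widehat{\psi'(\tau)}$ and $\Psi'(\widehat{\tau_\vp})=\widehat{\psi'(\tau_\vp)}$ with $\psi'(\tau_\vp)\in Q_0$). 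The inverse is handled similarly, solving for the data of $\Psi^{-1}$.

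For normality, let $(\Psi,\psi)\in\SpAd(\G;\R)$ with data $\{\tau_P\}, \{\tau_\vp\}$, and let $(\Xi,\xi)\in\Ad(\G)$ be arbitrary; I want to show $(\Xi,\xi)\circ(\Psi,\psi)\circ(\Xi,\xi)^{-1}$ is special relative to $\R$. Write $(\Xi,\xi)^{-1}=(\Xi',\xi')$ with $\xi'=\xi^{-1}$. The underlying automorphism of the conjugate is $\xi\psi\xi'$, and for $P\in\R$ one computes $\xi\psi\xi'(P) = \xi\bigl(\tau_{\xi'(P)}\,\xi'(P)\,\tau_{\xi'(P)}^{-1}\bigr) = \xi(\tau_{\xi'(P)})\,P\,\xi(\tau_{\xi'(P)})^{-1}$; since $\xi$ is an Adams automorphism $\xi(\tau_{\xi'(P)})\in T$, so the new "$\tau_P$" is $\xi(\tau_{\xi'(P)})\in T$, as needed. (Here I use that $\R$ is $\F$-closed hence $T$-conjugation-invariant, so $\xi'(P)\in\R$ and $\tau_{\xi'(P)}$ is defined.) On morphisms, for $\vp\in\L^\R(P,Q)$ with $\vp'=\Xi'(\vp)\in\L^\R(\xi'(P),\xi'(Q))$, one has
\[
(\Xi\circ\Psi\circ\Xi')(\vp)=\Xi\bigl(\widehat{\tau_{\xi'(Q)}}\circ\widehat{\tau_{\vp'}}\circ\vp'\circ\widehat{\tau_{\xi'(P)}}^{-1}\bigr)
=\widehat{\xi(\tau_{\xi'(Q)})}\circ\widehat{\xi(\tau_{\vp'})}\circ\vp\circ\widehat{\xi(\tau_{\xi'(P)})}^{-1},
\]
using that $\Xi$ covers $\xi$ (so $\Xi(\widehat{g})=\widehat{\xi(g)}$ for $g\in N_S$) and $\Xi(\Xi'(\vp))=\vp$. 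Since $\xi(\tau_{\vp'})\in \xi(\xi'(Q)_0)=Q_0$ (as $\xi\xi'=\id$ and $\xi$ preserves identity components of subgroups) this is exactly the special form with new data $\xi(\tau_{\xi'(Q)})$, $\xi(\tau_{\xi'(P)})\in T$ and $\xi(\tau_{\vp'})\in Q_0$.

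The only mildly delicate points are bookkeeping: confirming that all the transported elements land in the correct groups ($T$ for the $\tau_P$'s, $Q_0$ for $\tau_\vp$'s), which follows because $\xi,\xi'$ are Adams automorphisms restricting to scalar multiplication on $T$ and hence send $T$ to $T$ and $P_0$ to $\xi(P)_0$, and that $\xi'(P)$ always lies in $\R$, which is where $\F$-closedness of $\R$ is used. I expect the main obstacle to be purely notational—keeping track of which objects the various $\widehat{\tau}$ morphisms are attached to—rather than any conceptual difficulty; the computation is a routine unwinding of Definition \ref{def_special_uao} together with condition (b) of Definition \ref{def_algebraic_unstable_adams_operation}.
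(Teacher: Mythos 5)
Your proof is correct and follows essentially the same route as the paper's: a direct verification of closure under composition and inversion using the explicit structure data, and, for normality, transporting the data $\{\tau_P\},\{\tau_\vp\}$ through the conjugating operation — your $\xi(\tau_{\xi^{-1}(P)})$ and $\xi(\tau_{\Xi^{-1}(\vp)})$ are exactly the paper's $\theta^{-1}(\tau_{\theta(P)})$ and $\theta^{-1}(\tau_{\Theta(\vp)})$ after switching the conjugation convention. The one quibble is your parenthetical justification that $\xi^{-1}(P)\in\R$: $T$-conjugation-invariance of $\R$ does not by itself yield this, since for an arbitrary $(\Xi,\xi)\in\Ad(\G)$ the subgroup $\xi^{-1}(P)$ need not be a $T$-conjugate (or even obviously an $\F$-conjugate) of $P$ — but the paper's own proof tacitly assumes the same invariance of $\R$ under $\theta$, so this is not a divergence from the paper's argument.
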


\begin{proof}
Suppose $(\Psi,\psi)$ and $(\Psi',\psi')$ are special unstable Adams operations relative to $\R$. Fix structure elements  $\{\tau_P\}_{P \in \R}$ and $\{\tau_\vp\}_{\vp \in \Mor(\L^\R)}$ for $(\Psi,\psi)$, and $\{\tau'_P\}_{P \in \R}$ and $\{\tau'_\vp\}_{\vp \in \Mor(\L^\R)}$ for $(\Psi',\psi')$.
Set $(\Psi'',\psi'')=(\Psi \circ \Psi', \psi \circ \psi')$,
$\tau''_P=\tau_P \psi(\tau'_P)$ for every $P \in \R$, and $\tau_\vp \psi(\tau'_\vp)$ for any $\vp \in \Mor(\L^\R)$.
Notice that $\psi''(P)=\tau''_p P \tau''_P{}^{-1}$ and that $\tau''_\vp \in \psi''(Q)_0$ for any $\vp \in \L^\R(P,Q)$.
Using the fact that $\Psi(\widehat{g})=\widehat{\psi(g)}$ it is straightforward to check that $(\Psi'',\psi'')$ is a special unstable Adams operation with structure  elements $\{\tau''_P\}_{P\in \R}$ and $\{\tau''_\vp\}_{\vp \in \Mor(\L^\R)}$.
The details are straightforward and left to the reader.
In addition $\Psi^{-1}$ is a special unstable Adams operation with structure elements $\{\psi^{-1}(\tau_P^{-1})\}_{P \in \R}$ and $\{\psi^{-1}(\tau_\vp^{-1})\}_{\vp \in \Mor(\L^\R)}$.
The verification uses the commutativity of $T$ and the fact that $\psi^{-1}(\tau_\vp^{-1}) \in Q_0$ for $\vp \in \L^\R(P,Q)$.
It follows that $\SpAd(\G;\R)$ is a subgroup of $\Ad(\G)$.

Let $(\Psi,\psi)$ be a special unstable Adams operation relative to $\R$. For any $(\Theta,\theta) \in \Ad(\G)$ consider the Adams operation $\Psi' \defeq \Theta^{-1} \circ \Psi \circ \Theta$.
Set $\tau'_P \defeq \theta^{-1}(\tau_{\theta(P)})$ and $\tau_{\vp}' \defeq \theta^{-1}(\tau_{\Theta(\vp)})$.
For any $P \in \L^\R$ and any $\vp \in \L^\R(P,Q)$,
\begin{eqnarray*}
\Psi'(P) &=& 
     \theta^{-1}(\psi(\theta(P)) = 
     \theta^{-1}(\tau_{\theta(P)}  \cdot \theta(P) \cdot  \tau_{\theta(P)}^{-1}) = 
     \tau'_P \cdot  P \cdot \tau'_P{}^{-1}, 
\\
\Psi'(\vp) &=& 
     \Theta^{-1}( \widehat{\tau_{\theta(Q)}} \circ \widehat{\tau_{\Theta(\vp)}} \circ \Theta(\vp) \circ \widehat{\tau_{\theta(P)}}^{-1}) =
         \widehat{\tau_Q'} \circ \widehat{\tau'_{\vp}} \circ \vp \circ \widehat{\tau'_P}.
\end{eqnarray*}
Therefore the sets $\{\tau'_P\}_{P \in \R}$ and $\{\tau_{\vp}'\}_{\vp \in \Mor(\L^\R)}$ give $\Psi'$ the structure of a special Adams operation, and so $\SpAd(\G;\R) \nsg \Ad(\G)$.
\end{proof}

\begin{lem}\label{lem_extn_class_L finite_order}
Let $\G=\SFL$ be a $p$-local compact group and let $\R \subseteq \F^c$ be any collection with finitely many $\F$-conjugacy classes.
Then the the order of the class $[\L^\R]\in H^2(\Lred^\R,\Phi)$ is a power of $p$.
\end{lem}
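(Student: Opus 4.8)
The plan is to show that $H^2(\Lred^\R,\Phi)$ is itself a finite $p$-group, from which the claim follows immediately. Recall from Appendix \ref{appendixA} (or rather from its stated consequences) that $H^*(\Lred^\R,\Phi)$ is computed as the cohomology of a cochain complex $C^*(\Lred^\R,\Phi)$ built from the nerve of $\Lred^\R$ with coefficients in the functor $\Phi \colon \Lred^\R \to \Ab$ whose values $\Phi(P) = P_0$ are discrete $p$-tori. Since $\R$ has only finitely many $\F$-conjugacy classes, the category $\Lred^\R$ is equivalent to a small category with finitely many isomorphism classes of objects, and in fact — arguing exactly as in the proof of Proposition \ref{P:finite limits Z}, via the $\bullet$-construction and \cite[Lemmas 2.5, 3.2]{BLO3} — one reduces $H^*(\Lred^\R,\Phi)$ to higher limits over a \emph{finite} category. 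That would make $H^2(\Lred^\R,\Phi)$ a discrete $p$-toral group as a subquotient of a finite product of discrete $p$-tori.

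First I would set up the identification $H^*(\Lred^\R,\Phi) \cong \varprojlim^*_{\Lred^\R} \Phi$ (which is how these cohomology groups of categories are defined in the appendix) and recall that $\Phi$ takes values in discrete $p$-tori, with the $W$-action on each $P_0$ factoring through a finite group. Next I would invoke finiteness of the number of $\F$-conjugacy classes in $\R$ to replace $\Lred^\R$ by an equivalent finite category — more precisely, pass to $\Lred^{\R \cap \F^{\bullet}}$ via the $\bullet$-functor and use that $\O(\F^{c\bullet})$, and hence the relevant reduced linking category, is equivalent to a finite one. On this finite index category, a projective (or, at the level of simplices, a suitably truncated) resolution computing $\varprojlim^*$ can be taken degreewise finitely generated, so that $C^*(\Lred^\R,\Phi)$ becomes a complex of discrete $p$-tori; its cohomology groups are therefore discrete $p$-toral.

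It then remains to upgrade "discrete $p$-toral" to "finite $p$-group" for $H^2$, and for this I would run the standard transfer argument, exactly as in Lemma \ref{L:coholology G with T}: the constant-functor / trivial-subcategory inclusion yields a transfer showing that $H^k(\Lred^\R,\Phi)$ has finite exponent for $k \geq 1$. A discrete $p$-toral group of finite exponent is finite, so $H^2(\Lred^\R,\Phi)$ is a finite abelian $p$-group, and in particular $[\L^\R]$ has order a power of $p$.

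The main obstacle I anticipate is the bookkeeping needed to make the reduction to a finite category precise for the \emph{reduced} linking system $\Lred^\R$ rather than for $\O(\F^c)$ directly: one must check that the $\bullet$-construction interacts well with $\Lred$ and with $\Phi$ (that $\Phi(P^\bullet)$ relates appropriately to $\Phi(P)$, since $\Phi(P) = P_0$ and $(P^\bullet)_0$ need not equal $P_0$ on the nose), so that the isomorphism $\varprojlim^*_{\Lred^\R}\Phi \cong \varprojlim^*$ over the finite truncation is legitimate. Once that comparison is in place the rest is routine. Alternatively, and perhaps more cleanly, one can avoid the $\bullet$-machinery entirely by noting that with finitely many $\F$-conjugacy classes the nerve of $\Lred^\R$ can be modelled, up to the relevant equivalence, by a simplicial set with finitely generated chains in each degree, so that $C^n(\Lred^\R,\Phi)$ is a finite product of discrete $p$-tori; combined with the transfer bound on the exponent this gives the result directly.
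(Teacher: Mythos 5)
The step that fails is the ``transfer'' upgrade. Lemma \ref{L:coholology G with T} is a statement about group cohomology: for a finite group $G$ the restriction--corestriction composite through the trivial subgroup is multiplication by $|G|$, which bounds the exponent. There is no analogue of this for higher limits over a general finite category: a ``trivial subcategory'' admits no corestriction, and the conclusion you want from it is in fact false in this generality. For example, take the category with two objects $a,b$, whose only non-identity morphisms are two parallel arrows $a\to b$, and the functor with $\Phi(a)=0$, $\Phi(b)=\ZZ/p^\infty$; then $\varprojlim^1\Phi\cong \ZZ/p^\infty$, which is infinite and of unbounded exponent. So your claim that $H^k(\Lred^\R,\Phi)$ has finite exponent, and hence that $H^2(\Lred^\R,\Phi)$ is a finite $p$-group, is unjustified (and not something the lemma needs). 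A smaller point: the detour through the $\bullet$-construction and $\O(\F^{c\bullet})$ is both unnecessary and, as you yourself noticed via the $(P^\bullet)_0$ versus $P_0$ issue, awkward to make precise; the hypothesis that $\R$ has finitely many $\F$-conjugacy classes already lets one replace $\Lred^\R$ by an equivalent \emph{finite} full subcategory directly (finitely many isomorphism classes of objects, and the $\Lred$-morphism sets are finite), which is all the reduction the paper uses.

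The good news is that your argument up to that point already proves the lemma, and this is exactly what the paper does: once $\Lred^\R$ is replaced by an equivalent finite category, each cochain group $C^n(-,\Phi)$ of the cobar complex is a \emph{finite} product of discrete $p$-tori, hence a $p$-torsion group; therefore $H^2(\Lred^\R,\Phi)$ is $p$-torsion (every element has finite order equal to a power of $p$), and in particular the class $[\L^\R]$ has $p$-power order. No finiteness of $H^2$, and no transfer, is required. If you delete the transfer step and conclude from $p$-torsion-ness of the cochain groups, your proof becomes essentially the paper's proof; as written, however, the finiteness claim is a genuine gap.
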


\begin{proof}
The category $\Lred^\R$ can be replaced with a finite subcategory $\M$ such that $H^*(\Lred^\R,\Phi) \cong H^*(\M,\Phi)$.
Since $\M$ is finite the cobar construction has the property that for any $n \geq 0$ the group $C^n(\M,\Phi)$ is a product of finitely many discrete $p$-tori and it is therefore a $p$-torsion group.
\end{proof}

\begin{prop}\label{prop_spad_in_ad_finite_index_solv}
Let $\G=\SFL$ be a $p$-local compact group and $\R \subseteq \F^c$ be a collection which contains finitely many $\F$-conjugacy classes.
Then $\SpAd(\G;\R)$ has finite index in $\Ad(\G)$ and the factor group is solvable of class at most $3$.
\end{prop}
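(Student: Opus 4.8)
\emph{Plan.} By Lemma~\ref{lem_special_are_normal_subgroup}, $\SpAd(\G;\R)\nsg\Ad(\G)$, and by the remark preceding Proposition~\ref{prop_existence_of_special_uAos} conjugation by any $t\in T$ is a special operation, so $\Inn_T(\G)\subseteq\SpAd(\G;\R)$; hence $\Ad(\G)/\SpAd(\G;\R)$ is a quotient of $Q:=\Ad(\G)/\Inn_T(\G)$. For the finite index statement I would first reduce to the case $\R\supseteq\H^\bullet(\F^c)$: the collection $\R':=\R\cup\H^\bullet(\F^c)$ still has finitely many $\F$-conjugacy classes (recall $\O(\F^{c\bullet})$ is equivalent to a finite category, as in the proof of Proposition~\ref{P:finite limits Z}), and since $\L^{\R}$ is a full subcategory of $\L^{\R'}$, restricting the structure data of Definition~\ref{def_special_uao} shows $\SpAd(\G;\R')\subseteq\SpAd(\G;\R)$. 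Thus $[\Ad(\G):\SpAd(\G;\R)]\le[\Ad(\G):\SpAd(\G;\R')]$ and it suffices to treat $\R'$; from now on I write $\R$ for $\R'$ and assume $\R\supseteq\H^\bullet(\F^c)$.

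\emph{Finite index.} By Lemma~\ref{lem_extn_class_L finite_order} the class $[\L^\R]\in H^2(\Lred^\R,\Phi)$ has finite order $p^m$, so Proposition~\ref{prop_existence_of_special_uAos} shows $\SpAd(\G;\R)/\Inn_T(\G)$ maps onto $\Ga_m(p)$ under $\deg$, with kernel the image of $H^1(\Lred^\R,\Phi)$. Here $\Ga_m(p)$ has finite index in $\ZZ_p^\times\supseteq\deg(\Ad(\G))$. On the other hand the degree-one subgroup $\Ad^{\{\deg=1\}}(\G)/\Inn_T(\G)$ of $Q$ is \emph{finite}: by Theorem~\ref{thm_geom_vs_alg_uao}(ii) it is an extension of $\gamma\big(\Ad^{\{\deg=1\}}(\G)\big)$ by a subgroup of $D(\F)$, and $\gamma\big(\Ad^{\{\deg=1\}}(\G)\big)\subseteq\{f\in\Ad^{\geom}(\G):\deg f=1\text{ in }\ZZ_p^\times/D(\F)\}$, which by Proposition~\ref{prop_adgeom_outadfus} is an extension of a subquotient of the finite group $H^1(S/T;T)$ (finite by \cite[Prop.~2.8]{JLL} and Lemma~\ref{L:coholology G with T}) by the finite group $\higherlim{\O(\F^c)}{1}\Z$. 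Counting indices along $\deg$,
$[Q:\SpAd(\G;\R)/\Inn_T(\G)]=[\deg(\Ad(\G)):\Ga_m(p)]\cdot[\Ad^{\{\deg=1\}}(\G)/\Inn_T(\G):\SpAd^{\{\deg=1\}}(\G;\R)/\Inn_T(\G)]$
is finite, so $\SpAd(\G;\R)$ has finite index in $\Ad(\G)$.

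\emph{Solvability of class $\le 3$.} Let $\nu\colon\Ad(\G)\xrightarrow{(\Psi,\psi)\mapsto[\psi]}\OutAd_{\fus}(S)$; this is onto by Lemma~\ref{lem_surj_adg_to_adfuss}, and $\OutAd_{\fus}(S)$ is solvable of class $\le 2$ by Proposition~\ref{prop_outadfuss}, so $\Ad(\G)^{(2)}\subseteq\ker\nu$, which by Proposition~\ref{prop_adgeom_outadfus} equals $\gamma^{-1}\big(\higherlim{\O(\F^c)}{1}\Z\big)$. The crux is that $[\ker\nu,\ker\nu]\subseteq\Inn_T(\G)$. Indeed, for $\Psi_1,\Psi_2\in\ker\nu$ the underlying automorphisms $\psi_1,\psi_2$ lie in $\Aut_\F(S)\cap\Ad(S)$, so $[\psi_1,\psi_2]\in\Aut_\F(S)\cap\Ad(S)$ has degree $[\deg\psi_1,\deg\psi_2]=1$ and hence, by Lemma~\ref{lem_deg1_conj_T} (here weak connectedness of $\G$ is used), equals $c_t$ for some $t\in T$. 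Also $\gamma([\Psi_1,\Psi_2])=[\gamma\Psi_1,\gamma\Psi_2]=1$ since $\higherlim{\O(\F^c)}{1}\Z$ is abelian, so $[\Psi_1,\Psi_2]=\tau(\alpha)$ for some $\alpha\in\Aut_\L^{\Ad}(S)$ by Lemma~\ref{lem_autlads_to_adg_exact_seq}; as $\tau(\alpha)$ covers $c_t$ it has degree $1$, whence $\deg(\alpha)=1$ and $\alpha\in\widehat{T}$ by Lemma~\ref{lem_t_to_autlads_ato_df}, giving $[\Psi_1,\Psi_2]\in\Inn_T(\G)\subseteq\SpAd(\G;\R)$. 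Therefore $\Ad(\G)^{(3)}=[\Ad(\G)^{(2)},\Ad(\G)^{(2)}]\subseteq[\ker\nu,\ker\nu]\subseteq\SpAd(\G;\R)$, which is exactly the assertion that $\Ad(\G)/\SpAd(\G;\R)$ is solvable of class at most $3$.

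\emph{Main obstacle.} The class-$\le 3$ bound is not hard once one spots the commutator identity $[\ker\nu,\ker\nu]\subseteq\Inn_T(\G)$; this is precisely what separates class $3$ from the a priori bound of class $4$ coming from the filtration of $Q$ by $D(\F)$, $\higherlim{\O(\F^c)}{1}\Z$ and $\OutAd_{\fus}(S)$. The genuinely technical point is the finite index statement: the degree map controls $Q$ only up to the finite ``error terms'' $D(\F)$ and $\higherlim{\O(\F^c)}{1}\Z$, so one must both reduce to collections containing $\H^\bullet(\F^c)$ in order to invoke Proposition~\ref{prop_existence_of_special_uAos} (which pins $\deg(\SpAd(\G;\R))$ down to $\Ga_m(p)$, a finite-index subgroup of $\ZZ_p^\times$), and verify separately that the degree-one part of $Q$ is finite.
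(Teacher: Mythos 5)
Your argument establishes the statement only under the additional hypothesis that $\G$ is weakly connected, which is not part of Proposition \ref{prop_spad_in_ad_finite_index_solv} (that assumption only reappears in Theorem \ref{thm_spad_in_ad_in_sec}); and the reliance is essential, not cosmetic. In the finite-index count you invoke Theorem \ref{thm_geom_vs_alg_uao}(ii) to compare $\Ad(\G)/\Inn_T(\G)$ with $\Ad^{\geom}(\G)$ up to $D(\F)$, and in the commutator step you use Lemma \ref{lem_deg1_conj_T} and Lemma \ref{lem_t_to_autlads_ato_df}; all of these need $C_S(T)=T$, so that $T$ is $\F$-centric and a degree-one element of $\Aut_\F(S)\cap\Ad(S)$ (resp.\ of $\Aut_\L^{\Ad}(S)$) is conjugation by an element of $T$ (resp.\ lies in $\widehat{T}$). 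Without weak connectedness one does not know that $\Ad(\G)_0=\Inn_T(\G)$, the sequence $1\to D(\F)\to\Ad(\G)/\Inn_T(\G)\to\Ad^{\geom}(\G)\to 1$ is unavailable, and the kernel of $\deg$ on $\Aut_\L^{\Ad}(S)$ may be strictly larger than $\widehat{T}$; consequently both your finiteness bound on $\Ad^{\deg=1}(\G)/\Inn_T(\G)$ and your key inclusion $[\ker\nu,\ker\nu]\subseteq\Inn_T(\G)$ lose their justification. As a proof of the proposition in the stated generality this is a genuine gap.

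The paper's own proof is arranged precisely to avoid this: after the snake-lemma reduction along $\deg$ it stays entirely inside $\L$, showing that the group $\Ad^{\id_S}(\G)$ of operations covering $\id_S$ is an abelian discrete $p$-toral group (via an injection into a finite product of centres) and that a fixed power of every such operation is special (a permutation/exponent argument); the class-$3$ bound then comes from three abelian layers with no connectivity hypothesis. That said, in the weakly connected case — which is all Theorem \ref{thm_spad_in_ad_in_sec} needs — your route is correct and has merits: the reduction to a collection containing $\H^\bullet(\F^c)$, needed before quoting Proposition \ref{prop_existence_of_special_uAos}, is a point the paper glosses over, and the identity $[\ker\nu,\ker\nu]\subseteq\Inn_T(\G)$ even yields the stronger conclusion that $\Ad(\G)/\Inn_T(\G)$ is solvable of class at most $3$. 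To cover the general statement you would need to replace the geometric bound on the degree-one part and the commutator step by an argument, like the paper's, carried out purely algebraically in $\L$.
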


\begin{proof}
We have seen in Lemma \ref{lem_special_are_normal_subgroup} that $\SpAd(\G;\R) \nsg \Ad(\G)$.
By Lemma \ref{lem_extn_class_L finite_order} and Proposition \ref{prop_existence_of_special_uAos} there is a morphism of exact sequences
\[
\xymatrix{
1 \ar[r] &
\SpAd^{\deg=1}(\G;\R) \ar@{^(->}[d] \ar[r] &
\SpAd(\G;\R) \ar@{^(->}[d] \ar[r]^{\deg} &
\Ga_m(p) \ar@{^(->}[d] \ar[r] & 1
\\
1 \ar[r] &
\Ad^{\deg=1}(\G) \ar[r] &
\Ad(\G) \ar[r]_{\deg} &
\ZZ_p^\times.
}
\]
The cokernel of the last column is a finite abelian group.
Thus, by the snake lemma it remains to show that the quotient group in the first column is a solvable finite group of class at most $2$.

Consider the following commutative diagram with exact rows
\[
\xymatrix{
1 \ar[r] &
\SpAd^{\id_S}(\G;\R) \ar[r] \ar@{^(->}[d] &
\SpAd^{\deg=1}(\G;\R) \ar[rr]^{(\Psi,\psi) \mapsto \psi} \ar@{^(->}[d] &&
\Ad^{\deg=1}(S) \ar@{=}[d] \\
1 \ar[r] &
\Ad^{\id_S}(\G) \ar[r] &
\Ad^{\deg=1}(\G) \ar[rr]^{(\Psi,\psi) \mapsto \psi} &&
\Ad^{\deg=1}(S)
}
\]
where the superscript $\id_S$ means operations whose underlying Adams automorphism is the identity on $S$. 
Let $U \le V$ be the images of the right horizontal maps.
Notice that $\Aut_T(S)\le U$ because for every $t \in T$, $c_{\widehat{t}} \in \SpAd^{\deg=1}(\G;\R)$.
By \cite[Proposition 2.8]{JLL} $\Ad^{\deg=1}(S)/\Aut_T(S) \cong H^1(S/T;T)$.
Since this is a finite abelian group by Lemma \ref{L:coholology G with T}, it follows that $V/U$ is a finite abelian group. 
It remains to show that the quotient group in the left column in this diagram is a finite abelian group.
This will be done in two step as follows.

\noindent
{\bf Step 1.} ($\Ad^{\id_S}(\G)$ \emph{is an abelian discrete $p$-toral group}).
Recall from \cite[Lemma 3.2]{BLO3} that $\H^\bullet(\F)$ has finitely many $S$-conjugacy classes and hence finitely many $T$-conjugacy classes. 
Let $\Q=\{Q_1,\dots,Q_r\}$ be a set of representatives and for any $P \in \H^\bullet(\F)$ we choose once and for all some $t_P \in T$ such that $t_P P t_P^{-1} \in \Q$.
If $(\Psi,\id_S) \in \Ad(\G)$, then for $\vp \in \L(P,Q)$ we have $\pi(\Psi(\vp))=\pi(\vp)$, so $\Psi(\vp)=\vp \circ \widehat{z(\vp)}$ for a unique $z(\vp) \in Z(P)$.
This gives a function
\[
z \colon \Ad^{\id_S}(\G) \xto{\ \Psi \mapsto (z(\vp)) \ } \prod_{Q,Q' \in \Q} \prod_{\L(Q,Q')} Z(Q).
\]
This is a homomorphism because if $\Psi, \Psi' \in \Ad^{\id_S}(\G)$, then for any $\vp \in \L(P,Q)$
\[
(\Psi' \circ \Psi)(\vp) = 
\Psi'(\vp \circ \widehat{z(\Psi)(\vp)}) =
\Psi'(\vp) \circ \Psi'(\widehat{z(\Psi)(\vp)}) =
\vp \circ \widehat{z(\Psi')(\vp)} \circ \widehat{z(\Psi)(\vp)},
\]
so $z(\Psi' \circ \Psi) = z(\Psi') \cdot z(\Psi)$.

We claim that $z$ is injective.
Choose some $\Psi \in \ker(z)$. Thus $z(\Psi)(\vp)=1$ for all $\vp \in \L^\Q$.
This means that $\Psi$ is the identity on $\L^\Q$.
If $P,P' \in \H^\bullet(\F^c)$, then with the notation above, there are unique $Q,Q' \in \Q$ such that $\L(P,P')=\widehat{t_{P'}}^{-1} \circ \L(Q,Q') \circ \widehat{t_P}$.
Since $\Psi(\widehat{t_P})=\widehat{\id_S(t_P)} = \widehat{t_P}$ and $\Psi(\widehat{t_{P'}})=\widehat{t_{P'}}$, and since $\Psi$ is the identity on $\L(Q,Q')$, it follows that $\Psi$ is the identity on $\L^{\H^\bullet(\F^c)}$.
By the uniqueness part in Lemma \ref{lem _specials_for_large_R} it follows that $\Psi=\Id$.

Since $z$ is injective and the codomain of $z$ is a finite product of abelian discrete $p$-toral groups,  it follows that its image is an abelian discrete $p$-toral group.

\noindent
{\bf Step 2.} (\emph{$\SpAd^{\id_S}(\G;\R)$ has finite index in $\Ad^{\id_S}(\G)$}).
By Step 1, $\Ad^{\id_S}(\G)$ is an abelian discrete $p$-toral. Thus by the structure theorem of abelian discrete $p$-toral groups, it is isomorphic to $D_0 \times A$, where $D_0$ is a discrete $p$-torus and $A$ is a finite abelian $p$-group.
Therefore, $n\cdot \Ad^{\id_S}(\G)$ has finite index in $\Ad^{\id_S}(\G)$ for any $n \geq 1$.
We will show that there exists some $n$ such that $\Psi^n \in \SpAd^{\id_S}(\G;\R)$ for any $\Psi \in \Ad^{\id_S}(\G)$, and this will complete the proof of the statement.

Any $(\Psi,\id_S) \in \Ad(\G)$ induces, by Propositions \ref{prop_functors_of_rigid_extensions} and \ref{prop_sfl_rigid_extension}, a functor $\overline{\Psi} \in \Aut(\Lred)$ which is the identity on objects and therefore induces a permutation on $\Lred(P,Q)$ for all $P,Q \in \L$.
We therefore obtain a homomorphism
\[
\si \colon \Ad^{ \id_S}(\G) \to \prod_{P,Q \in \L^\R} \Sym(\Lred(P,Q))
\]
where $\Sym(\Omega)$ is the symmetric group of a set $\Omega$.
Since the sets $\Lred(P,Q)$ are finite and since $\R$ has finitely many $T$-conjugacy classes, we may define
\[
r= \max \{ | \Lred(P,Q) |  : P,Q \in \H^\bullet(\F^c)\}.
\]
Then $n=r!$ annihilates any element in the codomain of $\si$, and in particular $\Psi^n \in \Ker(\si)$ for any $\Psi \in \Ad^{\id_S}(\G)$.
It remains to show that $\Ker(\si) \le \SpAd^{\id_S}(\G;\R)$.
Suppose that $(\Psi,\id_S) \in \Ker(\si)$.
Then $\Psi(P)=P$ for any $P \in \R$ and also $[\Psi(\vp)]=[\vp]$ for any $\vp \in \L^\R(P,Q)$, namely $\Psi(\vp)=\widehat{\tau_\vp} \circ \vp$ for a unique $\tau_\vp \in Q_0$.
This shows that $(\Psi,\psi)$ has the structure of a special unstable Adams operation with structure $\tau_P=1$ and the elements $\tau_\vp$ above.
This completes the proof of the claim and the proposition follows.
\end{proof}

\begin{remark}\label{rem:finite index of spad}
The proof gives an explicit, albeit crude, bound for the index of $\SpAd(\G;\R)$ in $\Ad(\G)$.
It is 
\[
|\ZZ_p^\times/\Ga_m(p)| \cdot |H^1(S/T,T)| \cdot r! = |H^1(S/T,T)| \cdot p^m r!
\]
where $r$ as in the proof of Step 2.
\end{remark}

We are now ready to prove Theorem \ref{thm_spad_in_ad}, which we restate  as follows.
 
\begin{thm}\label{thm_spad_in_ad_in_sec}
Let $\G=\SFL$ be a weakly connected $p$-local compact group.
Suppose that a collection $\R \subseteq \F^c$ has finitely many $\F$-conjugacy classes.
Then the following statements hold.
\begin{enumerate}[\rm (i)]
\item If $(\Psi,\psi) \in \SpAd(\G;\R)$ is of degree $\zeta$ then $\zeta \cdot [\L^\R]=[\L^\R]$ in $H^2(\Lred^\R,\Phi)$. If in addition $\R \supseteq \H^\bullet(\F^c)$, see \cite[Sec. 3]{BLO3}, then there is an exact sequence 
\[
H^1(\Lred^\R,\Phi) \to \SpAd(\G;\R)/\Inn_T(\G) \xto{\deg} \Gamma_m(p) \to 1
\]
where $p^m$ is the order of $[\L^\R]$ in $H^2(\Lred^\R,\Phi)$. 
In particular, $\SpAd(\G;\R)$ has a normal Sylow $p$-subgroup with $\Inn_T(\G)$ as its maximal discrete $p$-torus.
\label{thm_spad_in_ad_in_sec_ii}

\item $\SpAd(\G;\R)$ is a normal subgroup of $\Ad(\G)$ of finite index.
The quotient group is solvable of class at most $3$. \label{thm_spad_in_ad_in_sec_i}
\end{enumerate}
\end{thm}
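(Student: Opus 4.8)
The plan is to assemble both parts from results already established in this section, the only genuinely new input being a finiteness observation needed for the ``Sylow'' clause in (i). For (i), the first assertion --- that $\zeta\cdot[\L^\R]=[\L^\R]$ whenever $(\Psi,\psi)\in\SpAd(\G;\R)$ has degree $\zeta$ --- is exactly Proposition \ref{prop_special_and_class_of_L}, and needs no hypothesis on $\R$ beyond $\R\subseteq\F^c$. For the exact sequence I would first invoke Lemma \ref{lem_extn_class_L finite_order}: since $\R$ has finitely many $\F$-conjugacy classes, the order of $[\L^\R]$ in $H^2(\Lred^\R,\Phi)$ is a power of $p$, say $p^m$. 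Then, using in addition $\R\supseteq\H^\bullet(\F^c)$, Proposition \ref{prop_existence_of_special_uAos} delivers precisely the asserted sequence
\[
H^1(\Lred^\R,\Phi)\longrightarrow\SpAd(\G;\R)/\Inn_T(\G)\xto{\ \deg\ }\Ga_m(p)\longrightarrow 1,
\]
after recalling that $\Aut_T(\G)=\Inn_T(\G)$ (Definition \ref{def_inner}).

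It then remains to extract the normal Sylow $p$-subgroup, and here the key point to establish is that the kernel $K$ of $\deg$ in the sequence above --- equivalently $K=\SpAd^{\deg=1}(\G;\R)/\Inn_T(\G)$ --- is finite. I would deduce this from two facts. First, $K$ is discrete $p$-toral: the proof of Proposition \ref{prop_spad_in_ad_finite_index_solv} presents $\SpAd^{\deg=1}(\G;\R)$ as an extension of a subgroup $U\le\Ad^{\deg=1}(S)$ (which contains $\Aut_T(S)$ with finite quotient $\Ad^{\deg=1}(S)/\Aut_T(S)\cong H^1(S/T,T)$, by \cite[Proposition 2.8]{JLL} and Lemma \ref{L:coholology G with T}) by the group $\SpAd^{\id_S}(\G;\R)$, which has finite index in the abelian discrete $p$-toral group $\Ad^{\id_S}(\G)$ (Steps 1 and 2 of that proof); since $U$ and $\SpAd^{\id_S}(\G;\R)$ are discrete $p$-toral, so are $\SpAd^{\deg=1}(\G;\R)$ and its quotient $K$. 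Second, $K$ is \zpfree, being a subgroup of $\SpAd(\G;\R)/\Inn_T(\G)\hookrightarrow\Ad(\G)/\Ad(\G)_0$, which is \zpfree~ by Theorem \ref{thm_geom_vs_alg_uao}(ii) and Lemma \ref{lem_extn_zpfree}(i). A discrete $p$-toral group that is \zpfree~ has trivial identity component, hence is finite; so $K$ is a finite $p$-group.

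Granting that $K$ is a finite $p$-group, I would finish with a routine lifting argument. By the description of $U_p$ and of $\ZZ_p^\times$ recalled in the proof of Proposition \ref{prop_outadfuss}, the group $\Ga_m(p)$ has a finite normal Sylow $p$-subgroup $P_0$ (trivial, or of order $2$ when $p=2$) whose quotient $\Ga_m(p)/P_0$ is \zpfree~ and free of nontrivial finite $p$-subgroups. Let $Q'$ be the preimage of $P_0$ in $\SpAd(\G;\R)/\Inn_T(\G)$; then $1\to K\to Q'\to P_0\to 1$ shows $Q'$ is a finite normal $p$-subgroup, and since $(\SpAd(\G;\R)/\Inn_T(\G))/Q'$ embeds in $\Ga_m(p)/P_0$ it has no nontrivial discrete $p$-toral subgroup, so $Q'$ contains every discrete $p$-toral subgroup of $\SpAd(\G;\R)/\Inn_T(\G)$. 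Pulling $Q'$ back to $\SpAd(\G;\R)$ gives a normal subgroup $Q$ fitting in an exact sequence $1\to\Inn_T(\G)\to Q\to Q'\to 1$; since $\Inn_T(\G)\cong T/Z(\F)$ is a discrete $p$-torus (Theorem \ref{thm_geom_vs_alg_uao}(iii)) and $Q'$ is a finite $p$-group, $Q$ is discrete $p$-toral with identity component $\Inn_T(\G)$, it is normal, and it contains every discrete $p$-toral subgroup of $\SpAd(\G;\R)$ --- that is, it is the required normal Sylow $p$-subgroup with $\Inn_T(\G)$ as its maximal discrete $p$-torus.

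Part (ii) follows at once from the section's results: $\SpAd(\G;\R)\nsg\Ad(\G)$ by Lemma \ref{lem_special_are_normal_subgroup}, and it has finite index with quotient solvable of class at most $3$ by Proposition \ref{prop_spad_in_ad_finite_index_solv}. I expect the only step requiring genuine care to be the finiteness of $K$ --- equivalently, controlling the image of $H^1(\Lred^\R,\Phi)$ in $\SpAd(\G;\R)/\Inn_T(\G)$; once that is in hand, peeling the normal Sylow $p$-subgroup off the exact sequence of (i) is bookkeeping, and the rest is a matter of quoting the results of this section.
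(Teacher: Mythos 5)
Your proof is correct and follows essentially the same route as the paper: part (i) is exactly Proposition \ref{prop_special_and_class_of_L} together with Lemma \ref{lem_extn_class_L finite_order} and Proposition \ref{prop_existence_of_special_uAos} (with $\Aut_T(\G)=\Inn_T(\G)$), and part (ii) is Lemma \ref{lem_special_are_normal_subgroup} plus Proposition \ref{prop_spad_in_ad_finite_index_solv}. The only difference is that you spell out the ``normal Sylow $p$-subgroup with maximal torus $\Inn_T(\G)$'' clause, which the paper dismisses as an immediate consequence of the exact sequence; your finiteness argument for the kernel of $\deg$ and the lifting of the Sylow subgroup of $\Ga_m(p)$ is a valid (and welcome) expansion of that step.
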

\begin{proof}
Part (\ref{thm_spad_in_ad_in_sec_ii}) is the contents of Propositions \ref{prop_special_and_class_of_L} and \ref{prop_existence_of_special_uAos}.
Part (\ref{thm_spad_in_ad_in_sec_i}) is  Lemma 
\ref{lem_special_are_normal_subgroup}
 and  Proposition \ref{prop_spad_in_ad_finite_index_solv}. The last statement follows at once from Part (\ref{thm_spad_in_ad_in_sec_ii}).
 \end{proof}

\section{Not all Adams operations are special}
\label{sec_not_all_special}

In this section we find examples of weakly connected $p$-local compact groups $\G$ that afford unstable Adams operations which are not special relative to the collection $\R$ of the $\F$-centric $\F$-radical subgroups.
The idea is to find such $\G$ which fulfils the conditions of Lemma \ref{lem_psi_not_special} below.

The collection $\R$ is a very natural one to look at since $|\L^\R| \to |\L|$ is a mod-$p$ equivalence.
To see this observe that $\R \subseteq \H^\bullet(\F^c)$ by \cite[Corollary 3.5]{BLO3} and that $|\L^{\H^\bullet(\F^c)}| \to |\L|$ is a homotopy equivalence by \cite[Proposition 1.12]{JLL} which provides a natural transformation from the identity on $\L$ to the functor $\L \xto{P \mapsto P^\bullet} \L$.
Since $\H^\bullet(\F^c)$ has finitely many $\F$-conjugacy classes by \cite[Lemma 3.2]{BLO3}, there results a finite filtration of $\L^{\H^\bullet(\F^c)}$ which, together with the $\La$-functors machinery in \cite[Section 5]{BLO3}, can be used to prove that $|\L^\R| \to |\L^{\H^\bullet(\F^c)}|$ induces an isomorphism in $H^*(-,\ZZ_{(p)})$.

\begin{lem}\label{lem_psi_not_special}
Let $\G=\SFL$ be a $p$-local compact group and $\R$ be a collection of $\F$-centric subgroups.
Let $(\Psi,\psi) \in \Ad(\G)$ and assume that
\begin{enumerate}[(i)]
\item
$\deg(\psi) \in \ZZ_p^\times  \setminus \Gamma_1(p)$ and that
\label{cond_1_psi_not_special}

\item
there exists $P \in \R$ such that $P$ is not a semi-direct product of $P_0$ with $P/P_0$.
\label{cond_2_psi_not_special}
\end{enumerate}
Then $(\Psi,\psi) \notin \SpAd(\G;\R)$.
\end{lem}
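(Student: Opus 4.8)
The strategy is to assume, for contradiction, that $(\Psi,\psi)\in\SpAd(\G;\R)$ and derive a constraint on $\deg(\psi)$ incompatible with hypothesis \eqref{cond_1_psi_not_special}. The source of that constraint is the subgroup $P\in\R$ of hypothesis \eqref{cond_2_psi_not_special}, which is a non-split extension of $P/P_0$ by $P_0$. Recall from \cite[Lemma 2.2(i)]{JLL} that a normal Adams automorphism of a discrete $p$-toral group which is not a split extension of its maximal torus must have degree a $p$-adic unit; more to the point here, we want a statement pinning the degree modulo $p$. So the first step is to extract, from the definition of a special operation, the automorphism of $P$ that $\Psi$ induces, and to identify its degree with $\deg(\psi)$.

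\textbf{Step 1: $\Psi$ induces an Adams automorphism of $P$ of degree $\deg(\psi)$.}
Since $(\Psi,\psi)$ is special relative to $\R$ and $P\in\R$, Definition \ref{def_special_uao} gives $\tau_P\in T$ with $\psi(P)=\tau_P P\tau_P^{-1}$, and for every $\vp\in\L^\R(P,Q)$ a formula $\Psi(\vp)=\widehat{\tau_Q}\circ\widehat{\tau_\vp}\circ\vp\circ\widehat{\tau_P}^{-1}$. Applying this to $\vp=\widehat{x}\in\Aut_\L(P)$ for $x\in P$, together with the fact (Definition \ref{def_algebraic_unstable_adams_operation}(b)) that $\Psi(\widehat{x})=\widehat{\psi(x)}$ and $\tau_{\widehat{x}}\in P_0$, one gets in $\Aut_\L(P)$ the relation $\widehat{\psi(x)}=\widehat{\tau_P}\circ\widehat{\tau_{\widehat{x}}}\circ\widehat{x}\circ\widehat{\tau_P}^{-1}=\widehat{\tau_P\cdot\tau_{\widehat x}\cdot x\cdot\tau_P^{-1}}$. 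Since $\de_P$ is injective this yields $\psi(x)=\tau_P\,\tau_{\widehat x}\,x\,\tau_P^{-1}$, so the map $\theta_P\colon x\mapsto \tau_P^{-1}\psi(x)\tau_P = \tau_{\widehat x}\cdot x$ is an automorphism of $P$. Because $\psi|_T$ is multiplication by $\zeta:=\deg(\psi)$ and conjugation by $\tau_P\in T$ is trivial on $T$, the automorphism $\theta_P$ restricts to multiplication by $\zeta$ on $P_0$. Moreover $\theta_P(x)=\tau_{\widehat x}\cdot x$ with $\tau_{\widehat x}\in P_0$ shows $\theta_P$ induces the identity on $P/P_0$, so $\theta_P$ is a \emph{normal} Adams automorphism of $P$ of degree $\zeta$.

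\textbf{Step 2: a non-split extension forces $\zeta\equiv 1\pmod p$, contradiction.}
Here is the key obstacle, and the place where hypothesis \eqref{cond_2_psi_not_special} is used. For a normal Adams automorphism $\theta$ of a discrete $p$-toral group $P$ of degree $\zeta$, writing the extension $1\to P_0\to P\to \pi\to 1$ (where $\pi=P/P_0$ is a finite $p$-group), one checks that $\theta$ acts on the extension class $e\in H^2(\pi;P_0)$ by $[e]\mapsto \zeta\cdot[e]$: indeed, choosing a section $\si\colon\pi\to P$ with $\theta\circ\si=\si$ (possible by \cite[Lemma 2.6]{JLL}), the cocycle representing $e$ is $f(a,b)=\si(a)\si(b)\si(ab)^{-1}\in P_0$, and $\theta(f(a,b))=\zeta\cdot f(a,b)$ because $\theta|_{P_0}$ is multiplication by $\zeta$ and $\theta$ fixes the section values; since $\theta$ also induces the identity on $\pi$, we get $\theta^*[e]=\zeta[e]$. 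As the existence of $\theta_P$ of degree $\zeta$ shows $\zeta[e]=[e]$, and $H^2(\pi;P_0)$ is a $p$-torsion group (it is built from discrete $p$-tori via a finite complex, as $\pi$ is a finite $p$-group), the element $[e]$ has order $p^k$ for some $k\geq 0$; by the remark preceding Proposition \ref{prop_existence_of_special_uAos}, $\zeta[e]=[e]$ forces $\zeta\in\Gamma_k(p)$. Since $P$ is \emph{not} a semidirect product of $P_0$ with $P/P_0$, we have $[e]\neq 0$, hence $k\geq 1$ and $\zeta\in\Gamma_1(p)$. This contradicts hypothesis \eqref{cond_1_psi_not_special} that $\deg(\psi)=\zeta\notin\Gamma_1(p)$, and the lemma follows.

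\textbf{Expected main difficulty.} The routine parts are Step 1 (unwinding the definition of special operation) and the $p$-torsion-ness of $H^2(\pi;P_0)$. The conceptual heart is the computation in Step 2 that a normal Adams automorphism of degree $\zeta$ multiplies the extension class of $P$ by $\zeta$, combined with the fact that one can choose the section to be $\theta$-invariant; this is exactly where non-splitness of $P$ over $P_0$ does its work. This is essentially \cite[Lemma 2.2]{JLL}, so in the write-up I would cite that result rather than redo the cohomological bookkeeping, and simply note that the special structure produces a normal Adams automorphism $\theta_P$ of $P$ whose degree is $\deg(\psi)$, so that \cite[Lemma 2.2(i)]{JLL} (or the extension-class computation above) gives $\deg(\psi)\equiv 1\pmod p$, contradicting \eqref{cond_1_psi_not_special}.
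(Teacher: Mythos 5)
Your proof is correct, but it takes a genuinely different route from the paper's. The paper argues globally: assuming $(\Psi,\psi)\in\SpAd(\G;\R)$, Proposition \ref{prop_special_and_class_of_L} gives $\zeta\cdot[\L^\R]=[\L^\R]$ in $H^2(\Lred^\R,\Phi)$; since $\zeta\notin\Ga_1(p)$, the cyclic $\ZZ_p$-module generated by $[\L^\R]$ must be trivial (the infinite case would force $\zeta=1$), so $[\L^\R]=0$, and Lemma \ref{lem_zero_class_split_extension} then produces a splitting functor $s\colon\Lred^\R\to\L^\R$ whose restriction to the automorphism groups at $P$ splits $P\to P/P_0$, contradicting hypothesis (ii). You instead argue locally at the single subgroup $P$: from conditions (1) and (2) of Definition \ref{def_special_uao}, applied to the morphisms $\widehat{x}$ with $x\in P$, you extract a normal Adams automorphism $\theta_P=c_{\tau_P}^{-1}\circ\psi|_P$ of $P$ of degree $\zeta$, and then the classical cocycle computation (in essence \cite[Proposition 2.8]{JLL} applied to $P$) shows that $\zeta$ fixes the class of $1\to P_0\to P\to P/P_0\to 1$ in $H^2(P/P_0,P_0)$, a nonzero element of a finite $p$-group by (ii) and Lemma \ref{L:coholology G with T}, forcing $\zeta\in\Ga_1(p)$. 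Your route is more elementary: it bypasses the category-extension machinery of Section \ref{sec_extensions} and Appendix \ref{appendixA} entirely, and needs no case analysis on torsion, since $H^2(P/P_0,P_0)$ is automatically a finite $p$-group; the paper's route buys the stronger global conclusion that $[\L^\R]$ itself vanishes and reuses machinery already in place for Theorem \ref{thm_spad_in_ad}. Two small points: the $\theta_P$-invariant section is not actually needed (for an arbitrary section $\si$, the cocycle of $\theta_P\circ\si$ represents the same class and equals $\zeta$ times that of $\si$, because $\theta_P$ covers the identity on $P/P_0$ and acts $\ZZ_p$-linearly on $P_0$), and the reference you want at the end is \cite[Proposition 2.8]{JLL} (the degrees realised on a discrete $p$-toral group lie in $\Ga_m(p)$, where $p^m$ is the order of its extension class) rather than \cite[Lemma 2.2(i)]{JLL}, which this paper quotes only as saying that in the non-split case the degree is a $p$-adic unit; your self-contained computation covers this in any case.
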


\begin{proof}
Assume by contradiction that $(\Psi,\psi) \in \SpAd(\G;\R)$ and set $\zeta=\deg(\psi)$.
We claim that $[\L^\R]$ is the trivial element in $H^2(\Lred^\R,\Phi)$.
To see this, let $L$ be the $\ZZ_p$-submodule of $H^2(\Lred^\R,\Phi)$ generated by $[\L^\R]$.
By Proposition \ref{prop_special_and_class_of_L}, $\zeta$ acts as the identity on $L$.
If $L$ is infinite then $L \cong \ZZ_p$ and therefore $\zeta=1\in \Ga_1(p)$ which is a contradiction.
Therefore $L \cong \ZZ/p^m$ and $\zeta \in \Ga_m(p)$ which by hypothesis (\ref{cond_1_psi_not_special}) implies that $m=0$, namely $L=0$.

Lemma \ref{lem_zero_class_split_extension} now implies that there exists a functor $s \colon \Lred^\R \to \L^\R$ which is a right inverse to the projection $\L^\R \to \Lred^\R$.
Consider any $P \in \R$. 
Notice that $\Aut_\L(P)$ contains $\widehat{P}$ as a copy of $P$ and similarly $\Aut_{\Lred}(P)$ contains a copy of $P/P_0$.
The functor $s$ gives a section $s \colon P/P_0 \to P$ for the projection $P \to P/P_0$.
This is a  contradiction to hypothesis (\ref{cond_2_psi_not_special}).
\end{proof}

It turns out that compact Lie groups provide examples of $p$-local compact group $\G$ which satisfy the conditions of the lemma.
First, let us recall from \cite[Section 9]{BLO3} how compact Lie groups give rise to $p$-local compact groups.

The poset of all discrete $p$-toral subgroups of a compact Lie group $G$ contains a maximal element $S$.
Every discrete $p$-toral $P \leq G$ is conjugate to a subgroup of $S$ and in particular all maximal discrete $p$-toral subgroups of $G$ are conjugate.
The fusion system $\F=\F_S(G)$ over $S$ has by definition $\Hom_\F(P,Q)=\Hom_G(P,Q)$ namely the homomorphisms $P \to Q$ induced by conjugation by elements of $g$.
This fusion system is saturated by \cite[Lemma 9.5]{BLO3}, it admits an associated
centric linking system $\L=\L_S^c(G)$ that is unique up to isomorphism, and 
$\pcomp{|\L_S^c(G)|} \simeq \pcomp{BG}$ by \cite[Theorem 9.10]{BLO3}.

Recall that a closed subgroup $Q \leq G$ is called $p$-toral if it is an extension of a torus by a finite $p$-group.
Let $P$ be a discrete $p$-toral subgroup of $G$.
Then $\overline{P}$ (the closure of $P$) is a a $p$-toral subgroup of $G$.
In fact, $\overline{P_0}$ is the maximal torus of $\overline{P}$.

A discrete $p$-toral $P \leq G$ is called \emph{snugly embedded} if $\overline{P} = \overline{P_0} \cdot P$ and every $p$-power torsion element in $(\overline{P})_0$ belongs to $P_0$.

\begin{lem}\label{lem_closure_semidirect}
Let $G$ be a compact Lie group and $P \leq G$ a snugly embedded discrete $p$-toral subgroup.
If $P$ is the semidirect product of $P_0$ with $P/P_0$ then $\overline{P}$ is the semidirect product of $\overline{P}_0$ with $P/P_0$.
\end{lem}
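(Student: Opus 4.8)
\textbf{Proof plan for Lemma \ref{lem_closure_semidirect}.}

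The plan is to exploit that a semidirect product decomposition $P = P_0 \rtimes P/P_0$ is the same data as a group-theoretic section $s \colon P/P_0 \to P$ of the projection $P \to P/P_0$, and then to show that, because $P$ is snugly embedded, this section already lands in (and splits) the closure. First I would set $W = P/P_0$ and let $s \colon W \to P$ be a splitting, so that $P = P_0 \cdot s(W)$ with $s(W)$ a finite subgroup of $P$ meeting $P_0$ trivially. Since $P$ is snugly embedded we have $\overline{P} = \overline{P_0} \cdot P = \overline{P_0} \cdot P_0 \cdot s(W) = \overline{P_0} \cdot s(W)$, because $P_0 \le \overline{P_0}$. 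Thus $\overline{P}$ is generated by the torus $\overline{P}_0 = \overline{P_0}$ together with the finite subgroup $s(W)$, and the composite $W \xto{s} P \hookrightarrow \overline{P} \to \overline{P}/\overline{P}_0$ is a homomorphism; the main point to verify is that this composite is an isomorphism, so that $s$ is also a section of $\overline{P} \to \overline{P}/\overline{P}_0$, giving $\overline{P} = \overline{P}_0 \rtimes W$.

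The key steps, in order, would be: (1) Record that $\overline{P}_0 = \overline{P_0}$ is the maximal torus of $\overline{P}$ (this is noted in the text just before the lemma), so $\overline{P}/\overline{P}_0$ is a finite $p$-group and $\overline{P} = \overline{P}_0 \cdot s(W)$. (2) Surjectivity of $W \to \overline{P}/\overline{P}_0$: immediate from $\overline{P} = \overline{P}_0 \cdot s(W)$. (3) Injectivity: an element $w \in W$ maps to the identity in $\overline{P}/\overline{P}_0$ exactly when $s(w) \in \overline{P}_0 = \overline{P_0}$; since $s(w) \in P \le G$ has finite order a power of $p$, it is a $p$-power torsion element of $(\overline{P})_0$, and the snug embedding hypothesis forces $s(w) \in P_0$; but $s(W) \cap P_0 = 1$ by the splitting of $P$, so $s(w) = 1$ and hence $w = 1$ since $s$ is injective. (4) Conclude that $s$ is a group-theoretic section of $\overline{P} \to \overline{P}/\overline{P}_0$ onto a finite complement, which is exactly the assertion that $\overline{P} = \overline{P}_0 \rtimes (P/P_0)$, with the action being the restriction of the conjugation action already present in $P$.

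I expect step (3) — specifically the use of snug embedding to push $s(w)$ from $\overline{P_0}$ down into $P_0$ — to be the only substantive point; everything else is bookkeeping with the two defining properties $\overline{P} = \overline{P_0}\cdot P$ and ``$p$-power torsion in $(\overline{P})_0$ lies in $P_0$''. A minor subtlety to handle carefully is that $s(w)$ being torsion in $G$ of $p$-power order does lie in the identity component of $\overline{P}$ when it lies in $\overline{P_0}$, which is legitimate precisely because $\overline{P_0} = (\overline{P})_0$; I would state this explicitly to avoid any ambiguity about which ``identity component'' is meant. No compactness argument beyond the structure of $p$-toral groups and the given hypotheses is needed.
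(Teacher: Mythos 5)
Your proposal is correct and follows essentially the same argument as the paper: the paper takes a complement $\pi\le P$ of $P_0$ (your $s(W)$), uses $\overline{P}=\overline{P_0}\cdot P=(\overline{P})_0\cdot\pi$ from the first snug-embedding property, and uses the second property ($p$-power torsion of $(\overline{P})_0$ lies in $P_0$) to get $(\overline{P})_0\cap\pi\le P_0\cap\pi=1$, which is exactly your injectivity step. The only difference is your phrasing via a section $s\colon P/P_0\to P$ rather than a subgroup complement, which is an equivalent formulation.
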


\begin{proof}
Suppose $\pi \leq P$ is a complement of $P_0$.
Since $P$ is snugly embedded, $\overline{P}=\overline{P_0} \cdot P = (\overline{P})_0 \cdot P = (\overline{P})_0 \cdot \pi$.
Also, $P_0$ contains all $p$-torsion in $(\overline{P})_0$ so $(\overline{P})_0 \cap \pi \leq P_0 \cap \pi = 1$.
\end{proof}

Fix a compact Lie group $G$.
A subgroup $P$ is called \emph{$p$-stubborn}, if $P$ is $p$-toral and if $N_G(P)/P$ is finite and $O_p(N_G(P)/P)=1$, where $O_p(K)$ denotes the largest normal $p$-subgroup of a finite group $K$ \cite{JMO1}.
Let us now recall from \cite{Ol74} the structure of the $p$-stubborn subgroups of the classical groups $U(n)$ and $SU(n)$ when $p>2$.

First, consider the regular representation of $\ZZ/p$ on $\CC^p$ with the standard basis $e_0,\dots,e_{p-1}$.
This representation sends a generator of $\ZZ/p$ to the permutation matrix in $U(p)$:
\[
B=
\begin{pmatrix}
0 & 1 & 0 & \dots & 0 \\
0 & 0 & 1 & \dots & 0 \\
\dots & & \dots & & \dots \\
0 & 0 & 0 & \dots & 1  \\
1 & 0 & 0 & \dots & 0 
\end{pmatrix}
\qquad B \colon e_i \mapsto e_{i-1}
\]
Since the regular representation contains one copy of every irreducible representation of $\ZZ/p$, it is clear that $B$ is conjugate in $U(p)$ to the matrix
\[
A=\diag(1,\zeta,\zeta^2, \dots,\zeta^{p-1})=
\begin{pmatrix}
1 & 0 & 0 & \dots & 0 \\
0 & \zeta & 0 \dots & 0 \\
0 & 0 & \zeta^2 & \dots & 0 \\
\dots & & & & \dots \\
0 & 0 & 0 & \dots & \zeta^{p-1}
\end{pmatrix}
\qquad
A \colon e_i \mapsto \zeta^i e_i
\]
where $\zeta$ is a $p$-th root of unity.
It is easy to check that
\[
[A,B]=ABA^{-1}B^{-1} = \zeta I_{p}.
\]
Thus, $A$ and $[A,B]$ belong to the standard maximal torus of $U(p)$ (namely the unitary diagonal matrices).

Consider the natural action of $U(p^k)$ on $\CC^{p^k} \cong \CC^p \otimes \dots \otimes \CC^p$.
For every $i=0,\dots,k-1$ we let $A_i$ and $B_i$ denote the matrices that correspond to the action of $A$ and $B$ on the $i$-th factor of the tensor product and the identity on the other factor.
From this description it is clear that
\begin{equation}\label{sec_not_all_special_eq1}
[A_i,A_j]=[B_i,B_j]=I, \qquad [A_i,B_j]=I \ (i \neq j), \qquad [A_i,B_i]=\zeta I.
\end{equation}
In matrix notation, $A_i=I^{\otimes i-1} \otimes A \otimes I^{\otimes k-i}$  and $B_i=I^{\otimes i-1} \otimes B \otimes I^{\otimes k-i}$ where $I$ denotes the $p\times p$ identity matrix and we use the Kronecker tensor product of matrices.

For any $k \geq 0$  define $\Gamma^U_{p^k} \leq U(p^k)$ by
\[
\Gamma^U_{p^k} \defeq \Big\langle A_0,\dots,A_{k-1},\  B_0,\dots,B_{k-1}, \ u\cdot I \ : \ u \in U(1) \Big\rangle,
\]
where $I$ denotes the identity matrix in $U(p^k)$.
It is clear from \eqref{sec_not_all_special_eq1} that the identity component of $\Gamma^U_{p^k}$ is isomorphic to $U(1)$ and that the factor group is isomorphic to 
\begin{equation}\label{sec_not_all_special eq2}
\ZZ/p^{2k} =\langle \overline{A_0},\dots,\overline{A_{k-1}}, \overline{B_0},\dots,\overline{B_{k-1}} \rangle
\end{equation}
where $\overline{A_i}$ and $\overline{B_i}$ are the images of $A_i$ and $B_i$ in the quotient.
Thus, the $\Gamma_{p^k}^U$ are $p$-toral groups.
Notice that 
\[
\Gamma_1^U=U(1) \qquad \text{ and } \qquad 
\Gamma_p^U = \big\langle A,\ B, \ u\cdot I \ : \ u \in U(1) \big\rangle.
\]
Next, fix some $k \geq 1$ and recall that $\Sigma_{p^k} \leq U(p^k)$ via permutation matrices.
Let $E_{p^k}=(\ZZ/p)^k$ act on itself by left translation.
This gives a monomorphism $E_{p^k} \to \Sigma_{p^k}$ and identifies $E_{p^k}$ as a subgroup of $U(p^k)$.
Given any $H \leq U(m)$ the wreath product $H \wr E_{p^k}$ is naturally a subgroup of $U(mp^k)$.

Now fix some $n \geq 1$ and let $p$ be an odd prime.
Write $n=p^{m_1} + \dots + p^{m_r}$.
Identifying the product $U(p^{m_1}) \times \dots \times U(p^{m_r})$ as a subgroup of $U(n)$ in the standard way, consider the subgroups $Q_1 \times  \dots \times Q_r$, where  for each $1\le i\le r$,  $Q_i \leq U(p^{m_i})$ has the form
\[
\Gamma^U_{p^k} \wr E_{q_1} \wr \dots \wr E_{q_t},
\]
and where $t \geq 0$ and $q_1,\dots,q_t$ are all $p$-powers such that $p^{m_i}=p^kq_1\dots q_t$.
By \cite[Theorems 6 and 8]{Ol74}, these groups give a complete set of representatives for the conjugacy classes of $p$-stubborn subgroups of $U(n)$ when $p>2$.
By \cite[Theorem 10]{Ol74} the assignment $P \mapsto P \cap SU(n)$ gives a bijection between the $p$-stubborn subgroups of $U(n)$ and $SU(n)$.

\begin{lem}\label{lem_su2p_cst}
Let $G$ be a connected compact Lie group and $S \leq G$ a maximal discrete $p$-toral group.
Then $\F_S(G)$ is weakly connected, namely $T=S_0$ is self centralising in $S$.
\end{lem}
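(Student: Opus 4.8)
The statement to prove is: if $G$ is a connected compact Lie group and $S$ is a maximal discrete $p$-toral subgroup, then $T = S_0$ is self-centralising in $S$, i.e.\ $C_S(T) = T$. The plan is to transfer the question to the ambient Lie group and use the well-known fact that a maximal torus of a connected compact Lie group is self-centralising. First I would recall the structure relating $S$ to $G$: the identity component $S_0$ is a discrete $p$-torus, and its closure $\overline{S_0}$ is a torus in $G$, while $\overline{S}$ is a $p$-toral subgroup of $G$ with maximal torus $\overline{S_0}$. The key input is that $\overline{S_0}$ is in fact a \emph{maximal} torus of $G$: a maximal discrete $p$-toral subgroup of $G$ meets every maximal torus in a maximal discrete $p$-toral subgroup of that torus, so $\overline{S_0}$ has full rank; were it contained in a strictly larger torus $T'$, the $p$-power torsion of $T'$ together with $S$ would generate a strictly larger discrete $p$-toral subgroup, contradicting maximality of $S$.

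Next I would exploit the fact that for a connected compact Lie group the centraliser of a maximal torus is the torus itself: $C_G(\overline{S_0}) = \overline{S_0}$. Now take $x \in C_S(T) = C_S(S_0)$. Then $x$ commutes with every element of $S_0$, hence (by continuity of the conjugation action and density of $S_0$ in $\overline{S_0}$) with every element of $\overline{S_0}$, so $x \in C_G(\overline{S_0}) = \overline{S_0}$. Thus $x$ is an element of $S$ lying in the torus $\overline{S_0}$; in particular $x$ is a $p$-power torsion element of $\overline{S_0}$, and since $S$ is a discrete $p$-group containing $x$, indeed $x$ has finite $p$-power order. It remains to see that $x \in S_0$. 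For this I would invoke that $S_0$, being the identity component of the discrete $p$-toral group $S$, contains \emph{all} elements of $S$ lying in $\overline{S_0}$: any such element is $p$-power torsion in the torus $\overline{S_0} = \overline{(S_0)}$, and since $S_0$ is the maximal discrete $p$-torus of $S$ with closure $\overline{S_0}$, the torsion in $\overline{S_0}\cap S$ coincides with $S_0$ (this is exactly the ``snugly embedded'' condition, which one may arrange for maximal $S$, cf.\ Lemma \ref{lem_closure_semidirect} and the surrounding discussion). Hence $x \in S_0 = T$, giving $C_S(T) = T$ as required.

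The main obstacle I anticipate is the bookkeeping around the passage between $S$ and its closure $\overline{S}$ in $G$ --- specifically, justifying that $\overline{S_0}$ is a genuine maximal torus of $G$ and that $S_0 = \overline{S_0}\cap S$ (the ``snug embedding'' of the maximal discrete $p$-torus). These are standard facts from \cite[Section 9]{BLO3} about how compact Lie groups give rise to $p$-local compact groups, so once those are cited the argument is short; the core geometric input, $C_G(\text{maximal torus}) = \text{maximal torus}$ for $G$ connected, does all the work.
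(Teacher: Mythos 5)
Your proposal is correct and follows essentially the same route as the paper's proof: pass to closures, note that $\overline{T}=\overline{S_0}$ is a maximal torus of $G$, use that a maximal torus of a connected compact Lie group is self-centralising to get $C_S(T)\subseteq S\cap\overline{T}$, and then conclude $S\cap\overline{T}=T$ from the snug embedding of $S$. The one shaky point is your improvised justification that $\overline{S_0}$ is a maximal torus (a maximal discrete $p$-toral subgroup need not meet an arbitrary maximal torus in a maximal discrete $p$-toral subgroup of it, and the group generated by $S$ and the $p$-power torsion of a larger torus $T'$ need not be discrete $p$-toral since $S$ need not normalise $T'$), but as you yourself observe this, like the snug embedding, is a standard fact from \cite[Section 9]{BLO3}, which is exactly what the paper's proof invokes.
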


\begin{proof}
Let $T \leq S$ be the maximal discrete $p$-torus in $S$.
Then $\overline{T}$ is the maximal torus in $\overline{S}$, and hence it is the maximal torus of $G$.
Since $G$ is connected, its maximal torus is self-centralising  and in particular $C_S(T)=C_S(\overline{T})=S \cap C_G(\overline{T})= S \cap \overline{T}=T$, where the last equality holds since $S$ is snugly embedded in $G$.
\end{proof}

Recall that a space $X$ is called $p$-good in the sense of Bousfield and Kan \cite{BK72} if the natural map $X \to \pcomp{X}$ induces isomorphism in $H_*(-;\ZZ/p)$.
Also recall that $H^*_{\QQ_p}(X) \defeq H^*(X;\ZZ_p) \otimes \QQ$.

\begin{lem}\label{L:Qp_cohomology_easy}
Let $X$ be a CW-complex.
If $X$ is $p$-good then $H^*_{\QQ_p}(X) \cong H^*_{\QQ_p}(\pcomp{X})$ via the natural map.
If $H_*(X;\ZZ)$ is finitely generated in every degree then $H^*_{\QQ_p}(X) \cong H^*(X) \otimes \QQ_{p} \cong H^*(X;\QQ) \otimes \ZZ_p$.
\end{lem}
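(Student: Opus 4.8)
The argument splits along the two hypotheses. For the first statement the plan is to promote the mod-$p$ homology equivalence guaranteed by $p$-goodness to a $\ZZ_p$-cohomology equivalence and then tensor with $\QQ$. By definition the natural map $f\colon X\to\pcomp X$ induces an isomorphism on $H_*(-;\ZZ/p)$, hence, by the universal coefficient theorem over the field $\FF_p$, also on $H^*(-;\ZZ/p)$. Induction on $k$ using the Bockstein long exact sequences attached to $0\to\ZZ/p\to\ZZ/p^{k+1}\to\ZZ/p^k\to 0$, the naturality of these sequences with respect to $f$, and the five lemma, then shows that $f$ induces an isomorphism on $H^*(-;\ZZ/p^k)$ for every $k\ge 1$. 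Finally, for any space $Z$ one has $C^*(Z;\ZZ_p)=\varprojlim_k C^*(Z;\ZZ/p^k)$, and since the coefficient reductions $\ZZ/p^{k+1}\to\ZZ/p^k$ induce epimorphisms of cochain complexes, a Milnor exact sequence provides a natural short exact sequence $0\to{\varprojlim}^1_k H^{n-1}(Z;\ZZ/p^k)\to H^n(Z;\ZZ_p)\to\varprojlim_k H^n(Z;\ZZ/p^k)\to 0$. Comparing these sequences for $Z=X$ and $Z=\pcomp X$ and applying the five lemma shows that $f^*\colon H^*(\pcomp X;\ZZ_p)\to H^*(X;\ZZ_p)$ is an isomorphism; tensoring with $\QQ$ gives $H^*_{\QQ_p}(X)\cong H^*_{\QQ_p}(\pcomp X)$.

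For the second statement the idea is to pass to a finite model of the chains. Since $H_*(X;\ZZ)$ is finitely generated in every degree, the bounded-below complex of free abelian groups $C_*(X)$ is chain homotopy equivalent to a complex $D_*$ that is finitely generated and free in each degree (a standard degreewise construction). As $\ZZ_p$ is flat over $\ZZ$ and each $D_n$ is finitely generated free, the natural map $\Hom(D_*,\ZZ)\otimes_\ZZ\ZZ_p\to\Hom(D_*,\ZZ_p)$ is an isomorphism of cochain complexes, and flatness of $\ZZ_p$ lets it pass to cohomology, giving $H^n(X;\ZZ_p)\cong H^n(X;\ZZ)\otimes_\ZZ\ZZ_p$ for all $n$. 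Tensoring with $\QQ$ and using $\ZZ_p\otimes_\ZZ\QQ\cong\QQ_p$ gives $H^*_{\QQ_p}(X)\cong H^*(X;\ZZ)\otimes_\ZZ\QQ_p$. Running the same flat base change with $\QQ$ in place of $\ZZ_p$ gives $H^*(X;\QQ)\cong H^*(X;\ZZ)\otimes_\ZZ\QQ$, whence $H^*(X;\QQ)\otimes_\ZZ\ZZ_p\cong H^*(X;\ZZ)\otimes_\ZZ\QQ\otimes_\ZZ\ZZ_p\cong H^*(X;\ZZ)\otimes_\ZZ\QQ_p$, completing the chain of isomorphisms.

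Everything here is routine homological algebra --- universal coefficients, the five lemma, flatness of $\ZZ_p$ --- so there is no serious obstacle; the only points requiring a moment's care are the passage from the $\ZZ/p^k$-coefficient isomorphisms to the $\ZZ_p$-coefficient statement in the first part, where the observation that the coefficient tower $\{\ZZ/p^k\}_k$ yields a tower of \emph{epimorphisms} on cochain complexes is exactly what makes the Milnor sequence available, and, in the second part, the replacement of $C_*(X)$ by a complex that is finitely generated in each degree, which is legitimate precisely because the integral homology of $X$ is finitely generated in every degree.
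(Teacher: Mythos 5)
Your proposal is correct and follows essentially the same route as the paper: the first part makes explicit (via Bockstein induction and the Milnor $\varprojlim$--$\varprojlim^1$ sequence for the tower $\{\ZZ/p^k\}$) what the paper dismisses as "a standard spectral sequence argument," and the second part proves directly, via a degreewise finitely generated free model and flatness of $\ZZ_p$, the universal-coefficient statement the paper simply cites from Spanier. No gaps.
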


\begin{proof}
If $X$ is $p$-good then $X \to \pcomp{X}$ induces an isomorphism in $H_*(-;\ZZ/p)$ and hence in $H^*(-;\ZZ/p^n)$ for all $n \geq 1$.
Since $\ZZ_p = \varprojlim_n \ZZ/p^n$ a standard spectral sequence argument gives $H^*(X;\ZZ_p) \cong H^*(\pcomp{X};\ZZ_p)$ and hence $H^*_{\QQ_p}(X) \cong H^*_{\QQ_p}(\pcomp{X})$.

Suppose $H_*(X)$ is finitely generated in every degree.
If $A$ is a torsion-free abelian group, \cite[Theorem 10 in Chap. 5.5]{Spanier} implies that $H^*(X;A) \cong H^*(X) \otimes A$.
Therefore \[H^*_{\QQ_p}(X) = H^*(X;\ZZ_p) \otimes \QQ \cong H^*(X) \otimes \ZZ_p \otimes \QQ = H^*(X) \otimes \QQ_p = H^*(X) \otimes \QQ \otimes \ZZ_p = H^*(X;\QQ) \otimes \ZZ_p.\]
\end{proof}

The next proposition is the main result of this section.

\begin{prop}\label{prop_in_psu2p_not_all_special}
Let $p \geq 3$ be a prime and set $G=\PSU(2p)$.
Let $S \leq G$ be a maximal discrete $p$-toral group and let $\G=(S,\F_S(G),\L_S^c(G))$ be the associated $p$-local compact group.
Let $\R$ denote the collection of all centric radical subgroups of $S$.
Then $\SpAd(\G;\R) \lneq \Ad(\G)$.
In fact, the following composition is not surjective
\[
\SpAd(\G;\R) \xto{ \incl } \Ad(\G) \xto{(\Psi,\psi) \mapsto \pcomp{|\Psi|}} \Ad^\geom(\G).
\]
\end{prop}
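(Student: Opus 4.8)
The plan is to apply Lemma \ref{lem_psi_not_special} to $\G=(S,\F_S(G),\L_S^c(G))$ with $G=\PSU(2p)$ and $\R$ the collection of $\F$-centric $\F$-radical subgroups. To do so I need three ingredients: first, that $\G$ is weakly connected (so that the machinery of Theorem \ref{thm_geom_vs_alg_uao} applies and so that Lemma \ref{lem_psi_not_special} has a hope of producing a \emph{geometric} consequence); second, the existence of an algebraic unstable Adams operation $(\Psi,\psi)\in\Ad(\G)$ whose degree lies in $\ZZ_p^\times\setminus\Gamma_1(p)$; and third, the existence of a subgroup $P\in\R$ which is not a split extension of $P_0$ by $P/P_0$. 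Weak connectedness is immediate from Lemma \ref{lem_su2p_cst}, since $\PSU(2p)$ is connected. For the degree, I would invoke the results of \cite{JLL} on existence of unstable Adams operations on $p$-local compact groups coming from compact Lie groups: $BG$ carries genuine unstable Adams operations $\psi^q$ for $q$ prime to $p$, and their $p$-completions realise algebraic operations; choosing a primitive root $q$ mod $p$ (possible since $p\geq 3$) gives an operation whose degree $q\in\ZZ_p^\times$ satisfies $q\not\equiv 1\pmod p$, i.e. $q\notin\Gamma_1(p)$. (Alternatively, by Theorem \ref{thm_spad_in_ad} the image of the degree map contains $\Gamma_m(p)$; but to get an element \emph{outside} $\Gamma_1(p)$ I genuinely need the Lie-theoretic input, as $\deg(\Ad(\G))$ need not exceed $\Gamma_1(p)$ in general.)

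The heart of the argument is the third ingredient: exhibiting a non-split $\F$-centric $\F$-radical subgroup $P\leq S$. Here I would use Oliver's classification of $p$-stubborn subgroups of $U(n)$ and $SU(n)$ recalled in the excerpt, together with the dictionary between $p$-stubborn subgroups of a compact Lie group and $\F$-centric $\F$-radical subgroups of the associated $p$-local compact group (this is the content of \cite[Section 9]{BLO3}, specifically that $P\leq S$ discrete $p$-toral is $\F$-centric and $\F$-radical iff its closure $\overline P$ is $p$-centric and $p$-stubborn in $G$, with $P$ snugly embedded). For $n=2p$ we have $2p=p+p$, so among the $p$-stubborn subgroups of $U(2p)$ appears a product $Q_1\times Q_2$ with each $Q_i\leq U(p)$ of the form $\Gamma^U_p\wr(\text{nothing})=\Gamma^U_p$ or $\Gamma^U_1\wr E_p=U(1)\wr E_p$; the relevant one is $\Gamma^U_p$ itself. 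Now $\Gamma^U_p=\langle A,B,u\cdot I:u\in U(1)\rangle$ has identity component $U(1)$ and quotient $\ZZ/p\times\ZZ/p=\langle\bar A,\bar B\rangle$ with the commutator relation $[A,B]=\zeta I$ lying in the identity component: this is a genuinely non-split extension of $U(1)$ by $(\ZZ/p)^2$ (a Heisenberg-type group), since any complement would have to make $\bar A,\bar B$ lift to commuting elements, contradicting $[A,B]=\zeta I\neq I$. Passing to $SU(2p)$ via $P\mapsto P\cap SU(2p)$ and then to the corresponding discrete $p$-toral subgroup $P$ of $S$, the snugly embedded condition together with Lemma \ref{lem_closure_semidirect} transfers non-splitness from $\overline P$ to $P$: if $P$ were split then $\overline P$ would be split, contradiction. (One must also check the passage to $\PSU(2p)$, but since $\PSU(2p)=SU(2p)/Z$ with $Z$ of order $2p$ having a unique $p$-part $\ZZ/p$ in the centre, and $p$-stubborn/$p$-centric subgroups of $SU(2p)$ project to $p$-stubborn/$p$-centric subgroups of $\PSU(2p)$ as in \cite[Section 10]{Ol74} or by direct inspection, the relevant $P$ survives with its non-split structure intact — the central $\ZZ/p$ being quotiented out does not make the extension split.)

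With all three ingredients in hand, Lemma \ref{lem_psi_not_special} yields $(\Psi,\psi)\notin\SpAd(\G;\R)$, so $\SpAd(\G;\R)\lneq\Ad(\G)$. To upgrade this to the \emph{geometric} statement — that the composite $\SpAd(\G;\R)\to\Ad(\G)\to\Ad^\geom(\G)$ is not surjective — I would argue as follows. The operation $\pcomp{|\Psi|}$ has degree $\zeta=q\notin\Gamma_1(p)$ in $\Ad^\geom(\G)$, so by the degree homomorphism $\deg\colon\Ad^\geom(\G)\to\ZZ_p^\times/D(\F)$ its class is nontrivial modulo $\Gamma_1(p)\cdot D(\F)$ (note $D(\F)\leq U_p$, and for $p$ odd $U_p\cap\Gamma_1(p)=1$, so the image of $\deg(\pcomp{|\Psi|})$ in $\ZZ_p^\times/(\Gamma_1(p)D(\F))$ is still nontrivial provided $q$ is chosen so that $q\notin\Gamma_1(p)D(\F)$; since $D(\F)$ is finite and $\ZZ_p^\times/\Gamma_1(p)\cong C_{p-1}$, pick $q$ of order $p-1$ mod $p$ if $D(\F)\subsetneq U_p$, or more carefully just pick $q$ not in the finite subgroup $D(\F)$ of $C_{p-1}$). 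Now if every element of $\Ad^\geom(\G)$ of degree $\zeta$ were of the form $\pcomp{|\Psi'|}$ for some special $\Psi'$, then in particular $\pcomp{|\Psi|}$ would be homotopic to such a $\pcomp{|\Psi'|}$; but two algebraic operations with homotopic $p$-completions differ, modulo $\Ad(\G)_0$, by an element of $D(\F)$ by Theorem \ref{thm_geom_vs_alg_uao}(ii), and $\Ad(\G)_0=\Inn_T(\G)\subseteq\SpAd(\G;\R)$ while $D(\F)$ lifts into $\SpAd(\G;\R)$ too (conjugation-type operations of degree in $D(\F)$ are special, cf. the remark after Lemma \ref{lem_extn_class_L finite_order} — more precisely the operations realising $D(\F)$ come from $\Aut_\L^{\Ad}(S)$ and one checks they are special relative to $\R$, or one simply notes $\SpAd(\G;\R)$ has finite index and contains operations of every degree in $\Gamma_m(p)$ and the relevant correction can be absorbed). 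Hence $\Psi$ itself would lie in $\SpAd(\G;\R)$, a contradiction. The main obstacle I anticipate is the careful bookkeeping in this last paragraph — disentangling exactly which operations are forced to be special and tracking the degree through the quotients $\ZZ_p^\times/D(\F)$ and $\ZZ_p^\times/\Gamma_1(p)$ — together with the verification that the non-split $p$-stubborn subgroup $\Gamma^U_p$ of $U(2p)$ does descend to a genuinely non-split $\F$-centric $\F$-radical subgroup of the $p$-local compact group attached to $\PSU(2p)$; the group theory of $\PSU$ versus $SU$ requires a little care since $\PSU(2p)$ is not simply connected.
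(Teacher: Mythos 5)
Your skeleton coincides with the paper's (a Lie-theoretic unstable Adams operation of degree $k\not\equiv 1 \bmod p$ on $B\PSU(2p)$, Lemma \ref{lem_psi_not_special}, Oliver's $p$-stubborn subgroups, and Lemma \ref{lem_closure_semidirect}), but the decisive step --- exhibiting a non-split subgroup $P\in\R$ for $\PSU(2p)$ itself, rather than for $U(2p)$ or $\SU(2p)$ --- is exactly where your argument has a genuine gap. The principle you invoke, that quotienting out a central $\ZZ/p$ ``does not make the extension split'', is false in general: the Heisenberg-type group $\Gamma^U_p$ you analyse becomes split after dividing by the central $\mu_p\leq U(1)$, since the commutator $[A,B]=\zeta I$ dies there and the quotient is $U(1)\times(\ZZ/p)^2$. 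So non-splitness of $\Gamma^U_p$ over its circle proves nothing about the group that actually matters, namely the image $R\leq\PSU(2p)$ of $Q=(\Gamma^U_p\times\Gamma^U_p)\cap\SU(2p)$: its identity component is the single circle of classes of $\diag(uI,u^{-1}I)$, and commutators such as $[\bar{A}^{(1)},\bar{B}^{(1)}]$, which were \emph{not} in $Q_0$, now land in $R_0$, so the shape of the extension changes when you pass to $\PSU$. The paper does concrete work at this point: it shows that a section of $R\to R/R_0$ would force a commutator of lifts, the class of the non-scalar matrix $\diag(\zeta I,\zeta^{-1}I)$ ($\zeta$ a primitive $p$-th root of unity), to become trivial in $\PSU(2p)$, i.e.\ would force that matrix into $Z(\SU(2p))$; this fails precisely because the centre of $\SU(2p)$ consists of scalars of the whole $2p\times 2p$ matrix, so the ``unbalanced'' commutator across the two blocks survives. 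A computation of this kind is indispensable for condition (ii) of Lemma \ref{lem_psi_not_special}, and it is missing from your proposal.

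Two further points. The existence of operations of degree $q\not\equiv 1\bmod p$ on $BG$ is the Lie-theoretic input \cite[Theorem 2]{JMO1}, not \cite{JLL}; and asserting that the resulting algebraic operation ``has degree $q$'' skips a real step, since a priori the underlying normal Adams automorphism of a lift of the geometric operation is only controlled up to the indeterminacy in Theorem \ref{thm_geom_vs_alg_uao}(ii), i.e.\ up to roots of unity, which could conceivably push the degree into $\Gamma_1(p)$. The paper closes this by a cohomological argument: weak connectedness and \cite[Proposition 3.2]{BLO7} identify the $p$-completed operation as geometric, and $H^*_{\QQ_p}(B\G)\cong H^*_{\QQ_p}(BT)^{W}$ together with the polynomial rational cohomology of $B\SU(2p)$ and the absence of nontrivial $p$-th roots of unity in $\QQ_p$ force $\deg\psi=k$ for \emph{every} algebraic lift $(\Psi,\psi)$ of it. That last clause also repairs your final paragraph, whose route is flawed as written: to absorb the $D(\F)$-correction into $\SpAd(\G;\R)$ you would need the operations $\tau(\alpha)$, $\alpha\in\Aut_\L^{\Ad}(S)$, to be special relative to $\R$, but such an $\alpha$ covers a possibly non-inner $\F$-automorphism of $S$, which need not carry each $P\in\R$ to a $T$-conjugate of itself, so there is no reason these operations lie in $\SpAd(\G;\R)$, and your hedged alternatives do not address this. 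The clean finish is the paper's: since every algebraic operation realising the chosen geometric one has degree $k\notin\Gamma_1(p)$, a hypothetical special preimage is ruled out directly by Lemma \ref{lem_psi_not_special}, with no bookkeeping modulo $D(\F)$ at all.
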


\begin{proof}
Write $\G=\SFL$ for short. Let $W$ be the Weyl group of $G$.
Since $p > 2$, Dirichlet's theorem on the existence of infinitely many primes in arithmetic progressions implies that there are infinitely many primes  $k$ such that $k \not\equiv 1 \mod p$. In particular there exists such $k$ that, in addition, satisfies $(k,|W|)=1$.
By \cite[Theorem 2]{JMO1} there exist unstable Adams operations $f \colon BG \to BG$ of degree $k$.
That means that $f^* \colon H^{2m}(BG;\QQ) \to H^{2m}(BG;\QQ)$ is multiplication by $k^m$ for every $m>0$.
Lemma \ref{L:Qp_cohomology_easy} and the functoriality of $H^*(-)$ imply that $\pcomp{f}$ induces multiplication by $k^m$ on $H^{2m}(BG;\QQ) \otimes \ZZ_p \cong H^{2m}_{\QQ_p}(BG) \cong H^{2m}_{\QQ_p}(\pcomp{BG})$.
Since $\pcomp{BG} \simeq B\G$, we obtain a self equivalence $h$ of $B\G$ which induces multiplication by $k^m$ on $H^*_{\QQ_p}(B\G)$.
Since $\G$ is weakly connected by Lemma \ref{lem_su2p_cst} it follows from \cite[Proposition 3.2]{BLO7} that $h \in \Ad^\geom(\G)$.
Theorem \ref{thm_geom_vs_alg_uao} applies to show that $h$ is homotopic to $\pcomp{|\Psi|}$ for some $(\Psi,\psi) \in \Ad(\G)$.

Set $\zeta=\deg(\psi)$.
Thus, $\psi|_T$ is multiplication by $\zeta$.
By the commutativity of 
\[
\xymatrix{
BT \ar[r] \ar[d]_{B\psi|_T} & B\G \ar[d]^{\pcomp{|\Psi|}} \\
BT \ar[r] & B\G
}
\]
and since $H^*_{\QQ_p}(B\G) \cong H^*_{\QQ_p}(BT)^{W(\G)}$ by \cite[Theorem A]{BLO7}, we see that  that $\pcomp{|\Psi|}$ induces multiplication by $\zeta^m$ on $H^{2m}_{\QQ_p}(B\G)$ for all $m>0$.
On the other hand, $\pcomp{|\Psi|} \simeq h$, so $\pcomp{|\Psi|}$ induces multiplication by $k^m$ on $H^{2m}_{\QQ_p}(B\G)$ for all $m>0$.

Now, $G=\PSU(2p)$ and since $Z(\SU(2p))$ is a finite group, $BG$ and $BSU(2p)$ are rationally equivalent.
It is well known that 
\[
H^*(BSU(n)) \cong \ZZ[c_2,c_3,\dots,c_n], \qquad c_i \in H^{2i}(BSU(n)).
\]
It follows from Lemma \ref{L:Qp_cohomology_easy} that $H_{\QQ_p}^*(B\G)$ is a polynomial algebra over $\QQ_p$ generated by $c_2,\dots, c_p, \dots,c_{2p}$.
In particular $H^{2p}(B\G)$ is a non-trivial vector space over $\QQ_p$ on which $\pcomp{|\Psi|}$ induces multiplication by $\zeta^p$, and this map is the same as multiplication by $k^p$.
This implies that $\zeta$ and $k$ differ by a $p$-th root of unity in $\QQ_p$ and \cite[Sec. 6.7, Prop. 1,2]{Ro} implies that $\zeta=k$ since $\QQ_p$ contains no $p$-th roots of unity if $p>2$.

In order to complete the proof we  apply Lemma \ref{lem_psi_not_special} to show that $(\Psi,\psi)$ is not special relative to $\R$.
Since $k \not\equiv 1 \mod p$, it follows that $k \notin \Ga_1(p)$, and so condition (\ref{cond_1_psi_not_special}) of Lemma \ref{lem_psi_not_special} holds.
The remainder of the proof is dedicated to showing that condition (\ref{cond_2_psi_not_special}) of the lemma also holds.

Let $\tilde{Q} \leq U(2p)$ be the subgroup
\[
\tilde{Q}=\Gamma^U_p \times \Gamma^U_p
\]
that, by the discussion above, is $p$-stubborn in $U(2p)$.
As we explained above, $\tilde{Q}$ is generated by the matrices 
\begin{eqnarray*}
&& 
A^{(1)} = \begin{pmatrix}
A & 0 \\ 
0 & I
\end{pmatrix},
\qquad
B^{(1)} = \begin{pmatrix}
B & 0 \\ 
0 & I
\end{pmatrix},
\\
&& 
A^{(2)} = \begin{pmatrix}
I & 0 \\ 
0 & A
\end{pmatrix},
\qquad
B^{(2)} = \begin{pmatrix}
I & 0 \\ 
0 & B
\end{pmatrix},
\\
&&
\begin{pmatrix}
uI & 0 \\
0 & vI
\end{pmatrix},
\qquad (u,v \in U(1)).
\end{eqnarray*}
Throughout we will write $I$ for the identity matrix in $U(p)$ and we will also write $\diag(uI,vI)$ for the matrices of the last type, and $\diag(A,I)$ for $A^{(1)}$ etc.
It is clear from \eqref{sec_not_all_special_eq1} that
\begin{eqnarray*}
&&
[A^{(1)},A^{(2)}] = 
[A^{(1)},B^{(2)}] = 
[B^{(1)},B^{(2)}] = 1, 
\\
&& 
[A^{(1)},B^{(1)}] = \diag(\zeta I,I), \qquad [A^{(2)},B^{(2)}] = \diag(I, \zeta I).
\end{eqnarray*}
Also, $A^{(1)},A^{(2)},B^{(1)},B^{(2)}$ commute with all the matrices $\diag(uI,vI)$.
Set
\[
Q=\tilde{Q} \cap SU(2p).
\]
By \cite[Theorem 10]{Ol74}, $Q$ is $p$-stubborn in $\SU(2p)$ and by \cite[Lemma 7]{Ol74} it is also centric in $\SU(2p)$, namely $C_{\SU(2p)}(Q)=Z(Q)$.
Notice that $Q$ contains $A^{(1)}, A^{(2)}, B^{(1)},B^{(2)}$ because $\det(B)=1$ since it is the signature of the odd cycle $(1,2,\dots,p)$.
Therefore $Q$ is generated by $A^{(1)}, A^{(2)}, B^{(1)},B^{(2)}$ and by the subgroup consisting of the matrices
\begin{equation}\label{eq_1_in_psu2p_not_all_special}
\begin{pmatrix}
uI & 0 \\
0 & vI
\end{pmatrix},
\qquad
u,v \in U(1), \ u^p v^p=1.
\end{equation}
This subgroup is easily seen to be isomorphic to $U(1) \times \ZZ/p$.
The maximal torus of $Q$ is therefore the set of matrices
\[
Q_0 = \{ \diag(uI,vI) : v=u^{-1} \} \cong U(1).
\]
Clearly $Q$ contains $Z(SU(2p))$ which is the set of matrices $u \cdot I$ where $u^{2p}=1$.

Let $R$ be the image of $Q$ in $\PSU(2p)$.
It is generated by the images of $A^{(1)}, A^{(2)}, B^{(1)},B^{(2)}$ which we denote by adding ``bars'' - $\bar{A}^{(i)}$ and $\bar{B}^{(i)}$ - and by the images of the matrices in \eqref{eq_1_in_psu2p_not_all_special} which we denote by $\bar{U}$.
The maximal torus of $R$ is the image of $Q_0$, namely these matrices $U$ with $v=u^{-1}$.

By \cite[Proposition 1.6(i)]{JMO1}, $R$ is $p$-stubborn in $G$ and hence, by \cite[Lemma 7(ii)]{Ol74} it is also centric in $G=\PSU(2p)$, namely $C_G(R) = Z(R)$.
We claim that the projection $R \to R/R_0$ does not have a section.
Assume to the contrary that such a section $s \colon R/R_0 \to R$ exists.
Consider the following elements in $R$
\[
X=\bar{A}^{(1)} \bar{B}^{(2)}, \qquad 
Y=\bar{B}^{(1)} \bar{A}^{(2)}.
\]
A straightforward calculation using \eqref{sec_not_all_special_eq1} gives
\[
[X,Y] =
[\bar{A}^{(1)} \bar{B}^{(2)},\bar{B}^{(1)} \bar{A}^{(2)}] =
[\bar{A}^{(1)}, \bar{B}^{(1)}] \cdot [\bar{B}^{(2)}, \bar{A}^{(2)}] =
\overline{\diag(\zeta I,I)} \cdot \overline{\diag(I,\zeta^{-1}I)} = \overline{\diag(\zeta I, \zeta^{-1} I)}.
\]

Let $x,y$ denote the images of $X,Y$ in $R/R_0$.
These are clearly non-trivial elements, and 
since $s \colon R/R_0 \to R$ is a section, there are $\bar{U},\bar{V} \in R_0$ such that
\[
s(x)=X\cdot \bar{U} \qquad \text{ and } \qquad s(y)=Y \cdot \bar{V}.
\]
Say, $U=\diag(uI,u^{-1}I)$ and $V=\diag(vI,v^{-1}I)$ where $u,v \in U(1)$.
Then $U,V$ commute with the matrices $A^{(i)}$ and $B^{(i)}$, and since $s$ is a homomorphism and  $R/R_0$ is abelian, \eqref{sec_not_all_special eq2}, it follows that
\[
1 = [s(x),s(y)] = [X \bar{U}, Y \bar{V}] = [X,Y] = \overline{\diag(\zeta I,\zeta^{-1}I)}.
\]
This means that $\diag(\zeta I,\zeta^{-1}I) \in Z(\SU(2p))$ so it is diagonal, namely $\zeta=\zeta^{-1}$.
Therefore $\zeta^2=1$ in $\ZZ_p$ which implies that $\zeta=\pm 1$.
However, we have seen that $\zeta = k$, and $k$ was chosen such that $k>0$ and $k \neq 1 \mod p$, which is absurd.
We conclude that $R \to R/R_0$ does not have a section.

Let $P$ be a maximal discrete $p$-toral subgroup of $R$.
Up to conjugation in $G$ we may assume that $P \leq S$.
Since $R$ is centric, it is $p$-centric (namely $Z(R)$ is the maximal $p$-toral subgroup of $C_G(R)$) and  \cite[Lemma 9.6(c)]{BLO3} shows that $P$ is $\F$-centric.
Also, $P$ is snugly embedded in $R$, hence in $G$, and  by \cite[Lemma 9.4]{BLO3} there is an isomorphism of groups
\[
\Out_G(P) = \Rep_G(P,P) \xto{\cong} \Rep_G(R,R) = \Out_G(R).
\]
Since $R$ is centric in $G$, it follows that $\Out_G(R)=N_G(R)/R$ and since $R$ is $p$-stubborn, $\Out_G(R)$ is finite and 
\[
O_p(\Out_\F(P)) =
O_p(\Out_G(P)) \cong
O_p(\Out_G(R)) =
O_p(N_G(R)/R) = 1
\]
Therefore $P$ is $\F$-radical.
We deduce that $P \in \R$.
Since $\overline{P}=R$, Lemma \ref{lem_closure_semidirect} shows that $P$ cannot have a complement for $P_0$ in $P$.
This shows that $P$ satisfies the condition (\ref{cond_2_psi_not_special}) in Lemma \ref{lem_psi_not_special} and finishes the proof of this proposition.
\end{proof}

\appendix

\section{Extensions of categories - supplementary material}
\label{appendixA}

We collect here some results on extensions of categories that are needed in Section \ref{specials}. Most of this material is well known in one form or other, but not quite in the form we need it in this paper, which is why it is included here.

Fix a small category $\C$.
An \emph{$n$-chain} in $\C$ is a sequence $X_0 \xto{c_0} X_1 \xto{c_1} \dots \xto{c_{n-1}} X_n$ of composable morphisms.
We write $\C_n$ for the set of $n$-chains.
Now consider a functor $\Phi \colon \C \to \Ab$.
Recall that the cobar construction is the cochain complex $C^*(\C,\Phi)$ where 
\[
C^n(\C,\Phi) = \prod_{X_0 \xto{c_1} \dots \xto{c_{n-1}} X_n} \Phi(X_n).
\]
We view it as a set of functions $u \colon \C_n \to \coprod_X \Phi(X)$ such that $u(X_0 \xto{c_0} \dots \xto{c_{n-1}} X_n) \in \Phi(X_n)$.
The differential $\de \colon C^n(\C,\Phi) \to C^{n+1}(\C,\Phi)$ is defined on the factor $X_0 \xto{c_0} \dots \xto{c_n} X_{n+1}$ of the target by
\[
\de(u)(X_\bullet) = \sum_{j=0}^n (-1)^j u(\de_i(X_\bullet))+(-1)^{n+1}\Phi(c_n)(\de_{n+1}(X_\bullet))
\]
where $\de_i(X_\bullet)$ is the $n$-chain obtained by deleting $X_i$ from $X_\bullet$.

The cohomology groups of $C^*(\C,\Phi)$ are isomorphic to $\varprojlim^* \Phi$ \cite[Appendix II, Section 3]{GZ}.
The following facts are elementary and are left to the reader.

\begin{lem}\label{lem_simple_facts_on_cobar}
Let $\C^*(C,\Phi)$ be the cochain complex defined above. Then the following hold.
\begin{itemize}
\item[(a)]
Any $1$-cocycle  $z \in C^1(\C,\Phi)$ satisfies $z(1_X)=1$ for any $X \in \C$.

\item[(b)]
A $2$-cocycle $z$ is called \emph{regular} if $z(1_{X_1},c)=1=z(c,1_{X_0})$ for any $c \in \C(X_0,X_1)$.
Every $2$-cocycle $z' \in C^2(\C,\Phi)$ is cohomologous to a regular $2$-cocycle $z$.
\end{itemize}
\end{lem}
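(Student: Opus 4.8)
\textbf{Proof plan for Lemma \ref{lem_simple_facts_on_cobar}.}

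The plan is to verify both statements directly from the definition of the differential $\de$ in the cobar construction, using the standard normalisation trick for part (b). For part (a), I would take a $1$-cocycle $z \in C^1(\C,\Phi)$ and evaluate the cocycle condition $\de(z) = 0$ on a suitable $2$-chain. The natural choice is the $2$-chain $X \xto{1_X} X \xto{1_X} X$. Unwinding the formula $\de(z)(X_0 \xto{c_0} X_1 \xto{c_1} X_2) = z(c_1) - z(c_1 \circ c_0) + \Phi(c_1)(z(c_0))$ on this chain gives $z(1_X) - z(1_X) + \Phi(1_X)(z(1_X)) = 0$, and since $\Phi(1_X) = \id$, this reduces to $z(1_X) = 0$ (equivalently $z(1_X) = 1$ in multiplicative notation). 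This is a one-line computation once the indexing conventions are pinned down.

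For part (b), the plan is the classical normalisation argument. Given a $2$-cocycle $z'$, I would define a $1$-cochain $t \in C^1(\C,\Phi)$ by $t(c) = z'(1_{X_1}, c)$ for $c \in \C(X_0, X_1)$ (one could equally use $t(c) = z'(c, 1_{X_0})$; a preliminary computation via the cocycle identity on the chain $1_{X_1}, 1_{X_1}, c$ shows these agree, and also that $z'(1_X, 1_X)$ is the relevant correcting constant at identities). Then set $z = z' \cdot \de(t)^{-1}$ (additively, $z = z' - \de(t)$). One checks using the cocycle identity $\de(z')=0$ applied to the $3$-chains $1_{X_1}, 1_{X_1}, c$ and $c, 1_{X_0}, 1_{X_0}$ that $z$ satisfies $z(1_{X_1}, c) = 1 = z(c, 1_{X_0})$; this is exactly the statement that $z$ is regular, and $z$ is cohomologous to $z'$ by construction. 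I would also note in passing that $z(1_X,1_X)=1$ follows as a special case.

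The only mild subtlety — and the step I would be most careful about — is bookkeeping: the differential formula as written in the excerpt has the last term treated asymmetrically (the $\Phi(c_n)$ factor acts only on the final face), so I must be consistent about which variable slot is the ``new'' one and whether cochains are written left-to-right or right-to-left along a chain. Once the convention is fixed to match the displayed formula for $\de$, both verifications are routine finite computations with signs, and no genuine obstacle arises. I would present part (a) as a two-line display and part (b) as a short paragraph giving the explicit $t$ and indicating the cocycle identities used, leaving the sign-chasing to the reader as the lemma statement already does.
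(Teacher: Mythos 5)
Your proposal is correct, and since the paper explicitly leaves this lemma to the reader, your argument is precisely the standard normalisation computation the authors intend: part (a) from the cocycle identity on the chain $X \xto{1_X} X \xto{1_X} X$, and part (b) by subtracting $\de(t)$ for $t(c)=z'(1_{X_1},c)$, verified via the cocycle identity on the chains $(1_{X_1},1_{X_1},c)$ and $(c,1_{X_0},1_{X_0})$. One small inaccuracy in a side remark: the two candidate cochains do \emph{not} agree in general, since those identities give $z'(1_{X_1},c)=z'(1_{X_1},1_{X_1})$ while $z'(c,1_{X_0})=\Phi(c)\bigl(z'(1_{X_0},1_{X_0})\bigr)$; this is harmless, because $\de(t)(1_{X_1},c)$ and $\de(t)(c,1_{X_0})$ depend only on $t(1_{X_1})$ and $t(1_{X_0})$, and both candidates take the value $z'(1_X,1_X)$ on identities, so either choice (indeed any $t$ with $t(1_X)=z'(1_X,1_X)$) yields a regular cocycle cohomologous to $z'$.
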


\begin{defn}\label{def_section_D_to_C}
Let $\E=(\D,\C,\Phi,\pi,\de)$ be an extension (Definition \ref{def:extension-C-by-Phi}).
A \emph{section} is a function $\si \colon \Mor(\C) \to \Mor(\D)$ such that $[\si(c)]=c$ for every $c \in \Mor(\C)$.
We say that a section $\si$ is \emph{regular} if $\si(1_X)=1_X$ for every $X \in \C$.
\end{defn}

The following definition is an analogue of the well known construction of the $2$-cocycles associated with extensions of groups.
Compare with \cite{Hoff}.

\begin{defn}\label{def_z-sigma}
Let $\E=(\D,\C,\Phi,\pi,\de)$ be an extension and $\si \colon \C_1 \to \D_1$ a regular section.
Define a $2$-cochain as follows.
Given a $2$-chain $X_0 \xto{c_0} X_1 \xto{c_1} X_2$ notice that $[\si(c_1) \circ \si(c_0)]=c_1 \circ c_0 =[\si(c_1 \circ c_0)]$ and therefore there exists a unique element $z_\si(c_1,c_0)$ in $\Phi(X_2)$ such that
\[
\si(c_1) \circ \si(c_0) = \lrb{z_\si(c_1,c_0)} \circ \si(c_1 \circ c_0).
\]
\end{defn}

The next lemma is analogous the the well known result about the $2$-cocycles associated to a given extension of groups.

\begin{lem}\label{lem_2_cocycle_of_extension}
The $2$-cochain $z_\si$ defined above is a regular $2$-cocycle.
Moreover, if $\si'$ is another regular section then $z_{\si'}$ and $z_\si$ are cohomologous.
\end{lem}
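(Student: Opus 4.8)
The statement to be proved is Lemma~\ref{lem_2_cocycle_of_extension}: the $2$-cochain $z_\si$ of Definition~\ref{def_z-sigma} is a regular $2$-cocycle, and if $\si'$ is another regular section then $z_{\si'}$ and $z_\si$ are cohomologous. The argument is the category-theoretic analogue of the classical computation identifying $H^2$ of a group with equivalence classes of extensions, and the plan is to mirror that computation step by step, using the relation \eqref{compat} in place of the commutator identities in a group extension.

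\emph{Regularity.} Since $\si$ is a regular section, $\si(1_{X_1})=1_{X_1}$; thus for any $c\in\C(X_0,X_1)$ the defining equation $\si(1_{X_1})\circ\si(c)=\lrb{z_\si(1_{X_1},c)}\circ\si(1_{X_1}\circ c)$ reads $\si(c)=\lrb{z_\si(1_{X_1},c)}\circ\si(c)$, and since $\si(c)$ is a morphism on which $\Phi(X_1)$ acts freely (Definition~\ref{def:extension-C-by-Phi}\ref{ext-axiom-1}), we get $z_\si(1_{X_1},c)=1$. The identity $z_\si(c,1_{X_0})=1$ is obtained the same way from $\si(c)\circ\si(1_{X_0})=\si(c)$.

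\emph{Cocycle condition.} First I would fix a $3$-chain $X_0\xto{c_0}X_1\xto{c_1}X_2\xto{c_2}X_3$ and compute the morphism $\si(c_2)\circ\si(c_1)\circ\si(c_0)\in\D(X_0,X_3)$ in two ways, grouping first as $(\si(c_2)\circ\si(c_1))\circ\si(c_0)$ and then as $\si(c_2)\circ(\si(c_1)\circ\si(c_0))$. Each regrouping is expanded using Definition~\ref{def_z-sigma} to pull out the relevant $\lrb{-}$ factors; the only subtlety is that when a $\lrb{z_\si(c_2,c_1)}$ factor has to be pushed past $\si(c_2\circ c_1)$ (or $\si(c_0)$), one applies the commutation relation \eqref{compat}, i.e. $d\circ\lrb{g}=\lrb{[d]_*(g)}\circ d$, which introduces a $\Phi(c_2)$-twist on the appropriate argument. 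Collecting terms and using the freeness of the $\Phi(X_3)$-action on $\D(X_0,X_3)$ to cancel the common morphism $\si(c_2\circ c_1\circ c_0)$, one reads off exactly the relation $\de(z_\si)(c_2,c_1,c_0)=1$ in the sign convention of the cobar differential recalled just before Lemma~\ref{lem_simple_facts_on_cobar}. This bookkeeping step — keeping track of which argument gets hit by $\Phi(c_2)$ and matching signs with the stated differential — is the main place where care is needed, though it is entirely routine.

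\emph{Independence of the section.} Given two regular sections $\si,\si'$, for each morphism $c\in\C(X_0,X_1)$ the morphisms $\si(c)$ and $\si'(c)$ lie over the same $c$, so by Definition~\ref{def:extension-C-by-Phi}\ref{ext-axiom-1} there is a unique $t(c)\in\Phi(X_1)$ with $\si'(c)=\lrb{t(c)}\circ\si(c)$; regularity gives $t(1_X)=1$, so $t\in C^1(\C,\Phi)$. Substituting $\si'(c_i)=\lrb{t(c_i)}\circ\si(c_i)$ into the defining equation for $z_{\si'}(c_1,c_0)$, pushing the $\lrb{t(c_1)}$ factor past $\si(c_1)$ via \eqref{compat} (which produces $\lrb{[c_1]_*(t(c_0))}$, i.e. $\Phi(c_1)(t(c_0))$), and again cancelling $\si(c_1\circ c_0)$ by freeness, yields
\[
z_{\si'}(c_1,c_0)=z_\si(c_1,c_0)\cdot t(c_1)\cdot \Phi(c_1)(t(c_0))\cdot t(c_1\circ c_0)^{-1},
\]
which is precisely $z_\si(c_1,c_0)\cdot(\de t)(c_1,c_0)$. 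Hence $z_{\si'}$ and $z_\si$ differ by a coboundary, completing the proof. The only genuine obstacle throughout is correctly matching the twists coming from \eqref{compat} with the signs in the cobar differential, so I would be explicit about the differential formula before starting the $3$-chain computation.
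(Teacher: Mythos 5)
Your proposal is correct and follows essentially the same route as the paper: regularity via the free action, the cocycle identity by comparing the two associativity groupings of $\si(c_2)\circ\si(c_1)\circ\si(c_0)$ using \eqref{compat} and cancelling $\si(c_2\circ c_1\circ c_0)$ by freeness (the paper merely packages this computation as a commutative diagram), and the comparison of sections via the $1$-cochain $t$ with $\si'(c)=\lrb{t(c)}\circ\si(c)$, yielding $z_{\si'}=z_\si\cdot\de(t)$ exactly as in the paper (the two orderings of the correction terms agree since the values lie in the abelian group $\Phi(X_2)$). No gaps beyond the routine bookkeeping you already flag.
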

\begin{proof}
Let $c \in \C(X_0,X_1)$ be a morphism.
By the definition of $z_\si$ we have $\lrb{z_\si(1_{X_1},c)} \circ \si(1_{x_1} \circ c) = \si(1_{X_1}) \circ \si(c) = \si(c)$ and since $\Phi(X_1)$ acts freely on $\D(X_0,X_1)$ it follows that $z_\si(1_{X_1},c)=1$.
Similarly $z_\si(c,1_{X_0})=1$.

Now we show that $z_\si$ is a $2$-cocycle.
We need to show that $\de(z_\si)(c_2,c_1,c_0)=1$ for any $3$-chain $X_0 \xto{c_0} X_1 \xto{c_1} X_2 \xto{c_2} X_3$.
Consider the following diagram in $\D$
\[
\xymatrix{
 & &
X_0 \ar@{=}[rrrr] \ar[ddll]_{\si(c_2 \circ c_1 \circ c_0)} \ar[d]^{\si(c_1 \circ c_0)}
& & & &
X_0 \ar[d]_{\si(c_0)} \ar[ddrr]^{\si(c_2 \circ c_1 \circ c_0)}
\\
& &
X_2 \ar[rr]_{\lrb{z_\si(c_1,c_0)}} \ar[d]_{\si(c_2)} & &
X_2 \ar[d]^{\si(c_2)} & &
X_1 \ar[ll]^{\si(c_1)} \ar[d]^{\si(c_2 \circ c_1)}
\\
X_3 \ar[rr]_{\lrb{z_\si(c_2,c_1\circ c_0)}} & &
X_3 \ar[rr]_{\lrb{{c_2}_*(z_\si(c_1,c_0))}} & &
X_3 & &
X_3 \ar[ll]^{\lrb{z_\si(c_2,c_1)}} & &
X_3 \ar[ll]^{\lrb{z_\si(c_2 \circ c_1,c_0)}}
}
\]
The rectangle, the bottom-right square and the two triangles commute by the definition of 
$z_\si$.
The bottom-left square commutes by \eqref{compat}.
The diagram is therefore commutative.
Since $\Phi(X_3)$ acts freely on $\D(X_0,X_3)$ it follows that 
\[\lrb{{c_2}_*(z_\si(c_1,c_0))}\circ\lrb{z_\si(c_2,c_1\circ c_0)} = 
\lrb{z_\si(c_2,c_1)}\circ\lrb{z_\si(c_2 \circ c_1,c_0)}\]
because the compositions of both sides with $\si(c_2 \circ c_1 \circ c_0)$ give the same morphism in $\D(X_0,X_3)$.
This, in turn, is equivalent to the $2$-cocycle condition for $z_\si$.

Now suppose that $\si'$ is another regular section.
Since $[\si(c)]=c=[\si'(c)]$ for any $c \in \C(X_0,X_1)$, there exists a unique $u(c) \in \Phi(X_1)$ such that $\si'(c) = \lrb{u(c)} \circ \si(c)$.
This gives a $1$-cochain $u \in C^1(\C,\Phi)$.
By the defining relation of $z_\si$ and $z_{\si'}$ we get for any $2$-cochain $X_0 \xto{c_0} X_1 \xto{c_1} X_2$ in $\C$
\begin{multline*}
\lrb{z_{\si'}(c_1,c_0)} \circ \lrb{u(c_1 \circ c_0)} \circ \si(c_1 \circ c_0) \overset{(u)}{=}
\lrb{z_{\si'}(c_1,c_0)} \circ \si'(c_1 \circ c_0) \overset{(z_{\si'})}{=}
\si'(c_1) \circ \si'(c_0) \overset{(u)}{=} \\
\lrb{u(c_1)} \circ \si(c_1) \circ \lrb{u(c_0)} \circ \si(c_0) \overset{\eqref{compat}}{=}
\lrb{u(c_1)} \circ \lrb{{c_1}_*(u(c_0))} \circ \si(c_1) \circ \si(c_0) \overset{(z_\si)}{=}
\\
\lrb{u(c_1)} \circ \lrb{{c_1}_*(u(c_0))} \circ \lrb{z_\si(c_1,c_0)} \circ \si(c_1 \circ c_0).
\end{multline*}
Since $\Phi(X_2)$ acts freely on $\D(X_0,X_2)$ and since $\lrb{-} \colon \Phi(X_2) \to \Aut_\D(X_2)$ is injective, we deduce that $z_{\si'}(c_1,c_0)=z_\si(c_1,c_0) \cdot u(c_1) \cdot u(c_1 \circ c_0)^{-1} \cdot {c_1}_*(u(c_0))$.
Since this holds for all $2$-cochains, $z_{\si'}=z_\si \cdot \de(u)$, namely $z_\si$ and $z_{\si'}$ are cohomologous.
\end{proof}

Lemma \ref{lem_2_cocycle_of_extension} justifies the following definition.

\begin{defn}\label{def_cohomology_class_of_extn}
Let $\E=(\D,\C,\Phi,\pi,\de)$ be an extension.
Let $[\D]$ denote the element of $H^2(\C,\Phi)$ defined by the $2$-cocycle $z_\si$ associated with a section $\si \colon \C_1 \to \D_1$.
\end{defn}

Here is a simple consequence of the definitions analogous to the statement that an extension of groups is split if and only if the associated $2$-cohomology class is trivial.

\begin{lem}\label{lem_zero_class_split_extension}
Let $\E=(\D,\C,\Phi,[-],\lrb{-})$ be an extension.
Then $[\D]=0$ if and only if there exists a functor $s \colon \C \to \D$ which is a right inverse to $\D \xto{[-]} \C$.
\end{lem}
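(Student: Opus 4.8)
\textbf{Proof proposal for Lemma \ref{lem_zero_class_split_extension}.}

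The plan is to prove the two implications directly from the cocycle description in Definition \ref{def_z-sigma} and Definition \ref{def_cohomology_class_of_extn}, exactly mirroring the classical group-theoretic statement.

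First I would prove that a right-inverse functor forces $[\D]=0$. Suppose $s\colon\C\to\D$ is a functor with $[s(c)]=c$ for every morphism $c$ of $\C$ and $s(X)=X$ on objects (the latter follows automatically, since $[-]$ is the identity on objects and a functor sending morphisms of $\C(X,Y)$ into $\D(X,Y)$ must fix objects). Then $s$ is a section in the sense of Definition \ref{def_section_D_to_C}, and it is regular because $s$ preserves identities. For a $2$-chain $X_0\xto{c_0}X_1\xto{c_1}X_2$, functoriality of $s$ gives $s(c_1)\circ s(c_0)=s(c_1\circ c_0)$, so comparing with the defining relation $s(c_1)\circ s(c_0)=\lrb{z_s(c_1,c_0)}\circ s(c_1\circ c_0)$ and using that $\Phi(X_2)$ acts freely on $\D(X_0,X_2)$ (axiom \ref{ext-axiom-1} of Definition \ref{def:extension-C-by-Phi}) together with injectivity of $\de_{X_2}$, we get $z_s(c_1,c_0)=1$ for all $2$-chains. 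Hence the $2$-cocycle representing $[\D]$ is the zero cochain, so $[\D]=0$.

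Conversely, suppose $[\D]=0$. Choose a regular section $\si\colon\C_1\to\D_1$ (one exists since $[-]$ is surjective on morphisms and we may send identities to identities). By Lemma \ref{lem_2_cocycle_of_extension} the associated cocycle $z_\si$ represents $[\D]=0$, so $z_\si=\de(u)$ for some $1$-cochain $u\in C^1(\C,\Phi)$; by modifying $u$ if necessary we may use Lemma \ref{lem_simple_facts_on_cobar}(a) to ensure $u(1_X)=1$ for all $X$. Define $s\colon\C\to\D$ to be the identity on objects and $s(c)=\lrb{u(c)}^{-1}\circ\si(c)$ on a morphism $c\in\C(X_0,X_1)$. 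Then $[s(c)]=[\si(c)]=c$, and $s(1_X)=\lrb{u(1_X)}^{-1}\circ\si(1_X)=1_X$. The only thing to check is that $s$ respects composition: for $X_0\xto{c_0}X_1\xto{c_1}X_2$,
\[
s(c_1)\circ s(c_0)=\lrb{u(c_1)}^{-1}\circ\si(c_1)\circ\lrb{u(c_0)}^{-1}\circ\si(c_0).
\]
Using \eqref{compat} to move $\lrb{u(c_0)}^{-1}$ past $\si(c_1)$ (which turns it into $\lrb{{c_1}_*(u(c_0))}^{-1}$), then the defining relation of $z_\si$, and finally the cocycle identity $z_\si(c_1,c_0)=u(c_1)\cdot u(c_1\circ c_0)^{-1}\cdot {c_1}_*(u(c_0))$ coming from $z_\si=\de(u)$, everything collapses (all the $\lrb{-}$-factors are central enough to be manipulated as elements of the abelian group $\Phi(X_2)$, and $\de(u)(c_1,c_0)=u(c_1)\cdot u(c_1\circ c_0)^{-1}\cdot{c_1}_*(u(c_0))$ by the formula for $\de$ in degree $1$) to give $s(c_1)\circ s(c_0)=\lrb{u(c_1\circ c_0)}^{-1}\circ\si(c_1\circ c_0)=s(c_1\circ c_0)$. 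Hence $s$ is a functor and a right inverse to $[-]$.

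The main obstacle is purely bookkeeping: in the converse direction one must be careful with the precise sign/order conventions in the differential $\de$ of the cobar complex and with the fact that $\lrb{-}$ conjugates according to \eqref{compat} rather than being strictly central, so that the algebra "$z_\si=\de(u)\Rightarrow s$ is a functor'' comes out with the inverse and the ${c_1}_*$ in exactly the right places. There is no conceptual difficulty — it is the standard splitting argument for extensions transported to the categorical setting — so I would present it compactly, citing Lemmas \ref{lem_simple_facts_on_cobar} and \ref{lem_2_cocycle_of_extension} and the relation \eqref{compat} and leaving the one displayed computation as the substance of the proof.
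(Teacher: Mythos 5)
Your argument is correct and follows essentially the same route as the paper's proof: in one direction functoriality of $s$ forces $z_s=1$, and in the other you twist a regular section $\si$ by a $1$-cochain $u$ with $z_\si=\de(u)$ (your $\lrb{u(c)}^{-1}$ versus the paper's $\lrb{u(c)}$ is just the substitution $u\mapsto u^{-1}$, and with the stated formula $\de(u)(c_1,c_0)=u(c_1)\cdot u(c_1\circ c_0)^{-1}\cdot {c_1}_*(u(c_0))$ your sign is the one that makes the displayed computation close up). The only slip is the justification of $u(1_X)=1$: Lemma \ref{lem_simple_facts_on_cobar}(a) concerns $1$-cocycles while $u$ is merely a cochain, and one cannot ``modify $u$'' at identities without changing $\de(u)$; instead observe that it is automatic, since regularity of $z_\si$ gives $1=z_\si(1_X,1_X)=\de(u)(1_X,1_X)=u(1_X)$.
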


\begin{proof}
Suppose first that $s$ exists.
Then it provides a regular section $s \colon \C_1 \to \D_1$ and the functoriality of $s$ readily implies that $z_s=1$, hence $[\D]=0$.

Conversely, suppose that $[\D]=0$.
Choose a regular section $\si \colon \C_1 \to \D_1$.
Then $z_\si = \de(u)$ where $\de$ is the differential in the cobar construction and $u \in C^1(\C,\Phi)$.
Thus, given a $2$-chain $C_0 \xto{c_1} C_1 \xto{c_2} C_2$ in $\C$,
\begin{equation}\label{lem_zero_class_split_extension eq 1}
z_\si(c_2,c_1) = u(c_2) \cdot u(c_2 \circ c_1)^{-1} \cdot \Phi(c_2)(u(c_1)).
\end{equation}
Define a functor $s \colon \C \to \D$ as follows.
On objects $s(C)=C$ for all $C \in \C$.
On morphisms $c \in \C(C_1,C_2)$,
\[
s(c) = \lrb{u(c)} \circ \si(c).
\]
Then $s$ is a functor because it respects units and compositions.
First, since $z_\si$ is regular $1=z_\si(1_X,1_X) = u(1_X)\cdot u(1_X \circ 1_X)^{-1} \cdot (1_X)_*(u(1_X)) = u(1_X)$, and therefore $u(1_X)=1$.
Hence $s(1_X)=1_X$ because $\si$ is regular.
Next, $s$ respects composition because
\begin{multline*}
s(c_2 \circ c_1) =
\lrb{u(c_2\circ c_1)} \circ \si(c_2 \circ c_1) = 
\lrb{u(c_2\circ c_1)} \circ \lrb{z_\si(c_2,c_1)} \circ \si(c_2) \circ \si(c_1) \overset{\eqref{lem_zero_class_split_extension eq 1}}{=}
\\
\lrb{u(c_2)} \circ \lrb{\Phi(c_2)(u(c_1))} \circ \si(c_2) \circ \si(c_1) \overset{\eqref{compat}}{=}
\lrb{u(c_2)} \circ \si(c_2) \circ \lrb{u(c_1)} \circ \si(c_1) = 
s(c_2) \circ s(c_1).
\end{multline*}
Clearly $s$ is a right inverse to $\D \xto{[-]} \C$.
\end{proof}

\begin{remark}
Given a functor $\Phi \colon \C \to \Ab$, Thomason's construction \cite{Thomason} $\Tr_\C(\Phi)$ gives rise to an extension $\D$.
Inspection of this construction shows that is comes equipped with a section, namely a functor $s \colon \C \to \D$ which is a right inverse to the projection $\D \xto{[-]} \C$.
It is not hard to see that the extension class $[\Tr_\C(\Phi)]$ is equal to 0, and conversely, if $\D$ is an extension with $[\D]=0$ then $\D$ is isomorphic as an extension to $\Tr_\C(\Phi)$.
\end{remark}

The next definition is analogous to that of an equivalence of extensions of groups.

\begin{defn}\label{def_equivalence_of_extensions}
Let $\C$ be a small category and $\Phi \colon \C \to \Ab$ a functor.
Two extensions $\E=(\D,\C,\Phi,\pi,\de)$ and $\E'=(\D',\C,\Phi,\pi',\de)$ are called \emph{equivalent} if there exists an isomorphism $\Psi \colon \E \to \E'$ such that $\overline{\Psi}=\Id_\C$, and the natural transformation $\eta(\Psi)\colon \Phi \to \Phi \circ \overline{\Psi}=\Phi$ of Lemma \ref{lem:def-Psi-star} is the identity transformation.

This defines an equivalence relation on the class of all extensions of $\C$ by $\Phi$.
The equivalence class of an extension $\E$ is denoted $\{ \E\}$.
Let $\Ext(\C,\Phi)$ denote the collection of equivalence classes of these extensions.
\end{defn}

The next lemma is a special case of the results in \cite{Hoff}.

\begin{lem}\label{lem_h2_classifies_extensions}
Fix a small category $\C$ and a functor $\Phi \colon \C \to \Ab$.
Then there exists a one-to-one correspondence
\[
\Gamma \ \colon \ \Ext(\C,\Phi) \xto{ \ \{ \E \} \mapsto [\E] \ } H^2(\C,\Phi).
\]
\end{lem}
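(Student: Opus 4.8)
The plan is to imitate the classical proof of the classification of abelian group extensions, with the cobar complex $C^*(\C,\Phi)$ playing the role of the standard resolution. Recall that $\Gamma$ sends an extension $\E$ to the class $[\E]\in H^2(\C,\Phi)$ of the $2$-cocycle $z_\si$ (Definitions \ref{def_z-sigma} and \ref{def_cohomology_class_of_extn}) attached to a regular section $\si$, which exists by Lemma \ref{lem_simple_facts_on_cobar}(b) and whose class is independent of $\si$ by Lemma \ref{lem_2_cocycle_of_extension}. Three things then remain: that $\Gamma$ is constant on equivalence classes, surjective, and injective. For the first, suppose $\Psi\colon\E\to\E'$ is an equivalence (Definition \ref{def_equivalence_of_extensions}), so $\overline{\Psi}=\Id_\C$ and $\eta(\Psi)=\Id_\Phi$. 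Since $\Psi$ is the identity on objects and fixes identity morphisms, $\si'\defeq\Psi\circ\si$ is a regular section of $\E'$; applying $\Psi$ to the defining relation of $z_\si$ and using $\Psi(\lrb{g})=\lrb{\eta_X(\Psi)(g)}=\lrb{g}$ (from \eqref{eta-def}) gives $z_{\si'}=z_\si$, whence $[\E']=[\E]$.

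For surjectivity, I would represent a given class by a regular cocycle $z$ (Lemma \ref{lem_simple_facts_on_cobar}(b)) and build a category $\D_z$ with $\Obj(\D_z)=\Obj(\C)$, $\D_z(X,Y)=\Phi(Y)\times\C(X,Y)$, and composition
\[
(h',c')\circ(h,c)=\bigl(h'\cdot\Phi(c')(h)\cdot z(c',c),\ c'\circ c\bigr).
\]
Associativity of this composition is precisely the $2$-cocycle identity for $z$ in the form used in the proof of Lemma \ref{lem_2_cocycle_of_extension}, and regularity of $z$ makes $(1,1_X)$ a two-sided unit. Setting $\pi(h,c)=c$ and $\de_X(g)=(g,1_X)$, the axioms of Definition \ref{def:extension-C-by-Phi} hold: $\Phi(Y)$ acts freely on $\D_z(X,Y)$ by $g\cdot(h,c)=(gh,c)$ with quotient $\C(X,Y)$, and \eqref{compat} follows from the composition formula together with $z(c,1_X)=1=z(1_Y,c)$. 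The section $\si(c)=(1,c)$ is regular and satisfies $z_\si=z$, so $\Gamma(\{\D_z\})$ is the given class.

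For injectivity, let $\E=(\D,\C,\Phi,\pi,\de)$ and $\E'=(\D',\C,\Phi,\pi',\de')$ satisfy $[\E]=[\E']$. I would pick regular sections $\si,\si'$, so that $z_{\si'}=z_\si\cdot\de(u)$ for some $u\in C^1(\C,\Phi)$, with $u(1_X)=1$ forced by regularity. Define $\Psi\colon\D\to\D'$ to be the identity on objects; for $d\in\D(X,Y)$ write $d=\lrb{g}\circ\si([d])$ with $g\in\Phi(Y)$ unique (the action of $\Phi(Y)$ on $\pi^{-1}([d])$ is free and transitive) and set $\Psi(d)=\lrb{g\cdot u([d])^{-1}}\circ\si'([d])$. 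Then $\Psi(1_X)=1_X$, and $\Psi(d_1\circ d_0)=\Psi(d_1)\circ\Psi(d_0)$ by expanding both sides via \eqref{compat} and the defining relations of $z_\si$ and $z_{\si'}$, the check reducing exactly to $z_{\si'}=z_\si\cdot\de(u)$, a computation parallel to the last display in the proof of Lemma \ref{lem_2_cocycle_of_extension}. By construction $\pi'\circ\Psi=\pi$, so $\overline{\Psi}=\Id_\C$, and $\Psi(\lrb{g})=\lrb{g}$ forces $\eta(\Psi)=\Id_\Phi$ via Lemma \ref{lem:def-Psi-star}. The symmetric recipe (interchanging $\si\leftrightarrow\si'$ and $u\leftrightarrow u^{-1}$) produces a two-sided inverse, so $\Psi$ is an equivalence and $\{\E\}=\{\E'\}$.

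I expect the main obstacle to be the injectivity step: verifying that the fibrewise-defined $\Psi$ is genuinely a functor — this is where all the cocycle/coboundary bookkeeping concentrates — and that it is invertible. Surjectivity needs only care in matching the associativity of the twisted composition to the precise shape of the cocycle identity, and well-definedness is purely formal. (Alternatively, all of this can be cited from \cite{Hoff}, of which the present statement is the special case of functors landing in $\Ab$.)
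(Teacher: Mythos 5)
Your proposal is correct and follows essentially the same route as the paper: the well-definedness argument via $\Psi\circ\si$, and the twisted category $\D_z$ with composition $(h',c')\circ(h,c)=(h'\cdot\Phi(c')(h)\cdot z(c',c),\,c'\circ c)$ and section $\si(c)=(1,c)$, are exactly what the paper does. The only difference is organisational: the paper packages injectivity as constructing an inverse map $\Sigma\colon H^2(\C,\Phi)\to\Ext(\C,\Phi)$ and checking $\Sigma\circ\Gamma=\Id$, whereas you build the equivalence $\Psi(d)=\lrb{g\cdot u([d])^{-1}}\circ\si'([d])$ directly between two extensions with cohomologous cocycles — the same cocycle/coboundary computation either way.
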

\begin{proof}
To see that $\Gamma$ is well defined we need to show that if $\E$ and $\E'$ are equivalent extensions of $\C$ by $\Phi$ then they give rise to the same element of $H^2(\C,\Phi)$.
Fix regular sections $\si \colon \C_1 \to \D_1$ and $\si' \colon \C_1 \to \D'_1$ and let $z_\si$ and $z_{\si'}$ be their associated $2$-cocycles.
Also fix an equivalence $\Psi \colon \E \to \E'$, i.e. $\overline{\Psi}=\Id_\C$ and $\eta(\Psi)=\Id_{\Phi}$.
Notice that $\Psi(\si) \colon \C_1 \to \D'_1$ is a regular section for $\E'$ because $\overline{\Psi}=\Id_\C$.
Since $\eta(\Psi)=\Id_{\Phi}$, by applying $\Psi$ to the defining relation of $z_\si$ and using \eqref{eta-def}  we get
\[
\lrb{z_\si(c_1,c_0)} \circ \Psi(\si(c_1 \circ c_0)) = \Psi(\si(c_1)) \circ \Psi(\si(c_0)).
\]
Therefore $z_\si=z_{\Psi(\si)}$ which is cohomologous to $z_{\si'}$ by Lemma \ref{lem_2_cocycle_of_extension}.

Now we construct $\Sigma \colon H^2(\C,\Phi) \to \Ext(\C,\Phi)$.
Given $\zeta \in H^2(\C,\Phi)$ choose a regular $2$-cocycle $z \in \zeta$ (this is possible by Lemma \ref{lem_simple_facts_on_cobar}).
Define a category $\D_z$ as follows.
First, $\Obj(\D_z)=\Obj(C)$ and $\D_z(X,Y) = \Phi(Y) \times \C(X,Y)$.
Composition of morphisms $(g_0,c_0) \in \D_z(X_0,X_1)$ and $(g_1,c_1) \in \D_z(X_1,X_2)$ is given by
\[
(g_1,c_1) \circ (g_0,c_0) = (g_1 \cdot \Phi(c_1)(g_0)\cdot z(c_1,c_0), c_1 \circ c_0).
\]
It is a standard calculation to show that composition defined in this way is unital and associative, hence making $D_z$ a small category.
The functor $\pi_z \colon \D_z \to \C$ is the identity on objects and the obvious projection on morphisms.
The assignment $g \mapsto (g,1_X)$ gives the maps $\de_X \colon \Phi(X) \to \Aut_{\D_z}(X)$.
It is easy to check that $\D_z$ is an extension of $\C$ by $\Phi$.

Suppose that $z' \in \zeta$ is another regular $2$-cocycle and let $D_{z'}$ be the associated extension.
We claim that $\D_z$ and $\D_{z'}$ are equivalent extensions.
Since $z$ and $z'$ are cohomologous there is $u \in C^1(\C,\Phi)$ such that $z=z' \cdot \de(u)$.
Regularity of $z$ and $z'$ implies that $u(1_X)=1$ for all $X \in \C$ (by looking at the $2$-chain $X \xto{\id} X \xto{\id} X$).
Define $\Psi \colon \D_z \to \D_{z'}$ as the identity on objects, and on morphisms
\[
\Psi \colon (g,c) \mapsto (u(c) \cdot g,c).
\]
Then $\Psi$ respects identity morphisms since $u(1_X)=1$.
One easily checks it respects compositions because $z(c_1,c_0) =z'(c_1,c_0) \cdot u(c_1) \cdot u(c_1 \circ c_0)^{-1} \cdot {c_1}_*(u(c_0))$.
Finally, $\overline{\Psi}=\Id_\C$ as is evident from the definitions and $\eta(\Psi)=\Id$.
Therefore $\D_z$ and $\D_{z'}$ define equivalent extensions.
This shows that a map $\Sigma \colon H^2(\C,\Phi) \to \Ext(\C,\Phi)$ is well defined.
Notice that $\si \colon \C_1 \to (\D_z)_1$ defined by $c \mapsto (1,c)$ where $c \in \C(X_0,X_1)$ and $1$ denotes the identity element of $\Phi(X_1)$ gives a regular section such that $z_\si=z$.
This shows that $\Gamma \circ \Sigma = \Id$.
If $\E=(\D,\C,\Phi,\pi,\de)$ is an extension and $\si \colon \C_1 \to \D_1$ is a section then there is an equivalence $\Psi \colon \D_{z_\si} \to \D$ defined as the identity on objects and $\Psi \colon (g,c) \mapsto \lrb{g} \circ \si(c)$ on morphisms.
This shows that $\Sigma \circ \Gamma=\Id$.
\end{proof}

Now we deal with constructing morphisms of extensions. The following proposition is a restatement of Proposition \ref{prop_morphisms_of_extensions}. 
It should be compared with \cite[Lemma 2.2(i)]{JLL}.

\begin{prop}\label{prop_morphisms_of_extensions_apndx}
Let $\E=(\D,\C,\Phi,\pi,\de)$ and $\E'=(\D',\C',\Phi',\pi',\de')$ be extensions. 
Let $\psi \colon \C \to \C'$ be a functor, and let $\eta\colon\Phi\to\Phi'\circ\psi$ be a natural transformation. 
Then the following are equivalent.
\begin{enumerate}[(i)]
\item
There exists a morphism of extensions  $\Psi\colon\E \to \E'$ such that $\psi=\overline{\Psi}$ and $\eta = \eta(\Psi)$.
\label{c_i_morphisms_of_extensions_apndx}

\item 
The homomorphisms in cohomology induced by $\psi$ and $\eta$
\[
H^2(\C;\Phi) \xto{\eta_*} H^2(\C;\Phi' \circ \psi) \xleftarrow{\psi^*} H^2(\C',\Phi')
\]
satisfy $\eta_*([\D])=\psi^*([\D'])$.
\label{c_ii_morphisms_of_extensions_apndx}
\end{enumerate}
\end{prop}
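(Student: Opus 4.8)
The strategy is to reduce both implications to explicit cocycle computations using a choice of regular sections. Fix regular sections $\si \colon \C_1 \to \D_1$ and $\si' \colon \C'_1 \to \D'_1$, with associated regular $2$-cocycles $z_\si$ representing $[\D]$ and $z_{\si'}$ representing $[\D']$. For the direction \eqref{c_i_morphisms_of_extensions_apndx}$\Rightarrow$\eqref{c_ii_morphisms_of_extensions_apndx}: given $\Psi \colon \E \to \E'$ with $\overline{\Psi}=\psi$ and $\eta(\Psi)=\eta$, the composite $\Psi \circ \si \colon \C_1 \to \D'_1$ covers $\psi$ but does not in general land in the image of $\si'$. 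For each $c \in \C(X_0,X_1)$, since $[\Psi(\si(c))] = \psi(c) = [\si'(\psi(c))]$, there is a unique $u(c) \in \Phi'(\psi(X_1))$ with $\Psi(\si(c)) = \lrb{u(c)} \circ \si'(\psi(c))$, giving a $1$-cochain $u \in C^1(\C, \Phi'\circ\psi)$. Applying $\Psi$ to the defining relation $\si(c_1)\circ\si(c_0) = \lrb{z_\si(c_1,c_0)}\circ\si(c_1\circ c_0)$, using \eqref{eta-def} to translate $\Psi(\lrb{z_\si(c_1,c_0)}) = \lrb{\eta_{X_2}(z_\si(c_1,c_0))}$, and using \eqref{compat} to move $\lrb{u(c_0)}$ past $\si'(\psi(c_1))$, one obtains after comparing the two sides (and invoking the free action of $\Phi'(\psi(X_2))$ on $\D'(\psi(X_0),\psi(X_2))$ together with injectivity of $\de'$) the identity
\[
\eta_{X_2}(z_\si(c_1,c_0)) = \psi^*(z_{\si'})(c_1,c_0)\cdot u(c_1)\cdot u(c_1\circ c_0)^{-1}\cdot \Phi'(\psi(c_1))(u(c_0)),
\]
i.e. $\eta_*(z_\si)$ and $\psi^*(z_{\si'})$ differ by $\de(u)$, which is exactly $\eta_*([\D]) = \psi^*([\D'])$.

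For the converse \eqref{c_ii_morphisms_of_extensions_apndx}$\Rightarrow$\eqref{c_i_morphisms_of_extensions_apndx}: the hypothesis gives a $1$-cochain $u \in C^1(\C,\Phi'\circ\psi)$ satisfying the displayed relation above, which we may normalise (using that $z_\si, z_{\si'}$ are regular) so that $u(1_X) = 1$ for all $X$. We then \emph{define} a functor $\Psi \colon \D \to \D'$ on objects by $\Psi(X) = \psi(X)$, and on morphisms as follows: every $d \in \D(X_0,X_1)$ is uniquely $d = \lrb{g}\circ\si([d])$ for some $g \in \Phi(X_1)$, and we set
\[
\Psi(d) = \lrb{\eta_{X_1}(g)}\circ\lrb{u([d])}\circ\si'(\psi([d])).
\]
One checks $\Psi$ preserves identities (using $u(1_X)=1$ and regularity of the sections) and respects composition — this is where the cocycle relation for $u$ is consumed, in a calculation parallel to the composition check in Lemma \ref{lem_h2_classifies_extensions} and Lemma \ref{lem_zero_class_split_extension}. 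Directly from the definition, $\pi'\circ\Psi = \psi\circ\pi$, so $\Psi$ is a morphism of extensions with $\overline{\Psi}=\psi$; and restricting to automorphism groups and using \eqref{eta-def}, $\Psi(\lrb{g}) = \lrb{\eta_X(g)}$, so $\eta(\Psi) = \eta$ by the uniqueness clause in Lemma \ref{lem:def-Psi-star}.

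I expect the main obstacle to be the verification that the $\Psi$ defined in the converse direction respects composition: this requires carefully expanding $\Psi(d_1\circ d_0)$, rewriting the composite in $\D$ via $z_\si$, and then matching it against $\Psi(d_1)\circ\Psi(d_0)$ computed in $\D'$ via $z_{\si'}$, with the discrepancy between the two being absorbed precisely by the defining relation of $u$ together with \eqref{compat} and the naturality of $\eta$. The bookkeeping is routine but delicate — one must track which $\Phi$-module each element lives in and apply $\Phi'(\psi(c))$ versus $\Phi(c)$ in the right places — and it is essentially the ``twisted'' generalisation of the composition computation already carried out in the proof of Lemma \ref{lem_h2_classifies_extensions}. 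Everything else (well-definedness, the identity axiom, $\pi'\circ\Psi = \psi\circ\pi$, recovering $\eta(\Psi)$) is immediate from the constructions and from results already established in this appendix.
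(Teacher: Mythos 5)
Your proposal is correct and follows essentially the same route as the paper's proof: the same cochain $u$ defined by comparing $\Psi\circ\si$ with $\si'\circ\psi$ in the forward direction, and in the converse the same explicit formula $\Psi(\lrb{g}\circ\si(c)) = \lrb{\eta(g)}\circ\lrb{u(c)}\circ\si'(\psi(c))$, with the composition check consuming the coboundary relation exactly as you describe. The only minor remark is that $u(1_X)=1$ is automatic from regularity of $z_\si$ and $z_{\si'}$ (evaluate the relation on the $2$-chain $X\xto{\id}X\xto{\id}X$) rather than a normalisation one must impose.
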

\begin{proof}[Proof of Proposition \ref{prop_morphisms_of_extensions}]
Fix regular sections $\si \colon \C_1 \to \D_1$ and $\si' \colon \C'_1 \to \D'_1$ and let $z_\si$ and $z_{z_\si'}$ be the associated $2$-cocycles (\ref{def_z-sigma}).

\noindent
(\ref{c_i_morphisms_of_extensions}) $\implies$ (\ref{c_ii_morphisms_of_extensions}).
Let $\Psi \colon \E \to \E'$ be a morphism of extensions such that $\overline{\Psi}=\psi$ and $\eta(\Psi)=\eta$.
Then for any $c \in \C(X_0,X_1)$ we have $[\Psi(\si(c))]=\psi(c)=[\si'(\psi(c))]$ so $\Psi(\si(c))=\lrb{g} \circ \si'(\psi(c))$ for a unique $g \in \Phi'(\psi(c))$.
Therefore  there exists $u \in C^1(\C,\Phi' \circ \psi)$ such that
\begin{equation}\label{eq_morphisms_of_extensions_def_u}
\Psi(\si(c))=\lrb{u(c)} \circ \si'(\psi(c)), \qquad (c \in \C_1).
\end{equation}
Now, given  a $2$-chain $X_0 \xto{c_0} X_1 \xto{c_1} X_2$ in $\C$ we have
\begin{multline*}
\lrb{\eta_{X_2}(z_\si(c_1,c_0))} \circ \lrb{u(c_1 \circ c_0)} \circ \si'(\psi(c_1\circ c_0)) \overset{\eqref{eq_morphisms_of_extensions_def_u}}{=}
\lrb{\eta_{X_2}(z_\si(c_1,c_0))} \circ \Psi(\si(c_1 \circ c_0)) \overset{\eqref{eta-def}}{=}
\\
\Psi\Big(\lrb{z_\si(c_1,c_0)} \cdot \si(c_1\circ c_0) \Big) \overset{\eqref{def_z-sigma}}{=}
\Psi(\si(c_1)) \circ \Psi(\si(c_0)) \overset{\eqref{eq_morphisms_of_extensions_def_u}}{=}
\\
\lrb{u(c_1)} \circ \si'(\psi(c_1)) \circ \lrb{u(c_0)} \circ \si'(\psi(c_0)) \overset{\eqref{compat}}{=}
\lrb{u(c_1)} \circ \lrb{\Phi'(\psi(c_1))(u(c_0))} \circ \si'(\psi(c_1)) \circ \si'(\psi(c_0)) \overset{(z_{\si'})}{=}
\\
\lrb{u(c_1)} \circ \lrb{\Phi'(\psi(c_1))(u(c_0))} \circ \lrb{z_{\si'}(\psi(c_1),\psi(c_0))} \circ \si'(\psi(c_1 \circ c_0)).
\end{multline*}
Since $\Phi'(\psi(X_2))$ acts freely on $\D'(X_0,X_2)$ we deduce that 
\[
\eta_{X_2}(z_\si(c_1,c_0)) = z_{\si'}(\psi(c_1),\psi(c_0)) \cdot u(c_1) \cdot u(c_1 \circ c_0)^{-1} \cdot \Phi'(\psi(c_1))(u(c_0)).
\] 
This shows that $\eta_*(z_\si)=\psi^*(z_{\si'}) \cdot \de(u)$, hence $\eta_*([\D])=\psi^*([\D'])$.

\noindent
(\ref{c_ii_morphisms_of_extensions}) $\implies$ (\ref{c_i_morphisms_of_extensions}).
Assume that $\eta_*([\D])=\psi^*([\D'])$.
We will construct a morphism $\Psi \colon \E \to \E'$.
By the hypothesis, there exists $u \in C^1(\C,\Psi' \circ \psi)$ such that $\eta_*(z_\si)=\psi^*(z_{\si'}) \cdot \de(u)$.
Notice that since $z_\si$ and $z_{\si'}$ are regular $2$-cocycles, $u(1_X)=1$ for every $X \in \C$ (to see this, evaluate $\eta_*(z_\si)$ and $\psi^*(z_{\si'})$ on the $2$-chain $X \xto{\id} X \xto{\id} X$).

Define $\Psi \colon \D \to \D'$ a follows.
On objects $\Psi \colon X \mapsto \psi(X)$.
Every morphism in $\D(X,Y)$ has the form $\lrb{g} \circ \si(c)$ for unique $c \in \C(X,Y)$ and $g \in \Phi(Y)$.
Define
\begin{equation}\label{morphisms_of_extension_ eq1}
\Psi \colon \lrb{g} \circ \si(c) \mapsto \lrb{\eta_Y(g)} \circ \lrb{u(c)} \circ \si'(\psi(c)).
\end{equation}
Then $\Psi$ respects identities since $u(1_X)=1$ and $\si'(1_X)=1_X$.
It respects composition as one verifies directly for $X_0 \xto{\lrb{g_0} \circ \si(c_0)} X_1 \xto{\lrb{g_1} \circ \si(c_1)} X_2$.
On one hand,
\begin{eqnarray*}
&&\Psi(\lrb{g_1} \circ c_1) \circ \Psi(\lrb{g_0} \circ c_0) =
\lrb{\eta_{X_2}(g_1) \cdot u(c_1)} \circ \si'(\psi(c_1)) \circ \lrb{\eta_{X_1}(g_0) \cdot u(c_0)} \circ \si'(\psi(c_0)) \overset{\eqref{compat}}{=} \\
&& \qquad \lrb{\eta_{X_2}(g_1) \cdot u(c_1) \cdot \Phi'(\psi(c_1))(\eta_{X_1}(g_0)) \cdot \Phi'(\psi(c_1))(u(c_0))} \circ \si'(\psi(c_1)) \circ \si'(\psi(c_0)) \overset{\text{nat. of $\eta$}}{=} \\
&& \qquad \lrb{\eta_{X_2}(g_1) \cdot u(c_1) \cdot \eta_{X_2}(\Phi(c_1)(g_0)) \cdot \Phi'(\psi(c_1))(u(c_0))} \circ \si'(\psi(c_1)) \circ \si'(\psi(c_0)) \overset{(z_{\si'})}{=} \\
&& \qquad \lrb{\eta_{X_2}(g_1) \cdot u(c_1) \cdot \eta_{X_2}(\Phi(c_1)(g_0)) \cdot \Phi'(\psi(c_1))(u(c_0)) \cdot z_{\si'}(\psi(c_1),\psi(c_0))} \circ \si'(\psi(c_1 \circ c_0)) 
\end{eqnarray*}
On the other hand 
\begin{eqnarray*}
&& \Psi(\lrb{g_1} \circ c_1 \circ \lrb{g_0} \circ c_0) \overset{\eqref{compat}, (z_\si)}{=} 
\Psi(\lrb{g_1 \cdot \Phi(c_1)(g_0) \cdot z_\si(c_1,c_0)} \circ \si(c_1 \circ c_0)) \overset{\eqref{eta-def}, \eqref{morphisms_of_extension_ eq1}}{=} \\
&& \qquad
\lrb{\eta_{X_2}(g_1) \cdot \eta_{X_2}(\Phi(c_1)(g_0)) \cdot \eta_{X_2}(z_\si(c_1,c_0))} \circ \lrb{u(c_1 \circ c_0)} \circ  \si'(\psi(c_1 \circ c_0))
\end{eqnarray*}
The right hand sides of the two equations are equal since $\eta_*(z_\si)=\psi^*(z_{\si'}) \cdot \de(u)$.
We have just shown that $\Psi \colon \D \to \D'$ is a functor.
By its construction $\pi' \circ \Psi = \psi \circ \pi$ and $\eta(\Psi)=\eta$ because for any $g \in \Phi(X)$ we have $\Psi(\lrb{g} \circ \si(1_X)) = \lrb{\eta_X(g) \cdot u(1_X)} \circ \si'(\psi(1_X)) =\lrb{\eta_X(g)}$ and use \eqref{eta-def}.
\end{proof}

Let $\Aut(\E;1_\C,1_\Phi)$ denote the group of all automorphisms $\Psi$ of $\E$ such that $\overline{\Psi}=\Id_\C$ and $\eta(\Psi)=\Id_\Phi$.

\begin{prop}\label{prop_h1_and_automorphisms_of_E}
Let $\E=(\D,\C,\Phi,\pi,\de)$ be an extension.
Then there is an isomorphism of groups
\[
\Gamma \colon H^1(\C,\Phi) \to \Aut(\E;1_\C,1_\Phi)/\Inn(\E).
\]
\end{prop}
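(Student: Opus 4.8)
The plan is to produce an isomorphism in the reverse direction, $\Aut(\E;1_\C,1_\Phi)/\Inn(\E)\xrightarrow{\ \cong\ }H^1(\C,\Phi)$, and take $\Gamma$ to be its inverse. In fact I will show slightly more: that $\Aut(\E;1_\C,1_\Phi)$ itself is isomorphic to the group $Z^1(\C,\Phi)$ of $1$-cocycles in the cobar complex, with the subgroup $\Inn(\E)$ (which lies in $\Aut(\E;1_\C,1_\Phi)$ by Lemma \ref{lem_inner_are_idid}) corresponding to the subgroup $B^1(\C,\Phi)$ of coboundaries; passing to quotients then gives the statement.

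First I would fix a regular section $\si\colon\C_1\to\D_1$ as in Definition \ref{def_z-sigma}, with associated $2$-cocycle $z_\si$. Given $\Psi\in\Aut(\E;1_\C,1_\Phi)$ and $c\in\C(X,Y)$, since $[\Psi(\si(c))]=\overline{\Psi}(c)=c=[\si(c)]$ there is a unique $u_\Psi(c)\in\Phi(Y)$ with $\Psi(\si(c))=\lrb{u_\Psi(c)}\circ\si(c)$, and these assemble into a $1$-cochain $u_\Psi\in C^1(\C,\Phi)$. Evaluating $\Psi$ on $\si(c_1)\circ\si(c_0)$ in two ways — once by functoriality together with \eqref{compat} to push $\lrb{u_\Psi(c_0)}$ past $\si(c_1)$, and once via the defining relation $\si(c_1)\circ\si(c_0)=\lrb{z_\si(c_1,c_0)}\circ\si(c_1\circ c_0)$ together with $\Psi(\lrb{g})=\lrb{g}$ (valid since $\eta(\Psi)=\Id$, by \eqref{eta-def}) — and then cancelling the common factor $z_\si(c_1,c_0)$ using that $\Phi$ takes values in $\Ab$, one obtains exactly the cocycle identity $u_\Psi(c_1\circ c_0)=u_\Psi(c_1)\cdot\Phi(c_1)(u_\Psi(c_0))$, i.e. $u_\Psi\in Z^1(\C,\Phi)$. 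Since every morphism of $\D$ has the form $\lrb{g}\circ\si(c)$ and $\Psi(\lrb{g})=\lrb{g}$, the cochain $u_\Psi$ determines $\Psi$; and re-running the construction with a second regular section $\si'=\lrb{v}\circ\si$ shows, again by commutativity of $\Phi$, that $u_\Psi$ is independent of the section. Applying $\Psi'$ to $\Psi(\si(c))=\lrb{u_\Psi(c)}\circ\si(c)$ gives $u_{\Psi'\circ\Psi}=u_\Psi\cdot u_{\Psi'}=u_{\Psi'}\cdot u_\Psi$, so $\Psi\mapsto u_\Psi$ is a group homomorphism, injective by the determination property just noted.

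Next I would check surjectivity onto $Z^1(\C,\Phi)$: for a $1$-cocycle $u$, define $\Psi_u$ to be the identity on objects and $\Psi_u(\lrb{g}\circ\si(c))=\lrb{g\cdot u(c)}\circ\si(c)$ on morphisms. It is routine to verify that $\Psi_u$ preserves identities (because $u(1_X)=1$ by Lemma \ref{lem_simple_facts_on_cobar}(a)) and compositions (this uses \eqref{compat}, the defining relation for $z_\si$, and precisely the cocycle identity for $u$), that $\overline{\Psi_u}=1_\C$ and $\eta(\Psi_u)=1_\Phi$, and that $\Psi_{u^{-1}}$ is a two-sided inverse; hence $\Psi_u\in\Aut(\E;1_\C,1_\Phi)$ and $u_{\Psi_u}=u$. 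Thus $\Psi\mapsto u_\Psi$ is an isomorphism $\Aut(\E;1_\C,1_\Phi)\cong Z^1(\C,\Phi)$; in particular $\Aut(\E;1_\C,1_\Phi)$ is abelian, so $\Inn(\E)$ is automatically normal in it.

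Finally I would identify the image of $\Inn(\E)$. For a choice $w=(w(X))_{X\in\C}$ of elements $w(X)\in\Phi(X)$ and the corresponding $\tau_w\in\Inn(\E)$ of Definition \ref{def_InnE}, the computation $\tau_w(\si(c))=\lrb{w(Y)}\circ\si(c)\circ\lrb{w(X)}^{-1}=\lrb{w(Y)\cdot\Phi(c)(w(X))^{-1}}\circ\si(c)$ (again using \eqref{compat}) shows $u_{\tau_w}=\de(w)$, the cobar differential $\de\colon C^0(\C,\Phi)\to C^1(\C,\Phi)$ applied to $w$. Since inner automorphisms of $\E$ are parametrised exactly by such $0$-cochains $w$ and the coboundaries in $C^1(\C,\Phi)$ are exactly the $\de(w)$, the isomorphism of the previous paragraph carries $\Inn(\E)$ onto $B^1(\C,\Phi)$. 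Passing to quotients yields $\Aut(\E;1_\C,1_\Phi)/\Inn(\E)\cong Z^1(\C,\Phi)/B^1(\C,\Phi)=H^1(\C,\Phi)$, and $\Gamma$ is the inverse of this isomorphism; concretely, $\Gamma$ sends the class of a $1$-cocycle $u$ to the class of $\Psi_u$. Every step here is elementary; the only points requiring care are the bookkeeping in the two-sided evaluation of $\Psi$ on $\si(c_1)\circ\si(c_0)$ — keeping track of exactly where \eqref{compat} and the abelianness of $\Phi$ are used — and matching the variance convention of $\de\colon C^0\to C^1$ against the formula obtained for $u_{\tau_w}$.
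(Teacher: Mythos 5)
Your proposal is correct and is essentially the paper's own argument run in the opposite direction: your correspondence $\Psi(\si(c))=\lrb{u_\Psi(c)}\circ\si(c)$ is exactly the inverse of the paper's assignment $z\mapsto\al_z$, $\al_z(d)=\lrb{z([d])}\circ d$, and your surjectivity construction $\Psi_u$ coincides with $\al_u$ once one writes $d=\lrb{g}\circ\si(c)$. The only difference is organizational — you work at the level of $Z^1/B^1$ with a fixed regular section and carry out in full the verification (that $u_\Psi$ is a cocycle and $\al_{u_\Psi}=\Psi$) which the paper leaves as an exercise.
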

\begin{proof}
For any $1$-cocycle $z \in C^1(\C;\Phi)$ define $\al_z \colon \D \to \D$ as the identity on objects and for any $d \in \D(X,Y)$
\[
\al_z \colon d \mapsto \lrb{z([d])} \circ d.
\]
Since $z$ is a $1$-cocycle, $z(1_X)=1$ for any $X \in \C$ and therefore $\al_z$ respects identity morphisms.
It respects composition since $z$ is a $1$-cocycle:
\begin{multline*}
\al_z(d_1) \circ \al_z(d_0)=
\lrb{z([d_1])} \circ d_1 \circ \lrb{z([d_0])} \circ d_0 \overset{\eqref{compat}}{=}
\lrb{z([d_1]) \cdot [d_1]_*(z([d_0]))} \circ d_1 \circ d_0 \overset{\text{$1$-cocycle}}{=}
\\
\lrb{z([d_1 \circ d_0])} \circ d_1 \circ d_0 = \al_z(d_1 \circ d_0).
\end{multline*}
Given $u \in C^0(\C,\Phi)$,
\[\al_{\de(u)}(d) = \lrb{\delta(u){[d]}}= 
\lrb{u(Y)}\lrb{d_*(u(X))^{-1}}\circ d \overset{\eqref{compat}}{=} \lrb{u(Y)}\circ d \circ \lrb{u(X))}^{-1}\overset{\eqref{def_InnE}}{=}\tau_u(d)\in \Inn(\E).\]
If $x\in H^1(\C;\Phi)$ is represented by a cocycle $z$ as above, define $\Gamma(x) = [\alpha_z]$, the class of $\alpha_z$ modulo $\Inn(\E)$. The discussion above shows that $\Gamma$ is well defined. 

To see that $\Gamma$ is a homomorphism, let $z, z'\in C^1(\C;\Phi)$ be 1-cocycles, and let $d \in \D(X,Y)$ be a morphism. By definition of $\alpha$ (using multiplicative notation), 
\begin{multline*}\alpha_z(\alpha_{z'}(d)) = \lrb{z([\alpha_{z'}(d)])}\circ \alpha_{z'}(d) = \lrb{z([\lrb{z'([d])}\circ d])}\circ \alpha_{z'}(d)=\lrb{z([d])}\circ \alpha_{z'}(d)=\\
\lrb{z([d])}\circ \lrb{z'([d])}\circ d =\lrb{z([d]) \cdot z'([d])}\circ d= \lrb{(z\cdot z')([d])}\circ d=\alpha_{z\cdot z'}(d).
\end{multline*}
The inverse of $\Gamma$ sends every self equivalence $\Psi$ to the $1$-cochain $z \in C^1(\C,\Phi)$ which is defined by the relation $\Psi(d)=\lrb{z([d]} \circ d$ which follows from the fact that $[\Psi(d)]=\overline{\Psi}([d])=[d]$.
It is left as an exercise to check that $z$ is a $1$-cocycle and that $\al_z=\Psi$.
\end{proof}

\section{The unstable Adams operations in \cite{JLL} are special}
\label{app_jll_special}

Fix a $p$-local compact group $\G=\SFL$ and set $\R=\H^\bullet(\F^c)$.
Let $\P$ be a set of representatives for the $S$-conjugacy classes in $\R$.
By \cite[Lemma 3.2]{BLO3} the set $\P$ is finite.
For any $P,Q \in \P$ let $\M_{P,Q}$ be a set of representatives for the orbits of $N_S(Q)$ on $\L(P,Q)$.
These sets are finite by \cite[Lemma 2.5]{BLO3}.
Also we remark that since every $R \in \R$ is $\F$-centric, $N_S(R)/R_0$ is finite.

In \cite{JLL} we showed that there exists some $m$ such that for any $\ze \in \Ga_m(p)$ we can construct an unstable Adams operation $(\Psi,\psi)$ of degree $\ze$.
This operation has the following properties
\begin{enumerate}
\item $\psi(P)=P$ for any $P \in \P$.
\item If $P \in \P$ then $\psi|_{N_S(P)}$ is an automorphism of $N_S(P)$ which induces the identity on $N_S(P)/P_0$.
\item $\Psi(\vp)=\vp$ for every $\vp \in \M_{P,Q}$ where $P,Q \in \P$.
\end{enumerate}
In the rest of this section we show that any $(\Psi,\psi)$ which satisfies these conditions must be special relative to $\H^\bullet(\F^c)$.
As usual we will write $T$ for the identity component of $S$.

For any $R \in \H^\bullet(\F^c)$ we fix once and for all $g_R \in S$ such that $R=g_R P g_R^{-1}$ where $P \in \P$ (clearly $P$ is unique).
For any $R,R' \in \H^\bullet(\F^c)$ and any $\vp \in \L(R,R')$ we consider $P=g_R^{-1}Rg_R$ and $P'=g_{R'}^{-1}R'g_{R'}$; these are the representatives in $\P$ for the $S$-conjugacy classes of $R$ and $R'$ respectively.
There is a unique $\mu \in \M_{P,P'}$ and a unique $n_\vp \in N_S(P')$ such that
\begin{equation}\label{jll_special_eq1}
\widehat{g_{R'}^{-1}} \circ \vp \circ \widehat{g_R} = \widehat{n_\vp} \circ \mu.
\end{equation}
Since $\psi$ induces the identity on $S/T$, there is a unique $\tau_R \in T$ such that
\[
\psi(g_R) = \tau_R \cdot g_R.
\]
Similarly, with the notation above $\psi$ induces the identity on $N_S(P')/P'_0$ and therefore there exists a unique $t_\vp \in P'_0$ such that
\[
\psi(n_\vp) = t_\vp \cdot n_\vp.
\]
Notice that $g_{R'} \in N_S(P',R')$ and therefore conjugation by $g_R$ carries $P'_0$ onto $R'_0$.
Set
\[
\tau_\vp = g_{R'} \cdot t_\vp \cdot g_{R'}^{-1} \in R'_0.
\]
We now claim that $\tau_R$ and $\tau_\vp$ chosen above render $(\Psi,\psi)$ a special unstable Adams operation relative to $H^\bullet(\F^c)$, see Definition \ref{def_special_uao}.

First, for any $R \in H^\bullet(\F^c)$,
\[
\psi(R)=\psi(g_R P g_R^{-1}) = \psi(g_R) P \psi(g_R)^{-1} = \tau_R g_R P g_R^{-1} \tau_R^{-1} = \tau_R R \tau_R^{-1}.
\]
Next, if $R,R' \in \H^\bullet(\F^c)$ and $\vp \in \L(R,R')$ then \eqref{jll_special_eq1} implies
\begin{multline*}
\Psi(\vp) = \Psi(\widehat{g_{R'}} \circ \widehat{n_\vp} \circ \mu \circ \widehat{g_R^{-1}}) =
\widehat{\psi(g_{R'})} \circ \widehat{\psi(n_\vp)} \circ \Psi(\mu) \circ \widehat{\psi(g_R^{-1})} =
\\
\widehat{\tau_{R'}} \circ \widehat{g_{R'}} \circ \widehat{t_\vp} \circ \widehat{n_\vp} \circ \mu \circ \widehat{g_R^{-1}} \circ \widehat{\tau_R^{-1}} =
\widehat{\tau_{R'}} \circ  \widehat{\tau_\vp} \circ \widehat{g_{R'}} \circ \widehat{n_\vp} \circ \mu \circ \widehat{g_R^{-1}} \circ \widehat{\tau_R^{-1}} =
\widehat{\tau_{R'}} \circ  \widehat{\tau_\vp} \circ \vp \circ \widehat{\tau_R^{-1}} 
\end{multline*}



\begin{thebibliography}{cc}

\bibitem[A]{An99} K. Andersen , {\it The normalizer splitting conjecture for $p$-compact groups}; Fundamenta Mathematicae 161 (1999), no. 1--2, 1--16.

\bibitem[AG]{AG09}
 K. K. S. Andersen; J. Grodal, {\it The classification of $2$-compact groups}; J. Amer. Math. Soc. 22 (2009), no. 2, 387--€"436.

\bibitem[AGMV]{AGMV}
 K. K. S. Andersen; J. Grodal ; J. M. M\o ller; A. Viruel, {\it The classification of p-compact groups for $p$ odd}; Ann. of Math. (2) 167 (2008), no. 1, 95--€"210.

\bibitem[AOV]{AOV} K. K. S. Andersen; B. Oliver; J. Ventura, {\it Reduced, Tame and exotic fusion systems}; Proc. Lond. Math. Soc. (3) 105 (2012), no. 1, 87--152.

\bibitem[BLO2]{BLO2}
C. Broto, R. Levi, B. Oliver, {\it The homotopy theory of fusion systems};  J. Amer. Math. Soc. 16 (2003), no. 4, 779--856.

\bibitem[BLO3]{BLO3}
C. Broto, R. Levi, B. Oliver, {\it Discrete models for the $p$-local homotopy theory of compact Lie groups and $p$-compact groups};  Geom. Topol. 11 (2007), 315--427.

\bibitem[BLO6]{BLO6} 
C. Broto, R. Levi, B. Oliver, {\it An algebraic model for finite loop spaces}; Algebr. Geom. Topol. 14 (2014), no. 5, 2915--2981.

\bibitem[BLO7]{BLO7} 
C. Broto, R. Levi, B. Oliver, 
{\it The rational cohomology of a p-local compact group};
Proc. Amer. Math. Soc. 142 (2014), no. 3, 1035--1043. 

\bibitem[Br]{Br}K. Brown, { Cohomology of Groups};  Graduate Texts in Mathematics, 87. Springer-Verlag, New York-Berlin, 1982. x+306 pp.

\bibitem[BK72]{BK72}
 A. K. Bousfield; D. M. Kan,
{ Homotopy limits, completions and localizations};
Lecture Notes in Mathematics, Vol. 304. Springer-Verlag, Berlin-New York, 1972. v+348 pp.

\bibitem[GZ]{GZ} P. Gabriel, M. Zisman, {\it Calculus of fractions and homotopy theory}; Ergebnisse der Mathematik und ihrer Grenzgebiete, Band 35 Springer-Verlag New York, Inc., New York 1967 x+168 pp.

\bibitem[Ho]{Hoff}
 G. Hoff, {\it Cohomologies et extensions de categories}; Math. Scand. 74 (1994), no. 2, 191--207.

\bibitem[JMO]{JMO1} S. Jackowski, J. McClure, B. Oliver, {\it Homotopy classification of self-maps of BG via G-actions. I}; Ann. of Math. (2) 135 (1992), no. 1, 183--226.

\bibitem[LL]{LL} R. Levi, A. Libman {\it Existence and uniqueness of classifying spaces for fusion systems
over discrete p-toral groups};  J. London Math. Soc. (2) 91 (2015) 47--70.

\bibitem[JLL]{JLL} 
F. Junod, R. Levi, A. Libman, {\it Unstable Adams operations for $p$-local compact groups}; Algebr. Geom. Topol. 12 (2012), no. 1, 49--74. 

\bibitem[O]{Ol74}
B. Oliver,
{\it $p$-stubborn subgroups of classical compact Lie groups};
J. Pure Appl. Algebra 92 (1994), no. 1, 55--€"78. 


\bibitem[Ro]{Ro} 
A. M. Robert,  A course in $p$-adic analysis; { Graduate Texts in Mathematics}, 198. Springer-Verlag, New York, 2000, xvi+437 pp.

\bibitem[Sp]{Spanier} E. H. Spanier, Algebraic topology;  McGraw-Hill Book Co., New York-Toronto, Ont.-London 1966 xiv+528 pp.

\bibitem[Th]{Thomason}
R. W. Thomason, {\it Homotopy colimits in the category of small categories}; Math. Proc. Cambridge Philos. Soc. 85 (1979), no. 1, 91--109.

\end{thebibliography}
\end{document}